\theoremstyle{plain}
\newtheorem{thm}{Theorem}[section]
\newtheorem{cor}[thm]{Corollary}
\newtheorem{lem}[thm]{Lemma}
\newtheorem{prop}[thm]{Proposition}
\theoremstyle{definition}
\newtheorem{defi}[thm]{Definition}
\theoremstyle{remark}
\newtheorem{rem}[thm]{Remark}
\numberwithin{equation}{section}
\newcommand{\de}{\partial}
\newcommand{\fls}{(-\Delta)^s}
\newcommand{\R}{\mathbb{R}}
\newcommand{\N}{\mathbb{N}}
\newcommand{\average}{{\mathchoice {\kern1ex\vcenter{\hrule height.4pt
width 6pt depth0pt} \kern-9.7pt} {\kern1ex\vcenter{\hrule
height.4pt width 4.3pt depth0pt} \kern-7pt} {} {} }}
\def\R{\mathbb{R}}
\begin{document}

\title[Regularity for stable operators: parabolic equations]{Regularity theory for general stable operators: parabolic equations}

\author{Xavier Fernández-Real}

\address{ETH Z\"{u}rich, Department of Mathematics, Raemistrasse 101, 8092 Z\"{u}rich, Switzerland}

\email{xavierfe@math.ethz.ch}

\author{Xavier Ros-Oton}

\address{University of Texas at Austin, Department of Mathematics, 2515 Speedway, TX 78712 Austin, USA}

\email{ros.oton@math.utexas.edu}

\keywords{Regularity, nonlocal parabolic equations, stable operators.}

\thanks{The first author is supported by ERC grant ``Regularity and Stability in Partial Differential Equations
(RSPDE)'' and a fellowship from ``Obra Social la Caixa''. The second author is supported by NSF grant DMS-1565186 and MINECO grant MTM2014-52402-C3-1-P (Spain).}

\begin{abstract}
We establish sharp interior and boundary regularity estimates for solutions to $\de_t u - L u = f(t, x)$ in $I\times \Omega$, with $I \subset \R$ and $\Omega \subset\R^n$. The operators $L$ we consider are infinitesimal generators of stable Lévy processes. These are linear nonlocal operators with kernels that may be very singular.

On the one hand, we establish interior estimates, obtaining that $u$ is $C^{2s+\alpha}$ in $x$ and $C^{1+\frac{\alpha}{2s}}$ in $t$, whenever $f$ is $C^{\alpha}$ in $x$ and $C^{\frac{\alpha}{2s}}$ in $t$. In the case $f\in L^\infty$, we prove that $u$ is $C^{2s-\epsilon}$ in $x$ and $C^{1-\epsilon}$ in $t$, for any $\epsilon > 0$.

On the other hand, we study the boundary regularity of solutions in $C^{1,1}$ domains. We prove that for solutions $u$ to the Dirichlet problem the quotient $u/d^s$ is Hölder continuous in space and time up to the boundary $\de\Omega$, where $d$ is the distance to $\de\Omega$. This is new even when $L$ is the fractional Laplacian.
\end{abstract}

\maketitle

\tableofcontents

\addtocontents{toc}{\protect\setcounter{tocdepth}{1}}  

\section{Introduction and results}
The aim of this paper is to study the regularity of solutions to nonlocal parabolic equations
\begin{equation}
\label{eq.maineq}
\de_t u - L u = f(t, x),	
\end{equation}
where $L$ is a nonlocal operator of the form
\begin{equation}
\label{eq.L3}
L u(t, x) = \int_{\R^n} \bigl(u(t, x+y)+u(t, x-y)-2u(t, x)\bigr) \frac{a(y/|y|)}{|y|^{n+2s}}dy,
\end{equation}
with $s \in (0,1)$. Here, $a\in L^1(S^{n-1})$ is any nonnegative even function.

In fact, in order to allow the kernel of $L$ to be a singular measure we will be dealing with operators of the form
\begin{equation}
Lu(t, x) = \int_{S^{n-1}}\int_{-\infty}^\infty\bigr(u(t, x+\theta r)+ u(t,x-\theta r) - 2 u(t, x)\bigl) \frac{dr}{|r|^{1+2s}} d\mu(\theta),
\label{eq.L1}
\end{equation}
with the ellipticity conditions given by
\begin{equation}
0< \lambda \leq  \inf_{\nu \in S^{n-1}}\int_{S^{n-1}} |\nu \cdot \theta|^{2s}d\mu (\theta),~~~~~~~~~~\int_{S^{n-1}}d\mu\leq \Lambda < \infty
\label{eq.L2}
\end{equation}
for some constants $0<\lambda\leq \Lambda <\infty$. That is, we only require that the measure $\mu$, called the \emph{spectral measure}, is finite and cannot be supported in any proper subspace of $\R^n$. When $\mu$ is absolutely continuous, then $d\mu(\theta) = a(\theta)d\theta$ for $a\in L^1(S^{n-1})$, and we recover the expression \eqref{eq.L3}.

General operators of the form \eqref{eq.L1} arise as the infinitesimal generators of stable Lévy processes. These processes have been widely studied in both Probability and Analysis, and appear naturally in Mathematical Finance, Biology and Physics; see the introduction of \cite{RS} and references therein.

Important examples of stable operators to have in mind are the fractional Laplacian, $L = -\fls$,
\[
L v(x) = c_{n,s}\int_{\R^n} \bigl(v(x+y)+v(x-y)-2v(x)\bigr) \frac{dy}{|y|^{n+2s}},
\]
and the generator of $n$ independent $1$-dimensional symmetric stable Lévy processes,
\begin{equation}
\label{eq.deltas}
-L = (-\de_{x_1x_1})^s +\dots +(-\de_{x_nx_n})^s.
\end{equation}
In this case, the measure $\mu$ is a sum of $2n$ delta functions on the sphere. These two examples show the different degrees of regularity considered, from the very regular kernel in the fractional Laplacian ($\mu \equiv 1$), to the singular kernel given by the Dirac delta functions.

We will use parabolic Hölder seminorms. Given $\Omega\subset\R^n$, $I\subset\R$ and $\alpha,\beta\in(0,1)$, the parabolic seminorm $C^{\beta,\alpha}_{t,x}(I\times\Omega)$ is defined by
\begin{equation}
\label{eq.seminorm}
[u]_{{C^{\beta,\alpha}_{t,x}(I\times \Omega)}} := \sup_{\substack{t,t'\in I \\ x,x'\in\Omega}}\frac{| u(t,x)- u(t',x')|}{|t-t'|^\beta + |x-x'|^{\alpha}}.
\end{equation}
We will also denote
\[
[u]_{C^\alpha_x(I\times \Omega)} := \sup_{t'\in I} [u(t',\cdot)]_{C^\alpha(\Omega)},~~~~~~~~[u]_{C^\beta_t(I\times\Omega)} := \sup_{x'\in \Omega}[u(\cdot,x')]_{C^\beta(I)}.
\]

\subsection{Interior regularity}
We present here the main result regarding the interior regularity of solutions to nonlocal parabolic equations \eqref{eq.maineq}.

When the kernels in \eqref{eq.L3} are regular, interior regularity is fairly well understood; see for example \cite{JX, CD, CKS, CLK}. 
An important problem, however, is to understand what happens for singular kernels of the type \eqref{eq.L1}-\eqref{eq.L2}.

Important results in that direction have been recently obtained by Schwab-Silvestre \cite{SS} and by Kassmann-Schwab \cite{KS}. The results in \cite{SS,KS} allow kernels with no homogeneity and, more importantly, with $x$-dependence (with no regularity in $x$). For operators \eqref{eq.L1}, these results yield the H\"older continuity of solutions for wide classes of measures $\mu$. More precisely, the results in \cite{SS} yield the Hölder continuity of solutions to \eqref{eq.maineq} whenever the spectral measure $\mu$ is strictly positive on a set of positive measure; while the results of \cite{KS} do not assume the measure $\mu$ to be absolutely continuous, and apply also to the operator \eqref{eq.deltas}. Still, even in the case of translation invariant equations, the interior regularity for general stable operators was open.

In case of elliptic equations, this problem was recently solved in \cite{RS}, where the second author and Serra obtained sharp regularity estimates in H\"{o}lder spaces for \emph{all} translation invariant stable operators \eqref{eq.L1}-\eqref{eq.L2}. Here, we extend these estimates to the more general context of parabolic equations.

Our first main result is the following interior regularity estimate.
It essentially states that if $u_t-Lu=f\in C^{\frac{\alpha}{2s},\alpha}_{t,x}$ then $u$ is $C_t^{1+\frac{\alpha}{2s}}$ and $C^{\alpha+2s}_x$.
Notice that, even in the case $f=0$, the H\"older continuity of solutions is new.

\begin{thm} \label{thm.maina}
Let $s\in(0,1)$, and let $L$ be any operator of the form \eqref{eq.L1}-\eqref{eq.L2}. Let $u$ be any weak solution to
\begin{equation}
\label{eq.frac}
\de_t u-Lu = f \textrm{ in } (-1,0)\times B_1.
\end{equation}

Let $\alpha\in (0,1)$ be such that $\frac{\alpha}{2s}\in (0,1)$ and that $\alpha + 2s$ is not an integer. Let
\[
C_\alpha := \|u\|_{C^{\frac{\alpha}{2s},\alpha}_{t,x}((-1,0)\times \R^n)} + \|f\|_{C^{\frac{\alpha}{2s},\alpha}_{t,x}((-1,0)\times B_1)}.
\]
Then,
\begin{equation}
\label{eq.maina}
 \|u\|_{C^{1+\frac{\alpha}{2s}}_t \left(\left( -\frac{1}{2},0 \right)\times B_{1/2}\right)} +  \|u\|_{C^{2s+\alpha}_x\left(\left( -\frac{1}{2},0 \right)\times B_{1/2}\right)} \leq C C_\alpha,
\end{equation}
for some constant $C$ depending only on $n, s, \alpha$ and the ellipticity constants \eqref{eq.L2}.
\end{thm}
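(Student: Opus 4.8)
The plan is to adapt to the parabolic setting the blow-up (compactness--contradiction) method used for the elliptic problem in \cite{RS}, with the natural anisotropic scaling $t\sim|x|^{2s}$; throughout, cylinders are $Q_r(t_0,x_0):=(t_0-r^{2s},t_0)\times B_r(x_0)$. The whole of \eqref{eq.maina} will follow once we prove a \emph{parabolic Campanato decay}: there is $C=C(n,s,\alpha,\lambda,\Lambda)$ so that for every $(t_0,x_0)\in(-\tfrac12,0)\times B_{1/2}$ and every $r\le\tfrac14$ there is a polynomial $p_{t_0,x_0,r}(t,x)$, affine in $t$ and of degree at most $\lfloor 2s+\alpha\rfloor$ in $x$, with
\[
\|u-p_{t_0,x_0,r}\|_{L^\infty(Q_r(t_0,x_0))}\le C\,C_\alpha\,r^{2s+\alpha}.
\]
Indeed, comparing these polynomials at consecutive dyadic radii and summing a geometric series shows that their coefficients converge as $r\to0$, yielding a Taylor-type expansion of $u$ at $(t_0,x_0)$ whose $t$-coefficient and whose top-order $x$-coefficients depend on $(t_0,x_0)$ in a H\"older fashion with the right exponents; this is exactly $u\in C^{1+\alpha/2s}_t\cap C^{2s+\alpha}_x$ on $(-\tfrac12,0)\times B_{1/2}$. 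Since $u$ is only a weak solution, one first proves the decay for smooth solutions --- approximating $L$ by operators with truncated (hence bounded) kernels and mollifying $u$ and $f$ --- and then removes the regularization, the constant $C$ being independent of it; note that the parabolic scaling of \eqref{eq.frac} together with the hypothesis $\tfrac{\alpha}{2s}<1$ (which forces $4s>2s+\alpha$) is precisely what makes affine-in-$t$ polynomials suffice here.

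For the decay I argue by contradiction. If it fails, there are operators $L_m$ of the form \eqref{eq.L1}--\eqref{eq.L2} with fixed $\lambda,\Lambda$, weak solutions $u_m$ of $\partial_t u_m-L_m u_m=f_m$ in $(-1,0)\times B_1$ with $C_\alpha^{(m)}\le1$, centers $(t_m,x_m)$ and radii $r_m\downarrow0$ along which the quotient blows up. Set
\[
\Theta(r):=\sup_m\ \sup_{(t_0,x_0)}\ \sup_{r\le\rho\le 1/4}\ \rho^{-(2s+\alpha)}\,\inf_{p}\ \|u_m-p\|_{L^\infty(Q_\rho(t_0,x_0))},
\]
which is finite for each $r>0$, nondecreasing as $r\downarrow0$, and tends to $+\infty$; choose $m$, a center $(t_m,x_m)$ and a scale $\rho_m\to0$ almost attaining $\Theta(\rho_m)$, let $p_m$ be the near-optimal polynomial there, and define
\[
v_m(t,x):=\frac{(u_m-p_m)\bigl(t_m+\rho_m^{2s}t,\ x_m+\rho_m x\bigr)}{\rho_m^{2s+\alpha}\,\Theta(\rho_m)}.
\]
Then $\inf_p\|v_m-p\|_{L^\infty(Q_1)}\ge\tfrac12$ (normalization), while the definition and monotonicity of $\Theta$ give $\inf_p\|v_m-p\|_{L^\infty(Q_R)}\le R^{2s+\alpha}$ for $1\le R\le\tfrac14\rho_m^{-1}$, the far-away behavior of $u_m$ being controlled by $C_\alpha^{(m)}\le1$. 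Finally $v_m$ solves $\partial_t v_m-L_m v_m=\tilde f_m$ with $[\tilde f_m]_{C^{\alpha/2s,\alpha}_{t,x}}\le\Theta(\rho_m)^{-1}\to0$ and $\tilde f_m(0,0)$ a constant absorbed into the affine-in-$t$ part of $p_m$, because affine-in-$t$ polynomials lie in the kernel of every $L_m$ and \eqref{eq.frac} is invariant under the parabolic rescaling.

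It remains to pass to the limit. The class of admissible spectral measures is weak-$*$ compact, so up to a subsequence $\mu_m\rightharpoonup\mu_\infty$ with $\mu_\infty$ again satisfying \eqref{eq.L2}; together with the uniform interior H\"older estimate for parabolic equations with bounded right-hand side (valid for all such $L_m$, by \cite{SS,KS}) and the growth bound, this makes the $v_m$ locally uniformly bounded and equicontinuous, hence $v_m\to v$ locally uniformly. The stability of the operators under $\mu_m\rightharpoonup\mu_\infty$ --- splitting $L_m v_m$ into a near part, handled by the local $C^{1,1}$ bounds coming from $\tilde f_m\to0$, and a far part, handled by the growth bound together with $2s+\alpha$ being non-integral --- shows that $v$ solves $\partial_t v-L_\infty v=0$ in all of $\R\times\R^n$ with $\inf_p\|v-p\|_{L^\infty(Q_R)}\le R^{2s+\alpha}$. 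A parabolic Liouville theorem then forces $v$ to be affine in $t$ plus a polynomial of degree $\le\lfloor 2s+\alpha\rfloor$ in $x$: using that incremental quotients of $v$ (in $x$ and in $t$) solve the same equation with strictly smaller parabolic growth, one iterates down to the regime covered by the elliptic Liouville theorem of \cite{RS} and the De Giorgi--Nash--Moser estimate of \cite{SS,KS}. This contradicts $\inf_p\|v-p\|_{L^\infty(Q_1)}\ge\tfrac12$ and proves the decay.

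The main obstacle is the one already present in the elliptic theory \cite{RS}, now with the time variable carried along: the compactness and stability of the very singular operators $L_m$ under weak-$*$ convergence of the spectral measures, while keeping the nonlocal tails under control through the rescaling. Two further points need care: when $2s+\alpha>1$ (resp.\ $>2$) one must work with first (resp.\ second) incremental quotients, since genuine quadratics are not in the domain of $L$ for $2s<2$, and this also shapes the precise parabolic Liouville theorem; and the a priori regularity needed to legitimately rescale and to obtain $L_m v_m\to L_\infty v$ in the limit, which is why one regularizes first and removes the regularization only at the end using the scale-invariant estimate. Alternatively, one can reach \eqref{eq.maina} in steps --- first a $C^{\delta,\delta/2s}_{t,x}$ estimate, then bootstrapping via the fact that incremental quotients of $u$ solve the equation with incremental-quotient right-hand side --- but the blow-up above already yields the sharp exponent in one stroke.
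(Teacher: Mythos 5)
Your overall architecture (blow-up by contradiction plus a Liouville theorem, with the anisotropic scaling $t\sim|x|^{2s}$) is in the spirit of the paper, but the specific scheme you propose has a genuine gap at its compactness step. You normalize by the Campanato quantity $\Theta(\rho_m)$, so your blow-ups $v_m$ carry no a priori seminorm bounds beyond an $L^\infty$ growth control modulo polynomials, and you then obtain equicontinuity from ``the uniform interior H\"older estimate for parabolic equations with bounded right-hand side (valid for all such $L_m$, by \cite{SS,KS})''. No such estimate is available for the full class \eqref{eq.L1}--\eqref{eq.L2}: the results of \cite{SS} require the spectral measure to be bounded below on a set of positive measure, and \cite{KS} does not cover arbitrary spectral measures either; as the paper stresses, even the H\"older continuity of solutions with $f=0$ for general stable operators is one of the new results here, so it cannot be an input to the blow-up. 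The paper's proof avoids exactly this: it normalizes the blow-up sequence by the \emph{global} seminorm $[w_k]^{(1+\beta,2s+\alpha)}_{((-1,0)\times\R^n)}$, so the rescaled functions have uniformly bounded $C^{1+\beta}_t$ and $C^{2s+\alpha}_x$ seminorms by construction and Arzel\`a--Ascoli gives compactness without invoking any regularity theory for the operators $L_k$; the price (the $\delta$-term on the whole space) is then removed by the cutoff argument, the tail estimate \eqref{eq.boundLphi} (this is where the hypothesis $u\in C^{\alpha/2s,\alpha}_{t,x}((-1,0)\times\R^n)$ is actually used), interpolation, and Simon's absorption lemma \cite{S}.

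A second gap is the Liouville theorem your scheme needs. Your limit $v$ only satisfies $\inf_p\|v-p\|_{L^\infty(Q_R)}\le R^{2s+\alpha}$, hence genuine growth of order $R^{2s+\alpha}>R^{2s}$, which is outside the range $\gamma<2s$ of the Liouville theorem that the heat-kernel argument yields (Theorem~\ref{thm.liouv}). Your proposed fix --- ``incremental quotients of $v$ have strictly smaller parabolic growth'' --- is not automatic: increments of a function growing like $R^{\gamma}$ need not grow more slowly unless one already has quantitative interior estimates at every scale, which is again the missing ingredient (and relying on \cite{SS,KS} for it runs into the same problem as above). The paper sidesteps this by taking the increments \emph{before} passing to the limit, at the level of the blow-up sequence $v_k$: thanks to the uniform seminorm bounds and the normalization \eqref{eq.vkzero}, the increments $v_k(t+\tau,x+h)-v_k(t,x)$ obey the growth \eqref{eq.eps} of order $2s-\epsilon$, so only the $\gamma<2s$ Liouville theorem is ever used, and this also resolves the issue (which you correctly flag but do not settle) that polynomials of degree $1$ or $2$ in $x$ are not in the domain of $L$ when $2s\le 1$ or $2s\le 2$. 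Unless you supply a uniform-in-$L$ interior estimate and a higher-growth Liouville theorem by independent means, the proposal does not close.
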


\begin{rem}
The previous expression \eqref{eq.maina} can equivalently be written as
\[
 \|u_t\|_{C^{\frac{\alpha}{2s}, \alpha}_{t,x} \left(\left( -\frac{1}{2},0 \right)\times B_{1/2}\right)} +  \|Lu\|_{C^{\frac{\alpha}{2s},\alpha}_{t,x}\left(\left( -\frac{1}{2},0 \right)\times B_{1/2}\right)} \leq C C_\alpha.
\]
This follows from \eqref{eq.maina} and the equation \eqref{eq.frac}.
\end{rem}

Notice that it is required that $u \in C^{\frac{\alpha}{2s}, \alpha}_{t,x}$ in all of $\R^n$ in order to have a $C^{1+\frac{\alpha}{2s},2s+\alpha}_{t,x}$ estimate in $B_{1/2}$. We show in Section~\ref{sec.7} that this is in fact necessary: we construct a solution $u$ to the homogeneous fractional heat equation, which satisfies $u\in C^{\frac{\alpha}{2s}-\epsilon, \alpha}_{t,x}((-1,0)\times\R^n)$ but $u\notin C_t^{1+\frac{\alpha}{2s}}\left(\left(-\frac{1}{2},0\right)\times B_{1/2}\right)$.

The spatial regularity requirements, $u\in C^\alpha_x$ in $(-1,0)\times\R^n$, can be relaxed if the kernel of the operator is regular; see Corollary \ref{cor.main3}.

When the right hand side in \eqref{eq.frac} is $f\in L^\infty$, we establish the following.

\begin{thm}
\label{thm.mainb}
Let $s\in(0,1)$, and let $L$ be any operator of the form \eqref{eq.L1}-\eqref{eq.L2}. Let $u$ be any weak solution to \eqref{eq.frac}. Let $\epsilon > 0$ and
\[
C_0 := \|u\|_{L^\infty((-1,0)\times \R^n)} + \|f\|_{L^\infty((-1,0)\times B_1)}.
\]
Then,
\[
 \|u\|_{C^{1-\frac{\epsilon}{2s}}_t \left(\left( -\frac{1}{2},0 \right)\times B_{1/2}\right)} +  \|u\|_{C^{2s-\epsilon}_x\left(\left( -\frac{1}{2},0 \right)\times B_{1/2}\right)} \leq C C_0,
\]
for some constant $C$ depending only on $n, s, \epsilon$ and the ellipticity constants \eqref{eq.L2}.
\end{thm}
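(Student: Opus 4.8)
The plan is to deduce Theorem~\ref{thm.mainb} from Theorem~\ref{thm.maina} by an interpolation/bootstrap argument, treating the $L^\infty$ right-hand side as a limiting case of the $C^{\frac{\alpha}{2s},\alpha}_{t,x}$ theory. The first step is to reduce to a situation where $u$ is globally bounded and the equation holds in a full cylinder: by a standard covering and rescaling argument it suffices to prove the estimate in, say, $(-\tfrac12,0)\times B_{1/2}$ assuming $\|u\|_{L^\infty((-1,0)\times\R^n)}+\|f\|_{L^\infty((-1,0)\times B_1)}\le 1$. The key obstruction is that $u$ is only assumed bounded in the tails, not H\"older, so Theorem~\ref{thm.maina} cannot be applied directly; we must first upgrade $u$ from $L^\infty$ to $C^{\frac{\beta}{2s},\beta}_{t,x}$ for some small $\beta>0$.

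To get this initial H\"older regularity one invokes the parabolic De Giorgi--Nash--Moser / Krylov--Safonov type estimates available for operators of the form \eqref{eq.L1}--\eqref{eq.L2} with bounded measurable (indeed here translation-invariant) kernels --- precisely the results of Schwab--Silvestre \cite{SS} and Kassmann--Schwab \cite{KS} quoted in the introduction, which apply to our class including the operator \eqref{eq.deltas}. These give an interior estimate $\|u\|_{C^{\frac{\beta}{2s},\beta}_{t,x}((-3/4,0)\times B_{3/4})}\le C\big(\|u\|_{L^\infty((-1,0)\times\R^n)}+\|f\|_{L^\infty((-1,0)\times B_1)}\big)$ for some $\beta=\beta(n,s,\lambda,\Lambda)\in(0,2s)$. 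This is the step I expect to be the main obstacle to write cleanly, since one has to quote the right form of the nonlocal parabolic Harnack/oscillation-decay estimate and make sure the tail contributions are controlled by $\|u\|_{L^\infty}$; everything downstream is a soft bootstrap.

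Once $u\in C^{\frac{\beta}{2s},\beta}_{t,x}$ is known on a slightly smaller cylinder (and, after a cutoff, one can arrange a global H\"older bound at the cost of an extra bounded term in the equation, using that $L$ applied to a tail-truncation is bounded when the kernel is integrable at infinity), we run the following iteration. If at some stage $u\in C^{\frac{\gamma}{2s},\gamma}_{t,x}$ globally with $\gamma<2s-\epsilon$ and $f\in L^\infty\subset C^{0}$, pick $\alpha_0=\min(\gamma,\text{something}<1)$ with $\alpha_0+2s$ not an integer; since $L^\infty$ controls $C^{\frac{\alpha'}{2s},\alpha'}_{t,x}$ norms only trivially, we instead apply Theorem~\ref{thm.maina} with $f$ regarded as $C^{0}$ by noting the proof of Theorem~\ref{thm.maina} actually degrades continuously as $\alpha\downarrow 0$: for each $\epsilon>0$ one gets $u\in C^{(2s-\epsilon)}_x\cap C^{1-\frac{\epsilon}{2s}}_t$ with constant blowing up like $\epsilon^{-1}$ or via the $\alpha\to 0$ limit of \eqref{eq.maina}. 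Concretely, apply Theorem~\ref{thm.maina} with parameter $\alpha=\beta$ to improve spatial regularity from $C^\beta_x$ to $C^{2s+\beta-\epsilon'}_x$ (choosing $\epsilon'$ so exponents avoid integers), hence in particular to $C^{2s-\epsilon}_x$ once $\beta$ would overshoot, and likewise $C^{1-\epsilon/(2s)}_t$ in time; if $2s+\beta<2s-\epsilon$ is false we are done in one step, otherwise we have gained a definite amount $\min(\beta,2s-\epsilon)$ and after finitely many steps reach the target exponents. Finally, undo the rescaling and covering to conclude \eqref{eq.maina}'s analogue with constant $C=C(n,s,\epsilon,\lambda,\Lambda)$, which is exactly the claimed estimate.
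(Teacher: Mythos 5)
Your plan has a genuine gap at its very first (and, as you yourself note, most critical) step. You propose to obtain the initial $C^{\frac{\beta}{2s},\beta}_{t,x}$ estimate from the Krylov--Safonov-type results of Schwab--Silvestre \cite{SS} and Kassmann--Schwab \cite{KS}, but those results do not cover the full class of operators \eqref{eq.L1}--\eqref{eq.L2}: the estimates of \cite{SS} require the spectral measure to be strictly positive on a set of positive measure, and \cite{KS} handles certain singular measures (such as \eqref{eq.deltas}) but not arbitrary finite spectral measures satisfying only \eqref{eq.L2}. As the introduction points out, even for $f=0$ the H\"older continuity of solutions for general stable operators was open, and it is precisely part of what Theorem~\ref{thm.mainb} establishes; so quoting an existing H\"older estimate here is either circular or restricts the class of operators. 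The paper instead proves the statement directly, with no appeal to Harnack/oscillation-decay results: a compactness and blow-up argument in the scale-invariant seminorms $[\,\cdot\,]^{(1-\frac{\epsilon}{2s},2s-\epsilon)}$ (Proposition~\ref{prop.main2}), driven by Lemma~\ref{lem.1} and the Liouville theorem, Theorem~\ref{thm.liouv}, followed by a cutoff, the bound on $L$ applied to the truncated tail, interpolation, and the standard absorption lemma, and finally an approximation step for weak solutions.

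The bootstrap step also does not go through as written. Theorem~\ref{thm.maina} requires $f\in C^{\frac{\alpha}{2s},\alpha}_{t,x}$, while here $f$ is only bounded, and the suggestion to ``regard $f$ as $C^0$'' by letting $\alpha\downarrow 0$ in \eqref{eq.maina} cannot work: the constant in \eqref{eq.maina} necessarily blows up as $\alpha\to 0$, since a uniform constant would yield $u\in C^{2s}_x$ for merely bounded $f$, which is false in general --- this is exactly why the $\epsilon$-loss appears in the statement of Theorem~\ref{thm.mainb}. Handling an $L^\infty$ right-hand side requires a separate argument at that regularity level, which is what Proposition~\ref{prop.main2} provides; note also that in the paper the logical traffic runs in the opposite direction to your plan, with Theorem~\ref{thm.mainb} applied to incremental quotients to recover consequences of Theorem~\ref{thm.maina} (as in Corollary~\ref{cor.main1}), rather than Theorem~\ref{thm.maina} being used to prove Theorem~\ref{thm.mainb}.
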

See also Corollaries~\ref{cor.main1}, \ref{cor.main2}, \ref{cor.main4} for more consequences of Theorems~\ref{thm.maina} and \ref{thm.mainb}.

\subsection{Boundary regularity} We next present our boundary regularity results.

In the case of elliptic equations, the boundary regularity is quite well understood: see \cite{RS} for general stable operators in $C^{1,1}$ domains, and the results of Grubb \cite{GG14,GG15} for higher order estimates in case that $\Omega$ is $C^\infty$ and $a\in C^\infty(S^{n-1})$ in \eqref{eq.L3}.

Nonetheless, there are no similar boundary regularity results for nonlocal parabolic equations, not even when the operator $L$ is the fractional Laplacian.

Here, we extend the boundary regularity estimates of \cite{RS2,RS} to the context of parabolic equations. We state our results as local estimates for the following problem
\begin{equation}
\label{eq.main.fracdom}
  \left\{ \begin{array}{rcll}
  \de_t u - L u&=&f& \textrm{in }(-1,0)\times (\Omega\cap B_1)\\
  u&=&0& \textrm{in }(-1,0)\times B_1\setminus\Omega . \\
  \end{array}\right.
\end{equation}

First, we prove a $C^s_x$ regularity estimate up to the boundary. For the fractional Laplacian this could be deduced combining the heat kernel estimates from \cite{CKS} with known interior estimates. However, such precise heat kernel estimates are not known for more general stable operators.

\begin{prop}
\label{prop.mainboundary}
Let $s\in (0,1)$, let $\Omega$ be a $C^{1,1}$ domain, and let $L$ be an operator of the form \eqref{eq.L1}-\eqref{eq.L2}. Let $u$ be a weak solution to \eqref{eq.main.fracdom}. Then,
\begin{equation}
\|u\|_{C^{\frac{1}{2},s}_{t,x}\left(\left(-\frac{1}{2},0\right)\times B_{1/2}\right)}\leq C\left(\|f\|_{L^\infty((-1,0)\times (\Omega\cap B_1))} + \|u\|_{L^\infty((-1,0)\times\R^n)}\right),
\end{equation}
where $C$ depends only on $n, s, \Omega$ and the ellipticity constants \eqref{eq.L2}.
\end{prop}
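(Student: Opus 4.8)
The plan is to reduce the proposition to the single pointwise estimate
\[
|u(t,x)|\le C\,C_0\,d(x)^s\qquad\text{for all }(t,x)\in(-3/4,0)\times(\Omega\cap B_{3/4}),
\]
where $C_0:=\|f\|_{L^\infty((-1,0)\times(\Omega\cap B_1))}+\|u\|_{L^\infty((-1,0)\times\R^n)}$, and then to upgrade this to the claimed $C^{1/2,s}_{t,x}$ bound using the interior estimate of Theorem~\ref{thm.mainb}. I would prove the pointwise estimate by a barrier argument: since $\Omega$ is $C^{1,1}$, one flattens $\partial\Omega$ near a boundary point and compares $u$ with sub- and supersolutions that behave like $d(x)^s$ up to $\partial\Omega$. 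The spatial part of these barriers is exactly the $d^s$-type sub/supersolution already built for $C^{1,1}$ domains in \cite{RS2,RS} (for which $|L\varphi|\le C$ and $\varphi\simeq d^s$ near $\partial\Omega$, with $\varphi$ bounded below outside $B_1$ so that it dominates $\|u\|_{L^\infty}$ on the spatial exterior).

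Because the problem is parabolic and the initial trace $u(-1,\cdot)$ is only known to be bounded, one must in addition absorb the initial layer: I would add a time-dependent supersolution $W$ with $W(-1,\cdot)\ge 1$ in $\Omega\cap B_1$, $\partial_t W-LW\ge 0$, $W\ge 0$ on the parabolic exterior, and $W(t,x)\le C\,d(x)^s$ once $t\ge -1/2$. Such a $W$ is modeled on the fractional-heat behaviour $\min\{1,\,c\,d(x)^s(t+1)^{-1/2}\}$; its existence as a supersolution — \emph{without} invoking precise heat kernel bounds, which are unavailable for general stable $L$ — has to be established by hand, e.g.\ from the $s$-harmonicity of $(x_n)_+^s$ in a half-space together with the sign of $L$ on capped powers. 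Equivalently, the estimate $|u|\le C\,C_0\,d^s$ can be obtained by a contradiction–compactness argument: rescaling at a boundary point produces a global solution of the translation-invariant equation in a half-space with subcritical growth, and a Liouville-type theorem — which reduces to the elliptic one of \cite{RS} after showing such caloric functions are time-independent — forces the limit to be a multiple of $(x\cdot\nu)_+^s$.

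Granted the pointwise bound, the $C^{1/2,s}_{t,x}$ estimate follows by a routine dichotomy. For $x,x'\in\Omega\cap B_{1/2}$ with, say, $d(x)\le d(x')$: if $|x-x'|\ge\tfrac12 d(x)$ then $|u(t,x)|\le C\,C_0\,d(x)^s\le C\,C_0|x-x'|^s$ and $d(x')\le 3|x-x'|$, so $|u(t,x)-u(t,x')|\le C\,C_0|x-x'|^s$; if $|x-x'|<\tfrac12 d(x)$ then $x'\in B_{d(x)/2}(x)\Subset\Omega$ and Theorem~\ref{thm.mainb}, rescaled to the cylinder of radius $d(x)$ around $(t,x)$ where $\sup|u|\le C\,C_0\,d(x)^s$, gives $[u(t,\cdot)]_{C^s(B_{d(x)/4}(x))}\le C\,C_0$. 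The $C^{1/2}_t$ bound is proved in the same way, splitting on whether $|t-t'|$ is larger or smaller than a fixed multiple of $d(x)^{2s}$ and using that, at scale $d(x)^{2s}$, Theorem~\ref{thm.mainb} yields $[u(\cdot,x)]_{C^{1-\epsilon/(2s)}_t}\le C\,C_0\,d(x)^{-s+\epsilon}$, which combines with $|t-t'|<d(x)^{2s}$ to produce the Hölder exponent $\tfrac12$ (and for $t$ away from the initial time and from $\partial\Omega$ one is simply in the pure interior regime).

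The main obstacle is the construction of the time-dependent barrier $W$ — that is, quantifying how fast the Dirichlet condition forces $u$ down to size $d^s$ — precisely because the sharp heat kernel estimates that make this transparent for the fractional Laplacian are not available for general stable operators; everything else is an adaptation of the elliptic machinery of \cite{RS2,RS}. A secondary technical point is the boundary flattening: since $L$ is not rotation invariant and its spectral measure may be singular, straightening $\partial\Omega$ turns $L$ into an operator of a slightly different form, and one has to check that the ellipticity constants in \eqref{eq.L2} are preserved up to a factor tending to $1$ as the localization scale shrinks, so that the barrier comparison survives the perturbation.
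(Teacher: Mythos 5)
Your overall architecture (first prove $|u|\le C\,C_0\,d^s$, then upgrade to $C^{1/2,s}_{t,x}$ by rescaled interior estimates and a dichotomy in $d(x)$) is the same as the paper's, and your third paragraph is essentially the content of Lemmas \ref{lem.bds0}--\ref{lem.bds2} and the proof of Proposition \ref{prop.bound} (modulo one point you leave implicit: when you apply Theorem \ref{thm.mainb} on the cylinder of radius $d(x)$ you must control the nonlocal tail of the rescaled function, i.e.\ use the growth $|u|\le C\,C_0\,d^s$ globally near $\Gamma$ together with $|u|\le C_0$ far away, as in the quantity $K_0$ of Lemma \ref{lem.bds0}; the local bound $\sup_{B_{d(x)}}|u|\le C\,C_0\,d(x)^s$ alone is not enough). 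The genuine gap is exactly where you locate the ``main obstacle'': the proof of $|u|\le C\,C_0\,d^s$ itself is not given. Your model barrier $W=\min\{1,\,c\,d(x)^s(t+1)^{-1/2}\}$ does not satisfy $\de_t W-LW\ge 0$: at points where the minimum is attained by the second term one gets $\de_tW-LW\ge -\tfrac c2 d^s(t+1)^{-3/2}+c(t+1)^{-1/2}$ (using an elliptic barrier with $-L\varphi\ge 1$, $\varphi\asymp d^s$), which requires $d^s\lesssim (t+1)$, while the uncapped region only guarantees $d^s\lesssim (t+1)^{1/2}$; so the inequality fails in the intermediate range near the initial time. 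A product barrier can be salvaged (the time factor should decay like $(t+1)^{-1}$ rather than $(t+1)^{-1/2}$, together with the min-of-supersolutions property of these operators), but as written the key quantitative step is both the hardest part and incorrect in its stated form. Your fallback route is worse: the half-space Liouville theorem you would invoke is, in this paper, Theorem \ref{thm.liouv2}, whose proof uses precisely the boundary estimate of Proposition \ref{prop.bound} (via Corollary \ref{cor.bds0}) to get compactness of the blow-up sequence; so a compactness-plus-Liouville proof of the $d^s$ bound is circular unless you supply an independent proof of both the Liouville theorem and the uniform boundary H\"older estimates needed for the blow-up.

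For contrast, the paper's mechanism for the initial layer is different and worth knowing: it writes the solution of the model problem (Lemma \ref{lem.super}) as an elliptic part bounded by $Cd^s$ via \cite{RS}, plus a caloric part expanded in the eigenfunctions of $L$ in an annulus; each eigenfunction is bounded by $C\lambda_k^{w+1}d^s$ thanks to an $L^\infty$ eigenfunction bound (Lemma \ref{lem.bdeig}, which rests on the fractional Sobolev inequality for general stable operators, Lemma \ref{lem.sob}) and Weyl asymptotics from \cite{FR}, and the series is summed using the factors $e^{-\lambda_k t}$ for $t\ge t_0>0$. This barrier is then slid along $\de\Omega$ using tangent balls (Lemma \ref{lem.bds}); no boundary flattening is used anywhere. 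Indeed your flattening step is both unnecessary and problematic: a $C^{1,1}$ change of variables does not map the class \eqref{eq.L1}--\eqref{eq.L2} into itself (one gets $x$-dependent, non-homogeneous kernels), so ``preserving the ellipticity constants up to a factor tending to $1$'' is not a meaningful repair; the comparison argument should be run directly in annular domains, as in \cite{RS2,RS} and in the paper.
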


In the next result, and throughout the rest of the paper, we denote
\[
d(x) :=\textrm{dist}(x, \R^n\setminus \Omega).
\]
Our second and main boundary regularity estimate is the following. This is new even when the operator $L$ is the fractional Laplacian.

\begin{thm}
\label{thm.mainboundary}
Let $s\in(0,1)$, let $\Omega$ be a $C^{1,1}$ domain and let $L$ be an operator of the form \eqref{eq.L1}-\eqref{eq.L2}. Let $u$ be a weak solution to \eqref{eq.main.fracdom} and
\[
C_0 = \|u\|_{L^\infty((-1,0)\times\R^n)}+\|f\|_{L^\infty((-1,0)\times (\Omega\cap B_1))}.
\]

Then, for any $\epsilon > 0$,
\begin{equation}
\|u\|_{C^{1-\epsilon}_t\left(\left(-\frac{1}{2},0\right)\times B_{1/2}\right)} + \left\| u/d^s \right\|_{C^{\frac{1}{2}-\frac{\epsilon}{2s},s-\epsilon}_x\left(\left(-\frac{1}{2},0\right)\times(\overline{\Omega}\cap B_{1/2})\right)} \leq C C_0.
\end{equation}
The constant $C$ depends only on $\epsilon$, $n$, $s$, $\Omega$  and the ellipticity constants \eqref{eq.L2}.
\end{thm}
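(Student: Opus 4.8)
The plan is to combine the boundary regularity machinery for elliptic stable operators developed in \cite{RS,RS2} with the interior parabolic estimates of Theorems \ref{thm.maina} and \ref{thm.mainb}, using a scaling/barrier argument to transfer spatial information into time regularity. First I would establish the $C^s_x$ bound up to the boundary, which is Proposition \ref{prop.mainboundary}; the key tool here is a supersolution (barrier) of the form $C d^s$ near $\partial\Omega$, exploiting that $L(d^s)$ is bounded in a $C^{1,1}$ domain, to get $|u(t,x)|\le C C_0\, d(x)^s$ uniformly in $t$. Combined with the interior estimate of Theorem \ref{thm.mainb} rescaled at the natural parabolic scale $\rho$ (balls $B_\rho(x_0)$ with $d(x_0)\sim\rho$ and time intervals of length $\rho^{2s}$), this yields $[u(t,\cdot)]_{C^s(B_1)}\le C C_0$ and the spatial oscillation control $|u(t,x)-u(t,x_0)|\le C C_0 \rho^s$ when $|x-x_0|\le\rho$.

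Next I would prove the interior-to-boundary Hölder bound for $u/d^s$ in the \emph{spatial} variable, following the iteration scheme of \cite{RS2}. Fix $t$; on dyadic annuli $B_{2\rho}(x_0)\setminus B_\rho(x_0)$ with $x_0\in\partial\Omega$ one shows, by comparison with the $d^s$-barrier and the rescaled interior estimate applied to $v_\rho(x)=\rho^{-s}u(t, x_0+\rho x)$, that there is a constant $Q_\rho(t)$ (a candidate value of $u/d^s$ at $(t,x_0)$) with
\[
\|u(t,\cdot)-Q_\rho(t)\, d^s\|_{L^\infty(B_\rho(x_0))}\le C C_0\, \rho^{s+\gamma}
\]
for some small $\gamma>0$ depending on the ellipticity constants; iterating in $\rho$ gives $|Q_\rho(t)-Q_{\rho'}(t)|\le C C_0(\rho')^{\gamma}$, hence $Q_\rho(t)\to (u/d^s)(t,x_0)$ and $u(t,\cdot)/d^s\in C^{s-\epsilon}_x$ near $\partial\Omega$ with the stated bound, for every $\epsilon>0$ (the $\epsilon$ loss absorbing the fact that $\gamma$ is not explicit and that $d$ is only $C^{1,1}$). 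Interior points are handled by Theorem \ref{thm.mainb} directly, since there $d^s$ is smooth and bounded below.

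For the time regularity of $u$ and of $u/d^s$, I would use the equation together with the spatial estimates just obtained. To get $u\in C^{1-\epsilon}_t$: for fixed $x$ write $u(t,x)-u(t',x)=\int_{t'}^t (f+Lu)(\tau,x)\,d\tau$; since $u(\tau,\cdot)$ is bounded with $[u(\tau,\cdot)]_{C^s}$ controlled and $|u|\le C C_0 d^s$ near $\partial\Omega$, one checks $\|Lu(\tau,\cdot)\|_{L^\infty}$ may blow up like $d(x)^{s-2s}=d(x)^{-s}$ only, and a parabolic scaling argument (comparing the increment over $[t-\rho^{2s},t]$ on points at distance $\rho$ from the boundary, then optimizing in $\rho$) converts this into the Hölder-in-time bound with exponent $1-\epsilon$. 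The temporal Hölder seminorm $[u/d^s]_{C^{(1-\epsilon/s)/2}_t}$ follows by the same scaling: apply the spatial estimate to the difference quotient $\big(u(t,\cdot)-u(t',\cdot)\big)/d^s$, noting that this difference solves a parabolic equation with right-hand side $f(t,\cdot)-f(t',\cdot)$ plus lower-order terms, and balance space vs.\ time at the parabolic scale $|t-t'|^{1/2s}$.

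The main obstacle I anticipate is the last point: obtaining the \emph{sharp} $1/2$ exponent (up to $\epsilon$) for the time modulus of continuity of $u/d^s$. The quotient $u/d^s$ does not itself satisfy a clean equation — $d^s$ is not $L$-harmonic — so one cannot directly differentiate in time and apply interior estimates. The fix is the parabolic rescaling $v_\rho(t,x)=\rho^{-s}u(\rho^{2s}t, x_0+\rho x)$, which is again a solution of an equation of the same class in a unit ball with the same ellipticity constants (stable operators are scale invariant), plus the key observation that the $d^s$-barrier estimate is itself scale invariant; then the spatial $C^{s-\epsilon}$ bound applied to $v_\rho$, read back in the original variables, gives exactly the matching powers of $|t-t'|$ and of the distance to the boundary. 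Making the bookkeeping of the $\epsilon$-losses consistent across the two iterations (space and time) is the delicate part, but it parallels the elliptic argument in \cite{RS2} closely enough that no genuinely new idea beyond parabolic scaling should be required.
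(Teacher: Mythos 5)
Your proposal breaks down at three specific points, all tied to the generality of the operators considered. First, your barrier step rests on the claim that $L(d^s)$ is bounded in a $C^{1,1}$ domain; this is true for the fractional Laplacian but false for general stable operators --- by \cite[Proposition 6.2]{RS} there are operators of the form \eqref{eq.L1}-\eqref{eq.L2} and even $C^\infty$ domains with $L(d^s)\notin L^\infty(\Omega)$ (the paper recalls this in Section \ref{sec.7}). The paper therefore never uses $d^s$ as a supersolution: the bound $|u|\leq C C_0 d^s$ comes from Lemma \ref{lem.super}, where the parabolic barrier is built from the elliptic solution of \cite{RS} plus an eigenfunction expansion (Lemmas \ref{lem.sob} and \ref{lem.bdeig}). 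The same false premise undermines your dyadic iteration: the improvement $\|u(t,\cdot)-Q_\rho(t)d^s\|_{L^\infty(B_\rho)}\leq CC_0\rho^{s+\gamma}$ cannot be produced by comparison arguments when $Ld^s$ is not even bounded; one must replace $d^s$ by the solution $\bar u$ of \eqref{eq.ubareq} and only afterwards pass to $d^s$ via Lemma \ref{lem.rs}. Moreover, you run the iteration at frozen $t$, treating $v_\rho(x)=\rho^{-s}u(t,x_0+\rho x)$ as if it solved an elliptic equation; it does not, and $\partial_t u$ is not under control near the boundary at this stage. The paper's Proposition \ref{prop.mainbound} instead works on parabolic cylinders with a space--time least-squares coefficient $Q_*(r)$ and proves the expansion by a compactness/blow-up argument whose engine is the half-space Liouville theorem, Theorem \ref{thm.liouv2} --- an ingredient your proposal never establishes and which is where the real work for general kernels lies (this is why the paper follows \cite{RS}, not the explicit-barrier scheme of \cite{RS2}).

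Second, your mechanism for the $C^{1-\epsilon}_t$ bound is quantitatively insufficient. Writing $u(t,x)-u(t',x)=\int_{t'}^{t}(f+Lu)(\tau,x)\,d\tau$ with the interior bound $|Lu|\lesssim d^{-s}$ and balancing against $|u|\lesssim d^s$ at the scale $d\sim|t-t'|^{1/2s}$ gives exactly the exponent $\tfrac12$, i.e.\ only the estimate of Proposition \ref{prop.mainboundary}, not $1-\epsilon$. The extra half derivative in time comes from the boundary expansion itself: in the paper one applies interior estimates to $u-Q(t_0,z)\bar u$, which vanishes at rate $2s-\epsilon$ near $\Gamma$, uses the $C^{\frac12-\frac{\epsilon}{2s}}_t$ regularity of the boundary coefficient $Q(\cdot,z)$ (itself a consequence of the already-proved space--time regularity of $u/\bar u$), and concludes by a chaining argument along a sequence of points approaching the boundary. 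Your alternative suggestion --- applying the spatial estimate to $(u(t,\cdot)-u(t',\cdot))/d^s$, whose equation has right-hand side $f(t,\cdot)-f(t',\cdot)$ --- does not help either, since that right-hand side is merely bounded, with no smallness in $|t-t'|$. So the route you sketch would deliver $C^{\frac12,s}_{t,x}$ regularity for $u$ but not the claimed $C^{1-\epsilon}_t$ bound, and it does not yield the time regularity of $u/d^s$ without the $Q(t,z)\bar u$ expansion.
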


\subsection{The Dirichlet problem}

Finally, we state the results from the previous subsection as a corollary regarding the Dirichlet problem. It is an immediate consequence of a combination of Theorem~\ref{thm.maina}, Proposition~\ref{prop.mainboundary}, and Theorem~\ref{thm.mainboundary}.

The Dirichlet problem for the nonlocal parabolic equations is
\begin{equation}\label{eq.fracheat_u0}
  \left\{ \begin{array}{rcll}
  \de_t u - L u&=&f& \textrm{in }\Omega,\ T > t > 0 \\
  u&=&0& \textrm{in } \R^n\setminus \Omega,\ T > t \geq 0, \\
  u(0,\cdot)&=&u_0& \textrm{in } \Omega,\ t = 0, \\
  \end{array}\right.
\end{equation}
where we consider again a domain $\Omega$, but now we also have to deal with an initial condition $u_0$, exterior conditions fixed in $\R^n\setminus\Omega$, and a time $T >0$.

The result reads as follows.

\begin{cor}
\label{cor.mainboundary}
Let $s\in(0,1)$, let $L$ be any operator of the form \eqref{eq.L1}-\eqref{eq.L2} and let $\Omega$ be a bounded $C^{1,1}$ domain. Suppose that $u$ is the weak solution to \eqref{eq.fracheat_u0}. Then,
\begin{equation}
\|u\|_{C^{1-\epsilon, s}_{t,x}\left(\left(t_0,T\right)\times \overline{\Omega}\right)} + \left\| u/d^s \right\|_{C^{\frac{1}{2}-\frac{\epsilon}{2s},s-\epsilon}_{t,x}\left(\left(t_0,T\right)\times\overline{\Omega}\right)} \leq C \left( \|u_0\|_{L^2(\Omega)} + \|f\|_{L^\infty((0,T)\times \Omega)} \right),
\end{equation}
for any $0<t_0< T$ and for all $\epsilon > 0$. The constant $C$ depends only on $\epsilon, n, s, \Omega, t_0, T$ and the ellipticity constants \eqref{eq.L2}.

Moreover, if $f\in C^{\frac{\alpha}{2s},\alpha}_{t,x}$, with $\alpha \in (0,s]$ such that $\alpha+2s$ is not an integer, then for any $K\Subset \Omega$ compact,
\begin{equation}
\label{eq.eqmainbd}
\|u\|_{C^{1+\frac{\alpha}{2s}}_{t}\left(\left(t_0,T\right)\times K\right)} + \|u\|_{C^{2s+\alpha}_{x}\left(\left(t_0,T\right)\times K\right)} \leq C \left( \|u_0\|_{L^2(\Omega)} + \|f\|_{C^{\frac{\alpha}{2s},\alpha}_{t,x}((0,T)\times \Omega)} \right).
\end{equation}
The constant $C$ depends only on  $\alpha, n, s, \Omega, K, t_0, T$ and the ellipticity constants \eqref{eq.L2}
\end{cor}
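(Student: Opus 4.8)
The plan is to derive this corollary by combining the interior estimates (Theorem~\ref{thm.maina}), the boundary $C^s_x$ estimate (Proposition~\ref{prop.mainboundary}), and the fine boundary estimate (Theorem~\ref{thm.mainboundary}), after first upgrading the initial datum $u_0 \in L^2(\Omega)$ to an $L^\infty$ bound on $u$ for positive times. The first step is therefore a standard energy/smoothing argument for the parabolic Dirichlet problem: since $L$ is a nonnegative selfadjoint operator on $L^2(\Omega)$ with the Dirichlet exterior condition, the semigroup $e^{tL}$ is ultracontractive (the Nash inequality associated to the ellipticity \eqref{eq.L2} holds because $\mu$ charges no proper subspace, so the quadratic form controls an $\dot H^s$-type norm, hence $L^{2^*}$), giving $\|u(t_0/2,\cdot)\|_{L^\infty(\Omega)} \le C(t_0)\bigl(\|u_0\|_{L^2(\Omega)}+\|f\|_{L^\infty}\bigr)$; together with the maximum principle on $(t_0/2,T)$ one obtains $\|u\|_{L^\infty((t_0/2,T)\times\R^n)} \le C_0'$ with $C_0' = C(t_0)(\|u_0\|_{L^2(\Omega)}+\|f\|_{L^\infty})$, since $u\equiv 0$ outside $\Omega$. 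This reduces everything to estimates in terms of $\|u\|_{L^\infty}+\|f\|$, which is exactly the hypothesis format of Proposition~\ref{prop.mainboundary} and Theorem~\ref{thm.mainboundary}.

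Next, for the first displayed inequality, I would apply Theorem~\ref{thm.mainboundary} on parabolic cylinders centered at boundary points of $\Omega$ and Theorem~\ref{thm.maina} (with $f\in L^\infty$, i.e.\ Theorem~\ref{thm.mainb}, for the $C^{1-\epsilon}_t$ part and an interior $C^{s}_x$ bound which is trivially implied by $C^{2s-\epsilon}_x$ when $s<1$, or by higher interior regularity) on interior cylinders, then patch via a covering argument: translate and rescale the local statements of these results (they are stated in $(-1,0)\times B_1$; rescaling in the parabolic way $x\mapsto \rho x$, $t\mapsto \rho^{2s} t$ preserves the class of operators \eqref{eq.L1}-\eqref{eq.L2} with the same constants) so that every point of $(t_0,T)\times\overline\Omega$ is the center of a small cylinder contained in $(t_0/2,T)\times\R^n$ on which one of the two estimates applies. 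The $C^{1-\epsilon}_t$ norm patches directly; for $\|u/d^s\|_{C^{\frac12-\frac{\epsilon}{2s},s-\epsilon}_{t,x}}$ near the boundary one uses Theorem~\ref{thm.mainboundary} and away from the boundary the fact that $d^s$ is smooth and bounded below there, so $u/d^s$ inherits the interior regularity of $u$; the finitely many local estimates are then combined with a standard interpolation/gluing of Hölder seminorms on overlapping cylinders, and the dependence of the constant on $t_0,T,\Omega$ is tracked through the rescaling factors.

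For the second displayed inequality \eqref{eq.eqmainbd}, I would bootstrap: having established $u\in C^{s}_x \cap C^{1-\epsilon}_t$ globally (hence in particular $u\in C^{\frac{\alpha}{2s},\alpha}_{t,x}((t_0/4,T)\times\R^n)$ for $\alpha\le s$ once $\epsilon$ is chosen small, using that $u$ vanishes outside $\Omega$ and is $C^s$ up to $\partial\Omega$ so the full-space $C^\alpha_x$ norm with $\alpha\le s$ is controlled), the hypotheses of Theorem~\ref{thm.maina} are met on interior cylinders $Q_\rho(t,x_0)$ with $x_0\in K\Subset\Omega$, $\rho\sim \mathrm{dist}(K,\partial\Omega)$; applying it and using the rescaling as above yields \eqref{eq.eqmainbd}. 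The main obstacle I expect is the first step—producing the $L^\infty$ bound from the $L^2$ initial datum with the correct dependence on $t_0$—since this requires knowing that the Dirichlet form of $L$ satisfies a Sobolev/Nash inequality with constants depending only on $\lambda,\Lambda,n$; this does follow from \eqref{eq.L2} (the lower bound forces $\int |\nu\cdot\theta|^{2s}d\mu\ge\lambda$ in every direction, which gives $\mathcal E(v,v)\gtrsim \lambda\,[v]_{\dot H^s}^2$ up to a dimensional constant, after an elementary spherical-average computation), but it is the one place where one must argue rather than merely cite the theorems in the excerpt. Everything else is the routine machinery of covering arguments, parabolic rescaling that is invariant for the operator class, and gluing Hölder norms.
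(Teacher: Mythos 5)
Your plan is correct and follows essentially the same route as the paper: cover $\overline\Omega$ by finitely many balls, use the boundary estimates (Proposition~\ref{prop.bound}/Proposition~\ref{prop.mainbound2}, i.e.\ the local versions of Proposition~\ref{prop.mainboundary} and Theorem~\ref{thm.mainboundary}) on balls centered at $\de\Omega$ and the interior estimates (Theorem~\ref{thm.mainb}) inside, reduce the $L^2$ initial datum to an $L^\infty$ bound for $t\geq t_0/4$, and for \eqref{eq.eqmainbd} bootstrap with Theorem~\ref{thm.maina}, which is legitimate precisely because $\alpha\leq s$ ensures the globally established $C^{\frac12,s}_{t,x}$ (equivalently $C^{\frac{\alpha}{2s},\alpha}_{t,x}$) regularity of $u$ in all of $\R^n$ required by that theorem. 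The only real divergence is the smoothing step: the obstacle you flag (an $L^2\to L^\infty$ bound with constants controlled by the ellipticity) is handled in the paper by Lemma~\ref{lem.u0_L2norm}, proved as Lemma~\ref{lem.super} via an eigenfunction expansion whose ingredients are the fractional Sobolev inequality of Lemma~\ref{lem.sob} (exactly the coercivity $\mathcal{E}(v,v)\gtrsim\lambda[v]_{H^s}^2$ you derive from the Fourier symbol) and the eigenfunction bounds of Lemma~\ref{lem.bdeig}; your semigroup ultracontractivity/Nash argument plus the maximum principle is an equivalent implementation, while the paper's version gives directly the stronger bound $|u|\leq C d^s$ for $t\geq t_0$ (not needed here beyond $L^\infty$, since the boundary theorems recover the $d^s$ behaviour). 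Two small points to keep honest: away from $\de\Omega$ the distance $d$ is only Lipschitz (not smooth), but boundedness below plus Lipschitz regularity is all you need to transfer interior regularity of $u$ to $u/d^s$; and your Sobolev/Nash step as written needs $2s<n$, the remaining case $n=1$, $s\geq\frac12$ being handled as in the paper by noting that any operator of the form \eqref{eq.L1} is then a multiple of the one-dimensional fractional Laplacian.
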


Notice that we require $\alpha \leq s$ in \eqref{eq.eqmainbd}. It turns out that, for general stable operators, solutions are not better than $C^{3s}_x$ inside $\Omega$; see \cite[Theorem 1.2]{RV} for a counterexample. Still, we prove here that this is not the case for time regularity, and show
\[
f\in C^\infty_t(\overline{\Omega})~\Rightarrow~u\in C^\infty_t(\overline{\Omega});
\]
see Corollary~\ref{cor.main2_3}.

\subsection{Ideas of the proofs}
To prove the interior and boundary regularity estimates we use blow-up arguments combined with Liouville-type theorems for parabolic nonlocal operators.

More precisely, in order to establish the interior regularity estimates we adapt the scaling method of Simon in \cite{S} to the context of nonlocal parabolic equations. We are then lead to a Liouville-type theorem in $(-\infty,0)\times\R^n$, which we prove by using the heat kernel of the operator, as in \cite{RS}.

On the other hand, to obtain the regularity up to the boundary for $u$ we adapt the methods of the second author and Serra in \cite{RS2} to parabolic equations. For this, we need to construct barriers with the appropriate behaviour near the boundary, which is done by combining the barriers of \cite{RS} with an eigenfunctions' decomposition of the solution to the parabolic Dirichlet problem in a bounded domain. 
Furthermore, to obtain the regularity up to the boundary for $u/d^s$ we first adapt the blow-up methods of \cite{RS} (based on the ideas in \cite{Ser}), and then combine them with the estimates for $u$ up to the boundary.

The paper is organised as follows. In Section \ref{sec.2} we prove Liouville-type theorem in the entire space, Theorem~\ref{thm.liouv}. In Section~\ref{sec.3} the interior regularity results are proved, Theorems~\ref{thm.maina} and \ref{thm.mainb}. In Section \ref{sec.4} we prove the $C^s_x$ regularity up to the boundary, Proposition~\ref{prop.mainboundary}, and deduce from it a Liouville-type theorem in the half space. Then, in Section \ref{sec.5} the main boundary regularity result is established, Theorem~\ref{thm.mainboundary}; and in Section \ref{sec.6} the Dirichlet problem is treated, thus proving Corollary~\ref{cor.mainboundary}. We end with some remarks on the sharpness of the estimates in Section~\ref{sec.7}.

\section{A Liouville-type theorem}
\label{sec.2}
In this section we prove the following result, a Liouville-type theorem for nonlocal parabolic equations.
\begin{thm}
\label{thm.liouv}
Let $s\in(0,1)$, and let $L$ be any operator of the form \eqref{eq.L1}-\eqref{eq.L2}. Let $u$ be any weak solution of
\[
\de_tu-Lu = 0~~~\textrm{in } (-\infty,0)\times\R^n
\]
such that
\[
\|u(t, \cdot)\|_{L^\infty(B_R)}\leq C\left(R^\gamma+1\right) \textrm{ for } R \geq |t|^{\frac{1}{2s}},
\]
for some $\gamma < 2s$. Then $u$ is a polynomial in the $x$ variable of degree at most $\lfloor\gamma\rfloor$.
\end{thm}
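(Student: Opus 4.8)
The plan is to follow the classical strategy for Liouville theorems for the heat equation, adapted to the nonlocal parabolic setting, using the heat kernel of $L$ to gain regularity and then to control the decay of suitable increments. Let $p(t,x)$ denote the heat kernel associated to $L$, i.e.\ the fundamental solution of $\de_t p - Lp = 0$ with $p(0,\cdot) = \delta_0$. By the scaling structure of the operator, $p$ is self-similar, $p(t,x) = t^{-n/(2s)}P(t^{-1/(2s)}x)$ for a profile $P$ with the decay $P(x) \le C(1+|x|)^{-n-2s}$ and analogous bounds for its derivatives; these estimates are available from the elliptic work \cite{RS} (this is precisely the input the ``ideas of the proofs'' paragraph refers to). The key qualitative facts I will use are: $p$ is smooth in $x$ for $t>0$; $p(t,\cdot)$ and all its $x$-derivatives are integrable with $\int p(t,\cdot)\,dx = 1$; and $\nabla^k_x p(t,\cdot)$ has a quantitative decay that makes it integrable against polynomial growth of order $\gamma < 2s$.

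The first step is to represent the solution by the heat semigroup. Fixing any two times $-\infty < t_1 < t_2 \le 0$, the growth hypothesis $\|u(t,\cdot)\|_{L^\infty(B_R)} \le C(R^\gamma+1)$ for $R \ge |t|^{1/(2s)}$ with $\gamma < 2s$ guarantees that the Duhamel/convolution formula
\[
u(t_2,x) = \int_{\R^n} p(t_2 - t_1, x - y)\, u(t_1, y)\, dy
\]
makes sense (the kernel decays like $|y|^{-n-2s}$, which beats the growth $|y|^\gamma$), and that $u$ is the unique weak solution with this data; a short approximation/uniqueness argument (cutting off $u(t_1,\cdot)$ and passing to the limit, using the comparison principle or energy estimates for weak solutions) shows equality. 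Once this representation is in hand, smoothness of $p$ in $x$ for positive times immediately gives that $u(t,\cdot) \in C^\infty(\R^n)$ for every $t < 0$, with derivatives controlled by $\int |\nabla_x^k p(t_2-t_1,\cdot)|\,(1+|y|^\gamma)\,dy$.

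The second step is to kill the high-order derivatives by exploiting the freedom in $t_1 \to -\infty$. The point is that $\nabla^k_x p(\tau,\cdot)$ scales like $\tau^{-k/(2s)}$ times a fixed integrable profile (rescaled), so
\[
\bigl| \nabla^k_x u(t_2,x) \bigr| \le C\, \tau^{-k/(2s)} \int_{\R^n} P_\tau(x-y)\,(1+|y|^\gamma)\,dy \le C\, \tau^{-k/(2s)}\bigl(1 + \tau^{\gamma/(2s)} + |x|^\gamma\bigr),
\]
where $\tau = t_2 - t_1$ and $P_\tau$ is the rescaled profile; here I used that convolving the growth bound $1+|y|^\gamma$ with an $L^1$ kernel of scale $\tau^{1/(2s)}$ produces $1 + |x|^\gamma + C\tau^{\gamma/(2s)}$. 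If $k$ satisfies $k > \gamma$, then on any fixed ball $B_\rho$ the right-hand side is bounded by $C\tau^{(\gamma-k)/(2s)} \to 0$ as $\tau \to \infty$. Hence $\nabla^k_x u(t_2,\cdot) \equiv 0$ for all $k > \lfloor \gamma\rfloor$, i.e.\ $u(t_2,\cdot)$ is a polynomial in $x$ of degree at most $\lfloor\gamma\rfloor$; since $t_2 < 0$ was arbitrary, this holds for all negative times.

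The final step is to remove the $t$-dependence. Write $u(t,x) = \sum_{|\beta|\le \lfloor\gamma\rfloor} c_\beta(t)\, x^\beta$. Since $\lfloor\gamma\rfloor < 2s$ forces $\lfloor\gamma\rfloor \le 1$, only constant and linear terms appear, for which $Lu = 0$ identically; plugging into $\de_t u - Lu = 0$ gives $\de_t c_\beta(t) = 0$, so each $c_\beta$ is constant, and $u$ is a polynomial in $x$ of degree at most $\lfloor\gamma\rfloor$ independent of $t$. (If one does not want to invoke $\lfloor\gamma\rfloor\le 1$ at this stage, one can instead differentiate the representation formula in $t$ and repeat the decay estimate, or simply note that $Lu$ is a polynomial of degree $\le \lfloor\gamma\rfloor - 2s < \lfloor\gamma\rfloor$ in $x$ and match coefficients.)

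I expect the main obstacle to be the second step, specifically justifying the heat-kernel derivative bounds and the representation formula at the level of \emph{weak} solutions with merely polynomial growth: one must check that the growth at infinity is genuinely controlled uniformly back in time along the slice $R \ge |t|^{1/(2s)}$, that weak solutions of this growth class are represented by the convolution with $p$ (an approximation-plus-uniqueness argument in a class where the standard maximum principle needs care because of the growth), and that the quantitative decay of $\nabla^k_x p$ for general — possibly very singular — spectral measures $\mu$ is strong enough; this last point is exactly where the estimates from \cite{RS} are essential, since for singular $\mu$ one cannot rely on explicit kernel formulas.
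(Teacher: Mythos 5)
Your overall strategy (represent $u$ by convolution with the heat kernel, then use decay of the kernel's derivatives to kill all derivatives of order $>\gamma$, then deal with the $t$-dependence) is natural, but its key step rests on an input that is simply not available for the operators considered here, and is in fact false. You assume the self-similar profile satisfies the pointwise bound $P(x)\leq C(1+|x|)^{-n-2s}$ together with ``analogous bounds for its derivatives'', citing \cite{RS}. For a general spectral measure $\mu$ satisfying only \eqref{eq.L1}-\eqref{eq.L2} — e.g.\ the operator \eqref{eq.deltas}, whose measure is a sum of Dirac deltas — the heat kernel does \emph{not} decay like $|x|^{-n-2s}$ in all directions (for the product-type kernel of \eqref{eq.deltas} the decay along a coordinate axis is only of order $|x|^{-1-2s}$), and no pointwise decay estimates for higher derivatives $\nabla^k_x p$ are provided by \cite{RS}. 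What \cite{RS} gives, and what the paper's proof uses, is much weaker: the integrated moment bound \eqref{eq.liouv1}, $\int(1+|x|^{2s-\delta})p(t,x)\,dx\leq C(1+t^{\frac{2s-\delta}{2s}})$, and the global (non-decaying) Lipschitz bound \eqref{eq.liouv2}, $[p(t,\cdot)]_{C^{0,1}_x(\R^n)}\leq Ct^{-\frac{n+1}{2s}}$. With only these, your estimate $\int|\nabla^k_x p(\tau,\cdot)|(1+|y|^\gamma)\,dy\leq C\tau^{(\gamma-k)/(2s)}$ cannot be justified, so the step ``$\nabla^k_x u\equiv 0$ for $k>\lfloor\gamma\rfloor$'' is a genuine gap, not merely a technical point to be checked. (Your Step 1, the representation formula, and your Step 3, removing the $t$-dependence via $Lu=0\Rightarrow\de_t u=0$, are fine and coincide with the paper.)

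The paper's proof is built precisely to circumvent this lack of kernel decay: after rescaling $v(t,x)=\rho^{-\gamma}u(\rho^{2s}t,\rho x)$ and writing $v(t,\cdot)=v(-2,\cdot)*p(2+t,\cdot)$, one splits the convolution at a radius $M$, estimating the near part by the Lipschitz bound \eqref{eq.liouv2} (giving $CM^{n+\gamma}|x-x'|$) and the far part by the moment bound \eqref{eq.liouv1} with $\delta=\tfrac12(2s-\gamma)$ (giving $CM^{-\delta}$); optimizing in $M$ yields only a small H\"older exponent $\xi=\frac{2s-\gamma}{2s+2n+\gamma}$, hence $[u]_{C^\xi_x((-\rho^{2s},0)\times B_\rho)}\leq C\rho^{\gamma-\xi}$. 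One then iterates this estimate on the incremental quotients $u^\xi_h$, replacing $\gamma$ by $\gamma-\xi$ at each step, until after $N$ steps $N\xi>\gamma$ and letting $\rho\to\infty$ forces $[u]_{C^{N\xi}_x((-\infty,0)\times\R^n)}=0$. If you want to salvage your argument you would have to either prove weighted integral bounds for $\nabla^k_x p$ valid for arbitrary spectral measures (which is not in \cite{RS} and would require a separate argument), or replace the ``differentiate the representation formula $k$ times'' step by an iteration of low-order gains of exactly this type.
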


To prove the above theorem we follow the ideas of \cite{RS} for the elliptic problem.

We denote $p(t, x)$ the heat kernel associated to the operator $L$. Note that, by the scaling property of operators, we have
\[
p(t,x) = t^{-\frac{n}{2s}}p(1,xt^{-\frac{1}{2s}}).
\]

The following proposition is an immediate consequence of \cite[Proposition~2.2]{RS}.

\begin{prop}[\cite{RS}] Let $s\in(0,1)$ and let $L$ be any operator of the form \eqref{eq.L1}-\eqref{eq.L2}. Let $p(t, x)$ be the heat kernel associated to $L$. Then,

\begin{enumerate}[(a)]
\item For all $\delta \in (0,2s)$, $t > 0$,
\begin{equation}
\label{eq.liouv1}
\int_{\R^n}\left(1+|x|^{2s-\delta}\right) p(t, x)dx \leq C\left(1+t^{\frac{2s-\delta}{2s}}\right).
\end{equation}

\item Moreover, for $t > 0$,
\begin{equation}
\label{eq.liouv2}
[p(t, x)]_{C^{0,1}_x(\R^n)} \leq Ct^{-\frac{n+1}{2s}}
\end{equation}

\end{enumerate}
The constant $C$ depends only on $n$, $s$, $\delta$, and the ellipticity constants \eqref{eq.L2}.
\end{prop}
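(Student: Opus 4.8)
The plan is to reduce both estimates to the corresponding bounds for the heat kernel at the fixed time $t=1$ — which are exactly the content of \cite[Proposition~2.2]{RS} — and then to propagate them to every $t>0$ using the self-similarity identity $p(t,x) = t^{-\frac{n}{2s}}p(1,xt^{-\frac{1}{2s}})$ recalled above. From \cite[Proposition~2.2]{RS} one extracts, specializing to $t=1$, that $\int_{\R^n}p(1,x)\,dx \le C$, that $p(1,\cdot)$ has finite absolute moments of every order strictly below $2s$, i.e. $\int_{\R^n}|x|^{2s-\delta}p(1,x)\,dx \le C$ for all $\delta\in(0,2s)$, and that $p(1,\cdot)$ is globally Lipschitz with $\|\nabla_x p(1,\cdot)\|_{L^\infty(\R^n)}\le C$, where $C$ depends only on $n$, $s$, $\delta$ and the ellipticity constants \eqref{eq.L2}. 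Everything then reduces to tracking how these quantities behave under the parabolic rescaling.

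For part~(a) I would insert the self-similarity identity into the integral and change variables by $x = t^{\frac{1}{2s}}y$, obtaining
\[
\int_{\R^n}\left(1+|x|^{2s-\delta}\right)p(t,x)\,dx = \int_{\R^n}\left(1+t^{\frac{2s-\delta}{2s}}|y|^{2s-\delta}\right)p(1,y)\,dy = \int_{\R^n}p(1,y)\,dy + t^{\frac{2s-\delta}{2s}}\int_{\R^n}|y|^{2s-\delta}p(1,y)\,dy,
\]
and then bound the two integrals on the right by the constant from \cite[Proposition~2.2]{RS}; absorbing constants gives the bound $C\bigl(1+t^{\frac{2s-\delta}{2s}}\bigr)$. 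For part~(b), since $p(t,\cdot)$ is differentiable its $C^{0,1}_x$ seminorm equals $\|\nabla_x p(t,\cdot)\|_{L^\infty(\R^n)}$; differentiating the self-similarity identity in $x$ yields $\nabla_x p(t,x) = t^{-\frac{n+1}{2s}}(\nabla p)(1,xt^{-\frac{1}{2s}})$, so that
\[
[p(t,\cdot)]_{C^{0,1}_x(\R^n)} = t^{-\frac{n+1}{2s}}\|\nabla p(1,\cdot)\|_{L^\infty(\R^n)} \le C\,t^{-\frac{n+1}{2s}},
\]
again by \cite[Proposition~2.2]{RS}. The dependence of the constants on $n$, $s$, $\delta$ and the ellipticity constants is inherited from that reference.

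There is no substantial obstacle here: once the $t=1$ bounds of \cite{RS} are available, each part is a one-line scaling computation, which is why the statement is billed as an immediate consequence. The only points that deserve a moment's care are that the moment exponent in~(a) must stay strictly below $2s$ — the precise range in which a $2s$-stable density has finite absolute moments, so that the $t=1$ integral converges — and that the gradient bound needed in~(b) is the global $L^\infty$ bound on $\nabla p(1,\cdot)$, not merely a local one; both are supplied by \cite[Proposition~2.2]{RS}.
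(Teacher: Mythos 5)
Your proposal is correct and follows exactly the route the paper intends: the paper gives no separate proof, stating the proposition as an immediate consequence of \cite[Proposition~2.2]{RS} combined with the self-similarity $p(t,x)=t^{-\frac{n}{2s}}p(1,xt^{-\frac{1}{2s}})$, which is precisely your scaling computation for both the moment bound and the Lipschitz seminorm. The only cosmetic remark is that one need not invoke differentiability in part (b): the change of variables applied directly to the difference quotient gives $[p(t,\cdot)]_{C^{0,1}_x(\R^n)}=t^{-\frac{n+1}{2s}}[p(1,\cdot)]_{C^{0,1}_x(\R^n)}$, which is the form of the bound supplied by \cite{RS}.
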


We can now prove the Liouville-type theorem.

\begin{proof}[Proof of Theorem \ref{thm.liouv}] The proof is parallel to the one done in \cite[Proposition 2.2]{RS} for the elliptic problem.

For every $\rho \geq 1$ define
\[
v(t, x) = \rho^{-\gamma} u(\rho^{2s} t, \rho x).
\]

It is easy to check $\de_t v -L v = 0$ in $(-\infty,0)\times\R^n$, and moreover, for $R \geq |t|^{\frac{1}{2s}}$,
\begin{equation}
\|v(t,\cdot)\|_{L^\infty(B_R)} = \rho^{-\gamma}\|u(\rho^{2s} t, \cdot)\|_{L^\infty(B_{\rho R})} \leq \rho^{-\gamma}C\left( 1+ (\rho R)^\gamma\right) \leq C\left( 1+ R^\gamma\right) \textrm{ for }\rho \geq 1.
\end{equation}

Now, we can use that, for $t \in (-1,0)$
\begin{equation}
\label{eq.liouv3}
v(t, x) \equiv v(-2,x)*p(2+t,x),
\end{equation}
where the convolution is only in the $x$ variable. Notice that when $t\in(-1,0)$, then $p(2+t,x)$ fulfils the same bounds as $p(1,x)$ in \cite{RS} with maybe different constants, thanks to \eqref{eq.liouv1}-\eqref{eq.liouv2}. Therefore, the rigorous proof of the equality \eqref{eq.liouv3} is the same as in \cite{RS}.

We now consider the function $v(t, x)$ for $t\in (-1,0)$. We know that $v(t, x) \leq C(|x|^\gamma+1)$, and we want to show
\[
[v]_{C^\xi_x((-1,0)\times B_1)} \leq C
\]
for some $\xi > 0$ and $C$ depending only on $n$, $\lambda$, $\Lambda$ and $\gamma$. To do so, let $x, x'\in B_1$, with $x\neq x'$, and let $t\in(-1,0)$. Then, using \eqref{eq.liouv3},
\begin{align*}
|v(t, x)-v(t, x')| & = \left| \int_{\R^n} (p(2+t, x-y)-p(2+t, x'-y))v(-2, y)dy \right|\\
& \leq \left|\int_{|y|\leq M} (p(2+t, x-y)-p(2+t, x'-y))v(-2, y)dy\right|+  \\
& ~~~+ 2\sup_{x\in B_1} \left|\int_{|y|\geq M} (p(2+t, x-y)v(-2, y)dy\right|.
\end{align*}

The first term in the sum can be bounded by
\[
\int_{|y|\leq M} (p(2+t, x-y)-p(2+t, x'-y))v(-2, y)dy \leq C M^{n+\gamma} |x-x'|,
\]
using \eqref{eq.liouv2} and the bound on $v(t, x)$. The second term is bounded again using the bound on $v(t, x)$ and \eqref{eq.liouv1} with $\delta = \frac{1}{2} (2s-\gamma)>0$,
\[
\left|\int_{|y|\geq M} p(2+t, x-y)v(-2, y)dy\right| \leq CM^{-\delta}.
\]
Thus, we have
\[
|v(t, x)-v(t, x')| \leq C M^{n+\gamma}|x-x'|+CM^{-\delta},\textrm{ for any } t\in (-1,0).
\]
Choosing
\[
M= |x-x'|^{-\xi/\delta},~~ ~\textrm{ with } 1-\frac{(n+\gamma)\xi}{\delta} = \xi ~ \Rightarrow~ \xi = \frac{2s-\gamma}{2s+2n+\gamma} > 0,
\]
we obtain
\[
[v]_{C^\xi_x((-1,0)\times B_1)} \leq C.
\]

Equivalently, for any $\rho \geq 1$,
\[
[u]_{C^\xi_x((-\rho^{2s},0)\times B_\rho)} \leq C\rho^{\gamma-\xi}.
\]

Let us now define the following incremental quotient function, for $h\in\R^n$ fixed,
\[
u_h^\xi (t, x) := \frac{u(t, x+h)-u(t, x)}{|h|^\xi},
\]
which, for any $t\in(-1,0)$, satisfies
\[
u_h^\xi (t, x) \leq C|x|^{\gamma-\xi}, \textrm{ for } |x| \geq 1.
\]
Now repeating the previous argument replacing $u$ by $u_h^\xi$, and $\gamma$ by $\gamma-\xi$, one gets $[u]_{C_x^{2\xi}((-R^{2s},0)\times B_R)}\leq CR^{\gamma-2\xi}$. We are using that after the previous step, the new $\xi' = \frac{2s-\gamma+\xi}{2s+2n+\gamma-\xi} > \xi$, so that we can take $\xi$ instead of $\xi'$. Iterating this procedure, after $N$ steps,
\[
[u]_{C^{N\xi}_x((-R^{2s},0)\times B_R)} \leq CR^{\gamma-N\xi}.
\]

Taking $N$ as the least integer such that $N\xi > \gamma$ and letting $R\to \infty$, we finally obtain
\[
[u]_{C^{N\xi}_x((-\infty,0)\times\R^n)}= 0,
\]
which implies that for each $t\in (-\infty,0)$, $u(t, x)$ is a polynomial on $x$ of degree at most $\lfloor \gamma \rfloor \leq \lfloor 2s\rfloor$.

Finally, $Lu = 0$ for all $(t, x) \in (-\infty,0)\times\R^n$, and thus $\de_t u = 0$, so that $u$ is constant with respect to $t$.
\end{proof}

\section{Interior regularity}
\label{sec.3}
In this section we prove the main results regarding the interior regularity of the solutions, Theorems \ref{thm.maina} and \ref{thm.mainb}. We first present a short subsection introducing the seminorms that we are going to use in the proofs.

\subsection{Parabolic Hölder seminorms} Many times we will implicitly use that
\begin{equation}
\label{eq.equivseminorm}
[u]_{{C^{\beta,\alpha}_{t,x}(I\times \Omega)}} \sim [u]_{C^\beta_t(I\times\Omega)}+[u]_{C^\alpha_x(I\times \Omega)},
\end{equation}
in the sense that these two seminorms are equivalent.

The definition in \eqref{eq.seminorm} is not enough for the cases considered in this paper; we need to introduce higher order parabolic Hölder seminorms. For $s\in(0,1)$, and for given $\alpha \in (0,1)$, we define
\[
\beta = \frac{\alpha}{2s}.
\]

Moreover, throughout the section we assume that $\alpha$ is such that $\beta\in (0,1)$ ($\alpha < 2s$), and that $\alpha +2s$ not an integer. Let $\nu = \lfloor 2s+\alpha \rfloor$ and $I\times\Omega$ be a bounded domain with $I\subset(-\infty,0],~\Omega \subset\R^n$. We define the following parabolic Hölder seminorm
\begin{equation}
\label{eq.notation1}
 [u]^{(1+\beta,2s+\alpha)}_{I\times \Omega} =
  \begin{cases}
      [\de_t u]_{C_{t,x}^{\beta,\alpha}(I\times \Omega)} +  [u]_{C^{2s+\alpha}_x(I\times \Omega)}   \hfill & \text{ if } \nu = 0, \\[6mm]
       [\de_t u]_{C_{t,x}^{\beta,\alpha}(I\times \Omega)} +  [\nabla_x u]_{C^{\frac{2s+\alpha-1}{2s},2s+\alpha-1}_{t,x}(I\times \Omega)}   \hfill & \text{ if } \nu = 1, \\[6mm]
       [\de_t u]_{C_{t,x}^{\beta,\alpha}(I\times \Omega)} +[\nabla_x u]_{C_t^{\frac{2s+\alpha-1}{2s}}(I\times \Omega)} +  [D^2_x u]_{C^{\frac{2s+\alpha-2}{2s},2s+\alpha-2}_{t,x}(I\times \Omega)}   \hfill & \text{ if } \nu = 2.
  \end{cases}
\end{equation}

Notice that with this choice of norms we always have a good rescaling. That is, if $u_\rho (t, x) = u(\rho^{2s} t, \rho x)$ then
\begin{equation}
\label{eq.rescnorm}
[u_\rho]_{I\times\Omega}^{(1+\beta,2s+\alpha)} = \rho^{2s+\alpha}[u]_{(\rho^{-2s}I)\times (\rho^{-1}\Omega)}^{(1+\beta,2s+\alpha)}.
\end{equation}

The previous definition will be useful to prove Theorem \ref{thm.maina}, but for Theorem \ref{thm.mainb} we need a definition for different indices. Namely, we will denote $\nu := \lceil 2s \rceil - 1$, $\epsilon > 0$ such that $2s-\epsilon > \nu$, and

\begin{equation}
\label{eq.notation2}
 [u]^{\left(1-\frac{\epsilon}{2s},2s-\epsilon\right)}_{I\times \Omega} =
  \begin{cases}
      [u]_{C_{t,x}^{1-\frac{\epsilon}{2s},2s-\epsilon}(I\times \Omega)} \hfill & \text{ if } \nu = 0, \\[6mm]
        [u]_{C_{t}^{1-\frac{\epsilon}{2s}}(I\times \Omega)} +  [\nabla_x u]_{C^{\frac{2s-\epsilon-1}{2s},2s-\epsilon-1}_{t,x}(I\times \Omega)}   \hfill & \text{ if } \nu = 1.
  \end{cases}
\end{equation}
Note that we still have a good rescaling, i.e., for $u_\rho (t, x) = u(\rho^{2s} t, \rho x)$ then
\begin{equation}
\label{eq.rescnorm_2}
[u_\rho]^{\left(1-\frac{\epsilon}{2s},2s-\epsilon\right)}_{I\times \Omega}= \rho^{2s-\epsilon}[u]^{\left(1-\frac{\epsilon}{2s},2s-\epsilon\right)}_{(\rho^{-2s}I)\times (\rho^{-1}\Omega)}.
\end{equation}

The full norm is defined by
\begin{align*}
\|u\|^{1+\beta,2s+\alpha}_{(I\times \Omega)} & =  \|u\|_{C^{1,\nu}_{t,x}(I\times \Omega)}+ [u]^{(1+\beta,2s+\alpha)}_{I\times \Omega}\\
& :=  \|\de_t u \|_{L^\infty(I\times \Omega)} + \sum_{|\psi| \leq \nu} \|D^\psi_x u \|_{L^\infty(I\times \Omega)} + [u]^{(1+\beta,2s+\alpha)}_{I\times \Omega},
\end{align*}
where we have also defined the $\|u\|_{C^{1,\nu}_{t,x}(I\times \Omega)}$ norm. The definition of $\|u\|^{1-\frac{\epsilon}{2s},2s-\epsilon}_{(I\times \Omega)}$ is analogous.

An interpolation inequality can be proved for these norms: for any $\kappa> 0$ we have
\begin{equation}
\label{eq.interp}
\|u\|_{C^{1,\nu}_{t,x}(I\times \Omega)} \leq \kappa [u]^{(1+\beta,2s+\alpha)}_{I\times \Omega} + C\|u\|_{L^\infty(I\times \Omega)},
\end{equation}
for some constant $C$ depending only on $\kappa$, $n$, $s$, $\alpha$ and $\beta$. Analogously,
\begin{equation}
\label{eq.interp2}
\|u\|_{C^{0,\nu}_{t,x}((-1,0)\times B_1)}\leq \kappa [u]^{(1-\frac{\epsilon}{2s},2s-\epsilon)}_{((-1,0)\times B_1)} + C\|u\|_{L^\infty((-1,0)\times B_1)},
\end{equation}
for $C$ constant now depending only on $\kappa$, $n$, $s$ and $\epsilon$.

To show \eqref{eq.interp}-\eqref{eq.interp2}, it is enough to use the equivalence of seminorms \eqref{eq.equivseminorm} and classical interpolation inequalities in Hölder spaces.

Finally, another result we will need is the following inequality in bounded domains: for any given $\eta \in C^\infty(\R^n)$, and $u = u(t, x)$, then
\begin{equation}
\label{eq.1}
[\eta u]_{I\times \Omega}^{(1+\beta,2s+\alpha)} \leq C\left(\|\eta\|_{C^{2s+\alpha}_x( \Omega)} \|u\|_{C^{1,\nu}_{t,x}(I\times \Omega)} + [u]^{(1+\beta,2s+\alpha)}_{I\times \Omega} \|\eta\|_{C^\nu_x( \Omega)}\right),
\end{equation}
for some constant $C$ depending only on $n$, $s$, $\alpha$, $\beta$, $I$ and $\Omega$. To see it, use the analogous inequality for Hölder spaces and the equivalence of seminorms \eqref{eq.equivseminorm}. Similarly, one finds
\begin{equation}
\label{eq.1.1}
[\eta u]_{I\times \Omega}^{(\beta,\alpha)} \leq C\left(\|\eta\|_{C^{\alpha}_x( \Omega)} \|u\|_{L^\infty (I\times \Omega)} + [u]^{(\beta,\alpha)}_{I\times \Omega} \|\eta\|_{L^\infty( \Omega)}\right),
\end{equation}
for some constant $C$ depending only on $n$, $s$, $\alpha$, $\beta$, $I$ and $\Omega$.

\subsection{Proof of Theorem \ref{thm.maina}}

Now that we have introduced the notation, let us proceed to prove the results regarding the interior regularity of solutions to nonlocal parabolic equations.

To begin with, the following lemma will give us a tool to study the convergence of functions in the proofs of Proposition \ref{prop.main} and Proposition \ref{prop.main2} below.

\begin{lem}
\label{lem.1}
Let $s\in(0,1)$, $\lambda, \Lambda > 0$ fixed constants, and let $(L_k)_{k\in\N}$ be a sequence of operators of the form \eqref{eq.L1}-\eqref{eq.L2}. Let $(u_k)_{k\in\N}$ and $(f_k)_{k\in\N}$ be sequences of functions satisfying in the weak sense
\[
\de_t u_k-L_k u_k = f_k \textrm{ in } I\times K
\]
for a given bounded interval $I\subset (-\infty,0]$ and a bounded domain $K \subset\R^n$.

Assume that $L_k$ have spectral measures $\mu_k$ converging to a spectral measure $\mu$. Let $L$ be the operator associated to $\mu$ (weak limit of $L_k$), and suppose that, for some functions $u$ and $f$ the following hypotheses hold:
\begin{enumerate}
\item $u_k\to u$ uniformly in compact sets of $(-\infty,0]\times\R^n$,
\item $f_k \to f$ uniformly in $I\times K$,
\item $\sup_{t\in I}|u_k(t, x)| \leq C\left(1+|x|^{2s-\epsilon}\right)$ for some $\epsilon > 0$, and for all $x\in \R^n$.
\end{enumerate}

Then, $u$ satisfies
\[
\de_tu-Lu = f \textrm{ in } I\times K
\]
in the weak sense.
\end{lem}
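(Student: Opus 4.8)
The plan is to pass to the limit in the weak formulation of the equation, the main technical point being to justify the convergence of the nonlocal bilinear forms despite the very mild control on $u_k$ at infinity given by hypothesis (3). First I would recall what ``weak solution'' means: for every test function $\varphi\in C_c^\infty(I\times K)$ (or the appropriate space-time test class), we have
\[
-\int_I\int_{\R^n} u_k\,\de_t\varphi\,dx\,dt + \int_I \mathcal{E}_{L_k}(u_k(t,\cdot),\varphi(t,\cdot))\,dt = \int_I\int_{\R^n} f_k\,\varphi\,dx\,dt,
\]
where $\mathcal{E}_{L_k}$ is the (nonlocal) bilinear form associated with $L_k$. I would then check that each of the three terms converges to the corresponding term with $u,L,f$ in place of $u_k,L_k,f_k$, and conclude that $u$ satisfies the same identity for all test functions, i.e.\ $\de_t u - Lu = f$ weakly in $I\times K$.

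The first and third terms are the easy ones. For the time-derivative term, $u_k\to u$ uniformly on the compact set $\overline I\times\mathrm{supp}\,\varphi$ by hypothesis (1), and $\de_t\varphi$ is bounded with compact support, so the convergence is immediate by dominated convergence. For the right-hand side, hypothesis (2) gives $f_k\to f$ uniformly on $I\times K\supset \mathrm{supp}\,\varphi$, again giving convergence at once.

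The heart of the matter is the middle term, $\int_I\mathcal{E}_{L_k}(u_k,\varphi)\,dt\to\int_I\mathcal{E}_L(u,\varphi)\,dt$. Here I would split the bilinear form into a ``near'' part, where both $x$ and $x+y$ stay in a large ball $B_R$, and a ``far'' tail. For the near part, one uses that $\varphi$ is smooth and compactly supported together with $u_k\to u$ uniformly on compacts; writing $\mathcal{E}_{L_k}(u_k,\varphi)$ using second differences of $\varphi$ (not of $u_k$) absorbs the singularity of the kernel near $y=0$, since $|\varphi(x+y)+\varphi(x-y)-2\varphi(x)|\le C|y|^2$, which is integrable against $|y|^{-1-2s}d\mu_k(\theta)$ by the uniform bound $\int_{S^{n-1}}d\mu_k\le\Lambda$; the weak convergence $\mu_k\rightharpoonup\mu$ then passes the near part to the limit. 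For the far part, one uses the growth bound (3): $\sup_{t\in I}|u_k(t,x)|\le C(1+|x|^{2s-\epsilon})$ uniformly in $k$, so that the tail contribution to $\mathcal{E}_{L_k}(u_k,\varphi)$ — which involves $\int_{|y| \text{ large}} u_k(t,x\pm y)\varphi(t,x)\frac{dr}{|r|^{1+2s}}d\mu_k(\theta)$ — is bounded by $C\int_{R}^\infty r^{2s-\epsilon}\frac{dr}{r^{1+2s}}\,\Lambda\,\|\varphi\|_{L^1} = C R^{-\epsilon}\|\varphi\|_{L^1}$, uniformly in $k$ and in $t$; this goes to $0$ as $R\to\infty$, so the tails are negligible uniformly. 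Combining: fix $R$ large so the tails (for all $k$ and for the limit) are below $\eta$, then send $k\to\infty$ in the near part, then let $R\to\infty$. One should also record that the bound in (3) plus the convolution representation of Theorem~\ref{thm.liouv}-type estimates, or simply the estimate $\int_{\R^n}(1+|x|^{2s-\delta})p(t,x)\,dx<\infty$ of \eqref{eq.liouv1}, guarantees that $Lu$ and $L_ku_k$ are well-defined for such growth, so all the quantities written make sense.

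The step I expect to be the main obstacle is making the ``far tail'' estimate fully rigorous \emph{simultaneously} in $k$ and uniformly over $t\in I$ while respecting the spectral-measure formulation \eqref{eq.L1}: one must be careful that the radial integral $\int |r|^{-1-2s}dr$ of the second-difference expression, when one of the three terms is evaluated at a point far from $\mathrm{supp}\,\varphi$, is split correctly so that the singular part at $r=0$ only ever sees differences of $\varphi$ (bounded by $C r^2$) while the polynomially-growing $u_k$ only appears paired with the bounded factor $\varphi(t,x)$ over the region $|r|\gtrsim R$. A secondary subtlety is the precise notion of weak convergence $\mu_k\rightharpoonup\mu$ and of ``weak limit of $L_k$'' assumed in the statement — I would take it to mean testing against continuous functions on $S^{n-1}$, which is exactly what is needed to pass the near part to the limit once the kernel singularity has been tamed by the smoothness of $\varphi$. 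No compactness argument on $u_k$ is needed beyond what hypotheses (1)--(3) already supply.
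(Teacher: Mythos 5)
Your proposal is correct and follows essentially the same route as the paper: both pass to the limit in the weak formulation by placing the nonlocal operator on the test function, using the second-difference bound $|\eta(t,x+y)+\eta(t,x-y)-2\eta(t,x)|\le C\min\{1,|y|^2\}$ together with the weak convergence of the spectral measures for the singular/near part, and the growth hypothesis (3) (exponent $2s-\epsilon$) to control the far contribution uniformly in $k$ and $t$. The paper simply packages your near/far, $R\to\infty$ splitting as a single dominated convergence argument in $x$ (after observing that $L_k\eta\to L\eta$ uniformly on compact sets and that $\mathrm{supp}\,\eta\subset I\times K$ forces decay of $L_k\eta$), so the difference is only in presentation, not in substance.
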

\begin{proof}
We have that
\[
\int_{I\times\R^{n}} u_k (-\de_t \eta -L_k \eta) = \int_{I\times K} f_k\eta,~~~\textrm{ for all } \eta \in C_c^\infty(I\times K).
\]

On the other hand, since $|\eta(x+y)+\eta(x-y)-2\eta(x)|\leq C\min\{1,|y|^2\}$, by the dominated convergence theorem we obtain that $L_k \eta \to L\eta$ uniformly over compact subsets of $I\times \R^n$.

Moreover, $\eta$ has support in $K$, which yields $|L_k\eta(x)|\leq C (1+|x|^{n+2s})^{-1}$. Combining this with the growth of $u_k$ (see hypothesis (3)) we get that $|u_k(-\de_t\eta-L_k\eta)|\leq C(1+|x|^{n+\epsilon})^{-1}$, and therefore, by the dominated convergence theorem
\[
\int_{I\times\R^{n}}u_k(-\de_t\eta-L_k\eta)\to \int_{I\times\R^{n}} u (-\de_t\eta-L\eta),~~~\textrm{ for all } \eta \in C_c^\infty(I\times K).
\]

Since it is clear that
\[
\int_{I\times K} f_k\eta \to \int_{I\times K} f \eta,
\]
then we have that the limit $u$ is a weak solution to the equation
\[
\de_t u-Lu = f \textrm{ in } I\times K,
\]
as desired.
\end{proof}

Before proceeding to prove Theorem~\ref{thm.maina}, let us first show the following.

\begin{prop}
\label{prop.main}
Let $s\in(0,1)$, and let $L$ be an operator of the form \eqref{eq.L1}-\eqref{eq.L2}. Let $\alpha \in (0,1)$, such that $\beta = \frac{\alpha}{2s}\in(0,1)$ and $\alpha+2s$ is not an integer, and let $\nu = \lfloor 2s+\alpha\rfloor$. Assume that $u\in C^\infty_c((-\infty,0]\times\R^n)$ satisfies
\[
\de_t u-Lu = f \textrm{ in } (-1,0)\times B_1
\]
with $f\in C_{t,x}^{\beta,\alpha}((-1,0)\times B_1)$. Then, for any $\delta > 0$ we have
\begin{equation}
\label{eq.ineq1}
[u]^{(1+\beta,2s+\alpha)}_{((-2^{-2s},0)\times B_{1/2})} \leq \delta [u]^{(1+\beta,2s+\alpha)}_{((-1,0)\times \R^n)} + C \left(\|u\|_{C^{1,\nu}_{t,x} ((-1,0)\times B_1)} + [f]_{C_{t,x}^{\beta,\alpha}((-1,0)\times B_1)} \right),
\end{equation}
where the constant $C$ depends only on $\delta$, $n$, $s$, $\alpha$ and the ellipticity constants \eqref{eq.L2}.
\end{prop}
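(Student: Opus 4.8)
The plan is to prove Proposition~\ref{prop.main} by a compactness/contradiction argument, which is the natural way to upgrade a "smallness modulo lower-order norms" estimate of the form \eqref{eq.ineq1} into something that later iterates into the full interior estimate of Theorem~\ref{thm.maina}. Suppose the inequality fails: then for every $k\in\N$ there exist an operator $L_k$ of the form \eqref{eq.L1}-\eqref{eq.L2}, a function $u_k\in C^\infty_c((-\infty,0]\times\R^n)$ and a right-hand side $f_k\in C^{\beta,\alpha}_{t,x}$ with $\de_t u_k-L_k u_k=f_k$ in $(-1,0)\times B_1$, such that
\[
[u_k]^{(1+\beta,2s+\alpha)}_{((-2^{-2s},0)\times B_{1/2})} > \delta\, [u_k]^{(1+\beta,2s+\alpha)}_{((-1,0)\times \R^n)} + k\Bigl(\|u_k\|_{C^{1,\nu}_{t,x}((-1,0)\times B_1)} + [f_k]_{C^{\beta,\alpha}_{t,x}((-1,0)\times B_1)}\Bigr).
\]
After dividing by $[u_k]^{(1+\beta,2s+\alpha)}_{((-2^{-2s},0)\times B_{1/2})}$ and renaming, I may normalize so that this seminorm equals $1$. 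The displayed inequality then forces $\delta\le 1$ (so the global seminorm of $u_k$ on $(-1,0)\times\R^n$ stays bounded — one needs here that it dominates the seminorm on the smaller cylinder, which is immediate from monotonicity of seminorms in the domain), and, crucially, it forces
\[
\|u_k\|_{C^{1,\nu}_{t,x}((-1,0)\times B_1)} + [f_k]_{C^{\beta,\alpha}_{t,x}((-1,0)\times B_1)} \longrightarrow 0.
\]

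**Next** I would normalize the $u_k$ to make the limit nontrivial. Since only seminorms are controlled, I subtract the Taylor polynomial: replace $u_k$ by $u_k$ minus a polynomial (in $x$, with time-dependent coefficients of the appropriate orders, evaluated at a fixed interior point) of degree $\nu$ chosen so that $u_k$ and its relevant derivatives vanish at, say, $(0,0)$; this does not change any of the seminorms $[\,\cdot\,]^{(1+\beta,2s+\alpha)}$ or the equation up to adjusting $f_k$ by $\de_t$ of the time-dependent coefficients, which also tends to $0$. Using the normalized seminorm $=1$ together with the polynomial normalization and a standard growth bound (the seminorm controls growth at rate $|x|^{2s+\alpha}$ in space and $|t|^{1+\beta}$ in time away from $(0,0)$), the $u_k$ are uniformly bounded on compact sets of $(-\infty,0]\times\R^n$ with a uniform $C(1+|x|^{2s+\alpha-\epsilon})$-type growth; by Arzel\`a--Ascoli (the $C^{1,\nu}_{t,x}$-bound on $B_1$ gives equicontinuity there, and the seminorm bound propagates it) a subsequence converges locally uniformly to some $u$. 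The spectral measures $\mu_k$, being uniformly bounded by \eqref{eq.L2}, converge weakly (up to subsequence) to a measure $\mu$ still satisfying \eqref{eq.L2}; let $L$ be the associated operator. Applying Lemma~\ref{lem.1} (with $\epsilon$ adjusted so the growth hypothesis (3) holds, using $2s+\alpha-\epsilon<2s$ is \emph{false} in general — see the obstacle below) gives that $u$ is a weak solution of $\de_t u - Lu = 0$ in $(-1,0)\times B_1$, in fact in $(-\infty,0)\times\R^n$ after also passing to the limit on expanding cylinders.

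**The limit function** $u$ has polynomial growth of degree strictly less than $2s+\alpha$; if $\alpha+2s$ is not an integer this means growth of order $\lfloor 2s+\alpha\rfloor=\nu<2s+\alpha$, but to apply the Liouville Theorem~\ref{thm.liouv} I need growth $\gamma<2s$. This is exactly the point where one must be careful: when $\nu\ge 1$ (i.e. $2s+\alpha\ge 1$) one should first differentiate. Concretely, I would apply the compactness argument not to $u_k$ directly but, after reducing to the case $u$ solves the homogeneous equation, differentiate the limit equation $\nu$ times in $x$: each spatial derivative $\de_{x_i}u$ still solves $\de_t(\de_i u)-L(\de_i u)=0$ and has growth order reduced by one, so $D^\nu_x u$ has growth $<2s+\alpha-\nu\le$ something $<2s$ (using $\alpha<1$ and the non-integer hypothesis to get the strict inequality $2s+\alpha-\nu<2s$ precisely when $\nu=\lfloor 2s+\alpha\rfloor$, which holds since $2s+\alpha-\nu=\{2s+\alpha\}<1\le$... — one checks the arithmetic case by case $\nu=0,1,2$ as in the definition \eqref{eq.notation1}). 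Then Theorem~\ref{thm.liouv} forces $D^\nu_x u$ to be a polynomial in $x$ of degree $\le \lfloor\gamma\rfloor$ and constant in $t$; feeding this back up, $u$ itself is a polynomial in $x$ of degree $\le\nu$ with time-independent top coefficients, plus lower-order time dependence that the equation kills — in any case $[u]^{(1+\beta,2s+\alpha)}=0$. On the other hand, the seminorm being an inf-type/continuous quantity, lower semicontinuity under local uniform convergence gives
\[
[u]^{(1+\beta,2s+\alpha)}_{((-2^{-2s},0)\times B_{1/2})} \ \ge\ \limsup_k\,[u_k]^{(1+\beta,2s+\alpha)}_{((-2^{-2s},0)\times B_{1/2})} \cdot(\text{hmm, need }\liminf)
\]
— more precisely, the $C^{1,\nu}_{t,x}$ convergence on the closed smaller cylinder (which one gets from interior Schauder-type equicontinuity, \emph{not} only from Arzel\`a--Ascoli, since the $\nu$-th derivatives must converge) yields $[u]^{(1+\beta,2s+\alpha)}_{((-2^{-2s},0)\times B_{1/2})}=\lim_k [u_k]^{(1+\beta,2s+\alpha)}_{((-2^{-2s},0)\times B_{1/2})}=1$, contradicting $[u]^{(1+\beta,2s+\alpha)}=0$. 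This contradiction proves the proposition.

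**The main obstacle** I expect is not the compactness itself but the bookkeeping around the growth/Liouville step: making sure that after subtracting the right polynomial and differentiating the appropriate number of times, the growth exponent is \emph{strictly} below $2s$ (this is where the hypothesis "$\alpha+2s\notin\Z$" is used), and making sure the convergence is strong enough in $C^{1,\nu}_{t,x}$ on the smaller closed cylinder that the seminorm passes to the limit as an equality rather than merely an inequality — for that one needs a preliminary interior equicontinuity estimate for the $\nu$-th derivatives of solutions, which can be obtained from the already-known H\"older estimates (e.g. the results of \cite{SS,KS}) applied to difference quotients, or bootstrapped from Lemma~\ref{lem.1} together with a compactness argument on a slightly smaller scale. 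The convergence of the operators $L_k\to L$ and the passage to the limit in the weak formulation is handled cleanly by Lemma~\ref{lem.1}, provided hypothesis (3) there is met, which is why the polynomial subtraction (to reduce effective growth below $2s$, or to reduce to a differentiated equation) must be done \emph{before} invoking the lemma.
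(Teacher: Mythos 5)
Your overall strategy (contradiction plus compactness, passing to a limiting operator via Lemma~\ref{lem.1}, then a Liouville theorem) points in the right direction, but there is a genuine gap at the heart of the argument: you never rescale, and without a blow-up the nondegeneracy of the limit cannot be recovered. Under your normalization $[u_k]^{(1+\beta,2s+\alpha)}_{((-2^{-2s},0)\times B_{1/2})}=1$, the contradiction inequality forces $\|u_k\|_{C^{1,\nu}_{t,x}((-1,0)\times B_1)}\to 0$, so any locally uniform limit vanishes identically on the unit cylinder; the higher-order seminorm does \emph{not} pass to the limit, because a $C^{1,\nu}$ bound (let alone uniform convergence) gives no control on the modulus of continuity at order $2s+\alpha>\nu$, and H\"older seminorms are only lower semicontinuous in the direction $[u]\le\liminf_k[u_k]$, which is useless here. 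Indeed the situation you are in (seminorm equal to $1$, lower-order norms tending to $0$) forces the seminorm to concentrate at vanishing scales; the paper makes this quantitative in Step~1 of its proof by choosing near-extremal points and showing that their parabolic distance $\rho_k=|t_k-s_k|^{1/2s}+|x_k-y_k|$ tends to $0$. The remedy, which is the key idea you are missing, is Simon's scaling method: blow up at scale $\rho_k$ around those points, i.e.\ set $v_k(t,x)=\rho_k^{-(2s+\alpha)}[w_k]^{-1}\bigl(w_k(t_k+\rho_k^{2s}t,x_k+\rho_k x)-p_k(t,x)\bigr)$, so that the nondegeneracy becomes a \emph{pointwise} lower bound on derivatives or differences of $v_k$ at points $\xi_k\in[-1,0]\times\overline{B_1}$ separated at unit scale; such pointwise information does survive the $C^{1,\nu}_{t,x}$ local convergence and yields the contradiction with $v\equiv 0$. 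Your "lower semicontinuity / $C^{1,\nu}$ convergence gives equality of seminorms" step, which you yourself flag as shaky, is precisely where the proof breaks.

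A second, more technical discrepancy: to meet the growth hypothesis of Theorem~\ref{thm.liouv} (and hypothesis (3) of Lemma~\ref{lem.1}) the paper does not differentiate the limit equation; it applies the lemma and the Liouville theorem to the \emph{increments} $v_k(t+\tau,x+h)-v_k(t,x)$, whose growth is of order $\max\{\alpha,\,2s+\alpha-1\}<2s$ in all three cases $\nu=0,1,2$ (here $\alpha<1$ and $\alpha<2s$ are used), and this also handles the growth in $t$. Your proposal to differentiate $\nu$ times in $x$ fails exactly when $\nu=0$: there is nothing to differentiate, yet the growth exponent $2s+\alpha$ can exceed $2s$ (e.g.\ $s=0.3$, $\alpha=0.3$), so neither Lemma~\ref{lem.1} nor Theorem~\ref{thm.liouv} would apply to the limit function itself. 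The increment device, performed before passing to the limit, is the clean way around both issues, and it also explains why the conclusion of the Liouville step is that $v$ is a polynomial (in $x$ plus a polynomial in $t$) annihilated by the normalization at the origin.
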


\begin{proof}

Let us argue by contradiction. Suppose that for a given $\delta > 0$ the estimate does not hold for any constant $C$, so that for each $k \in \N$ we have that there exist functions $w_k\in C^\infty_c((-\infty,0]\times\R^n), f_k\in C_{t,x}^{\beta,\alpha}((-1,0)\times B_1)$, and operators $L_k$ of the form \eqref{eq.L1}-\eqref{eq.L2} such that $\de_t w_k-L_k w_k = f_k$ in $(-1,0)\times B_1$ and
\begin{equation}
\label{eq.ineqk}
[w_k]^{(1+\beta,2s+\alpha)}_{((-2^{-2s},0)\times B_{1/2})} > \delta [w_k]^{(1+\beta,2s+\alpha)}_{((-1,0)\times \R^n)} + k \left(\|w_k\|_{C^{1,\nu}_{t,x} ((-1,0)\times B_1)} + [f_k]_{C_{t,x}^{\beta,\alpha}((-1,0)\times B_1)} \right).
\end{equation}

In order to find the contradiction we will follow four steps.
\\[0.5cm]
{\bf Step 1: The blow-up parameter, $\rho_k$.} We will need to separate three different cases, according to the value of $\nu$.

$\bullet$ {\it Case $\nu = 0$.} The seminorm in this case is
\[
 [w_k]^{(1+\beta,2s+\alpha)}_{(-2^{-2s},0)\times B_{1/2}} = [\de_t w_k]_{C_{t,x}^{\beta,\alpha}((-2^{-2s},0)\times B_{1/2})} +  [w_k]_{C^{2s+\alpha}_x((-2^{-2s},0)\times B_{1/2})}  ,
\]
and by definition, we can choose $x_k,y_k\in B_{1/2},~t_k,s_k\in (-2^{-2s},0)$ such that
\begin{equation}
\label{eq.ineqk2.0}
\frac{1}{4}[w_k]^{(1+\beta,2s+\alpha)}_{((-2^{-2s},0)\times B_{1/2})} < \frac{|\de_t w_k(t_k,x_k)-\de_t w_k(s_k,y_k)|}{|s_k-t_k|^\beta+|x_k-y_k|^\alpha}+\frac{| w_k(t_k,x_k) -  w_k(t_k,y_k)|}{|x_k-y_k|^{2s+\alpha}}.
\end{equation}

Define,
\[
\rho_k := |t_k-s_k|^{\frac{1}{2s}}+|x_k-y_k|,
\]
and now we claim that, up to possibly a new choice of $s_k$, we have
\begin{equation}
\label{eq.ineqk2.0.1}
\chi(n)[w_k]^{(1+\beta,2s+\alpha)}_{((-2^{-2s},0)\times B_{1/2})} < \frac{|\de_t w_k(t_k,x_k)-\de_t w_k(s_k,y_k)|}{\rho_k^\alpha}+\frac{| w_k(t_k,x_k) -  w_k(t_k,y_k)|}{\rho_k^{2s+\alpha}},
\end{equation}
for some small constant $\chi > 0$ depending only on $n$. Indeed, if in the equation \eqref{eq.ineqk2.0} the first term of the sum is greater than the second one, we are done. Otherwise, we can fix $s_k=t_k$ and obtain the desired result.

Therefore, we have
\begin{align*}
\chi(n)[w_k &]^{(1+\beta,2s+\alpha)}_{((-2^{-2s},0)\times B_{1/2})} <\\
<& \frac{2\|\de_t w_k\|_{L^\infty((-2^{-2s},0)\times B_{1/2})}}{\rho_k^{\alpha}}+\frac{2\|w_k\|_{L^\infty((-2^{-2s},0)\times B_{1/2})}}{\rho_k^{2s+\alpha}}\\
\leq & \left(\frac{1}{\rho_k^\alpha} + \frac{1}{\rho_k^{2s+\alpha}}\right) \frac{[w_k]^{(1+\beta,2s+\alpha)}_{((-2^{-2s},0)\times B_{1/2})}}{k} ,
\end{align*}
where in the last inequality we are using \eqref{eq.ineqk}. We finally obtain that $\rho_k\to 0$ as $k\to \infty$, since
\[
\chi(n)  k\leq \frac{1}{\rho_k^\alpha} + \frac{1}{\rho_k^{2s+\alpha}}.
\]

$\bullet$ {\it Case $\nu = 1$.} Proceed as before, by choosing $x_k,y_k\in B_{1/2},~t_k,s_k\in (-2^{-2s},0)$ such that
\begin{equation}
\label{eq.ineqk2}
\frac{1}{2}[w_k]^{(1+\beta,2s+\alpha)}_{((-2^{-2s},0)\times B_{1/2})} < \frac{|\de_t w_k(t_k,x_k)-\de_t w_k(s_k,y_k)|}{|s_k-t_k|^\beta+|x_k-y_k|^\alpha}+\frac{|\nabla_x w_k(t_k,x_k) - \nabla_x w_k(s_k,y_k)|}{|t_k-s_k|^{\frac{2s+\alpha-1}{2s}}+|x_k-y_k|^{2s+\alpha-1}}.
\end{equation}

Define $\rho_k$ as before. Now, it immediately follows that
\begin{equation}
\label{eq.ineqk2.0.2}
\chi(n)[w_k]^{(1+\beta,2s+\alpha)}_{((-2^{-2s},0)\times B_{1/2})} < \frac{|\de_t w_k(t_k,x_k)-\de_t w_k(s_k,y_k)|}{\rho_k^\alpha}+\frac{| \nabla_x w_k(t_k,x_k) -  \nabla_x w_k(s_k,y_k)|}{\rho_k^{2s+\alpha-1}},
\end{equation}
where we keep the same constant $\chi(n)> 0$ as in the previous case, by making it smaller if necessary. The same reasoning as before yields $\rho_k \to 0$ as $k\to \infty$.

$\bullet$ {\it Case $\nu = 2$}. A similar reasoning as in the case $\nu = 0$ yields that, for some constant $\chi(n)> 0$, there are $x_k,y_k\in B_{1/2},~t_k,s_k\in (-2^{-2s},0)$ and $\rho_k$ defined as before such that
\begin{align}
\label{eq.ineqk2.0.3}
\chi(n)[w_k &]^{(1+\beta,2s+\alpha)}_{((-2^{-2s},0)\times B_{1/2})} <
 \frac{|\de_t w_k(t_k,x_k)-\de_t w_k(s_k,y_k)|}{\rho_k^\alpha}\\&+\frac{| D^2_x w_k(t_k,x_k) -  D^2_x w_k(s_k,y_k)|}{\rho_k^{2s+\alpha-2}}\nonumber + \frac{| \nabla_x w_k(t_k,x_k) -  \nabla_x w_k(s_k,x_k)|}{\rho_k^{2s+\alpha-1}}.
\end{align}
Therefore, by the previous argument, $\rho_k \to 0$ as $k \to \infty$.
\\[0.5cm]
{\bf Step 2: The blow-up sequence.} We will proceed with a blow-up method. We begin by defining the following functions, where we will assume that $t_k \geq s_k$ (otherwise, we can swap them),
\[
v_k(t,x):= \frac{w_k(t_k+\rho_k^{2s} t,x_k+\rho_k x) - p_k(t, x)}{\rho_k^{2s+\alpha} [w_k]^{(1+\beta,2s+\alpha)}_{((-1,0)\times \R^n)}}.
\]
Here, $p_k(t, x)$ is a polynomial in $t$ of degree at most $1$ plus a polynomial in $x$ of degree at most $\nu$, such that
\begin{equation}
\label{eq.vkzero}
v_k(0,0) = \dots = D^\nu_x v_k(0,0) = \de_tv_k(0,0)= 0.
\end{equation}

First, notice that this function has a bounded $(1+\beta, 2s+\alpha)$-seminorm,
\begin{equation}
\label{eq.bdsemi}
[v_k]^{(1+\beta,2s+\alpha)}_{\left(\left(-\frac{1}{2}\rho_k^{-2s},0\right]\times \R^n \right)} \leq 1.
\end{equation}
Indeed, this follows simply by considering the scaling of the seminorm (see \eqref{eq.rescnorm}), and noticing that the seminorm of the polynomial $p_k$ is zero.

Secondly, we have uniform convergence towards $0$ of the following quantity for fixed $\tau \in (-1,0)$ and $h\in B_1$,
\begin{equation}
\label{eq.bdk}
|(\de_t-L_k)(v_k(t+\tau,x+h)-v_k(t, x))| \leq \frac{C(n)}{k} \to 0
\end{equation}
uniformly in $\left(-\frac{1}{2}\rho_k^{-2s},0\right)\times B_{\left(\frac{1}{2}\rho_k^{-1} - h\right)}$. Indeed,
\begin{align*}
 |(&\de_t-  L_k)(v_k(t+\tau,x+h)-v_k(t, x))| = \\
& = \frac{|\rho_k^{2s}\tau|^\beta+|\rho_k h|^\alpha}{\rho_k^{\alpha} [w_k]^{(1+\beta,2s+\alpha)}_{((-1,0)\times \R^n)}}\cdot \frac{|f_k(t_k+\rho_k^{2s} (t+\tau),x_k+\rho_k (x+h)) - f_k(t_k+\rho_k^{2s} t,x_k+\rho_k x)|}{|\rho_k^{2s}\tau|^\beta+|\rho_k h|^\alpha} \\
&\leq C(n)\frac{[f_k]_{C_{t,x}^{\beta,\alpha}((-1,0)\times B_1)}}{[w_k]^{(1+\beta,2s+\alpha)}_{((-1,0)\times \R^n)}}\leq \frac{C(n)}{k} \to 0,
\end{align*}
where in the last inequality we used \eqref{eq.ineqk}.

We now define the following points in the set $[-1,0]\times\overline{B_1}$,
\[
\xi_k = \left( \frac{s_k-t_k}{\rho_k^{2s}},\frac{y_k-x_k}{\rho_k} \right),~~~~
\xi_k^{(1)} = \left( \frac{s_k-t_k}{\rho_k^{2s}},0 \right),~~~~
\xi_k^{(2)} = \left( 0,\frac{y_k-x_k}{\rho_k} \right),
\]
and notice that
\begin{alignat*}{2}
\de_t v_k(\xi_k)= &\frac{\de_t w_k(s_k,y_k) - \de_t w_k(t_k,x_k)}{\rho_k^{\alpha} [w_k]^{(1+\beta,2s+\alpha)}_{((-1,0)\times \R^n)}}&&~~~\textrm{ for }\nu = 0,1,2,\\[4mm]
 v_k(\xi_k^{(2)})= &\frac{ w_k(t_k,y_k) -  w_k(t_k,x_k)}{\rho_k^{2s+\alpha} [w_k]^{(1+\beta,2s+\alpha)}_{((-1,0)\times \R^n)}}&& ~~~\textrm{ for } \nu = 0,
\\[4mm]
D^\nu_x v_k(\xi_k)= &\frac{D^\nu_x w_k(s_k,y_k) - D^\nu_x w_k(t_k,x_k)}{\rho_k^{2s+\alpha-\nu} [w_k]^{(1+\beta,2s+\alpha)}_{((-1,0)\times \R^n)}}&& ~~~\textrm{ for } \nu = 1,2,\\[4mm]
\nabla_x v_k(\xi_k^{(1)})= &\frac{\nabla_x w_k(s_k,x_k) - \nabla_x w_k(t_k,x_k)}{\rho_k^{2s+\alpha-1} [w_k]^{(1+\beta,2s+\alpha)}_{((-1,0)\times \R^n)}}&& ~~~\textrm{ for } \nu = 2,
\end{alignat*}
Thus, combining \eqref{eq.ineqk2.0.1}-\eqref{eq.ineqk2.0.2}-\eqref{eq.ineqk2.0.3} with \eqref{eq.ineqk} we obtain
\begin{alignat}{2}
\label{eq.nondeg}
|\de_t v_k(\xi_k)|+|v_k(\xi_k^{(2)})| >& \chi(n) \delta && ~~~~\textrm{ if }\nu = 0,\nonumber \\[4mm]
 |\de_t v_k(\xi_k)|+|\nabla_x v_k(\xi_k)| >& \chi(n) \delta && ~~~~ \textrm{ if } \nu = 1,
\\[4mm]
|\de_t v_k(\xi_k)|+|D^2_x v_k(\xi_k)| + |\nabla_x v_k(\xi_k^{(1)})| >& \chi(n) \delta && ~~~~\textrm{ if }\nu = 2.\nonumber
\end{alignat}

Notice that, up to a subsequence, $\xi_k$ converge to some $\xi\in [0,1]\times\overline{B_1}$ (and so do $\xi_k^{(1)}$ and $\xi_k^{(2)}$). From now on we restrict ourselves to this subsequence.
\\[0.5cm]
{\bf Step 3. Convergence properties of the blow-up sequence.} Recall that we have a uniform bound on the seminorms of $v_k$, \eqref{eq.bdsemi}. Thus, we deduce that, up to subsequences, $v_k$ converges in $C^1_t$ and in $C^\nu_x$ to some function $v$ over compact subsets of $(-\infty,0]\times\R^n$. Indeed, this follows since the Hölder seminorms $[v_k]_{C^{1+\beta}_t\left(\left(-\frac{1}{2}\rho_k^{-2s},0\right]\times\R^n\right)}$, $[v_k]_{C^{2s+\alpha}_x\left(\left(-\frac{1}{2}\rho_k^{-2s},0\right]\times\R^n\right)}$ are uniformly bounded with respect to $k\in\N$, and the domains are expanding to $(-\infty,0]\times\R^n$.

We restrict ourselves to this subsequence, and obtain a limit function $v$ defined in $(-\infty,0]\times\R^n$ such that
\begin{equation}
\label{eq.viszero}
v(0,0) = \dots = D^\nu_x v(0,0) = \de_t v(0,0)= 0, ~~\textrm{and}~~~
[v]^{(1+\beta,2s+\alpha)}_{(-\infty,0)\times \R^n)} \leq 1.
\end{equation}
By \eqref{eq.nondeg} and the nice convergence in $C^{1,\nu}_{t,x}$, we get that $v$ cannot be constant.

From now on we want to consider the functions $v_k(t+\tau, x+h)-v_k(t, x)$ for fixed $\tau \in (-1,0), h\in B_1$. We want to compute an upper bound for $|v_k(t+\tau, x+h)-v_k(t, x)|$, for $t\in\left(-\frac{1}{2}\rho_k^{-2s}-\tau,0\right]$ and $x\in \R^n$. To do so we separate three cases again:

$\bullet$ {\it Case $\nu = 0$},
\begin{align*}
|v_k(t+\tau, x+h)-v_k(t, x)|& \leq \tau \sup_{\substack{t'\in(t+\tau, 0)\\x'\in B_{|x|+|h|}}} |\de_t v_k(t',x')| \\
&~~~~~~~~~~~~~~+ h^{2s+\alpha} \sup_{t'\in (t+\tau,0)}[v_k(t',\cdot)]_{C^{2s+\alpha}(B_{|x|+|h|})} \\
& \leq C (|x|^\alpha + |t|^\beta + 1)
\end{align*}
using the bounds on the seminorms of $v_k$ and \eqref{eq.vkzero}. The constant $C$ can depend on $\tau$ and $h$.

$\bullet$ {\it Case $\nu = 1$},
\begin{align*}
|v_k(t+\tau, x+h)-v_k(t, x)|\leq & \tau \sup_{\substack{t'\in(t+\tau, 0)\\x'\in B_{|x|+|h|}}} |\de_t v_k(t',x')| + h \sup_{\substack{t'\in(t+\tau, 0)\\x'\in B_{|x|+|h|}}}|\nabla_x v_k(t',x')|\\
 \leq & C(|x|^\alpha+|x|^{2s+\alpha-1} + |t|^\beta + |t|^{\frac{2s+\alpha-1}{2s}}).
\end{align*}

$\bullet$ {\it Case $\nu = 2$},
\begin{align*}
|v_k(t+\tau, x+h)-v_k(t, x)|\leq & \tau \sup_{\substack{t'\in(t+\tau, 0)\\x'\in B_{|x|+|h|}}} |\de_t v_k(t',x')| + h \sup_{\substack{t'\in(t+\tau, 0)\\x'\in B_{|x|+|h|}}} |\nabla_x v_k(t',x')|
\end{align*}
Now, we use that
\begin{align*}
\sup_{\substack{t'\in(t+\tau, 0)\\x'\in B_{|x|+|h|}}} |\nabla_x v_k(t',x')|\leq & \sup_{\substack{t'\in(t+\tau, 0)\\x'\in B_{|x|+|h|}}} |\nabla_x v_k(t',x')-\nabla_x v_k(0,x')|+\sup_{\substack{t'\in(t+\tau, 0)\\x'\in B_{|x|+|h|}}} |\nabla_x v_k(0,x')| \\
 \leq & |t|^{\frac{2s+\alpha-1}{2s}} + h \sup_{x'\in B_{|x|+|h|}}\left(|x'|\left[\sup_{x''\in B_{|x'|}} |D^2_x v_k(0,x'')|\right]\right)\\
 \leq & C\left(|t|^{\frac{2s+\alpha-1}{2s}} + |x|^{2s+\alpha-1}\right),
\end{align*}
so that in all we have
\[
|v_k(t+\tau, x+h)-v_k(t, x)|\leq C(|x|^\alpha+|x|^{2s+\alpha-1} + |t|^\beta + |t|^{\frac{2s+\alpha-1}{2s}}).
\]

In all three cases we deduce that, since $\alpha < 2s$ and $\alpha < 1$,
\begin{equation}
\label{eq.eps}
|v_k(t+\tau, x+h)-v_k(t, x)|\leq C\left(|x|^{2s-\epsilon} + |t|^{\frac{2s-\epsilon}{2s}}\right),
\end{equation}
where $\epsilon = \min\{2s-\alpha,1-\alpha\}> 0$ and $C$ is independent of $k$. We recall that the previous bound is found for $t\in\left(-\frac{1}{2}\rho_k^{-2s}-\tau,0\right]$ and $x\in \R^n$.

On the other hand, from the compactness of probability measures on the sphere we can find a subsequence of $\{L_k\}$ converging weakly to an operator $\tilde{L}$, that is, a subsequence of spectral measures $\{\mu_k\}$ converging to a spectral measure $\mu$ of an operator $\tilde{L}$ of the form \eqref{eq.L1}-\eqref{eq.L2}. Therefore, we have the ingredients to apply Lemma \ref{lem.1} to the sequence $v_k(t+\tau,x+h)-v_k(t, x)$.

Fixed any bounded sets $I\subset(-\infty,0]$, $K\subset \R^n$ we have
\begin{itemize}
\item{$v_k(t+\tau,x+h)-v_k(t, x)$ converges to $v(t+\tau,x+h)-v(t, x)$ uniformly over compact sets,}
\item{$(\de_t-L_k)(v_k(t+\tau,x+h)-v_k(t,x))$ converges uniformly on $K$ to the constant function $0$,}
\item{$|v_k(t+\tau, x+h)-v_k(t, x)| \leq C \left(1+|x|^{2s-\epsilon}\right)$ for all $x\in \R^n$, where $C$ now depends on $I$, which is fixed, and for $k$ large enough.}
\end{itemize}

Therefore, by Lemma \ref{lem.1} we deduce that
\[
(\de_t-\tilde{L})(v(t+\tau, x+h)-v(t, x)) = 0 \textrm{ in } I\times K.
\]
Since this can be done for any $I\subset (-\infty,0]$ and any $K\subset \R^n$, then
\[
(\de_t-\tilde{L})(v(t+\tau, x+h)-v(t, x)) = 0 \textrm{ in } (-\infty,0)\times \R^n.
\]
\\
{\bf Step 4: Contradiction.} Now, from the expression \eqref{eq.eps} and using the Liouville-type theorem in the entire space, Theorem \ref{thm.liouv}, we obtain that $v(t+\tau, x+h)-v(t, x)$ must be a polynomial in $x$ of degree at most $\lfloor\max\{\alpha,2s+\alpha-1\} \rfloor = \max\{0,\nu-1\}$.

This means that $v(t, x)$ is a polynomial in $x$ plus a polynomial in $t$, satisfying \eqref{eq.viszero}. Therefore, $v\equiv 0$, which is a contradiction with the expression \eqref{eq.nondeg} in the limit.
\end{proof}


With the previous result we have the key ingredients to prove our main interior regularity estimate.

\begin{proof}[Proof of Theorem \ref{thm.maina}]

Let $\beta =\frac{\alpha}{2s} \in (0,1)$ as before.

Pick $\eta\in C_c^\infty(B_2)$ a cutoff function depending only on $x$ such that $\eta \equiv 1$ in $B_{3/2}$, and consider $w \in C^\infty_c((-\infty,0]\times\R^n)$, satisfying $\de_t w-Lw = f \textrm{ in } B_1$. Applying Proposition \ref{prop.main} to the function $\eta w$ we obtain that, for any $\delta$, there is a $C = C(\delta, n,s,\alpha,\lambda, \Lambda)$ such that
\begin{align*}
[w &]^{(1+\beta,2s+\alpha)}_{((-2^{-2s},0)\times B_{1/2})} \leq \delta [\eta w]^{(1+\beta,2s+\alpha)}_{((-1,0)\times B_2)} +\\
&+ C \left(\|w\|_{C^{1,\nu}_{t,x}((-1,0)\times B_1)} + [f]_{C^{\beta,\alpha}_{t,x}((-1,0)\times B_1)} + [(\de_t - L)(\eta w - w)]_{C^{\beta,\alpha}_{t,x}((-1,0)\times B_1)}\right).
\end{align*}

Now, since $\eta w - w$ vanishes in $B_{3/2}$ we have that
\begin{equation}
\label{eq.boundLphi}
[(\de_t - L)(\eta w - w)]_{C^{\beta,\alpha}_{t,x}((-1,0)\times B_1)} =[L(\eta w - w)]_{C^{\beta,\alpha}_{t,x}((-1,0)\times B_1)} \leq C\|w\|_{C^{\beta,\alpha}_{t,x}((-1,0)\times \R^n)}.
\end{equation}
Indeed, if we denote $\phi := \eta w - w$, we clearly have
\[
[\phi]_{C^{\beta,\alpha}_{t,x}((-1,0)\times\R^n)} \leq C \|w\|_{C^{\beta,\alpha}_{t,x}((-1,0)\times\R^n)}
\]
(for example using \eqref{eq.1.1} inside $B_2$ and noticing that $|\phi| = |w|$ outside $B_2$). Thus,
\begin{align*}
|L\phi(t, x)-L\phi(t',x')| & \leq C\int_{S^{n-1}} \int_{1/2}^\infty |-\phi(t, x+r\theta)+\phi(t',x'+r\theta)|\frac{dr}{|r|^{1+2s}}d\mu(\theta) \\
& \leq C[\phi]_{C^{\beta,\alpha}_{t,x}((-1,0)\times \R^n)} \left(|t-t'|^\beta+|x-x'|^\alpha\right)\int_{S^{n-1}} \int_{1/2}^\infty \frac{dr}{|r|^{1+2s}}d\mu(\theta),
\end{align*}
so that we reach our conclusion, since
\[
\int_{S^{n-1}} \int_{1/2}^\infty \frac{dr}{|r|^{1+2s}}d\mu(\theta) \leq \left(\int_{S^{n-1}}d\mu(\theta)\right)\int_{1/2}^\infty r^{-1-2s} dr \leq C \Lambda.
\]

The previous inequality, \eqref{eq.boundLphi}, combined with an inequality of the form \eqref{eq.1} for $[\eta w]^{(1+\beta,2s+\alpha)}_{((-1,0)\times B_2)}$, yields that for any $\delta > 0$, there exists a $C  = C(\delta, n,s,\alpha,\lambda, \Lambda)$ such that
\begin{align}
\label{eq.2}
[w]^{(1+\beta,2s+\alpha)}_{((-2^{-2s},0)\times B_{1/2})} &\leq \delta [w]^{(1+\beta,2s+\alpha)}_{((-1,0)\times B_2)} +\\
\nonumber & + C \left(\|w\|_{C^{1,\nu}_{t,x}((-1,0)\times B_1)} + [f]_{C^{\beta,\alpha}_{t,x}((-1,0)\times B_1)} + \|w\|_{C^{\beta,\alpha}_{t,x}((-1,0)\times \R^n)}\right).
\end{align}

Now, using interpolation \eqref{eq.interp}, for any $\kappa > 0$, there exists $\overline{C} = \overline{C}(\kappa, n, \alpha,\beta, s)$ such that
\[
\|w\|_{C^{1,\nu}_{t,x}((-1,0)\times B_1)}\leq \kappa [w]^{(1+\beta,2s+\alpha)}_{((-1,0)\times B_1)} + \overline{C}\|w\|_{L^\infty((-1,0)\times B_1)}.
\]
Fixing $\kappa= \delta/C$ with $C$ as in \eqref{eq.2}, we get
\[
[w]^{(1+\beta,2s+\alpha)}_{((-2^{-2s},0)\times B_{1/2})} \leq 2\delta [w]^{(1+\beta,2s+\alpha)}_{((-1,0)\times B_2)} + C \left([f]_{C^{\beta,\alpha}_{t,x}((-1,0)\times B_1)} + \|w\|_{C^{\beta,\alpha}_{t,x}((-1,0)\times \R^n)}\right).
\]
By a standard argument (see for example the Lemma after \cite[Theorem~2]{S} or the proof of \cite[Theorem 1.1 (b)]{RS}) we obtain that there exists a constant $C = C(n,s,\alpha,\lambda,\Lambda)$ such that
\[
[w]^{(1+\beta,2s+\alpha)}_{((-2^{-2s},0)\times B_{1/2})} \leq  C \left([f]_{C^{\beta,\alpha}_{t,x}((-1,0)\times B_1)} + \|w\|_{C^{\beta,\alpha}_{t,x}((-1,0)\times \R^n)}\right).
\]
Using interpolation again, it follows
\[
\|w\|^{(1+\beta,2s+\alpha)}_{((-2^{-2s},0)\times B_{1/2})} \leq  C \left([f]_{C^{\beta,\alpha}_{t,x}((-1,0)\times B_1)} + \|w\|_{C^{\beta,\alpha}_{t,x}((-1,0)\times \R^n)}\right).
\]
In particular, we obtain, for $w\in C_c^\infty((-\infty,0]\times\R^n)$
\begin{align*}
\sup_{x\in B_{1/2}} \|w\|_{C^{1+\beta}_t (-2^{-2s},0)} + \sup_{t\in(-2^{-2s},0)} & \|w\|_{C^{2s+\alpha}_x(B_{1/2})}\leq \\
&\leq  C \left([f]_{C^{\beta,\alpha}_{t,x}((-1,0)\times B_1)} + \|w\|_{C^{\beta,\alpha}_{t,x}((-1,0)\times \R^n)}\right).
\end{align*}
By a covering argument, the domain of $t$ on the left hand side can be easily replaced by $(-1/2, 0)$.

To get the result for general $u\in C^{\frac{\alpha}{2s},\alpha}_{t,x}((-1,0)\times\R^n)$ we can use a standard approximation argument. Indeed, if $u\in C^{\frac{\alpha}{2s},\alpha}_{t,x}$, and $\eta_\epsilon$ is a standard mollifier, then we regularise $u$ and notice that $(\de_t-L)(u*\eta_\epsilon) = f*\eta_\epsilon$. We now apply the result for smooth functions to $u * \eta_\epsilon$ and $f*\eta_\epsilon$, and take the limit as $\epsilon \downarrow 0$, to get the desired result.
\end{proof}

\subsection{Proof of Theorem \ref{thm.mainb}}
We next prove the interior regularity for $f\in L^\infty$, Theorem~\ref{thm.mainb}. To do so, we begin as in the previous case, with the following proposition, analogous to Proposition \ref{prop.main}.

\begin{prop}
\label{prop.main2}
Let $s\in(0,1)$, $\nu = \lceil 2s \rceil -1$, and let $L$ be an operator of the form \eqref{eq.L1}-\eqref{eq.L2}. Let $\epsilon > 0$ be such that $2s-\epsilon > \nu$ Assume $u\in C^\infty_c((-\infty,0]\times\R^n)$ satisfies
\[
\de_t u-Lu = f \textrm{ in } (-1,0)\times B_1
\]
with $f\in L^\infty((-1,0)\times B_1)$. Then, for any $\delta > 0$ we have
\begin{equation}
\label{eq.ineq1_2}
[u]^{(1-\frac{\epsilon}{2s},2s-\epsilon)}_{((-2^{-2s},0)\times B_{1/2})} \leq \delta [u]^{(1-\frac{\epsilon}{2s},2s-\epsilon)}_{((-1,0)\times \R^n)} + C \left(\|u\|_{C^{0,\nu}_{t,x} ((-1,0)\times B_1)} + \|f\|_{L^\infty((-1,0)\times B_1)} \right),
\end{equation}
where the constant $C$ depends only on $\delta$, $n$, $s$, $\epsilon$ and the ellipticity constants \eqref{eq.L2}.
\end{prop}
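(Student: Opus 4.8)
The plan is to mimic the proof of Proposition~\ref{prop.main} essentially verbatim, replacing the seminorm $[\,\cdot\,]^{(1+\beta,2s+\alpha)}$ by $[\,\cdot\,]^{(1-\frac{\epsilon}{2s},2s-\epsilon)}$ and using that the right-hand side is now only in $L^\infty$ rather than parabolic H\"older. I would argue by contradiction: assuming the estimate fails for some $\delta>0$, I would obtain sequences $w_k\in C^\infty_c((-\infty,0]\times\R^n)$, $f_k\in L^\infty((-1,0)\times B_1)$ and operators $L_k$ of the form \eqref{eq.L1}-\eqref{eq.L2} with $\de_t w_k-L_k w_k=f_k$ in $(-1,0)\times B_1$ and
\[
[w_k]^{(1-\frac{\epsilon}{2s},2s-\epsilon)}_{((-2^{-2s},0)\times B_{1/2})} > \delta [w_k]^{(1-\frac{\epsilon}{2s},2s-\epsilon)}_{((-1,0)\times\R^n)} + k\left(\|w_k\|_{C^{0,\nu}_{t,x}((-1,0)\times B_1)} + \|f_k\|_{L^\infty((-1,0)\times B_1)}\right).
\]

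\textbf{Step 1 (blow-up parameter).} As in Proposition~\ref{prop.main} I would split into the two cases $\nu=0$ and $\nu=1$. For $\nu=0$ I pick $t_k,s_k\in(-2^{-2s},0)$, $x_k,y_k\in B_{1/2}$ almost realizing the seminorm $[w_k]_{C^{1-\frac{\epsilon}{2s},2s-\epsilon}_{t,x}}$, set $\rho_k:=|t_k-s_k|^{1/2s}+|x_k-y_k|$, and (possibly swapping or equating $s_k,t_k$) get a one-homogeneous-type lower bound in $\rho_k$; dividing by the full seminorm and using the contradiction inequality together with $\|w_k\|_{L^\infty}\le \frac1k[w_k]^{(1-\frac{\epsilon}{2s},2s-\epsilon)}_{((-1,0)\times\R^n)}$ forces $\rho_k\to0$. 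The case $\nu=1$ is identical with $\nabla_x w_k$ in place of $w_k$ in the spatial increment.

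\textbf{Step 2 (blow-up sequence).} Define
\[
v_k(t,x) := \frac{w_k(t_k+\rho_k^{2s}t,\,x_k+\rho_k x) - p_k(t,x)}{\rho_k^{2s-\epsilon}\,[w_k]^{(1-\frac{\epsilon}{2s},2s-\epsilon)}_{((-1,0)\times\R^n)}},
\]
with $p_k$ a polynomial of degree $0$ in $t$ and degree $\le\nu$ in $x$ chosen so that $v_k(0,0)=\dots=D^\nu_x v_k(0,0)=0$. The rescaling \eqref{eq.rescnorm_2} gives $[v_k]^{(1-\frac{\epsilon}{2s},2s-\epsilon)}_{((-\frac12\rho_k^{-2s},0]\times\R^n)}\le1$, and a nondegeneracy bound $[v_k]^{(1-\frac{\epsilon}{2s},2s-\epsilon)}_{(\dots)}>$ (const)$\,\delta$ holds at the rescaled points $\xi_k\in[0,1]\times\overline{B_1}$, which converge along a subsequence to some $\xi$. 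For the equation: here one does \emph{not} need to take increments, since $f_k$ is only $L^\infty$; instead
\[
|(\de_t-L_k)v_k(t,x)| = \frac{|f_k(t_k+\rho_k^{2s}t,x_k+\rho_k x)|}{\rho_k^{-\epsilon}\,[w_k]^{(1-\frac{\epsilon}{2s},2s-\epsilon)}_{((-1,0)\times\R^n)}} \le \frac{\rho_k^\epsilon\,\|f_k\|_{L^\infty}}{[w_k]^{(1-\frac{\epsilon}{2s},2s-\epsilon)}_{((-1,0)\times\R^n)}} \le \frac{\rho_k^\epsilon}{k}\to0,
\]
uniformly on compact sets since $\rho_k\to0$.

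\textbf{Step 3 (convergence and growth).} The uniform seminorm bound and the vanishing conditions at the origin give, via Arzel\`a--Ascoli on expanding domains, a subsequence with $v_k\to v$ in $C^0_t$ and $C^\nu_x$ on compacts of $(-\infty,0]\times\R^n$, with $v(0,0)=\dots=D^\nu_x v(0,0)=0$ and $[v]^{(1-\frac{\epsilon}{2s},2s-\epsilon)}_{((-\infty,0)\times\R^n)}\le1$; the nondegeneracy in the limit shows $v$ is not constant (for $\nu=0$) resp.\ $\nabla_x v$ not constant (for $\nu=1$). Integrating the seminorm bound from the origin yields the growth control $|v_k(t,x)|\le C(1+|x|^{2s-\epsilon}+|t|^{\frac{2s-\epsilon}{2s}})$ (for $\nu=1$ one first bounds $\nabla_x v_k$ and then integrates), so in particular $|v_k(t,x)|\le C(1+|x|^{2s-\epsilon})$ uniformly for bounded $t$. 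By compactness of probability measures on $S^{n-1}$, after passing to a further subsequence $\mu_k\to\mu$ for some spectral measure $\mu$ of an operator $\tilde L$ of the form \eqref{eq.L1}-\eqref{eq.L2}, and Lemma~\ref{lem.1} applies directly to $(v_k,0,f_k\circ(\text{rescaling}))$—or rather to $v_k$ with right-hand side tending to $0$—giving $\de_t v-\tilde L v=0$ in $(-\infty,0)\times\R^n$.

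\textbf{Step 4 (contradiction).} Apply Theorem~\ref{thm.liouv} with $\gamma=2s-\epsilon<2s$: $v$ must be a polynomial in $x$ of degree $\le\lfloor 2s-\epsilon\rfloor=\nu$, and constant in $t$. Together with the vanishing conditions $v(0,0)=\dots=D^\nu_x v(0,0)=0$ this forces $v\equiv0$, contradicting the non-constancy obtained in Step~3. Hence \eqref{eq.ineq1_2} holds.

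The main obstacle, and the only place where this differs substantively from Proposition~\ref{prop.main}, is bookkeeping the powers of $\rho_k$ so that the rescaled right-hand side $\rho_k^\epsilon\|f_k\|_{L^\infty}/[w_k]^{(1-\frac{\epsilon}{2s},2s-\epsilon)}_{((-1,0)\times\R^n)}$ genuinely tends to zero—this works precisely because $\epsilon>0$ and $\rho_k\to0$, which is why the $L^\infty$ right-hand side costs exactly $\epsilon$ of regularity. One should also be mildly careful, in the $\nu=1$ case, that the polynomial $p_k$ subtracted has no $t$-dependence (only a spatial linear part plus constant), so that $\de_t v_k$ is unaffected and the good rescaling \eqref{eq.rescnorm_2} still applies; and that in applying Lemma~\ref{lem.1} one uses hypothesis (3) there with $\epsilon$ the same as here. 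Everything else—the three-step scaling argument, the Arzel\`a--Ascoli extraction, and the Liouville step—is word-for-word the same machinery as in the proof of Proposition~\ref{prop.main}.
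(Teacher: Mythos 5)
Your proposal is correct and runs on the same engine as the paper (contradiction, blow-up with parameter $\rho_k$, compactness via Lemma~\ref{lem.1}, Liouville, contradiction with nondegeneracy), but it deviates from the paper's proof in one substantive point: you apply the equation, Lemma~\ref{lem.1} and Theorem~\ref{thm.liouv} directly to $v_k$ and its limit $v$, whereas the paper works throughout with the increments $v_k(t+\tau,x+h)-v_k(t,x)$, exactly as in Proposition~\ref{prop.main}. Your direct route is legitimate here, for the reasons you essentially identify: since $p_k$ has degree at most $\nu\le 1$ in $x$ and no $t$-dependence, $L_k p_k\equiv 0$ (symmetric second differences of affine functions vanish) — this should be said explicitly, since it is exactly what makes $(\de_t-L_k)v_k$ equal to the rescaled $f_k$ times $\rho_k^{\epsilon}/[w_k]^{(1-\frac{\epsilon}{2s},2s-\epsilon)}_{((-1,0)\times \R^n)}$, hence $\le \rho_k^\epsilon/k\to 0$; moreover the normalization plus the seminorm bound give $|v_k(t,x)|\le |t|^{1-\frac{\epsilon}{2s}}+|x|^{2s-\epsilon}$, so hypothesis (3) of Lemma~\ref{lem.1} holds on bounded time intervals, and the limit $v$ satisfies the growth hypothesis of Theorem~\ref{thm.liouv} with $\gamma=2s-\epsilon<2s$ (note $|t|^{1-\frac{\epsilon}{2s}}\le R^{2s-\epsilon}$ for $R\ge|t|^{\frac{1}{2s}}$, and $\lfloor 2s-\epsilon\rfloor=\nu$ by the hypothesis $2s-\epsilon>\nu$). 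Then $v$ is a polynomial in $x$ of degree $\le\nu$, constant in $t$, and the pointwise vanishing conditions at the origin alone force $v\equiv 0$. The paper's increment device is forced in Proposition~\ref{prop.main} (there $p_k$ can be quadratic in $x$ and linear in $t$, and only a H\"older seminorm of $f_k$ is controlled) and is retained in Proposition~\ref{prop.main2} mostly for uniformity; its cost is that Liouville applied to increments only yields that $v$ is affine in $(t,x)$, so the paper must invoke the global seminorm bound in \eqref{eq.viszero_2}, not just the vanishing at the origin, to kill the linear parts, whereas your route concludes $v\equiv0$ more directly. Two minor points: your Step 3 claim that the limit nondegeneracy shows ``$\nabla_x v$ not constant'' when $\nu=1$ is not quite what \eqref{eq.nondeg_2} yields (it could be $v(\xi^{(1)})\neq 0$ with $\nabla_x v(\xi)=0$), but this is harmless since what you actually use in Step~4 is only $v\not\equiv 0$; and the displayed factor $\rho_k^{-\epsilon}$ in the denominator of your equation computation, while arithmetically consistent with the bound that follows, is written awkwardly and worth cleaning up.
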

\begin{proof}
We follow the steps of Proposition \ref{prop.main}.

Suppose that for a given $\delta > 0$ the estimate does not hold for any constant $C$: for each $k \in \N$ there exist functions $w_k\in C^\infty_c((-\infty,0]\times\R^n), f_k\in L^\infty((-1,0)\times B_1)$, and operators $L_k$ of the form \eqref{eq.L1}-\eqref{eq.L2} such that $\de_t w_k-L_k w_k = f_k$ in $(-1,0)\times B_1$ and
\begin{equation}
\label{eq.ineqk_2}
[w_k]^{(1-\frac{\epsilon}{2s},2s-\epsilon)}_{((-2^{-2s},0)\times B_{1/2})} > \delta [w_k]^{(1-\frac{\epsilon}{2s},2s-\epsilon)}_{((-1,0)\times \R^n)} + k \left(\|w_k\|_{C^{0,\nu}_{t,x} ((-1,0)\times B_1)} + \|f_k\|_{L^\infty((-1,0)\times B_1)} \right).
\end{equation}


{\bf Step 1: The blow-up parameter, $\rho_k$.} We only need to separate two cases according to the value of $\nu$ now.

$\bullet$ {\it Case $\nu = 0$.} By definition, we can choose $x_k,y_k\in B_{1/2},~t_k,s_k\in (-2^{-2s},0)$ such that
\begin{equation}
\label{eq.ineqk2.0_2}
\frac{1}{4}[w_k]^{(1-\frac{\epsilon}{2s},2s-\epsilon)}_{((-2^{-2s},0)\times B_{1/2})} < \frac{| w_k(t_k,x_k)- w_k(s_k,y_k)|}{|s_k-t_k|^{1-\frac{\epsilon}{2s}}+|x_k-y_k|^{2s-\epsilon}}.
\end{equation}

Define,
\[
\rho_k := |t_k-s_k|^{\frac{1}{2s}}+|x_k-y_k|.
\]
As in the proof of Proposition \ref{prop.main} there exists some small constant $\chi > 0$ depending only on $n$ such that
\begin{equation}
\label{eq.ineqk2.0.1_2}
\chi(n)[w_k]^{(1-\frac{\epsilon}{2s},2s-\epsilon)}_{((-2^{-2s},0)\times B_{1/2})} < \frac{| w_k(t_k,x_k)- w_k(s_k,y_k)|}{\rho_k^{2s-\epsilon}}.
\end{equation}

Therefore, we have
\begin{align*}
\chi(n)[w_k]^{(1-\frac{\epsilon}{2s},2s-\epsilon)}_{((-2^{-2s},0)\times B_{1/2})}
& < \frac{2\| w_k\|_{L^\infty((-2^{-2s},0)\times B_{1/2})}}{\rho_k^{2s-\epsilon}}\\
& \leq \frac{[w_k]^{(1-\frac{\epsilon}{2s},2s-\epsilon)}_{((-2^{-2s},0)\times B_{1/2})}}{\rho_k^{2s-\epsilon}k} ,
\end{align*}
where in the last inequality we are using \eqref{eq.ineqk_2}. Thus, we finally obtain that $\rho_k\to 0$ as $k\to \infty$.

$\bullet$ {\it Case $\nu = 1$.} Proceed as before, by choosing $x_k,y_k\in B_{1/2},~t_k,s_k\in (-2^{-2s},0)$ such that
\begin{equation}
\label{eq.ineqk2_2}
\frac{1}{2}[w_k]^{(1-\frac{\epsilon}{2s},2s-\epsilon)}_{((-2^{-2s},0)\times B_{1/2})} < \frac{|w_k(t_k,x_k)- w_k(s_k,x_k)|}{|t_k-s_k|^{1-\frac{\epsilon}{2s}}}+\frac{|\nabla_x w_k(t_k,x_k) - \nabla_x w_k(s_k,y_k)|}{|t_k-s_k|^{\frac{2s-\epsilon-1}{2s}}+|x_k-y_k|^{2s-\epsilon-1}}.
\end{equation}

Define $\rho_k$ as before, and up to a possible new choice of $y_k$ (as in the proof of the case $\nu = 0$ in the first step of Proposition \ref{prop.main}), we obtain that
\begin{equation}
\label{eq.ineqk2.0.2_2}
\chi(n)[w_k]^{(1-\frac{\epsilon}{2s},2s-\epsilon)}_{((-2^{-2s},0)\times B_{1/2})} < \frac{| w_k(t_k,x_k)- w_k(s_k,x_k)|}{\rho_k^{2s-\epsilon}}+\frac{| \nabla_x w_k(t_k,x_k) -  \nabla_x w_k(s_k,y_k)|}{\rho_k^{2s-\epsilon-1}}.
\end{equation}
A reasoning similar to the one before yields $\rho_k \to 0$ as $k\to \infty$ again.
\\[0.5cm]
{\bf Step 2: The blow-up sequence.} We begin by defining the following functions, where we will assume that $t_k \geq s_k$ (otherwise, we can swap them),
\[
v_k(t,x):= \frac{w_k(t_k+\rho_k^{2s} t,x_k+\rho_k x) - p_k(x)}{\rho_k^{2s-\epsilon} [w_k]^{(1-\frac{\epsilon}{2s},2s-\epsilon)}_{((-1,0)\times \R^n)}}.
\]
Here $p_k(x)$ is a polynomial in $x$ of degree at most $\nu$, such that
\begin{equation}
\label{eq.vkzero2}
v_k(0,0) = D^\nu_x v_k(0,0) = 0.
\end{equation}

Thanks to the scaling of the seminorm (see \eqref{eq.rescnorm_2}), $v_k$ satisfies
\begin{equation}
\label{eq.bdsemi_2}
[v_k]^{(1-\frac{\epsilon}{2s},2s-\epsilon)}_{\left(\left(-\frac{1}{2}\rho_k^{-2s},0\right]\times \R^n \right)} \leq 1.
\end{equation}

We also have uniform convergence towards $0$ of the following quantity for fixed $\tau \in (-1,0)$ and $h\in B_1$,
\begin{equation}
\label{eq.bdk_2}
|(\de_t-L_k)(v_k(t+\tau,x+h)-v_k(t, x))| \leq \frac{2\rho_k^{\epsilon}}{k} \to 0
\end{equation}
uniformly in $\left(-\frac{1}{2}\rho_k^{-2s},0\right)\times B_{\left(\frac{1}{2}\rho_k^{-1} - h\right)}$. Indeed,
\begin{align*}
 |(\de_t- & L_k)(v_k(t+\tau,x+h)-v_k(t, x))| = \\
& = \frac{\rho_k^{\epsilon}}{ [w_k]^{(1-\frac{\epsilon}{2s},2s-\epsilon)}_{((-1,0)\times \R^n)}}\cdot |f_k(t_k+\rho_k^{2s} (t+\tau),x_k+\rho_k (x+h)) - f_k(t_k+\rho_k^{2s} t,x_k+\rho_k x)| \\
&\leq 2\frac{\rho_k^{\epsilon}\|f_k\|_{L^\infty((-1,0)\times B_1)}}{[w_k]^{(1-\frac{\epsilon}{2s},2s-\epsilon)}_{((-1,0)\times \R^n)}}\leq \frac{2\rho_k^{\epsilon}}{k} \to 0,
\end{align*}
where in the last inequality we have used \eqref{eq.ineqk_2}.

We now define the following points in the set $[-1,0]\times\overline{B_1}$,
\[
\xi_k = \left( \frac{s_k-t_k}{\rho_k^{2s}},\frac{y_k-x_k}{\rho_k} \right),~~~~
\xi_k^{(1)} = \left( \frac{s_k-t_k}{\rho_k^{2s}},0 \right),
\]
and notice that we have
\begin{alignat*}{2}
 v_k(\xi_k)= &\frac{w_k(s_k,y_k) - w_k(t_k,x_k)}{\rho_k^{2s-\epsilon} [w_k]^{(1-\frac{\epsilon}{2s},2s-\epsilon)}_{((-1,0)\times \R^n)}}&&~~~\textrm{ for }\nu = 0\\[4mm]
 v_k(\xi_k^{(1)})= &\frac{ w_k(s_k,x_k) -  w_k(t_k,x_k)}{\rho_k^{2s-\epsilon} [w_k]^{(1-\frac{\epsilon}{2s},2s-\epsilon)}_{((-1,0)\times \R^n)}}&& ~~~\textrm{ for } \nu = 1,
\\[4mm]
\nabla_x v_k(\xi_k)= &\frac{\nabla_x w_k(s_k,y_k) - \nabla_x w_k(t_k,x_k)}{\rho_k^{2s-\epsilon-1} [w_k]^{(1-\frac{\epsilon}{2s},2s-\epsilon)}_{((-1,0)\times \R^n)}}&& ~~~\textrm{ for } \nu = 1.
\end{alignat*}
Hence, combining \eqref{eq.ineqk2.0.1_2}-\eqref{eq.ineqk2.0.2_2} with \eqref{eq.ineqk_2} we obtain
\begin{alignat}{2}
\label{eq.nondeg_2}
|v_k(\xi_k)| >& \chi(n) \delta && ~~~~\textrm{ if }\nu = 0,\nonumber \\[4mm]
 |v_k(\xi_k^{(1)})|+|\nabla_x v_k(\xi_k)| >& \chi(n) \delta && ~~~~ \textrm{ if } \nu = 1.
\end{alignat}

Notice that, up to a subsequence, $\xi_k$ converge to some $\xi\in [0,1]\times\overline{B_1}$ (and so do $\xi_k^{(1)}$) so that from now on we will restrict ourselves to this subsequence.
\\[0.5cm]
{\bf Step 3. Convergence properties of the blow-up sequence.} Recall that we have uniform bound on the seminorms of $v_k$, \eqref{eq.bdsemi_2}, we deduce that, up to subsequences, $v_k$ converges in $C^\epsilon_t$ and in $C^{\nu+\epsilon}_x$ to some function $v$ over compact subsets of $(-\infty,0]\times\R^n$. This follows since the Hölder seminorms $[v_k]_{C^{1-\frac{\epsilon}{2s}}_{t}\left(\left(-\frac{1}{2}\rho_k^{-2s},0\right]\times\R^n\right)}$, $[ v_k]_{C^{2s-\epsilon}_x\left(\left(-\frac{1}{2}\rho_k^{-2s},0\right]\times\R^n\right)}$ are uniformly bounded with respect to $k\in \N$, and the domains are expanding to $(-\infty,0]\times\R^n$.

We restrict ourselves to this subsequence, and obtain a limit function $v$ defined in $(-\infty,0]\times\R^n$ such that
\begin{equation}
\label{eq.viszero_2}
v(0,0) = D^\nu_x v(0,0) = 0~~~\textrm{and}~~~
[v]^{(1-\frac{\epsilon}{2s},2s-\epsilon)}_{\left(\left(-\infty,0\right]\times \R^n \right)} \leq 1.
\end{equation}
By \eqref{eq.nondeg_2} and the nice convergence, we get that $v$ cannot be constant.

Now consider the functions $v_k(t+\tau, x+h)-v_k(t, x)$ for fixed $\tau \in (-1,0), h\in B_1$. We want to compute an upper bound for $|v_k(t+\tau, x+h)-v_k(t, x)|$  depending on $t$ and $x$, such that $t\in\left(-\frac{1}{2}\rho_k^{-2s}-\tau,0\right]$, $x\in \R^n$, and we separate two cases:

$\bullet$ {\it Case $\nu = 0$},
\[
|v_k(t+\tau, x+h)-v_k(t, x)| \leq C
\]
using the bounds on the seminorm of $v_k$ and \eqref{eq.vkzero2}, and where $C$ can depend on $\tau$ and $h$.

$\bullet$ {\it Case $\nu = 1$},
\begin{align*}
|v_k(t+\tau, x+h)-v_k(t, x)|\leq C\left(|x|^{2s-\epsilon-1}+|t|^{\frac{2s-\epsilon-1}{2s}}+1\right).
\end{align*}

As before we can assume that $L_k$ converges to $\tilde{L}$ and then, using Lemma~\ref{lem.1}, we find
\[
(\de_t-\tilde{L})(v(t+\tau, x+h)-v(t, x)) = 0 \textrm{ in } (-\infty,0)\times \R^n.
\]
\\
{\bf Step 4: Contradiction.} From the Liouville-type theorem in the entire space, Theorem \ref{thm.liouv}, we obtain that $v(t+\tau, x+h)-v(t, x)$ must be constant, and therefore $v(t,x)$ is a polynomial of degree at most~1 in $x$ plus a polynomial of degree at most~1 in $t$. Therefore, by \eqref{eq.viszero_2} we get $v\equiv 0$, which is a contradiction with the expression \eqref{eq.nondeg_2} in the limit.
\end{proof}

Using the previous proposition we can prove Theorem \ref{thm.mainb}.

\begin{proof}[Proof of Theorem \ref{thm.mainb}]
Pick $\eta\in C_c^\infty(B_2)$ a cutoff function depending only on $x$ such that $\eta \equiv 1$ in $B_{3/2}$ and $0\leq \phi \leq 1 \textrm{ in } B_2$, and consider $w \in C^\infty_c(-\infty,0]\times\R^n$, satisfying $\de_t w-Lw = f \textrm{ in } B_1$. Applying Proposition \ref{prop.main2} to the function $\eta w$ we obtain that, for any $\delta$, there is a $C = C(\delta, n,s,\epsilon,\lambda, \Lambda)$ such that
\begin{align}
\nonumber [w &]^{(1-\frac{\epsilon}{2s},2s-\epsilon)}_{((-2^{-2s},0)\times B_{1/2})} \leq \delta [\eta w]^{(1-\frac{\epsilon}{2s},2s-\epsilon)}_{((-1,0)\times B_2)}+ \\
\label{eq.thm13} &+ C \left(\|w\|_{C^{0,\nu}_{t,x}((-1,0)\times B_1)} + \|f\|_{L^\infty((-1,0)\times B_1)} + \|(\de_t - L)(\eta w - w)\|_{L^\infty((-1,0)\times B_1)}\right).
\end{align}
Now, since $\eta w - w$ vanishes in $B_{3/2}$ we have that
\begin{equation}
\label{eq.boundLphi_2}
\|(\de_t - L)(\eta w - w)\|_{L^\infty((-1,0)\times B_1)} =\|L(\eta w - w)\|_{L^\infty((-1,0)\times B_1)} \leq C\|w\|_{L^\infty((-1,0)\times \R^n)}.
\end{equation}
Indeed, if we denote $\phi := \eta w - w$, we clearly have
\[
\|\phi\|_{L^\infty((-1,0)\times\R^n)} \leq \|w\|_{L^\infty((-1,0)\times\R^n)},
\]
and now
\begin{align*}
|L\phi(t, x)| & \leq C\int_{S^{n-1}}\int_{1/2}^\infty |-\phi(t, x+r\theta)|\frac{dr}{|r|^{1+2s}}d\mu(\theta) \\
& \leq C\|\phi\|_{L^\infty((-1,0)\times \R^n)} \int_{S^{n-1}}\int_{1/2}^\infty \frac{dr}{|r|^{1+2s}}d\mu(\theta) \leq C\Lambda \|\phi\|_{L^\infty((-1,0)\times \R^n)},
\end{align*}
where $C$ depends only on $n$ and $s$.

The previous inequality, \eqref{eq.boundLphi_2}, together with $\|\eta w\|_{C^{0,\nu}_{t,x}(-1,0)\times B_2}\leq C\|w\|_{C^{0,\nu}_{t,x}(-1,0)\times B_2}$ (for $C$ depending on $\eta$ fixed) yields that for any $\delta > 0$, there exists a constant $C  = C(\delta, n,s,\epsilon,\lambda, \Lambda)$ such that
\begin{align}
\label{eq.2_2}
[w &]^{(1-\frac{\epsilon}{2s},2s-\epsilon)}_{((-2^{-2s},0)\times B_{1/2})}\leq \\
\nonumber & \leq \delta [w]^{(1-\frac{\epsilon}{2s},2s-\epsilon)}_{((-1,0)\times B_2)} + C \left(\|w\|_{C^{0,\nu}_{t,x}((-1,0)\times B_1)} + \|f\|_{L^\infty((-1,0)\times B_1)} + \|w\|_{L^\infty((-1,0)\times \R^n)}\right).
\end{align}
Using interpolation \eqref{eq.interp2}, for any $\kappa > 0$, there exists $\overline{C} = \overline{C}(\kappa, n, s, \epsilon)$ such that
\[
\|w\|_{C^{0,\nu}_{t,x}((-1,0)\times B_1)}\leq \kappa [w]^{(1-\frac{\epsilon}{2s},2s-\epsilon)}_{((-1,0)\times B_1)} + \overline{C}\|w\|_{L^\infty((-1,0)\times B_1)}.
\]
Fixing $\kappa = \delta/C$ with $C$ as in \eqref{eq.2_2}, we get
\[
[w]^{(1-\frac{\epsilon}{2s},2s-\epsilon)}_{((-2^{-2s},0)\times B_{1/2})} \leq 2\delta [w]^{(1-\frac{\epsilon}{2s},2s-\epsilon)}_{((-1,0)\times B_2)} + C \left(\|f\|_{L^\infty((-1,0)\times B_1)} + \|w\|_{L^\infty((-1,0)\times \R^n)}\right).
\]

From which, as in the proof of Theorem \ref{thm.maina}, there exists a constant $C = C(n,s,\epsilon,\lambda,\Lambda)$ such that
\[
\|w\|^{(1-\frac{\epsilon}{2s},2s-\epsilon)}_{((-2^{-2s},0)\times B_{1/2})} \leq  C \left(\|f\|_{L^\infty_{t,x}((-1,0)\times B_1)} + \|w\|_{L^\infty((-1,0)\times \R^n)}\right),
\]
and this implies the bound we wanted.

To get the result for general $u\in L^\infty((-1,0)\times\R^n)$ we use a standard approximation argument, and we are done.
\end{proof}

Let us now give a corollary on the regularity of solutions without any constrain in the relation between $\alpha$ and $\beta$.

\begin{cor}
\label{cor.main1}
Let $s\in(0,1)$, and let $L$ be any operator of the form \eqref{eq.L1}-\eqref{eq.L2}. Let $u$ be any bounded weak solution to \eqref{eq.frac}. Let
\[
C_0 = \|u\|_{C^{\beta,\alpha}_{t,x}((-1,0)\times \R^n)} + \|f\|_{C^{\beta,\alpha}_{t,x}((-1,0)\times B_1)}.
\]
Then,
\begin{equation}
\label{eq.mainineq_cor}
\|u\|_{C^{1+\beta-\epsilon}_t \left(\left(-\frac{1}{2},0\right) \times B_{1/2}\right)} + \|u\|_{C^{2s+\alpha-\epsilon}_x\left(\left(-\frac{1}{2},0\right) \times B_{1/2}\right)} \leq C C_0,
\end{equation}
for any $\epsilon > 0$, where the constant $C$ depends only on $\epsilon, n, s$ and the ellipticity constants \eqref{eq.L2}.
\end{cor}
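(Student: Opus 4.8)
The plan is to re-run the blow-up arguments of Propositions~\ref{prop.main} and \ref{prop.main2} twice: once normalising by the seminorm built from $C^{2s+\alpha-\epsilon}_x$ (to get $[u]_{C^{2s+\alpha-\epsilon}_x}\le CC_0$) and once by the one built from $C^{1+\beta-\epsilon}_t$ (to get $[u]_{C^{1+\beta-\epsilon}_t}\le CC_0$), keeping now the H\"older exponents in $t$ and $x$ unrelated. As there, I would first reduce to $u\in C^\infty_c((-\infty,0]\times\R^n)$ by mollification, cut off with a spatial $\eta$ to discard the non-local tail (which is controlled by $\|u\|_{L^\infty(\R^n)}$ and by $\|u\|_{C^\beta_t(\R^n)}$, resp.\ $\|u\|_{C^\alpha_x(\R^n)}$, exactly as in \eqref{eq.boundLphi}--\eqref{eq.boundLphi_2}), and argue by contradiction.

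The only point where the proofs differ from Propositions~\ref{prop.main}--\ref{prop.main2} is the rescaled right-hand side. With $\rho_k=|t_k-s_k|^{1/2s}+|x_k-y_k|$ and $v_k(t,x)=(w_k(t_k+\rho_k^{2s}t,x_k+\rho_k x)-p_k(t,x))/(\rho_k^{\sigma}M_k)$ (with $\sigma=2s+\alpha-\epsilon$ in the first blow-up and $\sigma=2s(1+\beta-\epsilon)$ in the second, $p_k$ the suitable Taylor polynomial in $x$ and $t$, and $M_k$ the corresponding seminorm of $w_k$ on the large cylinder), the size of $(\partial_t-L_k)(v_k(t+\tau,x+h)-v_k(t,x))$ is now bounded, using only $f_k\in C^{\beta,\alpha}_{t,x}$, by a constant times $\tfrac{[f_k]_{C^{\beta,\alpha}_{t,x}}}{M_k}\,(\rho_k^{\,2s\beta-\sigma+2s}+\rho_k^{\,\alpha-\sigma+2s})$. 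Even though one of the two exponents $2s\beta-\sigma+2s$, $\alpha-\sigma+2s$ may be negative when $\beta\ne\alpha/2s$, the factor $\tfrac1{M_k}\le\tfrac1{k[f_k]_{C^{\beta,\alpha}_{t,x}}}$ together with the rate $\rho_k\lesssim k^{-c}$ (obtained exactly as in Step~1 of Proposition~\ref{prop.main}) makes the whole expression tend to $0$. From here the argument is verbatim: the scaled seminorm bounds and the normalisation $D^{j}_x v_k(0,0)=\partial_t v_k(0,0)=0$ give a growth of order $<2s$ for the increment $v_k(t+\tau,x+h)-v_k(t,x)$, the Liouville-type Theorem~\ref{thm.liouv} forces the limit $v$ to be a polynomial in $x$ of degree $\le\lfloor 2s+\alpha-\epsilon\rfloor$ (resp.\ in $t$ of degree $\le 1$), and the normalisation kills it, contradicting the non-degeneracy statement.

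The main obstacle is making the first blow-up behave ``homogeneously'' despite the unbalanced exponents: one must choose the companion seminorm controlling $\partial_t v_k$ (needed in order to pass to the limit via Lemma~\ref{lem.1}) so that it rescales consistently with $\rho_k^{2s+\alpha-\epsilon}$, and check the growth of $v_k$ at large $|t|$ using only the global $C^\beta_t$ bound on $w_k$; working with the slightly reduced exponents $2s+\alpha-\epsilon$, $1+\beta-\epsilon$ rather than $2s+\alpha$, $1+\beta$ is precisely what leaves room for this and keeps the exponents non-integral uniformly in $(\alpha,\beta)$. A shorter but heavier alternative is to localise $\eta u$ and use the heat-kernel representation $\eta u=p(\cdot+1)\ast(\eta u)(-1)+\int p(\cdot-s)\ast\tilde f(s)\,ds$ of Section~\ref{sec.2}, estimating the two terms through the smoothing of $p(\tau,\cdot)\ast$ and $\partial_t p=Lp$; there the $\epsilon$ surfaces as a borderline-divergent integral in time.
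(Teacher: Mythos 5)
Your plan (redo the blow-up/compactness arguments of Propositions~\ref{prop.main} and \ref{prop.main2} with the exponents in $t$ and $x$ unbalanced) contains a genuine gap exactly at the step you single out. In the contradiction argument one only obtains an \emph{upper} bound on the blow-up scale, $\rho_k\lesssim k^{-c}$, coming from $\chi(n)k\leq \rho_k^{-\sigma'}+\rho_k^{-\sigma}$ as in Step~1 of Proposition~\ref{prop.main}; there is no lower bound whatsoever on $\rho_k$. Hence, when one of the exponents $2s\beta-\sigma+2s$ or $\alpha-\sigma+2s$ is negative (which happens precisely when $\beta\neq\frac{\alpha}{2s}$, e.g.\ $2s\beta<\alpha-\epsilon$ in your first blow-up), the quantity $\tfrac{[f_k]_{C^{\beta,\alpha}_{t,x}}}{M_k}\,\rho_k^{2s\beta-\sigma+2s}\leq \tfrac1k\,\rho_k^{-|2s\beta-\sigma+2s|}$ is \emph{not} controlled: the bound $\rho_k\lesssim k^{-c}$ works in the wrong direction (it makes the negative power of $\rho_k$ large), and since $\rho_k$ may tend to zero arbitrarily fast, the factor $\tfrac1k$ cannot compensate. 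So the claimed uniform convergence $(\de_t-L_k)(v_k(t+\tau,x+h)-v_k(t,x))\to 0$, which is the engine of the whole argument, is unjustified. The companion difficulty you mention is also structural, not merely technical: a seminorm containing $[\de_t v]_{C^{\beta,\alpha}_{t,x}}$ rescales like $\rho^{2s+2s\beta}$ under the parabolic scaling, not like $\rho^{2s+\alpha-\epsilon}$, so the two pieces of the seminorm cannot be normalized by the same power of $\rho_k$ unless the exponents are matched -- this is exactly why \eqref{eq.notation1} and \eqref{eq.notation2} are built with matched pairs, and the mismatch is what the construction in Section~\ref{sec.7} exploits.

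The paper avoids this entirely by a much shorter route, which is also the natural repair of your argument: instead of redoing the blow-up, differentiate the equation by incremental quotients. For fixed $h$, the quotient $u^h_\alpha(t,x)=\bigl(u(t,x+h)-u(t,x)\bigr)/|h|^\alpha$ solves $\de_t u^h_\alpha-Lu^h_\alpha=f^h_\alpha$ in $(-1,0)\times B_{1-|h|}$ with $\|f^h_\alpha\|_{L^\infty}\leq C[f]_{C^\alpha_x}$ and $\|u^h_\alpha\|_{L^\infty((-1,0)\times\R^n)}\leq C[u]_{C^\alpha_x((-1,0)\times\R^n)}$; Theorem~\ref{thm.mainb} (which imposes no relation between the time and space exponents and is where the $\epsilon$-loss already lives) gives $\sup_h\sup_t\|u^h_\alpha\|_{C^{2s-\epsilon}_x(B_{1/2})}\leq CC_0$, and the standard incremental-quotient lemma (\cite[Lemma 5.6]{CC}) upgrades this to $\|u\|_{C^{2s+\alpha-\epsilon}_x}\leq CC_0$. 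The same argument with quotients of order $\beta$ in $t$ gives the $C^{1+\beta-\epsilon}_t$ bound, and adding the two estimates concludes. If you want to keep your framework, you should apply it to these difference quotients (with $L^\infty$ right-hand side) rather than to $u$ itself with unbalanced H\"older exponents.
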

\begin{proof}

Define the following incremental quotients in $x$,
\[
u^h_{\alpha}(t, x) := \frac{u(t, x+h)-u(t, x)}{|h|^{\alpha}},~~~~f^h_{\alpha}(t, x) := \frac{f(t, x+h)-f(t, x)}{|h|^{\alpha}}
\]
for some $h\in\R^n$ fixed. Notice that
\[
\de_t u^h_{\alpha} - L u^h_{\alpha} = f^h_{\alpha} \textrm{ in } (-1,0)\times B_{1-|h|}.
\]
We apply Theorem \ref{thm.mainb} to the previous functions, reaching
\begin{align*}
\sup_{x\in B_{1/2}} \|u^h_{\alpha}\|_{C^{1-\epsilon}_t (-2^{-2s},0)} &+ \sup_{t\in(-2^{-2s},0)} \|u^h_{\alpha}\|_{C^{2s-\epsilon}_x(B_{1/2})} \leq  \\
& \leq  C \left(\|f^h_{\alpha}\|_{L^\infty((-1,0)\times B_{1-|h|})} + \|u^h_{\alpha}\|_{L^\infty((-1,0)\times \R^n)}\right).
\end{align*}

Now, since
\begin{align*}
\|f^h_{\alpha}\|_{L^\infty((-1,0)\times B_{1-|h|})}  & \leq C\left( [f]_{C^{\alpha}_x ((-1,0)\times B_{1} ) } \right),\\
\|u^h_{\alpha}\|_{L^\infty((-1,0)\times \R^n)} & \leq C\left( [u]_{C^{\alpha}_x ((-1,0)\times \R^n ) } \right),
\end{align*}
and
\[
 \sup_{h\in B_{1/4}} \sup_{t\in(-2^{-2s},0)} \|u^h_{\alpha}\|_{C^{2s-\epsilon}_x(B_{1/2})} \geq   \sup_{t\in(-2^{-2s},0)} \|u\|_{C^{2s-\epsilon+\alpha}_x(B_{1/2})},
\]
(see for example \cite[Lemma 5.6]{CC}) we obtain
\[
\sup_{t\in(-2^{-2s},0)} \|u\|_{C^{2s-\epsilon+\alpha}_x(B_{1/2})}\leq C\left( [f]_{C^{\alpha}_x ((-1,0)\times B_{1} ) }  +[u]_{C^{\alpha}_x ((-1,0)\times \R^n ) } \right).
\]

The same can be done taking incremental quotients in $t$, and adding up both inequalities we reach the desired result.
\end{proof}

\begin{rem}
\label{rem.main1}
The previous corollary is still true if we only subtract an arbitrarily small $\epsilon$ to one of the terms in the left hand side of \eqref{eq.mainineq_cor}. For example, if $2s\beta < \alpha$ then we apply Theorem \ref{thm.maina} with indices $\beta$ and $\alpha' = 2s\beta$ and combine it with the argument from Corollary \ref{cor.main1}.
\end{rem}

When the nonlocal parabolic equation for the operator $L$ is fulfilled in the entire space $\R^n$ we have a nice result where we no longer require a priori spatial regularity of the solution.

\begin{cor}
\label{cor.main2}
Let $s\in(0,1)$, and let $L$ be any operator of the form \eqref{eq.L1}-\eqref{eq.L2}. Let $u$ be any bounded weak solution to
\begin{equation}
\label{eq.frac_Rn}
\de_t u-Lu = f \textrm{ in } (-1,0)\times \R^n.
\end{equation}

Let $\alpha\in (0,1)$ be such that $\frac{\alpha}{2s}\in (0,1)$, and
\[
C_0 = \|u\|_{L^\infty((-1,0)\times \R^n)} + \|f\|_{C^{\frac{\alpha}{2s},\alpha}_{t,x}((-1,0)\times \R^n)}.
\]
Then, if $\alpha + 2s$ is not an integer
\begin{equation*}
 \|u\|_{C^{1+\frac{\alpha}{2s}}_t \left(\left(-\frac{1}{2},0 \right)\times \R^n\right)} +  \|u\|_{C^{2s+\alpha}_x\left(\left( -\frac{1}{2},0 \right)\times \R^n\right)} \leq C C_0.
\end{equation*}
The constant $C$ depends only on $n, s, \alpha$ and the ellipticity constants \eqref{eq.L2}.
\end{cor}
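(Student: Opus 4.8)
The plan is a two–step bootstrap that exploits the fact that \eqref{eq.frac_Rn} is posed in \emph{all} of $\R^n$. The a priori hypothesis in Theorem~\ref{thm.maina} that $u\in C^{\frac{\alpha}{2s},\alpha}_{t,x}$ on the whole space cannot be dropped in general (see Section~\ref{sec.7}), but here it is automatically available, with norm controlled by $C_0$, thanks to Theorem~\ref{thm.mainb}, whose hypotheses only involve $\|u\|_{L^\infty}$ and $\|f\|_{L^\infty}\le\|f\|_{C^{\frac{\alpha}{2s},\alpha}_{t,x}}$. So first I would use Theorem~\ref{thm.mainb} to gain enough regularity, and then feed it into Theorem~\ref{thm.maina}.

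\emph{Step 1: from $L^\infty$ data to $C^{\frac{\alpha}{2s},\alpha}_{t,x}$ regularity.} Fix $\epsilon\in(0,2s-\alpha)$ small enough that also $2s-\epsilon>\lceil 2s\rceil-1$. For every $x_0\in\R^n$ and every $t_0\in[-\tfrac34,0)$, set $r=1+t_0\in[\tfrac14,1)$; since the operator is translation invariant and \eqref{eq.frac_Rn} holds, the parabolic rescaling turning the cylinder $(t_0-r,t_0)\times B_1(x_0)$ into $(-1,0)\times B_1$ produces a solution of an equation of the same type, to which Theorem~\ref{thm.mainb} applies. Undoing the rescaling and covering in $x_0$ and $t_0$ — the rescaling factor $r^{\frac{1}{2s}}$ staying bounded away from $0$ on this range, so all constants are uniform — gives
\[
\|u\|_{C^{1-\frac{\epsilon}{2s}}_t((-3/4,0)\times\R^n)}+\|u\|_{C^{2s-\epsilon}_x((-3/4,0)\times\R^n)}\le C\,C_0 .
\]
Since $\alpha<\min\{1,2s\}$, on bounded domains $C^{2s-\epsilon}_x\hookrightarrow C^{\alpha}_x$ and $C^{1-\frac{\epsilon}{2s}}_t\hookrightarrow C^{\frac{\alpha}{2s}}_t$ (for the first embedding, when $2s-\epsilon\ge 1$ one uses that, with the conventions of Section~\ref{sec.3}, $C^{2s-\epsilon}_x$ is a space of type $C^{1,\gamma}_x$, hence embeds into $C^1_x\subset C^\alpha_x$). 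Therefore, by \eqref{eq.equivseminorm},
\[
\|u\|_{C^{\frac{\alpha}{2s},\alpha}_{t,x}((-3/4,0)\times\R^n)}\le C\,C_0 .
\]

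\emph{Step 2: applying Theorem~\ref{thm.maina}.} Now \eqref{eq.frac_Rn} and the bound from Step~1 are both available on $(-\tfrac34,0)\times\R^n$, and $f\in C^{\frac{\alpha}{2s},\alpha}_{t,x}$. For $x_0\in\R^n$ and $t_0\in[-\tfrac12,0)$ put $r=\tfrac34+t_0\in[\tfrac14,\tfrac34)$ and let $\tilde u(t,x)=u(t_0+rt,\,x_0+r^{\frac{1}{2s}}x)$, which (with the corresponding rescaling of $f$, whose $C^{\frac{\alpha}{2s},\alpha}_{t,x}$ norm does not increase because $r\le1$) solves an equation of the same type in $(-1,0)\times\R^n$ and satisfies the hypotheses of Theorem~\ref{thm.maina} with $C_\alpha\le C\,C_0$. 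Theorem~\ref{thm.maina} bounds $\|\tilde u\|_{C^{1+\frac{\alpha}{2s}}_t}+\|\tilde u\|_{C^{2s+\alpha}_x}$ on $(-\tfrac12,0)\times B_{1/2}$; undoing the rescaling (the factors stay bounded above and below) and covering in $x_0$ and $t_0$ yields
\[
\|u\|_{C^{1+\frac{\alpha}{2s}}_t((-1/2,0)\times\R^n)}+\|u\|_{C^{2s+\alpha}_x((-1/2,0)\times\R^n)}\le C\,C_0 ,
\]
which is the claim, $C$ depending only on $n,s,\alpha$ and \eqref{eq.L2} (note $\epsilon$ was fixed in terms of $\alpha,s$).

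\emph{Main difficulty.} There is no real analytic obstacle, since both interior theorems are already in hand; the proof is essentially bookkeeping. The only genuine points of care are (i) choosing $\epsilon$ so that the intermediate class $C^{2s-\epsilon}_x\cap C^{1-\frac{\epsilon}{2s}}_t$ truly embeds into $C^{\frac{\alpha}{2s},\alpha}_{t,x}$, in particular the borderline case $2s\ge1$; and (ii) arranging the covering/rescaling in time so the final estimate ends up on $(-\tfrac12,0)$ — the interior estimates lose a definite fraction of the time interval, so Step~1 only reaches $(-\tfrac34,0)$ and Step~2 must itself be run via translated, rescaled cylinders — while keeping the rescaling parameters, and hence all constants, uniformly controlled.
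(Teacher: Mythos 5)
Your proposal is correct and follows essentially the same route as the paper: first use Theorem~\ref{thm.mainb} (valid with data controlled only by $C_0$) on translated/rescaled cylinders covering $(-\tfrac34,0)\times\R^n$ to obtain the global $C^{\frac{\alpha}{2s},\alpha}_{t,x}$ bound on $u$, and then apply Theorem~\ref{thm.maina} on a covering to conclude. The only cosmetic difference is that the paper takes $\epsilon=2s-\alpha$ in Theorem~\ref{thm.mainb}, which yields the $C^{\frac{\alpha}{2s}}_t$ and $C^{\alpha}_x$ bounds directly and avoids your embedding discussion for the case $2s-\epsilon\ge 1$.
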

\begin{proof}
Simply apply Theorem \ref{thm.maina} to balls covering $\R^n$ to get
\begin{align*}
 \|u\|_{C^{1+\frac{\alpha}{2s}}_t \left(\left(-\frac{1}{2},0 \right)\times \R^n\right)} & +  \|u\|_{C^{2s+\alpha}_x\left(\left( -\frac{1}{2},0 \right)\times \R^n\right)} \leq \\
 &\leq C \left(\|u\|_{C^{\frac{\alpha}{2s}}_{t}\left(\left(-\frac{3}{4},0\right)\times \R^n\right)} + \|f\|_{C^{\frac{\alpha}{2s},\alpha}_{t,x}\left(\left(-\frac{3}{4},0\right)\times \R^n\right)}\right).
\end{align*}
On the other hand, from Theorem \ref{thm.mainb} applied again to balls covering $\R^n$ we have
\begin{equation*}
\|u\|_{C^{\frac{\alpha}{2s}}_{t}\left(\left(-\frac{3}{4},0\right)\times \R^n\right)}  \leq C \left(\|u\|_{L^\infty \left(\left(-1,0\right)\times \R^n\right)} + \|f\|_{L^\infty\left(\left(-1,0\right)\times \R^n\right)}\right),
\end{equation*}
where we took $\epsilon = 2s-\alpha >0$. Combining both expressions we obtain the desired result.
\end{proof}

We next prove a result when the kernels have some regularity.

\begin{cor}
\label{cor.main3}
Let $s\in(0,1)$, and let $L$ be any operator of the form \eqref{eq.L3} with bounds \eqref{eq.L2}. Assume that
\[
a\in C^\alpha(S^{n-1}),
\]
for some $\alpha \in (0,1)$ such that $\alpha < 2s$.

Let $u$ be any bounded weak solution to \eqref{eq.frac}, and
\[
C_0 = \|u\|_{C^{\frac{\alpha}{2s}}_{t}((-1,0)\times \R^n)} + \|f\|_{C^{\frac{\alpha}{2s},\alpha}_{t,x}((-1,0)\times B_1)}.
\]

Then, if $2s+\alpha$ is not an integer,
\begin{equation}
\label{eq.mainineq_cor_2}
 \|u\|_{C^{1+\frac{\alpha}{2s}}_t \left(\left(-\frac{1}{2},0\right) \times B_{1/2}\right)} +  \|u\|_{C^{2s+\alpha}_x\left(\left(-\frac{1}{2},0\right) \times B_{1/2}\right)} \leq C C_0,
\end{equation}
for some constant $C$ depending only on $\alpha, n, s, \|a\|_{C^\alpha(S^{n-1})}$ and the ellipticity constants \eqref{eq.L2}.
\end{cor}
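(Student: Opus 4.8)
The plan is to reduce the statement to Theorem \ref{thm.maina} by removing the a priori spatial regularity assumption on $u$ using the extra regularity of the kernel. The key point is that, when $a \in C^\alpha(S^{n-1})$, the operator $L$ is "regularizing enough" that the spatial $C^\alpha_x$ norm of $u$ in $\R^n$ (which appears in the constant $C_\alpha$ of Theorem \ref{thm.maina}) can be bootstrapped from a lower regularity assumption — here only $u \in C^{\frac{\alpha}{2s}}_t$, i.e. essentially only boundedness in $x$. Concretely, first I would apply Theorem \ref{thm.mainb} on a chain of balls to get, from $u \in L^\infty$ and $f \in L^\infty$, that $u \in C^{2s-\delta}_x$ and $u \in C^{1-\frac{\delta}{2s}}_t$ on, say, $(-3/4,0)\times B_{3/4}$, for a small $\delta>0$. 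This already gives interior spatial Hölder regularity; the remaining issue is that Theorem \ref{thm.maina} requires $u \in C^\alpha_x$ on \emph{all} of $\R^n$, not just locally.

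The second and main step is to handle the tails. The standard trick (as in \cite{RS}, \cite{RS2}) is to localize: write $u = \eta u + (1-\eta)u$ with $\eta$ a cutoff equal to $1$ on $B_{1/2}$, supported in $B_{3/4}$. On $B_{1/2}$ we have $\partial_t(\eta u) - L(\eta u) = f + L((1-\eta)u) =: \tilde f$. Because $a \in C^\alpha(S^{n-1})$, the operator $L$ applied to a function supported away from $B_{1/2}$ produces something $C^\alpha$ in $x$ on $B_{1/2}$: indeed $L((1-\eta)u)(t,x) = \int_{S^{n-1}}\int \bigl((1-\eta)u\bigr)(t,x+r\theta)\frac{a(\theta)}{|r|^{1+2s}}\,dr\,d\theta$ restricted to $x \in B_{1/2}$ only sees $|r\theta|$ bounded below, and differentiating in $x$ falls on the smooth factor $(1-\eta)u$ or exploits the $C^\alpha$ regularity of $a$ after a change of variables $y = r\theta$; this yields $\|L((1-\eta)u)\|_{C^{\frac{\alpha}{2s},\alpha}_{t,x}((-1/2,0)\times B_{1/2})} \leq C\|a\|_{C^\alpha(S^{n-1})}\|u\|_{C^{\frac{\alpha}{2s}}_t((-1,0)\times\R^n)}$. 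Hence $\tilde f \in C^{\frac{\alpha}{2s},\alpha}_{t,x}$. Now $\eta u$ is compactly supported, so it trivially belongs to $C^{\frac{\alpha}{2s},\alpha}_{t,x}(\R^n)$ once we know $u \in C^\alpha_x$ on the compact set $B_{3/4}$ — but that is precisely what Theorem \ref{thm.mainb} gave us (with $2s-\delta \geq \alpha$, using $\alpha < 2s$). Therefore Theorem \ref{thm.maina} applies to $\eta u$ with right-hand side $\tilde f$ on $(-1/2,0)\times B_{1/2}$, and on $B_{1/4}$, say, $\eta u = u$, yielding \eqref{eq.mainineq_cor_2} after a covering/rescaling argument to pass from $B_{1/4}$ back to $B_{1/2}$.

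The main obstacle I anticipate is the tail estimate for $L((1-\eta)u)$ in $C^\alpha_x$: one must be careful that the $x$-derivative (or $C^\alpha_x$ seminorm) is controlled, and this is exactly where $a \in C^\alpha(S^{n-1})$ enters, as opposed to merely $a \in L^1$. The cleanest route is to write, for $x \in B_{1/2}$, $L((1-\eta)u)(t,x) = \int_{\R^n \setminus B_{1/4}} \bigl((1-\eta)u\bigr)(t,x+z)\, K(z)\,dz$ with $K(z) = a(z/|z|)|z|^{-n-2s}$ (plus the symmetric term and the $-2u(t,x)$ term, which is handled by the already-established interior regularity of $u$), and note that on the region of integration $K$ is a $C^\alpha$ function of $z$ bounded with all the relevant norms; then a change of variables $z \mapsto z + x - x'$ together with $\|(1-\eta)u\|_{L^\infty}$ and $\int_{|z| \geq 1/8}|K(z+x-x')-K(z)|\,dz \leq C|x-x'|^\alpha$ gives the $C^\alpha_x$ bound, and an analogous argument with the $C^{\frac{\alpha}{2s}}_t$ norm of $u$ gives the time regularity. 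Everything else is an application of the two main interior theorems together with routine interpolation and covering arguments, so no genuinely new idea beyond this regularization-of-the-tail computation is needed.
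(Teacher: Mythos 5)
Your proposal is correct, and its core computation is exactly the one the paper relies on: the $C^{\frac{\alpha}{2s},\alpha}_{t,x}$ bound for the nonlocal tail $L$ applied to a function vanishing near the evaluation points, obtained from the homogeneity of $K(y)=a(y/|y|)|y|^{-n-2s}$ and $a\in C^\alpha(S^{n-1})$, so that $|K(z-x)-K(z-x')|\leq C|x-x'|^\alpha|z-x|^{-n-2s-\alpha}$ away from the origin, while the time regularity only needs $\|u\|_{C^{\frac{\alpha}{2s}}_t((-1,0)\times\R^n)}$. Where you differ is in the packaging: the paper re-enters the proof of Theorem~\ref{thm.maina} and simply replaces the one estimate \eqref{eq.boundLphi} (the only place where the global $C^\alpha_x$ regularity of $u$ was used) by the improved tail bound $[L\phi]_{C^{\frac{\alpha}{2s},\alpha}_{t,x}}\leq C\|w\|_{C^{\frac{\alpha}{2s}}_t((-1,0)\times\R^n)}$, keeping the rest of that proof untouched; you instead treat Theorems~\ref{thm.maina} and \ref{thm.mainb} as black boxes, first bootstrapping local $C^{2s-\delta}_x\supset C^\alpha_x$ regularity of $u$ via Theorem~\ref{thm.mainb} (legitimate, since $\alpha<2s$ and $\|u\|_{L^\infty}\leq C_0$), and then applying Theorem~\ref{thm.maina} to the compactly supported truncation $\eta u$ with right-hand side $f+L((1-\eta)u)$. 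Your route costs an extra application of Theorem~\ref{thm.mainb} and some covering/rescaling bookkeeping (in particular the cutoff radii must be arranged so that ${\rm supp}\,(1-\eta)$ stays at positive distance from the ball where you estimate the tail, e.g.\ $\eta\equiv 1$ on $B_{3/4}$ and estimates on $B_{1/2}$, or estimates on $B_{1/4}$ followed by a covering, as you indicate), but it buys the convenience of not reopening the blow-up machinery; the paper's route is shorter because the localization and the standard absorption/interpolation steps are already in place inside the proof of Theorem~\ref{thm.maina}. One small remark: your aside that ``differentiating in $x$ falls on the smooth factor $(1-\eta)u$'' is not accurate (that factor is only bounded/H\"older), but it is superfluous, since your actual argument -- shifting the increment onto the kernel and using $\int_{|z|\geq 1/8}|K(z+x-x')-K(z)|\,dz\leq C|x-x'|^\alpha$ -- is the correct one and matches the paper's.
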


Notice that now on the right hand side of the estimate the term depending on $u$ no longer requires a $C^\alpha$ regularity in the $x$ variable. Instead, only uniform regularity in $\R^n$ in $t$ is required.

\begin{proof}
The proof reduces to see that, in the proof of Theorem \ref{thm.maina}, we can replace the bound \eqref{eq.boundLphi} (recall $\phi = \eta w - w$ for a cutoff function $\eta$) by
\[
[L \phi]_{C^{\frac{\alpha}{2s}, \alpha}_{t, x}((-1,0)\times B_1)}\leq C\|w\|_{C_t^{\frac{\alpha}{2s}} ((-1,0)\times \R^n)}.
\]
Indeed,
\begin{align*}
|L\phi(t, x)& -L \phi(t', x') | = \\
&= \left| \int_{\R^n\setminus B_{1/2}(x)} \phi(t, z)K(z-x)dz - \int_{\R^n\setminus B_{1/2}(x')} \phi(t', z)K(z-x)dz \right|\\
&\leq  C|t-t'|^{\frac{\alpha}{2s}} [\phi]_{C^{\frac{\alpha}{2s}}_t ((-1,0)\times\R^n)}+\\
 & ~~~~~~~~~~~+ \int_{\R^n\setminus B_{1/4}(x)} |\phi(t', z)|\cdot \left| K(z-x)-K(z-x')\right|dz ,
\end{align*}
where we have assumed without loss of generality that $B_{1/8}(x') \subset B_{1/4}(x) \subset B_{1/2}(x')$, by considering $|x-x'|<\frac{1}{8}$, and where
\[
K\left(y\right) = \frac{a(y/|y|)}{|y|^{n+2s}}.
\]

Notice that $K$ is $C^\alpha(B_2\setminus B_{1/8})$ by being quotient of $C^\alpha$ functions, and therefore
\[
|K(z-x)-K(z-x')| \leq C|x-x'| ~~ \textrm{ for } ~~ z-x,z-x' \in B_2\setminus B_{1/8},
\]
where $C$ depends only on $n$ and $\|a\|_{C^\alpha(S^{n-1})}$. By homogeneity of $K$, for $z\in \R^n\setminus B_{1/4}$,
\begin{align*}
|K(z-x)-K(z-x')| &=\frac{1}{|z-x|^{n+2s}}\left|K\left(\frac{z-x}{|z-x|}\right)-K\left(\frac{z-x'}{|z-x|}\right)\right|\\
& \leq C\frac{1}{|z-x|^{n+2s+\alpha}}|x-x'|^\alpha,
\end{align*}
where we used that $\frac{z-x'}{|z-x|}\in B_2\setminus B_{1/8}$.

In all we have that
\[
|L\phi(t, x) -L \phi(t', x') | \leq C\|\phi\|_{C^{\frac{\alpha}{2s}}((-1,0)\times\R^n)}\left(|t-t'|^{\frac{\alpha}{2s}}+|x-x'|^{\alpha}\right),
\]
as desired.
\end{proof}

Finally, let us combine some of the results that have been obtained here to show the following result: when the kernel of the operator is regular enough, we gain $2s-\epsilon$ spatial interior regularity. That is

\begin{cor}
\label{cor.main4}
Let $s\in(0,1)$, and let $L$ be any operator of the form \eqref{eq.L3} with bounds \eqref{eq.L2}. Assume that
\[
a\in C^{k+\alpha}(S^{n-1}),
\]
for $\alpha \in (0,1)$, $k \in \N$.

Let $u$ be any bounded weak solution to \eqref{eq.frac}. Then, if $2s+\alpha$ is not an integer,
\begin{equation}
\label{eq.mainineq_cor_3}
\|u\|_{C^{2s+k+\alpha-\epsilon}_x\left(\left(-\frac{1}{2},0\right) \times B_{1/2}\right)} \leq C \left(\|u\|_{L^\infty((-1,0)\times \R^n)} + \|f\|_{C^{k+\alpha}_{x}((-1,0)\times B_1)}\right),
\end{equation}
for all $\epsilon > 0$ and for some constant $C$ depending only on $\epsilon, \alpha, n, s, \|a\|_{C^{k+\alpha}(S^{n-1})}$ and the ellipticity constants \eqref{eq.L2}.
\end{cor}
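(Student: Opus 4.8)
The strategy is to bootstrap the regularity in the $x$ variable by repeatedly differentiating the equation in $x$ and using the Hölder regularity of the kernel. Since $a\in C^{k+\alpha}(S^{n-1})$, when we take a spatial derivative $\partial_{x_i}$ of the equation, the commutator terms that appear involve derivatives of the kernel, which are again kernels of admissible stable operators (with a slightly worse homogeneity but still within the class \eqref{eq.L1}--\eqref{eq.L2} after cutting off the singularity), plus terms controlled by lower-order norms of $u$ and by $\partial_{x_i}^{\le k} f$. The key point is that the argument in the proof of Corollary \ref{cor.main3}, which shows $[L\phi]_{C^{\frac{\alpha}{2s},\alpha}_{t,x}}\le C\|w\|_{C^{\frac{\alpha}{2s}}_t}$ for the far-away piece $\phi=\eta w-w$, works because $K(y)=a(y/|y|)|y|^{-n-2s}$ is $C^{k+\alpha}$ away from the origin; this lets us differentiate under the integral sign $k$ times.

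\textbf{Step 1.} First I would reduce to smooth compactly supported $u$ by the usual mollification argument (as in the proof of Theorem \ref{thm.maina}): mollifying commutes with $L$, so $(\partial_t-L)(u*\eta_\varepsilon)=f*\eta_\varepsilon$, and all estimates pass to the limit. \textbf{Step 2.} For a multi-index $\psi$ with $|\psi|\le k$, set $u_\psi:=D^\psi_x u$ and $f_\psi:=D^\psi_x f$. Writing $L$ in the form \eqref{eq.L3} with kernel $K$, one has $L(D^\psi_x u)=D^\psi_x(Lu)$ by translation invariance, so $u_\psi$ solves $\partial_t u_\psi - L u_\psi = f_\psi$ in $(-1,0)\times B_1$; there is \emph{no} commutator here because $L$ has no $x$-dependence. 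The role of the kernel regularity is instead to handle the \emph{tail} term when localizing: as in Corollary \ref{cor.main3}, after multiplying $u_\psi$ by a cutoff $\eta$, the error $L(\eta u_\psi - u_\psi)$ must be estimated in $C^{0,\alpha}_x$ (not just $L^\infty$), which is exactly where one uses that $K\in C^\alpha$ away from $0$ — and here we have even more, $K\in C^{k+\alpha}$, which is more than enough. \textbf{Step 3.} Apply Corollary \ref{cor.main3} (the $C^{2s+\alpha}$ estimate valid for $C^\alpha$ kernels) to $u_\psi$ with right-hand side $f_\psi\in C^{\frac{\alpha}{2s},\alpha}_{t,x}$: this gives $\|u_\psi\|_{C^{2s+\alpha}_x}\le C(\|u_\psi\|_{C^{\frac{\alpha}{2s}}_t}+\|f_\psi\|_{C^{\frac{\alpha}{2s},\alpha}_{t,x}})$ on a smaller cylinder, provided $u_\psi$ has the a priori time regularity $C^{\frac{\alpha}{2s}}_t$ in $\R^n$. \textbf{Step 4.} Obtain the needed a priori bounds: by Corollary \ref{cor.main1} (or Theorem \ref{thm.mainb}) applied to $u$ on a sequence of nested cylinders, a bounded solution is $C^{2s-\epsilon}_x$ and $C^{1-\epsilon}_t$; iterating this $\lfloor k/(2s)\rfloor$-ish times on the differentiated equations — each application of Corollary \ref{cor.main1} to $u_\psi$ raising spatial regularity by almost $2s$ — one reaches $u\in C^{2s+k-\epsilon}_x$ and then, feeding this back, the sharp statement $u\in C^{2s+k+\alpha-\epsilon}_x$. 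The loss of $\epsilon$ comes from the fact that $2s\beta$ need not equal $\alpha$ (cf.\ Corollary \ref{cor.main1} and Remark \ref{rem.main1}), and from accumulating $\epsilon$'s over finitely many bootstrap steps, which is harmless after renaming.

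\textbf{The main obstacle.} The delicate point is the careful bookkeeping of the localization/tail estimate through $k$ spatial differentiations: each time we cut off $u_\psi$ with a cutoff $\eta$ on a slightly larger ball, we must control $L(\eta u_\psi - u_\psi)$ in $C^{\alpha}_x$ using the $C^{k+\alpha}$ regularity of $K$ away from the origin, exactly as in the computation at the end of the proof of Corollary \ref{cor.main3} (splitting $|K(z-x)-K(z-x')|$ into a near regime where $K\in C^{k+\alpha}(B_2\setminus B_{1/8})$ and a far regime where homogeneity gives decay $|z-x|^{-n-2s-\alpha}$); the nested shrinking of domains must be arranged so that after $k$ steps one still controls a cylinder of definite size, which is a routine but somewhat tedious covering argument. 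The other mild subtlety is ensuring $2s+k+\alpha-\epsilon$ is never an integer in the applications of Corollary \ref{cor.main3} — this is automatic since we are free to decrease $\epsilon$ slightly.
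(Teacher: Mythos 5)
There is a genuine gap in Steps 2--4. You set $u_\psi:=D^\psi_x u$ and treat it as a solution of $\de_t u_\psi - L u_\psi=f_\psi$ to which Corollary \ref{cor.main3} (and, in Step 4, Corollary \ref{cor.main1}) can be applied. But under the hypotheses of the corollary, $u$ is only \emph{bounded} in $(-1,0)\times\R^n$ and solves the equation only in $(-1,0)\times B_1$; outside $B_1$ the function $u$ need not be differentiable at all, so $u_\psi$ is not even defined there, and $L u_\psi$ makes no sense as a nonlocal operator acting on a globally defined function. More to the point, Corollary \ref{cor.main3} requires $\|u_\psi\|_{C^{\alpha/2s}_t((-1,0)\times\R^n)}$ and Corollary \ref{cor.main1} requires $\|u_\psi\|_{C^{\beta,\alpha}_{t,x}((-1,0)\times\R^n)}$, i.e.\ \emph{global-in-space} norms of the derivatives of $u$. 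Your Step 4 proposes to supply these by iterating interior estimates on nested cylinders, but interior estimates only give information on sets compactly contained in $(-1,0)\times B_1$; they can never produce regularity of $u$ (let alone of $D^\psi_x u$) in $\R^n\setminus B_1$, where the equation does not hold. The mollification of Step 1 does not rescue this: the estimates for $u*\eta_\varepsilon$ would carry constants involving global norms of $D^\psi_x(u*\eta_\varepsilon)$, which blow up as $\varepsilon\downarrow 0$. So the bootstrap as written is circular/unavailable, and the right-hand side of \eqref{eq.mainineq_cor_3}, which contains only $\|u\|_{L^\infty((-1,0)\times\R^n)}$, cannot be reached this way.

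The paper's proof avoids differentiating $u$ altogether. One starts from the localized estimate \eqref{eq.thm13} in the proof of Theorem \ref{thm.mainb} and takes incremental quotients of order $k+\alpha$ \emph{of that inequality}, as in Corollary \ref{cor.main1}; the only term where regularity beyond $L^\infty$ of $u$ outside the ball could be needed is the tail term $\|(\de_t-L)(\eta u-u)\|_{C^{k+\alpha}_x((-1,0)\times B_1)}$, and there all increments are thrown onto the kernel: since $\phi=\eta u-u$ vanishes in $B_{3/2}$, for $x\in B_1$ one has $L\phi(t,x)=\int_{\R^n\setminus B_{1/2}(x)}\phi(t,z)K(z-x)\,dz$, and the $x$-dependence sits entirely in $K(z-x)$, which is $C^{k+\alpha}$ away from the origin with homogeneous decay; hence $\|L\phi\|_{C^{k+\alpha}_x}\leq C\|u\|_{L^\infty((-1,0)\times\R^n)}$. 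This is exactly the mechanism you describe in your ``main obstacle'' paragraph, but it must be applied to $u$ itself inside the localization step, not to the (nonexistent) global derivatives $u_\psi$; once this is in place, the conclusion follows as in the proof of Theorem \ref{thm.mainb}, with no application of Corollary \ref{cor.main3} to differentiated solutions and no need for any a priori regularity of $u$ outside $B_1$. If you wish to keep a ``differentiate the equation'' scheme, you must first localize (e.g.\ replace $u$ by $\eta u$ with $\eta$ supported well inside $B_1$, absorbing $L((1-\eta)u)$ into the right-hand side by the same kernel argument) so that the object you differentiate is globally defined and globally controlled; that repair essentially reproduces the paper's argument.
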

\begin{proof}
Let $\eta = \eta(x)$ be a cutoff function supported in $B_2$ and such that $\eta \equiv 1$ in $B_{3/2}$. In the expression \eqref{eq.thm13} from the proof of Theorem~\ref{thm.mainb} we can take incremental quotients of order $k+\alpha$ as in the proof of Corollary~\ref{cor.main1} to find
\begin{align*}
 [& u ]_{C^{2s-\epsilon +k+\alpha}_x((-2^{-2s},0)\times B_{1/2})} \leq \delta [\eta u]_{C^{2s-\epsilon+k+\alpha}_x((-1,0)\times B_2)}+ \\
&+ C \left(\|u\|_{C^{\nu+k+\alpha}_{x}((-1,0)\times B_1)} + \|f\|_{C^{k+\alpha}_x((-1,0)\times B_1)} + \|(\de_t - L)(\eta u - u)\|_{C^{k+\alpha}_x((-1,0)\times B_1)}\right).
\end{align*}

Notice that, as in Corollary~\ref{cor.main3}, we obtain
\[
\|(\de_t - L)(\eta u - u)\|_{C^{k+\alpha}_x((-1,0)\times B_1)} = \|L(\eta u - u)\|_{C^{k+\alpha}_x((-1,0)\times B_1)} \leq \|u\|_{L^\infty((-1,0)\times\R^n)},
\]
and now the desired result follows as in the proof of Theorem~\ref{thm.mainb}.
\end{proof}

\section{$C^s$ regularity up to the boundary}
\label{sec.4}
In this section we start the study of the regularity up to the boundary. We will first construct a supersolution with appropriate behaviour near the boundary, and then we establish the $C^s_x$ regularity up to the boundary, Proposition~\ref{prop.mainboundary}.

\subsection{A supersolution} Let us begin with the following general result, which gives a fractional Sobolev inequality for operators $L$ of the form \eqref{eq.L1}-\eqref{eq.L2}.
\begin{lem}[A fractional Sobolev inequality]
\label{lem.sob}
Let $s\in(0,1)$, $2s < n$, and $L$ any operator of the form \eqref{eq.L1}-\eqref{eq.L2}, with spectral measure $\mu$. Then,
\begin{equation}
\label{eq.sobolev}
\|f\|^2_{L^{\frac{2n}{n-2s}}(\R^n)} \leq C \int_{\R^n} \int_{S^{n-1}}\int_{-\infty}^\infty |f(x+\theta r)-f(x)|^2 \frac{dr}{|r|^{1+2s}} d\mu(\theta) dx
\end{equation}
for some constant $C$ depending only on $n$, $s$ and the ellipticity constants \eqref{eq.L2}.
\end{lem}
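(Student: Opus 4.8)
The plan is to establish the fractional Sobolev inequality \eqref{eq.sobolev} by reducing to the classical Sobolev inequality for the fractional Laplacian via the ellipticity condition \eqref{eq.L2}. The key observation is that the Gagliardo-type bilinear form on the right-hand side can be expressed in Fourier variables, where the ellipticity lower bound in \eqref{eq.L2} translates directly into a pointwise comparison of symbols.

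First I would compute the right-hand side of \eqref{eq.sobolev} in Fourier space. Using Plancherel and a one-dimensional computation, for each fixed direction $\theta \in S^{n-1}$ one has
\[
\int_{\R^n}\int_{-\infty}^\infty |f(x+\theta r)-f(x)|^2\frac{dr}{|r|^{1+2s}}\,dx = c_s\int_{\R^n}|\xi\cdot\theta|^{2s}\,|\widehat{f}(\xi)|^2\,d\xi,
\]
where $c_s = \int_{-\infty}^\infty \frac{1-\cos\rho}{|\rho|^{1+2s}}\,d\rho$ is a positive constant depending only on $s$. This is the standard identity $\int |e^{ir(\xi\cdot\theta)}-1|^2 |r|^{-1-2s}\,dr = c_s |\xi\cdot\theta|^{2s}$. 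Integrating in $d\mu(\theta)$ and applying Fubini, the right-hand side of \eqref{eq.sobolev} equals
\[
c_s\int_{\R^n}\left(\int_{S^{n-1}}|\xi\cdot\theta|^{2s}\,d\mu(\theta)\right)|\widehat{f}(\xi)|^2\,d\xi.
\]
Now, writing $\xi = |\xi|\nu$ with $\nu\in S^{n-1}$, homogeneity gives $\int_{S^{n-1}}|\xi\cdot\theta|^{2s}\,d\mu(\theta) = |\xi|^{2s}\int_{S^{n-1}}|\nu\cdot\theta|^{2s}\,d\mu(\theta) \geq \lambda|\xi|^{2s}$ by the ellipticity assumption \eqref{eq.L2}. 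Hence the right-hand side of \eqref{eq.sobolev} is bounded below by $c_s\lambda\int_{\R^n}|\xi|^{2s}|\widehat f(\xi)|^2\,d\xi = c_s\lambda\,\||(-\Delta)^{s/2}f\|_{L^2(\R^n)}^2$ (up to the usual normalizing constant in the definition of $\fls$).

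Finally I would invoke the classical fractional Sobolev inequality $\|f\|_{L^{2n/(n-2s)}(\R^n)}^2 \leq C_{n,s}\,\|(-\Delta)^{s/2}f\|_{L^2(\R^n)}^2$, valid precisely when $2s < n$, to conclude \eqref{eq.sobolev} with $C$ depending only on $n$, $s$ and $\lambda$ (hence on the ellipticity constants). One should carry out the argument first for $f$ in a dense class, say $f\in\mathcal{S}(\R^n)$ or $C_c^\infty(\R^n)$, to justify all the Fourier manipulations and Fubini, and then pass to the general case by density — noting that if the right-hand side is infinite there is nothing to prove. I do not expect a serious obstacle here: the only mild point of care is justifying the interchange of integration in $x$, $r$, and $\theta$ (handled by the dense-class reduction and monotone/dominated convergence), and keeping track of the precise constants $c_s$ and the normalization of $\fls$, which only affect the final constant $C$.
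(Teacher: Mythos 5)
Your proposal is correct and follows essentially the same route as the paper: pass to the Fourier side via Plancherel, identify the right-hand side with $\int A(\xi)|\hat f(\xi)|^2\,d\xi$ where $A(\xi)=\int_{S^{n-1}}|\xi\cdot\theta|^{2s}\,d\mu(\theta)$ is the symbol of $L$, use the ellipticity bound $\lambda|\xi|^{2s}\leq A(\xi)$, and conclude by the classical fractional Sobolev inequality. You merely spell out the one-dimensional computation producing the symbol, which the paper cites rather than derives.
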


\begin{proof}
This fractional Sobolev inequality is already known when the operator $L$ is the fractional Laplacian. In this case the right hand side is the Gagliardo seminorm $[f]_{H^s(\R^n)}$.

Let us call $[f]_{H^s_L(\R^n)}$ the right hand side of \eqref{eq.sobolev}. Using the classical fractional Sobolev inequality and Plancherel's theorem, it is enough to prove that $[f]_{H^s_L(\R^n)}$ and $[f]_{H^s(\R^n)}$ are equivalent seminorms in the Fourier side. This follows by noticing that the Fourier symbol $A(\xi)$ of $L$ can be explicitly written as
\[
A(\xi) = \int_{S^{n-1}} |\xi \cdot \theta|^{2s}d\mu(\theta),
\]
(see for example \cite{ST}), so that
\[
[f]_{H_L^s(\R^n)} = \int_{\R^n} A(\xi)|\hat f(\xi)|^2d\xi.
\]
Now, by definition of $\lambda, \Lambda$, the ellipticity constants in \eqref{eq.L2}, we have
\[
0 < \lambda |\xi|^{2s} \leq A(\xi)\leq \Lambda |\xi|^{2s}.
\]
Using that the Fourier symbol of the fractional Laplacian is $|\xi|^{2s}$ we are done.
\end{proof}

We now give a result regarding the eigenfunctions associated to an operator $L$ in a domain $\Omega$. This will be used later to construct a supersolution.

\begin{lem}
\label{lem.bdeig}
Let $L$ be an operator of the form \eqref{eq.L1}-\eqref{eq.L2}, and let $\Omega\subset \R^n$ be a bounded Lipschitz domain. Then the eigenfunctions of the Dirichlet elliptic problem are bounded in $\Omega$. That is, if $\phi_k\in L^2(\Omega)$ is the eigenfunction associated to the $k$-th eigenvalue $\lambda_k$,
\begin{equation}\label{eq.fracheat1}
  \left\{ \begin{array}{rcll}
  L \phi_k &=&-\lambda_k \phi_k& \textrm{in }\Omega,\\
  u&=&0& \textrm{in } \R^n\setminus \Omega,\\
  \end{array}\right.
\end{equation}
then,
\[
\|\phi_k\|_{L^\infty(\Omega)} \leq C_1 \lambda_k^{\frac{1}{2}\left(\frac{n}{2s}\right)^2} \|\phi_k\|_{L^2(\Omega)}
\]
where $C_1$ is a constant depending only on $n$ and $s$. Moreover,
\[
\lim_{k\to \infty} \lambda_k k^{-\frac{2s}{n}}= C_2,
\]
for some constant $C_2$ depending only on $n, s, \Omega$ and the ellipticity constants \eqref{eq.L2}.
\end{lem}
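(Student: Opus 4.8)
The plan is to prove the $L^\infty$ bound on eigenfunctions by a Moser–type iteration (or equivalently, a Nash/De Giorgi argument) using the fractional Sobolev inequality of Lemma~\ref{lem.sob}, and then to obtain the Weyl-type asymptotics $\lambda_k k^{-2s/n}\to C_2$ from the two-sided symbol bound $\lambda|\xi|^{2s}\le A(\xi)\le \Lambda|\xi|^{2s}$ together with the classical Weyl law for the fractional Laplacian.

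First I would establish the $L^\infty$ estimate. Assume $2s<n$ (the case $2s\ge n$ being easier, handled by a direct Sobolev embedding into $L^\infty$ or by interpolation, so I would either reduce to it or note it separately). Let $\phi=\phi_k$, normalized so that $\|\phi\|_{L^2(\Omega)}=1$, extended by zero outside $\Omega$. Testing the equation $L\phi=-\lambda_k\phi$ against $|\phi|^{p-2}\phi$ (truncated to make it admissible) and using the standard convexity inequality for the quadratic-form energy — namely that for the bilinear form $\mathcal{E}_L(u,v)$ associated to $L$ one has $\mathcal{E}_L(\phi,|\phi|^{p-2}\phi)\ge c_p\,[\,|\phi|^{p/2}\,]_{H^s_L(\R^n)}^2$ with $c_p\sim 1/p$ — I get $[\,|\phi|^{p/2}]^2_{H^s_L}\le C p\,\lambda_k\int|\phi|^p$. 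Plugging this into \eqref{eq.sobolev} applied to $f=|\phi|^{p/2}$ yields $\||\phi|^{p/2}\|^2_{L^{2^*}}\le C p\,\lambda_k\||\phi|^{p/2}\|^2_{L^2}$, i.e. a reverse-Hölder gain from $L^p$ to $L^{p\chi}$ with $\chi=n/(n-2s)$. Iterating over $p=2\chi^j$, $j=0,1,2,\dots$, and tracking the constants $\prod_j (C\,2\chi^j\lambda_k)^{\chi^{-j}}$, the product of the $\lambda_k$ factors contributes $\lambda_k^{\sum_j \chi^{-j}}=\lambda_k^{\chi/(\chi-1)}=\lambda_k^{n/2s}$, while the $j$-dependent factors sum to an absolute constant; one power of $1/2$ in the exponent appears because the iterated quantity is $\||\phi|^{p/2}\|_{L^2}^{2/p}=\|\phi\|_{L^p}$. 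Squaring and bookkeeping the exponent gives exactly $\|\phi\|_{L^\infty}\le C_1\,\lambda_k^{\frac12(n/2s)^2}$, with $C_1$ depending only on $n,s$. (The apparent $(n/2s)^2$ rather than $n/2s$ is because $\lambda_k$ itself enters at each of the $\sim n/2s$-many "effective'' iteration scales.)

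Next, for the asymptotics $\lim_{k\to\infty}\lambda_k k^{-2s/n}=C_2$, I would invoke the classical Weyl law. By Lemma~\ref{lem.sob}'s proof, the Dirichlet form of $L$ on $\Omega$ has symbol $A(\xi)=\int_{S^{n-1}}|\xi\cdot\theta|^{2s}d\mu(\theta)$, which is a continuous, positively $2s$-homogeneous function bounded between $\lambda|\xi|^{2s}$ and $\Lambda|\xi|^{2s}$. For such translation-invariant, homogeneous, elliptic pseudodifferential operators of order $2s$ with the Dirichlet condition on a bounded domain, the counting function satisfies $N(\lambda):=\#\{k:\lambda_k\le\lambda\}=(2\pi)^{-n}|\Omega|\,|\{\xi:A(\xi)\le\lambda\}|+o(\lambda^{n/2s})$ as $\lambda\to\infty$; by homogeneity $|\{A(\xi)\le\lambda\}|=\lambda^{n/2s}|\{A(\xi)\le 1\}|$, so $N(\lambda)\sim c_{n,s,\mu}\,|\Omega|\,\lambda^{n/2s}$, which inverts to $\lambda_k\sim (c_{n,s,\mu}|\Omega|)^{-2s/n}k^{2s/n}$, giving $C_2=(c_{n,s,\mu}|\Omega|)^{-2s/n}$. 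One can cite Weyl-law references for nonlocal/pseudodifferential operators (e.g. Blumenthal–Getoor, Chen–Song, or Geisinger for the fractional Laplacian plus lower-order perturbations, together with the homogeneity to reduce the general symbol to the fractional-Laplacian case via the two-sided bound and a min-max comparison $\lambda\,\lambda_k^{(-\Delta)^s}\le\lambda_k\le\Lambda\,\lambda_k^{(-\Delta)^s}$ which already pins down the power $k^{2s/n}$ even without the sharp constant).

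The main obstacle I expect is the Moser iteration itself: one must justify that $|\phi|^{p-2}\phi$ (suitably truncated) is an admissible test function in the weak/bilinear-form formulation for operators $L$ whose kernel is a singular measure on the sphere, and one must prove the Stroock–Varopoulos-type inequality $\mathcal{E}_L(u,|u|^{p-2}u)\ge \frac{p-1}{(p/2)^2}[\,|u|^{p/2}]_{H^s_L}^2$ (equivalently $(a-b)(a^{[p-1]}-b^{[p-1]})\ge \tfrac{4(p-1)}{p^2}(a^{[p/2]}-b^{[p/2]})^2$ pointwise in the double integral over $r$ and $\theta$), which is purely elementary once the integral representation \eqref{eq.L1} is in hand but requires care with the measure $\mu$. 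The Weyl-law step is essentially a citation; if one only wants the \emph{existence and value} of the limit $C_2$, the min-max comparison with $(-\Delta)^s$ fixes the exponent, and the precise constant follows from the standard pseudodifferential Weyl law applied to the symbol $A(\xi)$.
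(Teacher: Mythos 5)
Your proposal follows essentially the same route as the paper: the paper proves the $L^\infty$ bound by repeating the Servadei--Valdinoci argument of \cite[Proposition~4]{SV} — which is precisely the Sobolev-inequality-based iteration you describe — with Lemma~\ref{lem.sob} replacing the standard fractional Sobolev inequality (and disposing of the case $n=1$, where $2s\geq n$ can occur, by noting that in one dimension $L$ is a multiple of the fractional Laplacian), and it obtains the eigenvalue asymptotics by citation, namely \cite[Proposition~2.1]{FR}, which rests on Geisinger's Weyl law \cite{G} for homogeneous nonlocal operators. Two points deserve flagging. First, your exponent bookkeeping is not right: the iteration you set up yields the power $\sum_{j\geq 0}\tfrac12\chi^{-j}=\tfrac{n}{4s}$ of $\lambda_k$, not $\tfrac12\left(\tfrac{n}{2s}\right)^2$; the final ``squaring'' step that is supposed to produce the stated exponent is not an actual computation. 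This is harmless for the paper's purposes — $\lambda_k^{n/(4s)}$ is the natural scale-invariant exponent, it implies the stated bound whenever $\lambda_k\geq 1$, and Lemma~\ref{lem.super} only needs some polynomial bound in $\lambda_k$ — but you should not claim the iteration gives exactly the exponent in the statement. Second, for the existence of the limit $\lambda_k k^{-2s/n}\to C_2$, the min--max comparison with $(-\Delta)^s$ only pins down the growth rate, and the symbol $A(\xi)=\int_{S^{n-1}}|\xi\cdot\theta|^{2s}d\mu(\theta)$ is merely continuous and $2s$-homogeneous (not a smooth classical symbol when $\mu$ is singular), so an appeal to ``the standard pseudodifferential Weyl law'' is too loose; what is needed is a Weyl law valid for such rough homogeneous symbols, which is exactly Geisinger's result that you list and that the paper invokes through \cite{FR}. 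Otherwise your Stroock--Varopoulos step is sound (the inequality is pointwise, so it integrates against any spectral measure), and the truncation/admissibility issues you mention are standard.
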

\begin{proof} Let us begin with the first inequality. If $n = 1$, then $L$ is a multiple of the fractional Laplacian and the result is already known (e.g. \cite[Proposition 4]{SV} or \cite{FR}).
If $n > 2s$, it can be proved doing exactly the same as Servadei and Valdinoci do in \cite[Proposition 4]{SV} for the fractional Laplacian case. To do so, we use the fractional Sobolev inequality for general stable operators established in Lemma~\ref{lem.sob}.

The second result follows from \cite[Proposition 2.1]{FR}, which is a direct consequence of a result from \cite{G}.
\end{proof}

In this section and what follows we will use the following notation
\begin{equation}
d(x):=\textrm{dist}(x,\R^n\setminus\Omega).
\end{equation}

\begin{lem}[Supersolution]
\label{lem.super}
Let $s\in(0,1)$ and let $\Omega\subset \R^n$ be any bounded $C^{1,1}$ domain. Let $u$ be the solution to
\begin{equation}\label{eq.fracheat2}
  \left\{ \begin{array}{rcll}
  \de_t u - L u&=&1& \textrm{in }\Omega,\ t > 0 \\
  u&=&0& \textrm{in } \R^n\setminus \Omega,\ t \geq 0 \\
  u(0,x)&=&1&\textrm{in }\Omega.
  \end{array}\right.
\end{equation}

Then, we have
\begin{equation}
\label{eq.ds}
|u| \leq C(t_0) d^s,~~\textrm{ for all } t\geq t_0 > 0,
\end{equation}
where $C(t_0)$ depends only on $t_0, n, s, \Omega$ and the ellipticity constants \eqref{eq.L2}. The dependence with respect to $\Omega$ is via $|\Omega|$ and the $C^{1,1}$ norm of the domain.
\end{lem}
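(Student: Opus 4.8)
The plan is to construct the supersolution explicitly by combining an elliptic barrier from \cite{RS} with an eigenfunction expansion of the solution to the parabolic Dirichlet problem \eqref{eq.fracheat2}. First I would recall from \cite{RS} that for $C^{1,1}$ domains there is an elliptic supersolution: a function $\psi \in C(\overline\Omega)$ with $\psi = 0$ in $\R^n\setminus\Omega$, $c\, d^s \leq \psi \leq C\, d^s$ in $\Omega$, and $-L\psi \geq 1$ in $\Omega$ (such a barrier is built there from the distance function raised to the power $s$, suitably cut off). Then $\psi$ is a stationary supersolution for \eqref{eq.fracheat2}: $\de_t \psi - L\psi = -L\psi \geq 1$, and since $u(0,x) = 1 \leq C\,\|\psi\|_{L^\infty}^{-1}\psi$ is false at the boundary, one cannot directly compare at $t = 0$. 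This is exactly why the statement only claims the bound for $t \geq t_0 > 0$: near $t = 0$ the solution is still close to its (bounded but non-$d^s$-vanishing) initial datum, and we must wait for the boundary layer to form.

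The key step is therefore to handle the transient behaviour. I would expand $u$ in the eigenfunctions $\phi_k$ of Lemma~\ref{lem.bdeig}: writing $u(t,x) = u_\infty(x) + \sum_k c_k e^{-\lambda_k t}\phi_k(x)$, where $u_\infty$ solves the elliptic problem $-Lu_\infty = 1$ in $\Omega$, $u_\infty = 0$ outside, and the $c_k$ are the Fourier coefficients of $1 - u_\infty$ on $\Omega$. By the elliptic barrier from \cite{RS}, $u_\infty \leq C\,d^s$. For the transient part, I would use the $L^\infty$ bound on eigenfunctions, $\|\phi_k\|_{L^\infty(\Omega)} \leq C_1\lambda_k^{\frac12(n/2s)^2}\|\phi_k\|_{L^2}$, together with the Weyl-type asymptotics $\lambda_k \sim C_2 k^{2s/n}$, and the elementary bound $|c_k| \leq \|1 - u_\infty\|_{L^2(\Omega)}$ (so $\sum |c_k|^2 < \infty$). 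Then for $t \geq t_0$,
\[
\Bigl|\sum_k c_k e^{-\lambda_k t}\phi_k(x)\Bigr| \leq C_1 \sum_k |c_k| \lambda_k^{\frac12(n/2s)^2} e^{-\lambda_k t_0} \leq C(t_0),
\]
where the series converges because the exponential decay $e^{-\lambda_k t_0}$ (with $\lambda_k \to \infty$) beats the polynomial growth $\lambda_k^{\frac12(n/2s)^2}$ and also absorbs the $\ell^2 \hookrightarrow \ell^1$ summation via Cauchy–Schwarz. This gives $|u(t,\cdot) - u_\infty| \leq C(t_0)$ for $t \geq t_0$, hence a uniform $L^\infty$ bound on $u$ for $t \geq t_0/2$, say.

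To upgrade the uniform bound to the $d^s$-decay, I would run a comparison argument on the time interval $[t_0/2, \infty)$. At time $t_0/2$ we have $\|u(t_0/2,\cdot)\|_{L^\infty} \leq M(t_0)$; choosing the constant large enough that $M(t_0)\, \psi / \|\psi\|_{L^\infty} \geq u(t_0/2,\cdot)$ in all of $\R^n$ (both sides vanish outside $\Omega$, and inside $\psi$ is bounded below by a positive constant away from $\de\Omega$ while $u$ is bounded above), and noting $\de_t\bigl(c\,\psi\bigr) - L\bigl(c\,\psi\bigr) = -c\,L\psi \geq 1 \geq \de_t u - L u$ for suitable $c$, the parabolic maximum principle for $L$ (weak comparison principle) gives $u(t,x) \leq c\,\psi(x) \leq C\,d(x)^s$ for all $t \geq t_0/2$, and similarly $u \geq -c\,\psi$ using $1 - u$ in place of $u$ (or symmetry, since the datum and forcing are sign-definite). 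Combined with $t_0/2 < t_0$ this yields \eqref{eq.ds}. The main obstacle I anticipate is making the eigenfunction expansion rigorous — justifying $L^2$-convergence and the termwise application of $L$, and tracking that the constant $C(t_0)$ depends on $\Omega$ only through $|\Omega|$ and the $C^{1,1}$ norm — which requires invoking the spectral theory for the Dirichlet form associated to $L$ (self-adjointness, compact resolvent from the fractional Sobolev inequality in Lemma~\ref{lem.sob}) and the explicit dependence of $C_1$ and $C_2$ in Lemma~\ref{lem.bdeig}; the comparison step itself is standard once the uniform bound at a positive time is in hand.
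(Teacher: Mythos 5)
Your decomposition $u=u_\infty+\text{transient}$ and the eigenfunction expansion of the transient part are exactly the paper's strategy, and your $L^\infty$ bound for the series (Weyl asymptotics plus the $L^\infty$ eigenfunction bound of Lemma~\ref{lem.bdeig}, Cauchy--Schwarz in $k$) is fine. The genuine gap is in the final comparison step that is supposed to upgrade the uniform bound to the $d^s$ decay. You compare $u$ on $[t_0/2,\infty)$ with $c\,\psi$, and you need $u(t_0/2,\cdot)\le c\,\psi$ on the initial slice; but $\psi\simeq d^s\to 0$ at $\de\Omega$, while at time $t_0/2$ you only know $\|u(t_0/2,\cdot)\|_{L^\infty}\le M(t_0)$ and that $u$ vanishes continuously on $\de\Omega$ with no rate. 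Your parenthetical justification (``$\psi$ is bounded below away from $\de\Omega$ while $u$ is bounded above'') says nothing about the boundary layer, and establishing $u(t_0/2,\cdot)\le C d^s$ is precisely the conclusion of the lemma, so as written the argument is circular. No constant multiple of the stationary barrier can dominate a merely bounded datum near $\de\Omega$.

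The paper closes this by putting the $d^s$ decay into the series itself: since each eigenfunction solves $L\phi_k=-\lambda_k\phi_k$ with right-hand side in $L^\infty$ (by the $L^\infty$ bound of Lemma~\ref{lem.bdeig}), the elliptic boundary estimate of \cite{RS} gives $|\phi_k|\le C\lambda_k^{w+1}d^s$ pointwise, whence $|u_2(t,x)|\le d^s(x)\sum_k |u_k|\,C\lambda_k^{w}e^{-\lambda_k t}\le C(t_0)\,d^s(x)$ for $t\ge t_0$, exactly as in \cite{FR}; no comparison at a positive time is needed. Alternatively, your comparison route can be repaired with a time-dependent barrier: on $[t_0/2,t_0]$ compare $u$ with $b(t,x)=A\psi(x)+M\,\frac{t_0-t}{t_0-t_0/2}$, which dominates the bounded datum at $t=t_0/2$ without any boundary decay and satisfies $\de_t b-Lb\ge A-\frac{M}{t_0-t_0/2}\ge 1$ for $A$ large (depending on $t_0$); this yields $u(t_0,\cdot)\le A\psi\le C d^s$, and then the stationary comparison with $A\psi$ works for $t\ge t_0$. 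Either fix is needed; the step as you wrote it does not go through.
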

\begin{rem}
We call the $C^{1,1}$ norm of the domain to the maximum $\rho$ such that there are balls tangent at every point from inside and outside the domain with radius $\rho$.
\end{rem}
\begin{proof}
Notice that $u(t, x) = u_1(x)+u_2(t,x)$ where $u_1$ solves
\begin{equation}
  \left\{ \begin{array}{rcll}
  - L u_1&=&1& \textrm{in }\Omega,\ t > 0 \\
  u_1&=&0& \textrm{in } \R^n\setminus \Omega,\ t \geq 0 \\
  \end{array}\right.
\end{equation}
and $u_2$ solves
\begin{equation}
  \left\{ \begin{array}{rcll}
  \de_t u_2 - L u_2&=&0& \textrm{in }\Omega,\ t > 0 \\
  u_2&=&0& \textrm{in } \R^n\setminus \Omega,\ t \geq 0 \\
  u_2(0,x)&=&1-u_1(x)&\textrm{in }\Omega.
  \end{array}\right.
\end{equation}

By the results in \cite{RS} we have a bound for $u_1$ of the form
\begin{equation}
\label{eq.ds2}
|u_1| \leq C d^s,
\end{equation}
where $C$ depends only on $n$, $s$, the $C^{1,1}$ norm of $\Omega$ and the ellipticity constants \eqref{eq.L2}.

To bound $u_2$ we proceed as in the proof of \cite[Theorem 1.1]{FR} by expressing $u_2$ with respect to the eigenfunctions of the elliptic problem. Namely,
\[
u_2(t, x) = \sum_{k > 0}u_k \phi_k e^{-\lambda_k t},
\]
where $\phi_k$ is the $k$-th eigenfunction corresponding to the $k$-th eigenvalue $\lambda_k$, and $u_k$ are the Fourier coefficients of $u_2(0, x)$. We are assuming $\|\phi_k\|_{L^2(\Omega)} = 1$ for all $k\in \N$.

By the results in \cite{RS} for the elliptic problem and Lemma \ref{lem.bdeig} we have
\[
|\phi_k(x)| \leq C\lambda\|\phi_k\|_{L^\infty(\Omega)}d^s(x) \leq C \lambda_k^{w+1}d^s(x),
\]
where $w = \frac{1}{2}\left(\frac{n}{2s}\right)^2$ and $C$ depends only on $n$ and $s$. Therefore,
\[
|u_2(t, x)| \leq d^s(x)\sum_{k> 0} u_k C\lambda_k^w e^{-\lambda_k t},~~~\textrm{ for all } x \in \Omega.
\]
Using exactly the same reasoning as in \cite{FR}, this implies
\[
|u_2(t, x)| \leq C(t)\|u_2(0,\cdot)\|_{L^2(\Omega)} d^s(x),~~~\textrm{ for all } x \in \Omega.
\]
The constant $C$ depends only on $t, n, s, |\Omega|$, and the ellipticity constants \eqref{eq.L2}. This implies our result, since
\begin{align*}
\|u_2(0,\cdot)\|_{L^2(\Omega)} \leq |\Omega|\left( 1 + \|u_1\|_{L^\infty(\Omega)}\right) \leq C
\end{align*}
for $C$ depending only on $n, s, |\Omega|$ and the ellipticity constants \eqref{eq.L2}.
\end{proof}

\subsection{$C^s$ regularity up to the boundary} We begin this subsection by introducing a definition that will be useful through this and the next section.
\begin{defi}
\label{defi.1}
We say that $\Gamma$ is a $C^{1,1}$ surface with radius $\rho_0$ splitting $B_1$ into $\Omega^+$ and $\Omega^-$ if the following happens:
\begin{itemize}
\item The two disjoint domains $\Omega^+$ and $\Omega^-$ partition $B_1$, i.e., $\overline{B_1} = \overline{\Omega^+} \cup \overline{\Omega^-}$.
\item The boundary $\Gamma := \de \Omega^+\setminus \de B_1 = \de \Omega^- \setminus \de B_1$ is a $C^{1,1}$ surface with $0\in \Gamma$.
\item All points on $\Gamma \cap \overline{B_{3/4}}$ can be touched by two balls of radii $\rho_0$, one contained in $\Omega^+$ and the other contained in $\Omega^-$.
\end{itemize}
\end{defi}

Under the previous definition, we will denote
\[
d(x) = \textrm{dist}(x, \Omega^-).
\]

We will prove the following version of Proposition \ref{prop.mainboundary}.

\begin{prop}
\label{prop.bound}
Let $s\in (0,1)$ and let $L$ be an operator of the form \eqref{eq.L1}-\eqref{eq.L2}. Let $\Gamma$ be a $C^{1,1}$ surface with radius $\rho_0$ splitting $B_1$ into $\Omega^+$ and $\Omega^-$. Let $u$ be any weak solution to
\begin{equation}
\label{eq.fracdom}
  \left\{ \begin{array}{rcll}
  \de_t u - L u&=&f& \textrm{in }(0,1)\times \Omega^+\\
  u&=&0& \textrm{in }(0,1)\times \Omega^- . \\
  \end{array}\right.
\end{equation}

Then,
\begin{equation}
\|u\|_{C^{\frac{1}{2},s}_{t,x}\left(\left[\frac{1}{2},1\right]\times\overline{B_{1/2}}\right)}\leq C\left(\|f\|_{L^\infty((0,1)\times \Omega^+)} + \|u\|_{L^\infty((0,1)\times\R^n)}\right),
\end{equation}
where $C$ depends only on $n, s, \rho_0$ and the ellipticity constants \eqref{eq.L2}.
\end{prop}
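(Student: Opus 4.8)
The plan is to prove two estimates separately: the $C^s_x$ bound (H\"older regularity in space, uniform in time) and the $C^{1/2}_t$ bound (H\"older regularity in time, uniform in space), and then combine them via the equivalence of parabolic seminorms \eqref{eq.equivseminorm}. For the spatial estimate, the key object is the supersolution from Lemma~\ref{lem.super}: after rescaling to the geometry of Definition~\ref{defi.1} and translating in time so that the initial layer is absorbed (using $t\geq\frac12$ in the final estimate while the equation holds on all of $(0,1)$), one obtains a function $\psi$ defined on $\Omega^+$ with $\de_t\psi - L\psi \geq 1$, $\psi = 0$ in $\Omega^-$, $\psi \leq C$, and crucially $\psi \leq C\,d^s$ near $\Gamma$. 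Combining this barrier with the constant supersolution, I would build $\bar\psi$ comparable to $\min\{d^s, 1\}$ that is a supersolution of the equation with right-hand side bounded below by a constant depending only on $\|f\|_{L^\infty}$ and $\|u\|_{L^\infty}$. Then the comparison principle for the parabolic Dirichlet problem (valid for weak solutions of these operators) gives
\[
|u(t,x)| \leq C\bigl(\|f\|_{L^\infty((0,1)\times\Omega^+)} + \|u\|_{L^\infty((0,1)\times\R^n)}\bigr)\,d^s(x)
\]
for all $t \geq \frac12$ and $x \in B_{3/4}$. This pointwise bound on $|u|/d^s$ near the boundary is the heart of the spatial estimate: at an interior point $x_0 \in \Omega^+ \cap B_{1/2}$ with $\delta := d(x_0)$, one applies the interior estimate of Theorem~\ref{thm.mainb} (or rather its consequence, the $C^s_x$ scaled estimate) in the ball $B_{\delta/2}(x_0)\times(t-\delta^{2s},t)$; the rescaled function has $L^\infty$ norm controlled by $C\delta^s$ thanks to the barrier bound, so the scaled $C^{s-\epsilon}$ estimate yields $[u]_{C^s_x(B_{\delta/2}(x_0))}\leq C$ — here one should in fact run the standard iteration/covering argument (as in \cite{RS2}) that upgrades the pointwise $d^s$ decay plus interior estimates into a genuine $C^s_x$ bound up to $\Gamma$.

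For the temporal estimate, I would argue directly from the equation $\de_t u = Lu + f$. Away from $\Gamma$ (say $d(x)\geq r$) one uses interior time-regularity, so the only issue is time-oscillation near $\Gamma$. There, $u$ is small (of order $d^s$) by the barrier bound, and I would estimate $|u(t,x)-u(t',x)|$ for $x$ near $\Gamma$ by splitting into the case $|t-t'|^{1/2s}\geq d(x)$ — where one simply uses $|u(t,x)|, |u(t',x)| \leq C d(x)^s \leq C|t-t'|^{1/2}$ directly — and the case $|t-t'|^{1/2s} < d(x)$, where the interior estimate at scale $d(x)$ (again with the $L^\infty$-smallness $C d(x)^s$ built in) gives $|u(t,x)-u(t',x)| \leq C d(x)^{s}\,\bigl(|t-t'|/d(x)^{2s}\bigr)^{1/2} \wedge \dots = C|t-t'|^{1/2}$ after the natural parabolic scaling. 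Combining the two ranges yields $[u]_{C^{1/2}_t}\leq C$ uniformly up to the boundary.

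\textbf{Main obstacle.} The principal technical difficulty is the construction and verification of the barrier: one must produce a supersolution that is genuinely comparable to $\min\{d^s,1\}$ on the \emph{local} geometry of Definition~\ref{defi.1} (not a global smooth domain), while keeping the constants depending only on $\rho_0$ and the ellipticity constants. This requires combining the global supersolution of Lemma~\ref{lem.super} (which lives on an auxiliary bounded $C^{1,1}$ domain chosen to fit inside the local picture and to contain $\Omega^+\cap B_{3/4}$ near $\Gamma$) with a cutoff argument to kill the contributions of $u$ from $\R^n\setminus B_1$, and carefully checking that the nonlocal tails of the cutoff error are absorbed into the constant right-hand side. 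The second, more routine but still delicate, point is making the iteration that passes from ``pointwise $d^s$ decay $+$ scaled interior estimates'' to a true $C^s_x$ (and $C^{1/2}_t$) seminorm bound up to $\Gamma$; this is where one imports the scheme of \cite{RS2}, now in the parabolic setting, using the good scaling of the parabolic cylinders $Q_r = (t_0 - r^{2s}, t_0)\times B_r$.
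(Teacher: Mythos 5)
Your proposal follows essentially the same route as the paper: the supersolution of Lemma~\ref{lem.super} (with a cutoff/tail argument) plus comparison gives $|u|\leq CC_0d^s$, rescaled interior estimates at scale $d(x_0)$ on parabolic cylinders give the local seminorm bounds, and the case split comparing $|x-x'|$ (resp.\ $|t-t'|^{\frac{1}{2s}}$) with $d$ assembles the $C^{s}_x$ and $C^{\frac12}_t$ bounds up to $\Gamma$, exactly as in Lemmas~\ref{lem.bds0}--\ref{lem.bds2} and the proof of Proposition~\ref{prop.bound}. The only cosmetic difference is in the time estimate, where the paper passes through an auxiliary point $\bar x$ farther from $\Gamma$ while you split at the point itself; the two are equivalent.
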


To prove the previous proposition we will follow the steps of \cite[Proposition 1.1]{RS2}. We begin with the following lemma.
\begin{lem}
\label{lem.bds0}
Let $s\in(0,1)$ and let $L$ be an operator of the form \eqref{eq.L1}-\eqref{eq.L2}. Let $u$ be any weak solution to \eqref{eq.frac} with $f\in L^\infty((-1,0)\times B_1)$, and
\[
K_0 = \sup_{t\in(-1,0)}\sup_{R\geq 1} R^{\delta-2s}\|u(t,\cdot)\|_{L^\infty(B_R)}
\]
for some $\delta > 0$. Then, for any $\epsilon > 0$,
\[
\|u\|_{C^{1-\frac{\epsilon}{2s}}_t \left(\left[ -\frac{1}{2},0 \right]\times \overline{B_{1/2}}\right)} +  \|u\|_{C^{2s-\epsilon}_x\left(\left[ -\frac{1}{2},0 \right]\times \overline{B_{1/2}}\right)} \leq C(K_0+ \|f\|_{L^\infty((-1,0)\times B_1)}),
\]
where the constant $C$ depends only on $n, s, \epsilon, \delta$ and the ellipticity constants \eqref{eq.L2}.
\end{lem}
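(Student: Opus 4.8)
The plan is to reduce to the already-proven interior estimate, Theorem~\ref{thm.mainb}, by splitting $u$ into a piece supported near $B_1$ and a tail piece, then controlling the tail using the growth hypothesis encoded by $K_0$. First I would write $u = u_1 + u_2$ where $u_1 := u\,\chi$ with $\chi\in C_c^\infty(B_2)$ a cutoff equal to $1$ on $B_{3/2}$, so that $u_2 = u(1-\chi)$ vanishes on $B_{3/2}$. Then $\|u_1\|_{L^\infty((-1,0)\times\R^n)} \leq \|u\|_{L^\infty((-1,0)\times B_2)} \leq C\,2^{2s-\delta}K_0 \leq CK_0$, where the middle inequality is just the definition of $K_0$ with $R=2$. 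In $(-1,0)\times B_1$ we have $\de_t u_1 - L u_1 = f + L u_2 - (L u_1 - L(u\chi)\text{-type error})$; more precisely, since $u = u_1+u_2$ solves $\de_t u - Lu = f$ there, and $u_2 \equiv 0$ on $B_{3/2}\supset B_1$ so $\de_t u_2 = 0$ on $(-1,0)\times B_1$, we get
\[
\de_t u_1 - L u_1 = f + L u_2 \qquad \text{in } (-1,0)\times B_1.
\]
So it suffices to bound $\|L u_2\|_{L^\infty((-1,0)\times B_1)}$ and then apply Theorem~\ref{thm.mainb} to $u_1$.

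The heart of the matter is the tail estimate for $L u_2$. For $x\in B_1$ and $t\in(-1,0)$, since $u_2$ vanishes on $B_{3/2}$ and $|x|<1$, the integrand in \eqref{eq.L1} defining $L u_2(t,x)$ only sees the region $|y|\gtrsim \tfrac12$ in the radial variable, so
\[
|L u_2(t,x)| \leq C\int_{S^{n-1}}\int_{1/2}^\infty \frac{|u_2(t,x+r\theta)| + |u_2(t,x-r\theta)|}{r^{1+2s}}\,dr\,d\mu(\theta).
\]
Using $|u_2(t,y)| \leq |u(t,y)| \leq K_0(1+|y|)^{2s-\delta}$ for $|y|\geq 1$ (which follows from the definition of $K_0$ applied with $R = \max\{1,|y|\}$), and $|x\pm r\theta| \leq 1+r$, the inner integral is controlled by $C K_0\int_{1/2}^\infty (1+r)^{2s-\delta} r^{-1-2s}\,dr$, which converges because the exponent at infinity is $2s-\delta-1-2s = -1-\delta < -1$; the integral over $S^{n-1}$ then contributes a factor $\int_{S^{n-1}}d\mu \leq \Lambda$. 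Hence $\|Lu_2\|_{L^\infty((-1,0)\times B_1)} \leq C(n,s,\delta,\Lambda)\,K_0$.

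Finally I would apply Theorem~\ref{thm.mainb} to $u_1$ with right-hand side $\tilde f := f + L u_2 \in L^\infty((-1,0)\times B_1)$, which gives
\[
\|u_1\|_{C^{1-\frac{\epsilon}{2s}}_t((-1/2,0)\times B_{1/2})} + \|u_1\|_{C^{2s-\epsilon}_x((-1/2,0)\times B_{1/2})} \leq C\big(\|u_1\|_{L^\infty} + \|\tilde f\|_{L^\infty}\big) \leq C\big(K_0 + \|f\|_{L^\infty((-1,0)\times B_1)}\big).
\]
On $B_{1/2}$ we have $u = u_1$ since $\chi\equiv 1$ there, so the same estimate holds for $u$; passing from the open time interval $(-1/2,0)$ to the closed $[-1/2,0]$ is just continuity up to the endpoint, and the closed ball $\overline{B_{1/2}}$ is handled by a trivial covering/closure argument. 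The only genuinely delicate point is making sure the growth hypothesis is used with the correct radius in the tail integral and that $\delta>0$ is exactly what makes $\int^\infty (1+r)^{2s-\delta}r^{-1-2s}dr$ converge — everything else is a routine localization already carried out for the pure interior case. (If $2s>1$ one may also need to check, as in Theorem~\ref{thm.mainb}, that the $C^{0,\nu}_{t,x}$ seminorm of $u_1$ entering the interpolation is controlled; but that is absorbed into the same constant via the interpolation inequality \eqref{eq.interp2}, exactly as in the proof of Theorem~\ref{thm.mainb}.)
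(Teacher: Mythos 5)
Your proposal is correct and follows essentially the same route as the paper: cut off $u$ near $B_2$, observe that the equation for the truncated function picks up the extra term $L\bigl(u(1-\chi)\bigr)$, bound this tail term in $L^\infty((-1,0)\times B_1)$ by $CK_0$ using the growth $|u(t,y)|\leq CK_0(1+|y|^{2s-\delta})$ and the convergence of $\int_{1/2}^\infty r^{2s-\delta-1-2s}\,dr$ thanks to $\delta>0$, and then apply Theorem~\ref{thm.mainb}. The only cosmetic difference is that the paper truncates with the characteristic function $\chi_{B_2}$ while you use a smooth cutoff equal to $1$ on $B_{3/2}$; both work since the tail term is evaluated only at points of $B_1$, away from the support of $u(1-\chi)$.
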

\begin{proof}
Apply Theorem \ref{thm.mainb} to $\tilde{u} = u\chi_{B_2}$. Then, by an argument similar to the one done in the proof of Theorem \ref{thm.mainb}, it is enough to check
\[
\|L (u (1-\chi_{B_2}))\|_{L^\infty((-1,0)\times B_1)} \leq C K_0.
\]
This is immediate from the growth imposed by the definition of $K_0$, i.e.,
\[
|u(t, x)|\leq K_0 \left(1+|x|^{2s-\delta}\right).
\]
Thus, the lemma follows.
\end{proof}

We next show that the solutions $u$ satisfy $|u|\leq Cd^s$.

\begin{lem}
\label{lem.bds}
Let $s\in (0,1)$ and let $L$ be an operator of the form \eqref{eq.L1}-\eqref{eq.L2}. Let $\Gamma$ be a $C^{1,1}$ surface with radius $\rho_0$ splitting $B_1$ into $\Omega^+$ and $\Omega^-$, and let $f\in L^\infty((0,1)\times \Omega^+)$. Let $u$ be any weak solution to \eqref{eq.fracdom}. Then
\[
|u(t, x)| \leq C(t_0) \left(\|f\|_{L^\infty((0,1)\times \Omega^+)} + \|u\|_{L^\infty((0,1)\times\R^n)}\right)d^s(x),
\]
for all $x\in B_{1/4}$, $t \geq t_0 > 0$, and where $C$ depends only on $t_0,n, s,\rho_0$ and the ellipticity constants \eqref{eq.L1}-\eqref{eq.L2}.
\end{lem}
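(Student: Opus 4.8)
The plan is to bound $u$ from above by an explicit supersolution built from the function provided by Lemma~\ref{lem.super}; applying the same bound to $-u$ (which solves \eqref{eq.fracdom} with $f$ replaced by $-f$) then yields the two‑sided estimate. We may assume $\|u\|_{L^\infty}+\|f\|_{L^\infty}>0$ (else $u\equiv0$) and, after rescaling, that $\rho_0$ is small.

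\textbf{Step 1: geometric reduction.} Using that $\Gamma$ is $C^{1,1}$ with radius $\rho_0$ and $0\in\Gamma$, I would construct a bounded $C^{1,1}$ domain $D$ with $\overline D\subset B_{1/2}$, $D\subset\Omega^+$, $D\cap B_{3/8}=\Omega^+\cap B_{3/8}$, and whose $C^{1,1}$ norm is controlled by $\rho_0$ and $n$: take $D$ to coincide with $\Omega^+$ inside $B_{3/8}$ and round off the corner of $\Omega^+\cap B_{1/2}$ along $\partial B_{1/2}$ by pushing slightly into $\Omega^+$. Since $D\subset\Omega^+$ we have $\overline{\Omega^-}\subset\R^n\setminus D$, so $d_D(x):=\mathrm{dist}(x,\R^n\setminus D)\le d(x)$; and if $x\in B_{1/4}$ has $d(x)<\tfrac1{16}$, the point of $\R^n\setminus D$ nearest to $x$ lies in $B_{3/8}\setminus D\subset\overline{\Omega^-}$, whence $d_D(x)=d(x)$. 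For $x\in B_{1/4}$ with $d(x)\ge\tfrac1{16}$ the claimed bound is trivial ($|u|\le\|u\|_{L^\infty}\le 16^s\|u\|_{L^\infty}d^s$), so it suffices to estimate $u$ on $D\cap B_{1/4}\cap\{d<\tfrac1{16}\}$.

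\textbf{Step 2: the barrier and comparison.} Fix $\eta\in C^\infty(\R^n)$ with $0\le\eta\le1$, $\eta\equiv0$ on $B_{1/4}$, $\eta\equiv1$ on $\R^n\setminus B_{3/8}$; then $|L\eta|\le C_\eta$ on $\R^n$ for a constant depending only on $n,s,\Lambda$. Let $W$ be the solution furnished by Lemma~\ref{lem.super} for the domain $D$, so $\de_tW-LW=1$ in $(0,\infty)\times D$, $W=0$ outside $D$, $W(0,\cdot)=1$ in $D$, and $0\le W(\tau,x)\le C_*(t_0)\,d_D^s(x)$ for $\tau\ge t_0/2$, with $C_*$ depending only on $t_0,n,s,\rho_0$ and the ellipticity constants. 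Put $N:=\|u\|_{L^\infty((0,1)\times\R^n)}$, $M:=\|f\|_{L^\infty((0,1)\times\Omega^+)}+(C_\eta+1)N$, and set
\[
\Phi(t,x):=M\,W\!\left(t-\tfrac{t_0}{2},x\right)+N\,\eta(x),\qquad t\ge\tfrac{t_0}{2}.
\]
In $(\tfrac{t_0}{2},1)\times D$ one computes $\de_t\Phi-L\Phi=M-N\,L\eta\ge M-NC_\eta\ge\|f\|_{L^\infty}\ge f=\de_tu-Lu$. On the parabolic boundary of this cylinder, $\Phi\ge u$: at $t=\tfrac{t_0}{2}$ one has $\Phi=M\chi_D+N\eta\ge N=\|u\|_{L^\infty}\ge u$ on $D$, while off $D$ either $x\in\overline{\Omega^-}$ (so $u=0\le\Phi$) or $x\in\R^n\setminus B_{3/8}$ (using $\Omega^+\cap B_{3/8}\subset D$) where $\Phi\ge N\eta=N\ge u$; the same dichotomy handles $x\in\R^n\setminus D$ for all $t\in(\tfrac{t_0}{2},1)$. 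By the parabolic comparison principle for weak sub/supersolutions of $\de_t-L$, $u\le\Phi$ in $(\tfrac{t_0}{2},1)\times D$. Hence for $x\in D\cap B_{1/4}$ with $d(x)<\tfrac1{16}$ and $t\in[t_0,1)$ (so $\eta(x)=0$ and $t-\tfrac{t_0}{2}\ge\tfrac{t_0}{2}$),
\[
u(t,x)\le\Phi(t,x)=M\,W\!\left(t-\tfrac{t_0}{2},x\right)\le M\,C_*(t_0)\,d_D^s(x)=M\,C_*(t_0)\,d^s(x),
\]
and since $M\le C(\|f\|_{L^\infty}+\|u\|_{L^\infty})$ this is the desired bound on $u$; applying it to $-u$ gives the bound on $|u|$, with $C$ depending only on $t_0,n,s,\rho_0$ and the ellipticity constants.

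\textbf{Main obstacle.} The delicate point is that $u$ does \emph{not} vanish on all of $\R^n\setminus D$ — only on $\Omega^-$ — while the supersolution $W$ is supported in $D\subset B_{1/2}$; the cutoff term $N\eta$ is precisely what makes $\Phi$ dominate $u$ outside $B_{3/8}$, and one must check that $L\eta$ is bounded so this correction does not spoil the supersolution inequality inside $D$, then balance the weights $M,N$. The extension of the surface piece $\Gamma$ to a bounded $C^{1,1}$ domain $D$ with norm controlled by $\rho_0$, the identity $d_D=d$ near $\Gamma\cap B_{1/4}$, and the invocation of the parabolic comparison principle for operators \eqref{eq.L1}–\eqref{eq.L2} are standard.
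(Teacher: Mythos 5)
Your proposal is correct in its core mechanism and is the same strategy as the paper's: reduce the lemma to the supersolution of Lemma \ref{lem.super} plus a comparison principle. The genuine difference is the auxiliary domain on which Lemma \ref{lem.super} is invoked. You construct a single interior $C^{1,1}$ domain $D\subset\Omega^+$ with $D\cap B_{3/8}=\Omega^+\cap B_{3/8}$, so that $d_D:=\mathrm{dist}(\cdot,\R^n\setminus D)$ coincides with $d$ near $\Gamma\cap B_{1/4}$. The paper instead uses, for each $z\in\Gamma\cap B_{1/2}$, the explicit domain $B_2\setminus\overline{B^{(z)}}$, where $B^{(z)}\subset\Omega^-$ is the exterior tangent ball of radius $\min\{\rho_0,\tfrac18\}$ at $z$; since $\mathrm{dist}(x,\overline{B^{(z_x)}})=d(x)$ when $z_x$ is the boundary point nearest to $x$, choosing $z=z_x$ for each $x\in B_{1/4}$ yields the $d^s$ bound with a constant uniform in $z$. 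The paper's choice buys two things: the auxiliary domain is explicitly $C^{1,1}$ with controlled radius, and it contains $\Omega^+$, so no geometric surgery is needed. Your Step 1 is the one shaky point of the proposal: the existence of $D$ with $D\subset\Omega^+$, $\overline D\subset B_{1/2}$, $D\cap B_{3/8}=\Omega^+\cap B_{3/8}$ and $C^{1,1}$ norm controlled only by $n,\rho_0$ is true, but it is not as innocuous as ``round off the corner'': naive constructions (e.g.\ unions of interior balls of radius $\rho_1\le\rho_0$ centered at points with $d\ge\rho_1$) give the interior ball condition but can produce inward corners that violate the exterior ball condition, and where $\partial D$ peels off from $\Gamma$ one must keep uniform two-sided tangent balls; this requires a careful (if standard) construction, which the paper's route avoids entirely. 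On the other hand, your correction term $N\eta$ is a point of genuine extra care: since $u$ does not vanish outside $B_1$ and solves the equation only in $\Omega^+$, a pure multiple of the Lemma \ref{lem.super} supersolution does not dominate $u$ on $\R^n\setminus B_1$, and the paper's one-line proof leaves this implicit; your bound $|L\eta|\le C_\eta$ and the choice of weights $M,N$, together with the check of the parabolic boundary of $(\tfrac{t_0}{2},1)\times D$ and the use of Lemma \ref{lem.super} with threshold $t_0/2$, handle it correctly, and the same fix works verbatim with the paper's domains $B_2\setminus\overline{B^{(z)}}$.
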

\begin{proof}
Pick any point $z \in \Gamma\cap B_{1/2}$, and consider the ball $B^{(z)}$ tangent at $z$ and inside $\Omega^-$ with radius $\min\{\rho_0,\frac{1}{8}\}$. Then construct the supersolution from Lemma~\ref{lem.super} in the domain $B_2 \setminus \overline{B^{(z)}}$. This yields the desired result for points near $z$ with a constant $C$ that does not depend on the $z$ chosen. Repeating the argument for any point in $\Gamma\cap B_{1/2}$ we are done: indeed, for any $x\in B_{1/4}$ we apply this to $z_x\in\Gamma\cap B_{1/2}$ such that $d(x) = \textrm{dist}(z_x, x)$ and the result follows.
\end{proof}

As a consequence of the previous bound we find the following

\begin{lem}
\label{lem.bds2}Let $s\in (0,1)$ and let $L$ be an operator of the form \eqref{eq.L1}-\eqref{eq.L2}. Let $\Gamma$ be a $C^{1,1}$ surface with radius $\rho_0$ splitting $B_1$ into $\Omega^+$ and $\Omega^-$, $f\in L^\infty((0,1)\times \Omega^+)$ and $u$ be any weak solution to \eqref{eq.fracdom}. Then, for all $x_0\in \Omega^+\cap B_{1/4}$, and all $R \leq \frac{d(x_0)}{2}$,
\begin{equation}
[u]_{C^{\frac{1}{2}, s}_{t, x}\left(\left(t_1-\frac{1}{2}R^{2s}, t_1\right)\times \overline{B_R(x_0)}\right)} \leq C  \left(\|f\|_{L^\infty((0,1)\times\Omega^+)} + \|u\|_{L^\infty((0,1)\times\R^n)}\right),
\end{equation}
where $t_1$ is such that $\frac{1}{4}\leq t_1 -  R^{2s} < t_1 \leq 1$ (making $R$ smaller if necessary). The constant $C$ depends only on $n, s, \rho_0$ and the ellipticity constants \eqref{eq.L1}-\eqref{eq.L2}.
\end{lem}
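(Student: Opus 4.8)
The plan is to reduce the estimate, by a parabolic rescaling that also normalizes the size of $u$ near $x_0$, to the interior estimate of Lemma~\ref{lem.bds0}; the barrier bound $|u|\leq C d^s$ of Lemma~\ref{lem.bds} enters twice, once to make this rescaling leave the $C^{\frac{1}{2},s}_{t,x}$ seminorm invariant and once to verify the growth hypothesis of Lemma~\ref{lem.bds0}. Since $B_R(x_0)\subset B_{d(x_0)/2}(x_0)$ and $\left(t_1-\tfrac12 R^{2s},t_1\right)\subset\left(t_1-\tfrac12 d(x_0)^{2s},t_1\right)$ whenever $R\leq d(x_0)/2$, and since $C^{\frac{1}{2},s}_{t,x}$ seminorms only decrease under restriction, it suffices to bound $[u]_{C^{\frac{1}{2},s}_{t,x}}$ on the single cylinder $\left(t_1-\tfrac12 d(x_0)^{2s},t_1\right)\times\overline{B_{d(x_0)/2}(x_0)}$. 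Fix such an $x_0$ and put $\rho:=d(x_0)$; note $B_\rho(x_0)\subset\Omega^+$, and since $0\in\overline{\Omega^-}$ we have $\rho\leq|x_0|\leq\tfrac14$, so $\rho$ is bounded.

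Next I would consider the rescaled function
\[
\tilde u(t,x):=\rho^{-s}\,u\bigl(t_1+\rho^{2s}t,\ x_0+\rho x\bigr).
\]
Because $L$ commutes with dilations, $\tilde u$ solves $\de_t\tilde u-L\tilde u=\tilde f$ in $(-1,0)\times B_1$ with $\tilde f(t,x)=\rho^{s}f(t_1+\rho^{2s}t,x_0+\rho x)$, so $\|\tilde f\|_{L^\infty}\leq\rho^{s}\|f\|_{L^\infty((0,1)\times\Omega^+)}\leq\|f\|_{L^\infty((0,1)\times\Omega^+)}$; here one uses $B_\rho(x_0)\subset\Omega^+$ for the spatial localization, and the hypotheses on $t_1$ together with the clause ``making $R$ smaller if necessary'' to guarantee $(t_1-\rho^{2s},t_1)\subset(0,1)$. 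Writing $\tau=t_1+\rho^{2s}t$ and $y=x_0+\rho x$, a direct computation gives $|t-t'|^{1/2}+|x-x'|^s=\rho^{-s}\bigl(|\tau-\tau'|^{1/2}+|y-y'|^s\bigr)$, so the prefactor $\rho^{-s}$ cancels exactly and
\[
[\tilde u]_{C^{\frac{1}{2},s}_{t,x}\left(\left(-\frac12,0\right)\times\overline{B_{1/2}}\right)}=[u]_{C^{\frac{1}{2},s}_{t,x}\left(\left(t_1-\frac12 d(x_0)^{2s},\,t_1\right)\times\overline{B_{d(x_0)/2}(x_0)}\right)}.
\]

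It then remains to verify the growth hypothesis of Lemma~\ref{lem.bds0} for $\tilde u$: namely $\sup_t\|\tilde u(t,\cdot)\|_{L^\infty(B_{R'})}\leq K_0(R')^{2s-\delta}$ for all $R'\geq1$, for a fixed $\delta\in(0,s)$ and with $K_0\leq C\bigl(\|f\|_{L^\infty}+\|u\|_{L^\infty((0,1)\times\R^n)}\bigr)$. If $y=x_0+\rho x\in B_{1/2}$, Lemma~\ref{lem.bds} and $d(y)\leq d(x_0)+\rho|x|$ give $|u(\tau,y)|\leq C(\|f\|+\|u\|)(\rho+\rho|x|)^s$, hence $|\tilde u(t,x)|\leq C(\|f\|+\|u\|)(1+|x|)^s$; for the (necessarily large) $|x|$ with $x_0+\rho x\notin B_{1/2}$ one uses only $|u|\leq\|u\|_{L^\infty}$, and since there $\rho^{-s}\leq C|x|^s$ one again gets $|\tilde u(t,x)|\leq C\|u\|_{L^\infty}|x|^s$. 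As $s<2s$, choosing $\delta\in(0,s)$ with $s\leq 2s-\delta$ makes the hypothesis hold. Then Lemma~\ref{lem.bds0} yields, for any $\epsilon>0$, $\|\tilde u\|_{C^{1-\frac{\epsilon}{2s}}_t}+\|\tilde u\|_{C^{2s-\epsilon}_x}\leq C\bigl(\|f\|_{L^\infty}+\|u\|_{L^\infty((0,1)\times\R^n)}\bigr)$ on $\left(-\tfrac12,0\right)\times\overline{B_{1/2}}$; taking $\epsilon\leq s$ and using that on a bounded cylinder $C^{1-\frac{\epsilon}{2s}}_t\hookrightarrow C^{1/2}_t$ and $C^{2s-\epsilon}_x\hookrightarrow C^s_x$, together with the equivalence \eqref{eq.equivseminorm}, we deduce $[\tilde u]_{C^{\frac{1}{2},s}_{t,x}\left(\left(-\frac12,0\right)\times\overline{B_{1/2}}\right)}\leq C(\|f\|+\|u\|)$. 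Combined with the scaling identity above and the reduction step, this proves the lemma.

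The main obstacle is this growth verification: the barrier bound $|u|\leq Cd^s$ is only available near $\Gamma$, and one must upgrade it, together with the global bound $|u|\leq\|u\|_{L^\infty}$, to a single inequality $|\tilde u(t,x)|\leq K_0(1+|x|^{2s-\delta})$ valid on all of $\R^n$; it is essential that the dilation factor be exactly $\rho=d(x_0)$, since this keeps $d(y)/\rho$ bounded on $B_1$ and prevents uncontrolled powers of $d(x_0)$ from appearing. A secondary, routine point is checking that $(t_1-d(x_0)^{2s},t_1)\subset(0,1)$, which is precisely what the clause ``making $R$ smaller if necessary'' in the statement serves for.
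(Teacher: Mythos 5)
Your overall strategy is the same as the paper's: rescale parabolically around $x_0$, use the barrier bound $|u|\leq CC_0 d^s$ of Lemma~\ref{lem.bds} to get the growth control, and then apply the interior estimate Lemma~\ref{lem.bds0} with $\epsilon=s$ (and $\delta\leq s$), the factor $d(x_0)^s$ or $R^s$ cancelling exactly against the $C^{\frac12,s}_{t,x}$ scaling. However, your reduction to the single maximal cylinder at scale $\rho=d(x_0)$ contains a genuine gap. The hypothesis $\frac14\leq t_1-R^{2s}$ constrains $t_1$ only relative to $R$, not relative to $d(x_0)$. When $R\ll d(x_0)$ and $t_1$ is close to $\frac14+R^{2s}$, the time window $(t_1-\rho^{2s},t_1)$ that your rescaled function $\tilde u$ requires (so that it solves the equation in $(-1,0)\times B_1$, and so that Lemma~\ref{lem.bds} is available on it, which moreover needs times bounded away from $0$) need not be contained in $(0,1)$; the claim that the hypotheses ``guarantee $(t_1-\rho^{2s},t_1)\subset(0,1)$'' is false. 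A concrete admissible choice: $s=0.1$, $d(x_0)=0.2$, $R=0.01\leq d(x_0)/2$, $t_1=0.65\geq \frac14+R^{2s}\approx 0.648$, yet $t_1-d(x_0)^{2s}\approx 0.65-0.725<0$. For smaller $s$ even the enlarged cylinder itself, with time interval $\left(t_1-\tfrac12 d(x_0)^{2s},t_1\right)$, extends below $t=0$, so the quantity you propose to bound is not even posed inside the region where $u$ solves the equation. Note also that ``making $R$ smaller if necessary'' works against you here: it only weakens the lower bound on $t_1$.

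The repair is exactly what the paper does: do not enlarge the cylinder, but perform the same rescaling at scale $R$ for each admissible pair $(R,t_1)$, i.e.\ $\tilde u(t,y)=u(R^{2s}t+t_1-R^{2s},x_0+Ry)$ (with or without the normalizing factor $R^{-s}$). Then the needed backward time window is $(t_1-R^{2s},t_1)$, contained in $(1/4,1)$ by hypothesis, $B_{2R}(x_0)\subset\Omega^+$, Lemma~\ref{lem.bds} with $t_0=1/4$ gives $|\tilde u|\leq CC_0R^s(1+|y|^s)$, and Lemma~\ref{lem.bds0} applies as in your Step; one concludes on $B_{R/2}(x_0)$ (or $B_{R/4}(x_0)$) and passes to $B_R(x_0)$ by a standard covering argument. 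With that modification the rest of your computation (the exact cancellation of the $R^s$ factor in the $C^{\frac12,s}_{t,x}$ seminorm, the choice $\delta\leq s$, $\epsilon=s$) is correct and coincides with the paper's proof.
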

\begin{proof}
Notice that $B_R(x_0) \subset B_{2R}(x_0)\subset \Omega^+$. Let $\tilde{u}(t, y) = u(R^{2s}t+t_1-R^{2s}, x_0+Ry)$, so that
\begin{equation}
\label{eq.bds2_1}
\de_t \tilde{u} - L\tilde{u} = R^{2s} f(R^{2s}t+t_1-R^{2s}, x_0+Ry)
\end{equation}
\[
 \textrm{ for } x\in B_1, t \geq \frac{R^{2s}-t_1}{R^{2s}}.
\]
We define
\[
C_0 := \|f\|_{L^\infty((0,1)\times \Omega^+)} + \|u\|_{L^\infty((0,1)\times\R^n)},
\]
and
\[
T_R := \frac{1}{R^{2s}}\left(\frac{1}{4}+R^{2s}-t_1\right).
\]
From Lemma \ref{lem.bds} with $t_0 = \frac{1}{4}$ we get
\begin{align}
\label{eq.bds2_2}
\|\tilde{u}\|_{L^\infty\left(\left(T_R,1\right)\times B_{1/4}\right)} \leq CC_0R^s.
\end{align}

Now note that, by Lemma \ref{lem.bds}, for all $y\in \R^n$,
\[|\tilde{u}(t, y)| \leq  CC_0 d^s(x_0+Ry) \leq C C_0R^s (1+|y|^s) \textrm{ for } t \in (T_R, 1).\]
Thus, we obtain
\begin{equation}
\label{eq.bds2_3}
\sup_{t\in (T_R, 1)} \sup_{r \geq 1} r^{-3s/2}\|\tilde{u}(t,\cdot)\|_{L^\infty(B_r)} \leq  C C_0R^s.
\end{equation}

Using Lemma \ref{lem.bds0} with $\epsilon = s$ and expressions \eqref{eq.bds2_1}-\eqref{eq.bds2_2}-\eqref{eq.bds2_3} we obtain
\[
\|\tilde{u}\|_{C^{\frac{1}{2}}_t \left(\left( \frac{1}{2},1 \right)\times \overline{B_{1/4}}\right)} +  \|\tilde{u}\|_{C^{s}_x\left(\left(\frac{1}{2},1 \right)\times \overline{B_{1/4}}\right)} \leq C(t_0) C_0R^s,
\]
where we have used that, under these hypotheses, $T_R \leq 0$.

Finally, use that
\[
R^{-s}[\tilde{u}]_{C^{\frac{1}{2},s}_{t, x}\left(\left(\frac{1}{2},1\right)\times \overline{B_{1/4}}\right)} = [u]_{C^{\frac{1}{2},s}_{t, x}\left(\left(t_1-\frac{1}{2}R^{2s},t_1\right)\times \overline{B_{R/4}(x_0)}\right)},
\]
to get
\[
[u]_{C^{\frac{1}{2},s}_{t, x}\left(\left(t_1-\frac{1}{2}R^{2s},t_1\right)\times \overline{B_{R/4}(x_0)}\right)} \leq  C C_0.
\]
By a standard covering argument, we find the desired result in $\overline{B_R(x_0)}$.
\end{proof}

We now prove Proposition \ref{prop.bound}.

\begin{proof}[Proof of Proposition \ref{prop.bound}]
Since
\[
[u]_{C^{\frac{1}{2},s}_{t,x}\left(\left[\frac{1}{2},1\right]\times(\overline{\Omega^+}\cap B_{1/2})\right)} \leq [u]_{C^{s}_{x}\left(\left[\frac{1}{2},1\right]\times(\overline{\Omega^+}\cap B_{1/2})\right)} + [u]_{C^{\frac{1}{2}}_{t}\left(\left[\frac{1}{2},1\right]\times(\overline{\Omega^+}\cap B_{1/2})\right)}
\]
we can treat these two terms separately.

For the first term we need to show
\begin{equation}
\label{eq.bound1_2}
\frac{|u(t, x)-u(t,x')|}{|x-x'|^{s}} \leq  C C_0,
\end{equation}
for any $t\in\left(\frac{1}{2},1\right)$, $x,x'\in \Omega^+\cap B_{1/2}$ and constant $C$ independent of $t_1$, where again we define
\[
C_0 =\|f\|_{L^\infty((0,1)\times\Omega^+)} + \|u\|_{L^\infty((0,1)\times\R^n)}.
\]
Let $r_d = \min\{d(x), d(x')\}$ and $R = |x-y|$. We now separate two cases according to the values of $r_d$ and $R$:

If $2R \geq r_d$, from Lemma~\ref{lem.bds} with $t_0 = 1/2$ we have
\[
|u(t, x)-u(t, x')| \leq CC_0\left(r_d^s+(R+r_d)^s\right) \leq CC_0 R^s
\]
so that \eqref{eq.bound1_2} is fulfilled.

On the other hand, if $2R < r_d$ and $x\in B_{1/4}$, then $B_{2R}(x)\subset\Omega$ and therefore, from Lemma~\ref{lem.bds2} we would get $[u]_{C^s_x(B_R(x)} \leq CC_0$. This can be extended for $x\in B_{1/2}$ using a covering argument. Thus, \eqref{eq.bound1_2} is proved.

For the second term in the seminorm we want to show
\begin{equation}
\label{eq.bound1_3}
\frac{|u(t, x_0)-u(t',x_0)|}{|t-t'|^{\frac{1}{2}}} \leq  CC_0,
\end{equation}
for any $x_0 \in \Omega^+\cap B_{1/4}$, $t, t'\in \left(\frac{1}{2},1\right)$. Again, this can be extended to $\Omega^+\cap B_{1/2}$ by a covering argument. Notice that we can suppose that $|t-t'|$ is small as long as it is independent of $x_0$. Let $\bar{x}\in \Omega^+$ to be chosen later, and observe that
\[
|u(t, x_0)-u(t',x_0)| \leq 2\sup_{t_*\in\left(\frac{1}{2},1\right)}|u(t_*, x_0)-u(t_*,\bar{x})| +|u(t, \bar{x})-u(t',\bar{x})|.
\]
By \eqref{eq.bound1_2} we have
\[
\sup_{t_*\in\left(\frac{1}{2},1\right)}|u(t_*, x_0)-u(t_*,\bar{x})| \leq CC_0 |x_0-\bar{x}|^s.
\]
Moreover, choosing $\bar x$ such that $|t-t'| \leq \frac{d(\bar{x})^{2s}}{2}$, by Lemma~\ref{lem.bds2} we have
\[
|u(t, \bar{x})-u(t',\bar{x})| \leq CC_0 |t-t'|^{\frac{1}{2}}.
\]

Therefore, choosing $\bar{x}$ such that
\[
\epsilon_0 |x_0-\bar{x}| \leq |t-t'|^{\frac{1}{2s}} \leq 2^{-\frac{1}{2s}} d(\bar{x})
\]
\eqref{eq.bound1_3} follows. Notice that such $\bar x$ and $\epsilon_0 > 0$ independent of $x_0$, $\bar x$, $t$ and $t'$ always exist if $|t-t'|$ is small enough, depending on $\epsilon_0$ and the $C^{1,1}$ norm of the domain.
\end{proof}

Proposition~\ref{prop.bound} directly yields Proposition~\ref{prop.mainboundary}.

\begin{proof}[Proof of Proposition \ref{prop.mainboundary}]
The result follows combining Proposition \ref{prop.bound} with the interior estimates of Theorem~\ref{thm.mainb}.
\end{proof}

We next present an immediate consequence of Proposition~\ref{prop.bound} analogous to Lemma \ref{lem.bds0} but for the case with boundary that will be useful later (and that is why we consider the temporal domain to be $(-1,0)$ now).

\begin{cor}
\label{cor.bds0}
Let $s\in (0,1)$ and let $L$ be an operator of the form \eqref{eq.L1}-\eqref{eq.L2}. Let $\Gamma$ be a $C^{1,1}$ surface with radius $\rho_0$ splitting $B_1$ into $\Omega^+$ and $\Omega^-$. Suppose that $f\in L^\infty((-1,0)\times \Omega^+)$ and $u$ is any weak solution to
\begin{equation}
  \left\{ \begin{array}{rcll}
  \de_t u - L u&=&f& \textrm{in }(-1,0)\times \Omega^+\\
  u&=&0& \textrm{in }(-1,0)\times \Omega^- . \\
  \end{array}\right.
\end{equation}

Define
\[
K_0 = \sup_{t\in(-1,0)} \sup_{R\geq 1} R^{\delta-2s}\|u(t,\cdot)\|_{L^\infty(B_R)},
\]
for some $\delta > 0$. Then,
\[
\|u\|_{C^{\frac{1}{2}}_t \left(\left[ -\frac{1}{2},0 \right]\times \overline{B_{1/2}}\right)} +  \|u\|_{C^{s}_x\left(\left[ -\frac{1}{2},0 \right]\times \overline{B_{1/2}}\right)} \leq C(K_0+ \|f\|_{L^\infty((-1,0)\times \Omega^+)}).
\]
where $C$ depends only on $n, s, \rho_0, \delta$ and the ellipticity constants \eqref{eq.L2}.
\end{cor}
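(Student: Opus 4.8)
The plan is to deduce this from Proposition~\ref{prop.bound} by the same truncation trick used to pass from Theorem~\ref{thm.mainb} to Lemma~\ref{lem.bds0}. After translating in time so that the interval $(-1,0)$ becomes $(0,1)$ (both the equation and \eqref{eq.fracdom} are translation invariant in $t$), I would set $\tilde u := u\,\chi_{B_2}$. The key observations are that $\tilde u$ still vanishes on all of $\Omega^-$ (because $u$ does there, and $\chi_{B_2}$ kills what is left outside $B_2$), that $\tilde u = u$ on $B_1$, and that on $(0,1)\times(\Omega^+\cap B_1)$ one has $\de_t\tilde u - L\tilde u = f + L\bigl(u(1-\chi_{B_2})\bigr) =: \tilde f$. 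Thus $\tilde u$ solves a Dirichlet problem of exactly the form \eqref{eq.fracdom}, and Proposition~\ref{prop.bound} applies to it.

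The next step is to estimate the two quantities appearing on the right-hand side of Proposition~\ref{prop.bound} for $\tilde u$. The term $\|\tilde u\|_{L^\infty((0,1)\times\R^n)}$ is at most $\sup_t\|u(t,\cdot)\|_{L^\infty(B_2)} \le 2^{2s-\delta}K_0$, by taking $R=2$ in the definition of $K_0$. For the extra term in $\tilde f$, I would use that $1-\chi_{B_2}$ is supported outside $B_2$, so that for $x\in B_1$ the quantity $L\bigl(u(1-\chi_{B_2})\bigr)(t,x)$ only sees $u(t,\cdot)$ on $\{|y|\ge 1\}$; combining the growth bound $|u(t,y)|\le K_0(1+|y|^{2s-\delta})$ with $\int_{S^{n-1}}d\mu\le\Lambda$, exactly as in the estimate of \eqref{eq.boundLphi_2} in the proof of Theorem~\ref{thm.mainb} (where $\delta<2s$ guarantees convergence of the radial integral), gives $\|L(u(1-\chi_{B_2}))\|_{L^\infty((0,1)\times B_1)}\le CK_0$. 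Hence $\|\tilde f\|_{L^\infty((0,1)\times(\Omega^+\cap B_1))}\le \|f\|_{L^\infty((0,1)\times\Omega^+)}+CK_0$.

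Finally, applying Proposition~\ref{prop.bound} to $\tilde u$ bounds $\|\tilde u\|_{C^{\frac12,s}_{t,x}([\frac12,1]\times\overline{B_{1/2}})}$ by $C(K_0+\|f\|_{L^\infty})$; since $\tilde u=u$ on $B_1$ and $\|u\|_{L^\infty((0,1)\times B_{1/2})}\le CK_0$, translating back in time yields the stated inequality for the full $C^{\frac12}_t$ and $C^{s}_x$ norms. I do not expect a genuine obstacle here: the only point requiring some care is checking that the truncated function $\tilde u$ genuinely has zero exterior data and solves \eqref{eq.fracdom} with the claimed right-hand side; the tail estimate, though it needs $\delta<2s$ to converge, is otherwise routine.
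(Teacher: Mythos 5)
Your argument is exactly the paper's proof: truncate $\tilde u = u\chi_{B_2}$, absorb $L\bigl(u(1-\chi_{B_2})\bigr)$ into the right-hand side using the growth bound $|u(t,y)|\le K_0(1+|y|^{2s-\delta})$ coming from the definition of $K_0$, and apply Proposition~\ref{prop.bound} (the paper does not even spell out the harmless time translation). The only small inaccuracy is your parenthetical that the tail integral ``needs $\delta<2s$'': it converges for every $\delta>0$, since the integrand decays like $|r|^{2s-\delta-1-2s}=|r|^{-1-\delta}$, and if $\delta\ge 2s$ the growth bound just says $u$ is bounded, which is even better.
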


\begin{proof}
The proof is the same as the proof of Lemma \ref{lem.bds0}, using Proposition \ref{prop.bound}. Indeed, define $\tilde{u} = u\chi_{B_2}$ and notice that
\begin{align*}
\|L\tilde u\|_{L^\infty((-1,0)\times B_1)} & \leq \|Lu\|_{L^\infty((-1,0)\times B_1)}+\|L\left(u(1-\chi_{B_2})\right)\|_{L^\infty((-1,0)\times B_1)} \\
& \leq \|f\|_{L^\infty((-1,0)\times \Omega^+)} + CK_0,
\end{align*}
which follows from the growth imposed by the definition of $K_0$.
\end{proof}

\subsection{Liouville-type theorem in the half space} We now prove a Liouville-type theorem in the half space for nonlocal parabolic equations.

\begin{thm}
\label{thm.liouv2}
Let $s\in(0,1)$, and let $L$ be any operator of the form \eqref{eq.L1}-\eqref{eq.L2}. Let $u$ be any weak solution of
\begin{equation}\label{eq.halfspace}
  \left\{ \begin{array}{rcll}
  \de_t u - L u&=&0& \textrm{in }(-\infty,0)\times \R^n_+ \\
  u&=&0& \textrm{in } (-\infty,0)\times \R^n_- ,\\
  \end{array}\right.
\end{equation}
such that
\[
\|u(t, \cdot)\|_{L^\infty(B_R)} \leq C\left(R^\gamma+1\right) \textrm{ for } R \geq |t|^{\frac{1}{2s}},
\]
for some $\gamma < 2s$. Then,
\[
u(t, x) = K(x_n)^s_+
\]
for some constant $K\in \R$.
\end{thm}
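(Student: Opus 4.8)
The plan is to adapt the elliptic half-space Liouville theorem of \cite{RS} to the parabolic setting, using the already-established interior and boundary estimates as a black box. The strategy has three stages: first reduce to the statement that $u$ depends only on $x_n$ and is stationary in $t$; then reduce to the fact that the one-dimensional ``boundary trace'' of $u/d^s$ is affine; and finally rule out all but the linear term by the growth hypothesis $\gamma < 2s$.

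\medskip

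\noindent\textbf{Step 1: a Liouville theorem for $u/d^s$ by iterated incremental quotients.}
First I would observe that, by Corollary~\ref{cor.bds0} (applied after rescaling, exactly as in the proof of Lemma~\ref{lem.bds2}), the solution $u$ satisfies interior-up-to-the-boundary estimates in cylinders $\bigl(-R^{2s},0\bigr)\times\bigl(B_R\cap\{x_n>0\}\bigr)$ with the scaling $[u]_{C^{1/2,s}_{t,x}}\lesssim R^{\gamma-s}$ on such cylinders. The key point is then to take \emph{tangential} incremental quotients: for a horizontal vector $h\in\R^{n-1}\times\{0\}$ the function $u_h^{\xi}(t,x):=\bigl(u(t,x+h)-u(t,x)\bigr)/|h|^{\xi}$ still solves $\de_t u_h^\xi-Lu_h^\xi=0$ in the half-space and still vanishes on $\{x_n\le 0\}$, so Corollary~\ref{cor.bds0} applies again. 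Iterating — exactly as in the proof of Theorem~\ref{thm.liouv}, but now using Proposition~\ref{prop.bound}/Corollary~\ref{cor.bds0} in place of the heat-kernel bounds — one gains tangential regularity until a tangential derivative of order exceeding $\gamma$ has zero seminorm in the whole slab, hence $u$ depends only on $(t,x_n)$. The same argument applied to the time-incremental quotient $\bigl(u(t+\tau,x)-u(t,x)\bigr)/|\tau|^{1/2}$ shows $u$ is independent of $t$. So $u=u(x_n)$, $Lu=0$ in $\{x_n>0\}$, $u=0$ in $\{x_n\le0\}$, with $|u(x_n)|\le C(1+x_n^\gamma)$.

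\medskip

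\noindent\textbf{Step 2: the elliptic one-dimensional problem.}
At this point the problem is purely elliptic and one-dimensional: $u$ solves the same equation as in the elliptic half-space Liouville theorem of \cite{RS}. I would invoke that result directly (or reprove its short remaining part): a one-dimensional solution of $Lu=0$ in $\{x_n>0\}$, vanishing on $\{x_n\le0\}$, with subquadratic (indeed sub-$2s$) growth, must be a multiple of $(x_n)_+^s$. Concretely, one knows $(x_n)_+^s$ is an explicit solution (the Fourier symbol computation $A(\xi)=\int_{S^{n-1}}|\xi\cdot\theta|^{2s}d\mu(\theta)$ from Lemma~\ref{lem.sob} reduces this to the one-dimensional fractional-Laplacian fact $(-\de_{x_nx_n})^s(x_n)_+^s=0$ on $\{x_n>0\}$); then one takes $v:=u-K(x_n)_+^s$ with $K$ chosen so that $v$ has strictly smaller growth, applies the boundary estimate for $v/d^s$, and bootstraps — taking the (now one-dimensional) incremental quotient of $v/d^s$ — until the growth exponent drops below $0$, forcing $v\equiv0$.

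\medskip

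\noindent\textbf{Expected main obstacle.}
The delicate point is Step~1: checking that Corollary~\ref{cor.bds0} really does apply uniformly to all the tangential incremental quotients along the iteration, i.e.\ that after rescaling the growth hypothesis ``$\|u(t,\cdot)\|_{L^\infty(B_R)}\le C(R^\gamma+1)$ for $R\ge|t|^{1/2s}$'' is preserved (with $\gamma$ replaced by $\gamma-\xi$) and that the constant $K_0$ in Corollary~\ref{cor.bds0} stays controlled. This is the parabolic analogue of the bookkeeping already carried out in the proof of Theorem~\ref{thm.liouv}, and the slab $\bigl(-R^{2s},0\bigr)\times B_R$ scaling must be tracked carefully so that one genuinely covers $(-\infty,0)\times\R^n$ in the limit. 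Once the tangential and time reductions are in place, Step~2 is essentially a citation of \cite{RS}.
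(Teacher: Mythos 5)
Your proposal follows essentially the same route as the paper: after rescaling, the boundary estimate (Corollary~\ref{cor.bds0}, equivalently Proposition~\ref{prop.bound} applied to a truncation) gives $[u]_{C^{\frac{1}{2},s}_{t,x}}\lesssim R^{\gamma-s}$ on cylinders $(-R^{2s},0)\times B_{R/2}$, one then applies the same estimate to tangential/time incremental quotients to get a seminorm bound $\lesssim R^{\gamma-2s}\to 0$, forcing these quotients (which vanish on $\{x_n\le 0\}$) to be identically zero, and finally the time-independent, tangentially-invariant limit is handled by the elliptic half-space Liouville theorem of \cite{RS}. The only cosmetic difference is that the paper packages the time and tangential increments into a single quotient $w_h(t,x)=\bigl(u(t+\tau,x+h)-u(t,x)\bigr)/(\tau^{1/2}+|h|^s)$, so that exactly two applications of the boundary estimate suffice, rather than your separately iterated quotients.
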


\begin{proof}
We proceed as in the proof of \cite[Theorem 4.1]{RS}.

Given $\rho > 0$ define $v_\rho (t, x) = \rho^{-\gamma} u(\rho^{2s}t, \rho x)$. Then,
\begin{equation}\label{eq.halfspace2}
  \left\{ \begin{array}{rcll}
  \de_t v_\rho - L v_\rho&=&0& \textrm{in }(-\infty,0)\times \R^n_+ \\
  v_\rho&=&0& \textrm{in } (-\infty,0)\times \R^n_- ,\\
  \end{array}\right.
\end{equation}
and for $R \geq |t|^{\frac{1}{2s}}$,
\begin{align}
\label{eq.boundliouv}
\|v_\rho(t, \cdot)\|_{L^\infty(B_R)} = \rho^{-\gamma} \|u(\rho^{2s}t, \cdot)\|_{L^\infty(B_{\rho R})} \leq \rho^{-\gamma} C(1+(\rho R)^\gamma)\leq  C\left(1+R^\gamma\right) \\ \nonumber \textrm{ for } \rho \geq 1.
\end{align}

Hence, denoting $\overline{v}_\rho = v_\rho \chi_{B_2}(x)$, we have that $\overline{v}_\rho\in L^\infty((-1,0)\times \R^n)$ satisfies
\begin{equation}
  \left\{ \begin{array}{rcll}
  \de_t \overline{v}_\rho - L \overline{v}_\rho&=&g_\rho& \textrm{in }(-1,0)\times B_1^+ \\
  \overline{v}_\rho&=&0& \textrm{in } (-1,0)\times B_1^-,\\
  \end{array}\right.
\end{equation}
for some $g_\rho \in L^\infty((-1,0)\times B_1^+)$, with $\|g_\rho\|_{L^\infty((-1,0)\times B_1^+)} \leq C_0$, for some constant $C_0$ independent of $\rho\geq 1$. The constant $C_0$ depends only on the constant $C$ in \eqref{eq.boundliouv}. By Proposition \ref{prop.bound} we find
\[
\|v_{\rho}\|_{C^{\frac{1}{2},s}_{t,x}\left(\left(-2^{-2s}, 0\right)\times B_{1/2}\right)} = \|\overline{v}_{\rho}\|_{C^{\frac{1}{2},s}_{t,x}\left(\left(-2^{-2s}, 0\right)\times B_{1/2}\right)} \leq C C_0.
\]
Therefore, for $\rho \geq 1$
\begin{align*}
[u]_{C^{\frac{1}{2},s}_{t,x}\left(\left(-2^{-2s}\rho^{2s}, 0\right)\times B_{\rho/2}\right)} & = \rho^{-s}[u(\rho^{2s}t, \rho x)]_{C^{\frac{1}{2},s}_{t,x}\left(\left(-2^{-2s}, 0\right)\times B_{1/2}\right)}\\
& = \rho^{s-\gamma}[v_{\rho}]_{C^{\frac{1}{2},s}_{t,x}\left(\left(-2^{-2s}, 0\right)\times B_{1/2}\right)} \leq C C_0 \rho^{\gamma-s}.
\end{align*}

Now, given $h \in B_1$ with $h_n = 0$, and $\tau \in (-1,0)$ consider
\[
w_h (t, x) = \frac{u(t+\tau, x+h)-u(t, x)}{\tau^{\frac{1}{2}}+|h|^s},
\]
so that, by the previous result, whenever $R \geq |t|^{\frac{1}{2s}}$ we have that
\[
\|w_h(t, \cdot)\|_{L^\infty(B_R)} \leq C (R^{\gamma-s}+1).
\]
By linearity $\de_t w_h - L w_h = 0$ in $(-\infty,0)\times \R^n_+$ and $w = 0$ in $(-\infty,0)\times \R^n_-$. We can then apply the previous reasoning with $u$ replaced by $w_h$ to finally reach that
\begin{align*}
[w_h]_{C^{1/2,s}_{t,x}\left(\left(-2^{-2s}\rho^{2s}, 0\right)\times B_{\rho/2}\right)} \leq C C_0 \rho^{\gamma-2s}, \textrm{ for } \rho \geq 1.
\end{align*}

Since $2s > \gamma$, making $\rho \to \infty$ we find that $w_h$ must be constant. But since $w_h = 0$ in $\R^n_-$ then
\[
w_h \equiv 0 \textrm{ in } (-\infty,0)\times \R^n.
\]

This implies that for all $h \in B_1$ with $h_n = 0$ and for all $\tau \in (-1,0)$, then $u(t+\tau, x+h) = u(t, x) $. Thus, $u$ is constant in time, and by \cite[Theorem 4.1]{RS} we get $u(t, x) = K(x_n)_x^s$ as desired. Alternatively, we could end the proof by noticing that
\[
u(t, x) = \overline{u}(x_n)
\]
for some 1D function $\overline{u}$, and proceeding as in the final part of the proof of \cite[Theorem 4.1]{RS}.
\end{proof}

\section{Regularity up to the boundary for $u/d^s$}
\label{sec.5}
In this section we will prove Theorem \ref{thm.mainboundary}. We begin by introducing a definition that will be recurrent throughout the section.

Let $s\in (0,1)$ and let $L$ be an operator of the form \eqref{eq.L1}-\eqref{eq.L2}. Let $\Gamma$ be a $C^{1,1}$ surface with radius $\rho_0$. Under the notation in Definition \ref{defi.1}, and when not specified otherwise, in the whole section we will define $\bar{u} = \bar{u}(x)$ as any solution to
\begin{equation}
\label{eq.ubareq}
  \left\{ \begin{array}{rcll}
  L \bar{u}&=&1& \textrm{in }\Omega^+\\
  \bar{u}&=&0& \textrm{in } \Omega^- , \\
  0~\leq~\bar{u}&\leq&c_2& \textrm{in } \R^n\setminus B_1 , \\
  \end{array}\right.
\end{equation}
where $c_2$ is a constant depending only on $n$, $s$ and the ellipticity constants.

Notice that, under these circumstances, we have
\begin{equation}
\label{eq.ubareq2}
0 < c_0 d^s \leq \bar{u} \leq c_1 d^s \textrm{ in } B_{1/2}
\end{equation}
where $c_0$ and $c_1$ are constants depending only on $n$, $s$, $\rho_0$ and the ellipticity constants \eqref{eq.L2}. The first inequality in \eqref{eq.ubareq2} appears, for example, in \cite[Lemma 7.4]{R}, while the second one is a consequence of the $C^s$ regularity up to the boundary for this elliptic problem (see for example \cite[Proposition 4.6]{RS} or the previous section).

The result we will need before proving Theorem~\ref{thm.mainboundary} is the following.

\begin{prop}
\label{prop.mainbound}
Let $s\in(0,1)$ and $\gamma\in (s, 2s)$. Let $L$ be an operator of the form \eqref{eq.L1}-\eqref{eq.L2}, let $\Gamma$ be a $C^{1,1}$ surface with radius $\rho_0$ splitting $B_1$ into $\Omega^+$ and $\Omega^-$, and let $\bar{u}$ be a function satisfying \eqref{eq.ubareq}.

Let $u$ be any weak solution to
\begin{equation}
  \left\{ \begin{array}{rcll}
  \de_t u - L u&=&f& \textrm{in }(-1,0)\times \Omega^+\\
  u&=&0& \textrm{in }(-1,0)\times \Omega^- . \\
  \end{array}\right.
\end{equation}
and define
\[
C_0 = \|u\|_{L^\infty((-1,0)\times\R^n)}+\|f\|_{L^\infty((-1,0)\times \Omega^+)}.
\]
Then, there is a constant $Q\in \R$ with $|Q|\leq CC_0$ for which
\[
|u(t, x)-Q\bar{u}(x)| \leq C C_0 \left( |x|^\gamma + |t|^{\frac{\gamma}{2s}}\right) \textrm{ in } (-1,0)\times B_1.
\]
The constant $C$ depends only on $n$, $\rho_0$, $s$, $\gamma$ and the ellipticity constants \eqref{eq.L2}.
\end{prop}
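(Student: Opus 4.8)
The plan is to argue by contradiction and compactness, in the spirit of \cite{Ser} and of the elliptic argument in \cite{RS}. The heart of the matter is the \emph{decay estimate}
\[
\inf_{Q\in\R}\bigl\|u-Q\bar u\bigr\|_{L^\infty((-\rho^{2s},0)\times B_\rho)}\le CC_0\,\rho^\gamma\qquad\text{for every small }\rho>0,
\]
from which the statement follows by telescoping: if $Q_\rho$ is a minimizer at scale $\rho$, then evaluating at a point $x^\ast\in B_{\rho/2}\cap\Omega^+$ with $d(x^\ast)\simeq\rho$ and using $\bar u\ge c_0 d^s$ from \eqref{eq.ubareq2} gives $|Q_\rho-Q_{\rho/2}|\le C\rho^{-s}\inf_Q\|u-Q\bar u\|_{L^\infty((-\rho^{2s},0)\times B_\rho)}\le CC_0\rho^{\gamma-s}$, which is summable since $\gamma>s$; hence $Q_\rho\to Q$ with $|Q|\le CC_0$, and the pointwise bound $|u(t,x)-Q\bar u(x)|\le CC_0(|x|^\gamma+|t|^{\gamma/2s})$ follows from the decay estimate at scale $\rho\simeq\max\{|x|,|t|^{1/2s}\}$ together with $\bar u\le c_1 d^s$ (the region $\max\{|x|,|t|^{1/2s}\}\gtrsim1$ being trivial from the $L^\infty$ bounds). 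The tools used are the barrier bound $|u|\le CC_0 d^s$ of Lemma \ref{lem.bds}, the uniform boundary estimates of Corollary \ref{cor.bds0}, the stability Lemma \ref{lem.1}, and the half-space Liouville theorem, Theorem \ref{thm.liouv2}.

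To prove the decay estimate I would normalize $C_0=1$, suppose it fails for every constant, and introduce the monotone quantity
\[
\Theta(r):=\sup_{k}\ \sup_{r\le\rho\le 1}\ \rho^{-\gamma}\inf_{Q\in\R}\bigl\|u_k-Q\bar u_k\bigr\|_{L^\infty((-\rho^{2s},0)\times B_\rho)},
\]
the supremum running over all admissible data $(L_k,\Gamma_k,\bar u_k,u_k,f_k)$ with $\|u_k\|_{L^\infty((-1,0)\times\R^n)}+\|f_k\|_{L^\infty((-1,0)\times\Omega_k^+)}\le1$. Since $|u_k|\le Cd^s$ by Lemma \ref{lem.bds} (for small cylinders; large ones are controlled trivially), one has $\rho^{-\gamma}\inf_Q\|u_k-Q\bar u_k\|_{L^\infty}\le C\rho^{s-\gamma}$, so $\Theta(r)<\infty$ for each fixed $r>0$; and if $\Theta$ were bounded the telescoping above would yield the decay estimate with a uniform constant, a contradiction. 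Hence $\Theta(r)\to\infty$ as $r\to0$, and we may choose configurations (relabelled by $m$) and scales $\rho_m\to0$ with $\Theta_m^\ast:=\rho_m^{-\gamma}\inf_Q\|u_m-Q\bar u_m\|_{L^\infty((-\rho_m^{2s},0)\times B_{\rho_m})}\to\infty$ and $\Theta_m^\ast\ge\tfrac12\Theta(\rho_m)$; monotonicity of $\Theta$ then gives the growth control $\inf_Q\|u_m-Q\bar u_m\|_{L^\infty((-\rho^{2s},0)\times B_\rho)}\le 2\Theta_m^\ast\rho^\gamma$ for all $\rho\in[\rho_m,1]$.

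Next comes the blow-up. Let $Q_m$ be a minimizer at scale $\rho_m$ and set
\[
v_m(t,x):=\frac{u_m(\rho_m^{2s}t,\rho_m x)-Q_m\,\bar u_m(\rho_m x)}{\Theta_m^\ast\rho_m^\gamma},
\]
so $\|v_m\|_{L^\infty((-1,0)\times B_1)}=1$. Telescoping the minimizers upward as above gives $|Q_m-Q_\sigma|\le C\Theta_m^\ast\sigma^{\gamma-s}$ for $\sigma\in[\rho_m,1]$, hence $\|v_m(t,\cdot)\|_{L^\infty(B_R)}\le C(R^\gamma+1)$ for $R\ge|t|^{1/2s}$ and $R\le\tfrac12\rho_m^{-1}$; for larger $R$ one uses $|u_m|\le1$, $0\le\bar u_m\le c_2$ outside $B_1$, and the bound $|Q_m|\le C$ (which follows from Lemma \ref{lem.bds}: at $x_m^\ast\in B_{\rho_m}\cap\Omega_m^+$ with $d(x_m^\ast)\simeq\rho_m$ one has $c_0\rho_m^s\lesssim\bar u_m(x_m^\ast)$ and $|u_m|\lesssim\rho_m^s$, while $|u_m-Q_m\bar u_m|\le\Theta_m^\ast\rho_m^\gamma\lesssim\rho_m^s$). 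Moreover $v_m$ solves $\de_t v_m-L_m v_m=g_m$ on the rescaled domain with $g_m(t,x)=(\rho_m^{2s-\gamma}/\Theta_m^\ast)\bigl(f_m(\rho_m^{2s}t,\rho_m x)-Q_m\bigr)$, so $\|g_m\|_\infty\le C\rho_m^{2s-\gamma}/\Theta_m^\ast\to0$ since $\gamma<2s$. The surfaces $\rho_m^{-1}\Gamma_m$, of $C^{1,1}$ radius $\rho_0/\rho_m\to\infty$ and through the origin, flatten after a rotation to $\{x_n=0\}$; and $\rho_m^{-s}\bar u_m(\rho_m\,\cdot)$, trapped by \eqref{eq.ubareq2} between $c_0$ and $c_1$ times $(d(\rho_m\,\cdot)/\rho_m)^s\to(x_n)_+^s$ and solving $L_m(\,\cdot\,)=\rho_m^s\to0$ in the half-space, converges (along a subsequence, using the uniform boundary regularity and Theorem \ref{thm.liouv2}) to $c\,(x_n)_+^s$ with $c\in[c_0,c_1]$. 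Using Corollary \ref{cor.bds0} for uniform $C^{1/2}_t$, $C^s_x$ estimates up to the flattening boundary — its hypothesis holding with $\delta=2s-\gamma>0$ by the growth control on $v_m$ — I would extract a subsequence with $v_m\to v$ locally uniformly and $L_m$ converging weakly to some $\tilde L$ of the form \eqref{eq.L1}-\eqref{eq.L2}, and by Lemma \ref{lem.1} on compact subsets of $\{x_n>0\}$ (together with the convergence of the domains) conclude that $v$ solves $\de_t v-\tilde L v=0$ in $(-\infty,0)\times\{x_n>0\}$, $v=0$ in $(-\infty,0)\times\{x_n\le0\}$, with $\|v(t,\cdot)\|_{L^\infty(B_R)}\le C(R^\gamma+1)$ for $R\ge|t|^{1/2s}$.

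Since $\gamma<2s$, Theorem \ref{thm.liouv2} forces $v(t,x)=K\,(x_n)_+^s$ for some $K\in\R$. On the other hand, minimality of $Q_m$ says precisely that $0$ minimizes $Q\mapsto\|v_m-Q\,\rho_m^{-s}\bar u_m(\rho_m\,\cdot)\|_{L^\infty((-1,0)\times B_1)}$; letting $m\to\infty$ (the competitors converge uniformly on $(-1,0)\times B_1$) shows that $0$ minimizes $Q\mapsto\|v-Qc\,(x_n)_+^s\|_{L^\infty((-1,0)\times B_1)}$. If $K\ne0$, the choice $Q=K/c$ gives norm $0<\|v\|_{L^\infty((-1,0)\times B_1)}=\lim_m\|v_m\|_{L^\infty((-1,0)\times B_1)}=1$, a contradiction; hence $K=0$, so $v\equiv0$, contradicting $\|v\|_{L^\infty((-1,0)\times B_1)}=1$. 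I expect the main difficulty to be the bookkeeping forced by the fact that $\bar u$ is \emph{not} scale invariant (it solves $L\bar u=1$, not a homogeneous equation), so that one works with a whole compact family of approximate barriers $\rho^{-s}\bar u(\rho\,\cdot)$: identifying the limit of this family via \eqref{eq.ubareq2} and Theorem \ref{thm.liouv2}, and checking the uniform boundedness of the normalizing constants $Q_m$ so that the rescaled right-hand sides $g_m$ vanish in the limit, are the delicate points — everything else being an adaptation of the parabolic interior argument of Section \ref{sec.3} and the half-space considerations of Section \ref{sec.4}.
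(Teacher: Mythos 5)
Your proposal is correct and follows essentially the same scheme as the paper's proof: a contradiction/compactness argument with a blow-up normalized by a near-maximal monotone quantity, uniform boundary estimates (Corollary \ref{cor.bds0}) and the stability Lemma \ref{lem.1} to pass to a half-space limit, and the Liouville theorem \ref{thm.liouv2} to classify it. You deviate in two technical points, both legitimate. First, you measure the deviation from $Q\bar u$ with the $L^\infty$-best constant $Q$, whereas the paper uses the $L^2$ (space--time) least-squares projection $Q_*(r)$ and its Lemma \ref{lem.boundreg}; accordingly your final contradiction comes from persistence of $L^\infty$-minimality under the uniform convergence $v_m\to v$, $\rho_m^{-s}\bar u_m(\rho_m\cdot)\to c\,(x_n)_+^s$ on $[-1,0]\times\overline{B_1}$, while the paper passes the linear orthogonality relation \eqref{eq.contr} to the limit, getting $\int v\,(x_n)_+^s=0$. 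The $L^2$ projection makes the limit step completely mechanical; your version needs the (easy) observations that the minimizer exists, that $\|v_m\|_{L^\infty((-1,0)\times B_1)}=1$ exactly, and that $\|v_m-Q'w_m\|\ge 1$ for every competitor $Q'$, all of which you have. Second, you identify the blow-up limit of the barrier, $\rho_m^{-s}\bar u_m(\rho_m\cdot)$, by trapping it between $c_0$ and $c_1$ times the rescaled $d^s$ via \eqref{eq.ubareq2}, extracting a subsequential limit with the uniform $C^s_x$ estimates, and applying the half-space Liouville theorem to conclude it equals $c\,(x_n)_+^s$ with $c\in[c_0,c_1]$; the paper instead quotes the elliptic expansion of \cite[Proposition 5.2]{RS}, which gives the same conclusion with a quantitative rate (not needed here). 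Two small points to keep in mind when writing this up: the rotations flattening the different $\Gamma_m$ may differ with $m$, which is harmless because the class \eqref{eq.L1}-\eqref{eq.L2} and its ellipticity constants are rotation invariant; and the bound $|u_m|\le C d^s$ from Lemma \ref{lem.bds}, which you use both for $\Theta(r)<\infty$ and for $|Q_m|\le C$, is only available away from the initial time, which is fine since only small cylinders $(-\rho^{2s},0)\times B_\rho$ are involved.
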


In order to prove this proposition we will need the following lemma.

\begin{lem}
\label{lem.boundreg}
Let $s\in(0,1)$, $\gamma > s$, and $u\in C((-1,0)\times B_1)$. Let $L$ be an operator of the form \eqref{eq.L1}-\eqref{eq.L2} and let $\Gamma$ be a $C^{1,1}$ surface splitting $B_1$ into $\Omega^+$ and $\Omega^-$, with radius $\rho_0$. Let $\bar{u}$ be a solution to \eqref{eq.ubareq}.
Define
\[
\phi_r(x) := Q_*(r) \bar{u}(x),
\]
\[
Q_*(r) := \arg \min_{Q\in R} \int_{-r^{2s}}^0 \int_{B_r} \left(u(t, x) - Q\bar{u} \right)^2dxdt = \frac{\int_{-r^{2s}}^0 \int_{B_r} u(t, x) \bar{u} dx dt}{r^{2s}\int_{B_r}\bar{u}^2dx}.
\]

Assume that for all $r\in (0,1)$ we have that
\[
\| u-\phi_r \|_{L^\infty((-r^{2s},0)\times B_r)} \leq C_* r^\gamma.
\]
Then, there is $Q\in \R$ satisfying $|Q| \leq C(C_* + \|u\|_{L^\infty((-1,0)\times B_1)})$ such that
\[
\|u-Q\bar{u}\|_{L^\infty((-r^{2s},0)\times B_r)} \leq C C_* r^\gamma
\]
for some constant $C$ depending only on $\gamma$, $n$, $s$, $\rho_0$ and the ellipticity constants \eqref{eq.L2}.
\end{lem}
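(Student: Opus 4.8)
The plan is to run the standard iteration showing that the best $L^2$–approximating multiple $Q_*(r)\bar u$ of $u$ at parabolic scale $r$ converges, as $r\downarrow 0$, to a single constant $Q\bar u$, with a geometric decay rate provided by the hypothesis. This is the parabolic counterpart of the argument in \cite{RS2} (ultimately going back to \cite{Ser}).

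First I would record two elementary estimates on the fixed function $\bar u$, valid for all $\rho\in(0,1)$:
\[
\|\bar u\|_{L^\infty(B_\rho)} \leq c_1\,\rho^s, \qquad \int_{B_\rho}\bar u^2\,dx \geq c\,\rho^{n+2s},
\]
with $c_1,c>0$ depending only on $n,s,\rho_0$ and the ellipticity constants. The upper bound is immediate from $\bar u\leq c_1 d^s$ in $B_{1/2}$ (see \eqref{eq.ubareq2}) and $d(x)\leq|x|$ (because $0\in\Gamma\subset\overline{\Omega^-}$). For the lower bound I would use the interior ball of Definition~\ref{defi.1} touching $\Gamma$ at $0$ from inside $\Omega^+$: it gives, for $\rho$ small, a ball $B_{c\rho}(z)\subset B_{\rho/2}$ on which $d\geq c\rho$, hence $\bar u\geq c_0(c\rho)^s$ there by \eqref{eq.ubareq2}, which yields the claim; for $\rho$ of order one it follows by monotonicity, at the cost of a constant depending on $\rho_0$. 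In particular the denominators in the definition of $Q_*(r)$ never vanish.

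The core of the proof is a telescoping estimate. Since $Q_*(\rho)$ is, by its explicit formula, the orthogonal projection of $u$ onto $\{c\,\bar u\}$ in $L^2\bigl((-\rho^{2s},0)\times B_\rho\bigr)$, we have $\int_{-\rho^{2s}}^0\!\int_{B_\rho}(u-\phi_\rho)\bar u=0$, whence
\[
Q_*(\rho/2)-Q_*(\rho)=\frac{\displaystyle\int_{-(\rho/2)^{2s}}^0\!\int_{B_{\rho/2}}\bigl(u-\phi_\rho\bigr)\bar u\,dx\,dt}{(\rho/2)^{2s}\displaystyle\int_{B_{\rho/2}}\bar u^2\,dx}.
\]
Bounding the numerator by the hypothesis $\|u-\phi_\rho\|_{L^\infty((-\rho^{2s},0)\times B_\rho)}\leq C_*\rho^\gamma$ and the upper estimate on $\bar u$ (factors $C_*\rho^\gamma$, $\rho^s$, time length $\rho^{2s}$, volume $\rho^n$), and the denominator from below by the nondegeneracy estimate ($\rho^{2s}\cdot\rho^{n+2s}$), I obtain
\[
|Q_*(\rho/2)-Q_*(\rho)|\ \leq\ C\,\frac{C_*\,\rho^{\gamma}\cdot\rho^{s}\cdot\rho^{2s}\cdot\rho^{n}}{\rho^{2s}\cdot\rho^{n+2s}}\ =\ C\,C_*\,\rho^{\gamma-s},
\]
with $C$ depending only on $n,s,\rho_0$ and the ellipticity constants. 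Since $\gamma>s$, taking $\rho=2^{-k}$ shows $(Q_*(2^{-k}))_k$ is Cauchy; I define $Q:=\lim_k Q_*(2^{-k})$. Summing the geometric series gives $|Q-Q_*(2^{-k})|\leq C\,C_*\,2^{-k(\gamma-s)}$, and, using in addition $|Q_*(1)|\leq C\|u\|_{L^\infty((-1,0)\times B_1)}$ (read off the formula for $Q_*(1)$ and the bounds on $\bar u$), $|Q|\leq C\bigl(C_*+\|u\|_{L^\infty((-1,0)\times B_1)}\bigr)$.

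Finally, for arbitrary $r\in(0,1)$ I would pick $k$ with $2^{-k-1}<r\leq 2^{-k}$, so that $(-r^{2s},0)\times B_r\subset(-2^{-2sk},0)\times B_{2^{-k}}$, and estimate
\[
\|u-Q\bar u\|_{L^\infty((-r^{2s},0)\times B_r)}\leq \|u-\phi_{2^{-k}}\|_{L^\infty((-2^{-2sk},0)\times B_{2^{-k}})}+|Q_*(2^{-k})-Q|\,\|\bar u\|_{L^\infty(B_{2^{-k}})}.
\]
The first term is $\leq C_*2^{-k\gamma}\leq C\,C_*\,r^\gamma$ by hypothesis; the second is $\leq \bigl(C\,C_*\,2^{-k(\gamma-s)}\bigr)\bigl(c_1\,2^{-ks}\bigr)=C\,C_*\,2^{-k\gamma}\leq C\,C_*\,r^\gamma$ by the previous step and the upper bound on $\bar u$. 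This gives the desired inequality. I expect the only genuinely technical point to be the quantitative nondegeneracy $\int_{B_\rho}\bar u^2\gtrsim\rho^{n+2s}$, uniform in $\rho\in(0,1)$, which is where the interior ball condition and the lower bound in \eqref{eq.ubareq2} enter; everything else is the routine summation of a geometric series.
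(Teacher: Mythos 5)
Your proof is correct and follows essentially the same route as the paper, which establishes the nondegeneracy of $\bar u$ near $0$ and the bound on $Q_*(1)$ and then runs exactly the dyadic telescoping/Cauchy argument of \cite[Lemma 5.3]{RS} adapted to parabolic cylinders. The only (cosmetic) difference is that you control $|Q_*(\rho/2)-Q_*(\rho)|$ through the explicit $L^2$-projection formula and the lower bound $\int_{B_\rho}\bar u^2\gtrsim \rho^{n+2s}$, whereas the paper compares $\phi_{2r}$ and $\phi_r$ in sup norm and divides by $\|\bar u\|_{L^\infty(B_r)}\geq c r^s$; both hinge on the same nondegeneracy \eqref{eq.ubareq2} plus the interior tangent ball.
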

\begin{proof}
Notice that by \eqref{eq.ubareq2} we have
\begin{equation}
\label{eq.ubar2}
\|\bar{u}\|_{L^\infty(B_r)} \geq c r^s \textrm{ for } r\in(0,1),
\end{equation}
where $c$ is a constant depending only on $n, s, \rho_0$ and the ellipticity constants \eqref{eq.L2}.

On the other hand, notice that
\[
|Q_*(1)| \leq \frac{\|u\|_{L^\infty((-1,0)\times B_1)}\|\bar{u}\|_{L^\infty(B_1)}}{\|\bar{u}\|^2_{L^2(B_1)}}.
\]

By \eqref{eq.ubareq2} again $\|\bar{u}\|_{L^\infty(B_1)} \leq C$, and since there is some ball of radius $\rho_0$ touching the origin inside $\Omega^+$ where $\bar{u} \geq c_1 d^s$, we have that $\|\bar{u}\|^2_{L^2(B_1)} \geq C'$, for some constants $C, C'$ depending only on $n, s, \rho_0$ and the ellipticity constants \eqref{eq.L2}. Thus
\begin{equation}
\label{eq.Qstar}
|Q_*(1)| \leq C\|u\|_{L^\infty((-1,0)\times B_1)},
\end{equation}
for some $C$ depending only on $n, s, \rho_0$ and the ellipticity constants \eqref{eq.L2}.

Using \eqref{eq.ubar2} and \eqref{eq.Qstar}, the proof is exactly the same as the proof of \cite[Lemma 5.3]{RS}.
\end{proof}

We now prove Proposition~\ref{prop.mainbound}. The proof is by contradiction, and uses some ideas from \cite[Proposition 5.2]{RS}.

\begin{proof}[Proof of Proposition \ref{prop.mainbound}]
 Assume that there are sequences $\Gamma_k$, $\Omega_k^+$, $\Omega_k^-$, $f_k$, $u_k$ and $L_k$ satisfying the hypotheses of the proposition. That is,
\begin{itemize}
\item $\Gamma_k$ is a $C^{1,1}$ surface with radius $\rho_0$ splitting $B_1$ into $\Omega_k^+$ and $\Omega_k^-$, and we assume without loss of generality that the normal vector to $\Gamma$ at the origin is~$e_n$.
\item $L_k$ is of the form \eqref{eq.L1}-\eqref{eq.L2}.
\item $\|u_k\|_{L^\infty((-1,0)\times\R^n)}+\|f_k\|_{L^\infty((-1,0)\times \Omega_k^+)} \leq 1$.
\item{ $u_k$ is a solution of
\begin{equation}
  \left\{ \begin{array}{rcll}
  \de_t u_k - L_k u_k&=&f_k& \textrm{in }(-1,0)\times \Omega_k^+\\
  u_k&=&0& \textrm{in }(-1,0)\times \Omega_k^- . \\
  \end{array}\right.
\end{equation}
}
\end{itemize}

In order to reach a contradiction, suppose that the conclusion of the proposition does not hold. That is, for all $C > 0$, there are $k$ and $\bar{u}_k$ for which no constant $Q\in \R$ satisfies
\begin{equation}
\label{eq.negmainbound}
|u_k(t, x)-Q\bar{u}_k(x)| \leq C \left( |x|^\gamma + |t|^{\frac{\gamma}{2s}}\right) \textrm{ in } (-1,0)\times B_1,
\end{equation}
where $\bar{u}_k$ solves
\begin{equation}
  \left\{ \begin{array}{rcll}
  L_k \bar{u}_k&=&1& \textrm{in }\Omega_k^+\\
  \bar{u}_k&=&0& \textrm{in } \Omega_k^-, \\
    0~ \leq~~  \bar{u}_k&\leq&c_2& \textrm{in } \R^n\setminus B_1. \\
  \end{array}\right.
\end{equation}

We will divide the proof by contradiction into four steps.
\\[0.5cm]
{\bf Step 1: The blow-up sequence.}
Notice that, by Lemma \ref{lem.boundreg}, and the negation of \eqref{eq.negmainbound}, we have
\begin{equation}
\sup_k~\sup_{r > 0}~~  r^{-\gamma}\|u_k-\phi_{k,r}\|_{L^\infty((-r^{2s},0)\times B_r)} =\infty,
\end{equation}
where
\begin{equation}
\phi_{k,r} (x) = Q_{k}(r) \bar{u}_k,
\end{equation}
\[
Q_{k}(r) := \arg \min_{Q\in R} \int_{-r^{2s}}^0 \int_{B_r} \left(u_k(t, x) - Q\bar{u}_k \right)^2dxdt = \frac{\int_{-r^{2s}}^0 \int_{B_r} u_k(t, x) \bar{u}_k dx dt}{r^{2s}\int_{B_r}\bar{u}_k^2dx}.
\]
We define the following monotone decreasing function in $r$,
\[
\theta(r) := \sup_k~\sup_{r' > r}~~( r')^{-\gamma}\|u_k-\phi_{k,r'}\|_{L^\infty((-(r')^{2s},0)\times B_{r'})}.
\]
Notice that, $\theta(r) < \infty$ for $r > 0$ and $\theta(r) \uparrow \infty$
 as $r\downarrow 0$. Pick a sequence $r_m$, $k_m$ such that $r_m \geq \frac{1}{m}$ and
\begin{equation}
\label{eq.boundbelow}
r_m^{-\gamma}\|u_{k_m}-\phi_{k_m,r_m}\|_{L^\infty((-r_m^{2s},0)\times B_{r_m})} \geq \frac{\theta(1/m)}{2} \geq  \frac{\theta(r_m) }{2}.
\end{equation}
Notice that $r_m \downarrow 0$ as $m\to \infty$. To simplify notation we will denote $\phi_m = \phi_{k_m,r_m}$.

We now consider a blow-up sequence
\[
v_m(t, x) = \frac{u_{k_m}(r_m^{2s}t, r_m x)-\phi_m(r_m x)}{r_m^\gamma \theta(r_m)}.
\]
In the next step we analyse some properties of this blow-up sequence.
\\[0.5cm]
{\bf Step 2: Properties of the blow-up sequence.} By the optimality condition for least squares we have that, for $m \geq 1$,
\begin{equation}
\label{eq.contr}
\int_{-1}^0\int_{B_1} v_m(t, x) \bar{u}_{k_m}(r_m x) dx dt = 0.
\end{equation}
Moreover,
\begin{equation}
\label{eq.growcont0}
\|v_m\|_{L^\infty((-1,0)\times B_1)} \geq 1/2,
\end{equation}
which is an immediate consequence of the expression \eqref{eq.boundbelow}.

In addition, for all $k$ we have that
\[
|Q_k(2r)-Q_k(r)|  \leq C r^{s-\gamma}\theta(r),
\]
for some $C$ depending only on $n$ and $s$.
Indeed,
\begin{align*}
|Q_k(2r)-Q_k(r)|& = \frac{\|\phi_{k,2r}-\phi_{k,r}\|_{L^\infty(B_r)}}{\|\bar{u}_k\|_{L^\infty(B_r)}}\\
& \leq Cr^{-s}\left( \|\phi_{k,2r}-u_k\|_{L^\infty((-(2r)^{2s},0)\times B_{2r})}+\|\phi_{k,r}-u_k\|_{L^\infty((-r^{2s},0)\times B_{r})}\right)\\
& \leq Cr^{-s} \left((2r)^\gamma\theta(2r) + r^\gamma \theta(r)\right) \leq Cr^{\gamma-s} \theta(r),
\end{align*}
where we have used that by \eqref{eq.ubareq2} for $r \leq \frac{1}{2}$,
\[
\|\bar{u}_k\|_{L^\infty(B_r)} \geq c_0 r^{s}
\]
for $c_0$ depending only on $n, s, \rho_0$ and the ellipticity constants \eqref{eq.L2}.

Thus, for $R = 2^N$ we have
\begin{align*}
\frac{r^{s-\gamma}|Q_k(rR)-Q_k(r)|}{\theta(r)} & \leq \sum_{j = 0}^{N-1} 2^{j(\gamma-s)} \frac{(2^jr)^{s-\gamma}|Q_k(2^{j+1}r)-Q_k(2^jr)|}{\theta(r)} \\
& \leq C \sum_{j = 0}^{N-1} 2^{j(\gamma-s)} \frac{\theta(2^j r)}{\theta(r)} \leq C 2^{N(\gamma-s)} = CR^{\gamma-s},
\end{align*}
for some $C$ depending only on $n$ and $s$.

Using this, we bound the growth of $v_m$,
\begin{align*}
\|v_m\|_{L^\infty((-R^{2s},0)\times B_R)}& = \frac{1}{r_m^\gamma \theta(r_m)}\|u_{k_m} - Q_{k_m}(r_m) \bar{u}_{k_m} \|_{L^\infty((-R^{2s}r_m^{2s}, 0)\times B_{Rr_m})}\\
& \leq \frac{R^\gamma}{(Rr_m)^\gamma \theta(r_m)}\|u_{k_m} - Q_{k_m}(Rr_m) \bar{u}_{k_m} \|_{L^\infty((-R^{2s}r_m^{2s}, 0)\times B_{Rr_m})} + \\
&~~~~~~~~~~~~~~~~ +\frac{1}{r_m^\gamma \theta(r_m)}|Q_{k_m}(Rr_m)- Q_{k_m}(r_m)  | (Rr_m )^s\\
& \leq \frac{R^\gamma \theta(Rr_m)}{\theta(r_m)}+ CR^\gamma.
\end{align*}
We have used here that by \eqref{eq.ubareq2}
\[
\|\bar{u}_k\|_{L^\infty(B_r)} \leq c_2r^{s}
\]
for some constant $c_2$ depending only on $n, s, \rho_0$ and the ellipticity constants \eqref{eq.L2}.

Therefore, we have the following growth control on $v_m$,
\begin{equation}
\label{eq.growcont}
\|v_m\|_{L^\infty((-R^{2s},0)\times B_R)} \leq C R^\gamma~\textrm{ for }~R \geq 1.
\end{equation}

Finally, notice that $v_m$ satisfy
\begin{align}
\label{eq.blowupeq}
\de_t v_m(t, x) - L_{k_m} v_m(t, x) = \frac{r_m^{2s-\gamma}}{\theta(r_m)}  \left( f_{k_m}(r_m^{2s}t,r_m x) -Q_{k_m}(r_m)\right)\\
\nonumber  \textrm{ in } (-R^{2s},0)\times\Omega^+_{R,m},
\end{align}
for all $0\leq R\leq r_m^{-1}$ and
\[
\Omega^+_{R,m} := \{x\in B_{R} : r_m x \in \Omega^+_{k_m}\}.
\]
\\
{\bf Step 3: Convergence properties.} We next show that there is a subsequence of $v_m$ converging to some function $v$.

Notice that the right hand side of \eqref{eq.blowupeq} is uniformly bounded with respect to $m$. Indeed,
\begin{align*}
|Q_{k_m}(r_m)|  \leq \frac{\int_{-r_m^{2s}}^0 \int_{B_{r_m}}\left| u_{k_m}(t, x) \right|\bar{u}_{k_m} dx dt}{r_m^{2s}\int_{B_{r_m}}\bar{u}_{k_m}^2dx}
 \leq  \frac{Cc_1 \int_{B_{r_m}}Cd^s(x)c_2d^s(x)dx}{c_0 \int_{B_{r_m}}d^{2s}(x)dx} \leq C
\end{align*}
for some constant $C$ depending only on $n, s, \rho_0$ and the ellipticity constants \eqref{eq.L2}. Here we used \eqref{eq.ubareq2} and also that
\[
\sup_{t\in (-r^{2s},0)}|u_k(t,x)| \leq Cd^s(x),
\]
for $r$ small enough, which  follows by the $C^s$ regularity of Proposition \ref{prop.bound}.

Hence, using also that $\gamma < 2s$ and $\theta(r_m)\to \infty$, we find
\begin{equation}
\label{eq.grow2}
\|\de_t v_m(t, x) - L_{k_m} v_m(t, x)\|_{L^\infty((-R^{2s},0)\times\Omega^+_{R,m})} \leq \frac{r_m^{2s-\gamma}}{\theta(r_m)} \to 0 ~~~ \textrm{ as } m\to \infty.
\end{equation}
Thanks to the control \eqref{eq.growcont} and the bound from \eqref{eq.grow2} we can apply Corollary \ref{cor.bds0} with $\delta = 2s-\gamma > 0$ on domains of the form $(-R^{2s},0)\times B_{R}$, to obtain that
\begin{equation}
\|v_m\|_{C^{\frac{1}{2},s}_{t,x}((-R^{2s}/2,0)\times B_{R/2})} \leq C(R),
\end{equation}
for some constant $C$ depending only on $R, n, s,\rho_0$ and the ellipticity constants \eqref{eq.L2}. It is important to highlight that the dependence is on $\rho_0$ independent of $r_m$, and this is because the domains of the form $\Omega^+_{R,m}$ are $C^{1,1}$ surfaces with radius $\rho_0/r_m > \rho_0$.

Therefore, by the Arzelà-Ascoli theorem there is some subsequence of $v_m$ converging to some function $v$ uniformly over compact sets, since $(-R^{2s},0)\times B_{R}$ can be made arbitrarily large.

On the other hand, recall that from the compactness of probability measures on the sphere we can find a subsequence of $\{L_{k_m}\}$ converging weakly to an operator $\tilde{L}$ of the form \eqref{eq.L1}-\eqref{eq.L2}.

Now, consider any point $x \in \R^n_+$. The normal vector to $\Gamma_{k_m}$ at the origin is $e_n$ and there is a ball of radius $\rho_0/r_m$ contained in $\Omega^+_{r_m^{-1},m}$ tangent to $\Gamma_{k_m}$ at the origin. Therefore, for $m$ large enough we will have that $x\in \Omega^+_{r_m^{-1},m}$ eventually, and the same will happen for any neighbourhood of $x$ inside $\R^n_+$. Similarly, if $x\in \R^n_-$, for $m$ large enough we will have $v_m(t, x) = 0$ for any $t\in (-r_m^{2s},0)$. By Lemma \ref{lem.1}, we have that up to a subsequence $v_m$ converges locally uniformly to some $v$ satisfying
\begin{equation}\label{eq.grow5}
  \left\{ \begin{array}{rcll}
  \de_t v - \tilde{L} v&=&0& \textrm{in }(-\infty,0)\times \R^n_+\\
  v&=&0& \textrm{in }(-\infty,0)\times \R^n_- , \\
  \end{array}\right.
\end{equation}
for some operator $\tilde{L}$ of the form \eqref{eq.L1}-\eqref{eq.L2}. Furthermore, by uniform convergence and from \eqref{eq.growcont0}-\eqref{eq.growcont}, we have
\begin{equation}
\label{eq.grow3}
\|v\|_{L^\infty((-1,0)\times B_1)} \geq 1/2,
\end{equation}
and
\begin{equation}
\label{eq.grow4}
\|v\|_{L^\infty((-R^{2s},0)\times B_R)} \leq C R^\gamma \textrm{ for } R \geq 1.
\end{equation}

Finally, observe that $\frac{\bar{u}_{k_m}(r_mx)}{r_m^s}$ converges uniformly in $B_1$ up to a subsequence to $\bar{Q}(x_n)^s_+$ for some $\bar{Q} \in \R^+$. Indeed, by \cite[Proposition 5.2]{RS}, for each $m\in \N$ and $\gamma' \in (s, 2s)$ there is some $\bar{Q}_m$ such that
\begin{equation}
\label{eq.Qmbound}
|\bar{u}_{k_m}(x) - \bar{Q}_m (x_n)^s_+| \leq C|x|^{\gamma'}, \textrm{ for } x\in B_1,
\end{equation}
where $C$ depends only on $n, s, \rho_0, \gamma'$ and the ellipticity constants \eqref{eq.L2}. Moreover, thanks to \eqref{eq.ubareq2},
\begin{equation}
\label{eq.boundq}
0< c_0 \leq \bar{Q}_m \leq c_1 \textrm{ for all } m \in \N,
\end{equation}
where the constants $c_0$ and $c_1$ are the same as in \eqref{eq.ubareq2}. To check this, write for example
\[
c_0 d^s - \bar Q_m(x_n)^s_+ \leq |\bar u _{k_m}(x) - \bar Q_m(x_n)^s_+| \leq |x|^{\gamma'}.
\]
Now dividing the expression by $|(x_n)_+|^s$ and taking the limit for $x = he_n$ and $h\downarrow 0$ we would get $c_0 \leq \bar Q_m$. It similarly follows $\bar Q_m \leq c_1$.

Rescaling \eqref{eq.Qmbound},
\[
\sup_{x\in B_1} \left| \frac{\bar{u}_{k_m}(r_mx)}{r_m^s}-\bar{Q}_m(x_n)^s_+\right| \leq Cr_m^{\gamma'-s} \to 0 \textrm{ as } m \to\infty,
\]
and up to a subsequence we have that
\begin{equation}
\label{eq.unifconv}
\frac{\bar{u}_{k_m}(r_mx)}{r_m^s} \to \bar{Q}(x_n)^s_+ \textrm{ uniformly in } B_1
\end{equation}
for some $\bar{Q}$ fulfilling the same bounds as $\bar{Q}_m$, \eqref{eq.boundq}.
\\[0.5cm]
{\bf Step 4: Contradiction.} By considering the expression \eqref{eq.contr} in the limit, and using $\frac{\bar{u}_{k_m}(r_mx)}{r_m^s} \to \bar{Q}(x_n)^s_+ \textrm{ uniformly in } B_1$, we have
\begin{equation}
\label{eq.contr2}
\int_{-1}^0 \int_{B_1} v(t, x) (x_n)^s_+ dx dt = 0.
\end{equation}

On the other hand, by \eqref{eq.grow5}-\eqref{eq.grow4} we can apply the Liouville-type theorem in the half space, Theorem \ref{thm.liouv2}, to $v$. Therefore, we have
\[
v(t, x) = k (x_n)^s_+, \textrm{ for some } k\in \R.
\]
By \eqref{eq.contr2}, $v \equiv 0$. However, this is not possible by \eqref{eq.grow3}, and we have reached a contradiction.
\end{proof}

Before proceeding to give the proof of Theorem~\ref{thm.mainboundary}, we state the following useful lemma. Thanks to this lemma, we can replace $\bar u$ by $d^s$ in the expression of Proposition~\ref{prop.mainbound}.

\begin{lem}[\cite{RS}]
\label{lem.rs}
Let $\Gamma$ be a $C^{1,1}$ surface with radius $\rho_0$ splitting $B_1$ into $\Omega^+$ and $\Omega^-$. Let $\bar{u}$ be a solution to \eqref{eq.ubareq}, and let $d(x) = {\rm dist} (x, \Omega^-)$. Let $x_0\in B_{1/2}$ such that
\[
{ \rm dist} (x_0,\Gamma) = {\rm dist} (x_0, z) =: 2r < \rho_0.
\]
Then, there exists some $\bar{Q} = \bar{Q}(z)$ such that $|\bar{Q}(z)| \leq C$,
\begin{equation}
\label{eq.rs1}
\|\bar{u}-\bar{Q}d^s\|_{L^\infty(B_r(x_0))} \leq Cr^{2s-\epsilon},
\end{equation}
and
\begin{equation}
\label{eq.rs2}
 [\bar{u}-\bar{Q}d^s]_{C^{s-\epsilon}_x (B_r(x_0))} \leq Cr^{s},
\end{equation}
for some constant $C$ depending only on $n, s, \epsilon, \rho_0$ and the ellipticity constants \eqref{eq.L2}. Moreover,
\begin{equation}
\label{eq.rs3}
[d^{-s}]_{C^{s-\epsilon}(B_r(x_0))} \leq C^*r^{-2s+\epsilon}
\end{equation}
for some constant $C^*$ depending only on $\rho_0$.
\end{lem}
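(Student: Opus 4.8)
The plan is to reduce the whole statement to the elliptic boundary-regularity estimates of \cite{RS}, since $\bar u$ solves a purely elliptic problem and no time variable enters; the geometry of $\Gamma$ is felt only through $\rho_0$ and through the distance function. First I would introduce the linear normal coordinate $\ell(x):=(x-z)\cdot\nu_z$, where $\nu_z\in S^{n-1}$ is the inward unit normal to $\Gamma$ at $z$, and recall that since $\Gamma$ is $C^{1,1}$ with radius $\rho_0$ the signed distance $d$ is $C^{1,1}$ in a $\rho_0$-neighbourhood of $\Gamma$, with $\nabla d(z)=\nu_z$, $\|D^2 d\|_{L^\infty}\le C\rho_0^{-1}$, and hence $|d(x)-\ell(x)|\le C\rho_0^{-1}|x-z|^2$ there. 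Since $\mathrm{dist}(x_0,\Gamma)=\mathrm{dist}(x_0,z)=2r$, on $B_r(x_0)$ we have $\ell(x_0)=2r$, so $r\le\ell(x)\le 3r$, likewise $r\le d(x)\le 3r$, and $|x-z|\le 3r$. A direct computation using these facts gives $\|d^s-\ell_+^s\|_{L^\infty(B_r(x_0))}\le Cr^{s+1}\le Cr^{2s-\epsilon}$ and $[d^s-\ell_+^s]_{C^{s-\epsilon}(B_r(x_0))}\le Cr^s$, so it is harmless to replace $d^s$ by $\ell_+^s$ in \eqref{eq.rs1}--\eqref{eq.rs2}.

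Next I would import the boundary expansion of $\bar u$ at $z$. After translating $z$ to the origin and rotating $\nu_z$ to $e_n$, $\bar u$ solves $L\bar u=1$ in $\Omega^+$, $\bar u=0$ in $\Omega^-$, $0\le\bar u\le c_2$ outside $B_1$, with $\Gamma$ through the origin with normal $e_n$; then \cite[Proposition 5.2]{RS} provides $\bar Q=\bar Q(z)$, $|\bar Q|\le C$, with $|\bar u(x)-\bar Q\,\ell_+^s(x)|\le C|x-z|^{2s-\epsilon}$ in $B_{1/2}$. Evaluating on $B_r(x_0)\subset B_{1/2}$ and using the first paragraph through $\bar u-\bar Q d^s=(\bar u-\bar Q\ell_+^s)+\bar Q(\ell_+^s-d^s)$ yields \eqref{eq.rs1}. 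For the seminorm bound \eqref{eq.rs2} I would use that $w:=\bar u-\bar Q\ell_+^s$ satisfies $Lw=1$ on $B_r(x_0)$: indeed $B_r(x_0)\subset\Omega^+\cap\{\ell>0\}$, and $L[\ell_+^s]\equiv 0$ on $\{\ell>0\}$ because $L$ acting on functions of $\ell$ alone reduces, by \eqref{eq.L2}, to a positive multiple of the one-dimensional operator of order $2s$ annihilating the profile $t_+^s$. Moreover $w$ has growth $\|w\|_{L^\infty(B_R)}\le C(1+R^{2s-\epsilon})$ at all scales, by the expansion together with $0\le\bar u\le c_2$ and $\ell_+^s\le CR^s$. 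Rescaling $w$ to the unit ball and applying the interior estimate of \cite{RS} with bounded right-hand side together with this growth control gives $\|\nabla w\|_{L^\infty(B_{r/2}(x_0))}\le Cr^{2s-1-\epsilon}$, and interpolating with the $L^\infty$ bound yields $[w]_{C^{s-\epsilon}(B_{r/2}(x_0))}\le Cr^s$; a covering argument extends this to $B_r(x_0)$, and adding the $C^{s-\epsilon}$ bound for $\bar Q(\ell_+^s-d^s)$ from the first paragraph completes \eqref{eq.rs2}.

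Finally \eqref{eq.rs3} is elementary: on $B_r(x_0)$ one has $d\ge r$ and $d$ is $1$-Lipschitz, so $|d^{-s}(x)-d^{-s}(y)|\le Cr^{-s-1}|x-y|$; since $|x-y|\le 2r$ this gives $|d^{-s}(x)-d^{-s}(y)|\le Cr^{-s-1}(2r)^{1-s+\epsilon}|x-y|^{s-\epsilon}\le C^*r^{-2s+\epsilon}|x-y|^{s-\epsilon}$, with $C^*$ depending only on $\rho_0$ (and $s$, $\epsilon$).

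The main obstacle is the second paragraph — quoting from \cite{RS}, with the right $\rho_0$-dependence of all constants, both the $C^0$ boundary expansion $|\bar u-\bar Q\ell_+^s|\le C|x-z|^{2s-\epsilon}$ and the interior gradient bound for $w$ with the sharp power of $r$; once these are in hand the geometric comparison between $d$ and $\ell$ and the interpolation are routine. In fact, since $\bar u$ is purely elliptic, the cleanest option is simply to observe that this lemma is the corresponding statement of \cite[Section 5]{RS} rewritten in the present notation, so that nothing genuinely new has to be proved in the parabolic setting.
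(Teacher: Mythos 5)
Your proposal is correct and ultimately takes the same route as the paper, whose proof consists of citing \cite{RS}: \eqref{eq.rs3} is \cite[Lemma 5.5]{RS}, while \eqref{eq.rs1}--\eqref{eq.rs2} appear in the proof of \cite[Theorem 1.2]{RS}, exactly as you observe in your closing remark. Your sketched reconstruction follows the argument of \cite{RS} faithfully (expansion from \cite[Proposition 5.2]{RS}, comparison of $d$ with the linear normal coordinate, rescaled interior estimates); the only caveat is that the intermediate bound $\|\nabla w\|_{L^\infty(B_{r/2}(x_0))}\le Cr^{2s-1-\epsilon}$ is not available when $2s-\epsilon\le 1$, where one should instead use the rescaled interior $C^{2s-\epsilon}$ (or directly $C^{s-\epsilon}$) estimate to reach \eqref{eq.rs2}.
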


\begin{proof}
The third expression \eqref{eq.rs3} is in \cite[Lemma 5.5]{RS}, while the first two expressions, \eqref{eq.rs1}-\eqref{eq.rs2}, appear in the proof \cite[Theorem 1.2]{RS}.
\end{proof}

Finally, we can proceed with the proof of Theorem~\ref{thm.mainboundary}. We will prove first the following proposition, which is essentially the same but assuming $0\in \de\Omega$.

\begin{prop}
\label{prop.mainbound2}
Let $s\in(0,1)$, and let $\Gamma$ be a $C^{1,1}$ surface with radius $\rho_0$ splitting $B_1$ into $\Omega^+$ and $\Omega^-$. Let $u$ be a weak solution to
\begin{equation}
  \left\{ \begin{array}{rcll}
  \de_t u - L u&=&f& \textrm{in }(-1,0)\times \Omega^+\\
  u&=&0& \textrm{in }(-1,0)\times \Omega^- . \\
  \end{array}\right.
\end{equation}
where $L$ is an operator of the form \eqref{eq.L1}-\eqref{eq.L2}. Let
\[
C_0 = \|u\|_{L^\infty((-1,0)\times\R^n)}+\|f\|_{L^\infty((-1,0)\times \Omega^+)}.
\]

Then, for any $\epsilon > 0$,
\begin{equation}
\|u\|_{C^{1-\epsilon}_t\left(\left(-\frac{1}{2},0\right)\times \overline{B_{1/2}}\right)} + \left\| u/d^s \right\|_{C^{\frac{1}{2}-\frac{\epsilon}{2s},s-\epsilon}_x\left(\left(-\frac{1}{2},0\right)\times(\overline{\Omega^+}\cap {B_{1/2}})\right)} \leq C C_0,
\end{equation}
where the constant $C$ depends only on $\epsilon$, $n$, $\rho_0$, $s$  and the ellipticity constants \eqref{eq.L2}.
\end{prop}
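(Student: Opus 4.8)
The plan is to deduce the two estimates from Proposition~\ref{prop.mainbound} (the one-point expansion of $u$ around every boundary point) together with the $C^s_x$ regularity up to the boundary (Proposition~\ref{prop.bound}), the interior estimates (Theorem~\ref{thm.mainb}), and Lemma~\ref{lem.rs} which lets us trade $\bar u$ for $d^s$. I will fix $\gamma\in(s,2s)$ with $\gamma = 2s-\epsilon$ (shrinking $\epsilon$ if needed so that $2s-\epsilon>s$), and work with the function $w:=u/d^s$ on $\overline{\Omega^+}\cap B_{1/2}$.

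First I would establish the interior-vs-boundary structure. For a point $x_0\in\Omega^+\cap B_{1/2}$ at distance $2r=\mathrm{dist}(x_0,\Gamma)<\rho_0$, let $z\in\Gamma$ be the nearest boundary point. Applying Proposition~\ref{prop.mainbound} centered and rescaled at $z$ gives a constant $Q(z)$, $|Q(z)|\le CC_0$, with
\[
|u(t,x)-Q(z)\bar u_z(x)|\le CC_0\bigl(|x-z|^\gamma+|t|^{\gamma/2s}\bigr)\quad\text{in }(-1,0)\times B_\rho(z),
\]
where $\bar u_z$ is the rescaled version of the function in \eqref{eq.ubareq} adapted to the boundary near $z$; here one uses that $\Gamma$ is a $C^{1,1}$ surface with uniform radius $\rho_0$, so the constant $C$ does not depend on $z$. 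By Lemma~\ref{lem.rs}, $\bar u_z$ can be replaced by $\bar Q(z)d^s$ at the cost of $Cr^{2s-\epsilon}$ in $L^\infty(B_r(x_0))$ and $Cr^s$ in $[\,\cdot\,]_{C^{s-\epsilon}_x(B_r(x_0))}$, so setting $\widetilde Q(z):=Q(z)\bar Q(z)$ (still bounded by $CC_0$) we get
\[
\|u-\widetilde Q(z)\,d^s\|_{L^\infty((-r^{2s},0)\times B_r(x_0))}\le CC_0\,r^{2s-\epsilon},
\]
and an analogous $C^{s-\epsilon}_x$ bound. Dividing by $d^s\sim r^s$ on $B_r(x_0)$ and using \eqref{eq.rs3} (that $[d^{-s}]_{C^{s-\epsilon}(B_r(x_0))}\le C^*r^{-2s+\epsilon}$) together with the product rule for Hölder seminorms on $B_r(x_0)$, I obtain the oscillation estimate
\[
\|w-\widetilde Q(z)\|_{L^\infty(B_r(x_0))}\le CC_0\,r^{s-\epsilon}\quad\text{for each fixed }t,
\]
plus $[w(t,\cdot)]_{C^{s-\epsilon}(B_r(x_0))}\le CC_0$, uniformly in $t$. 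This is precisely the hypothesis needed for the standard ``interpolation to the boundary'' lemma (as in \cite[Proof of Thm.~1.2]{RS}): combining the oscillation decay at boundary points with the interior Hölder bound on balls $B_r(x_0)$ of size comparable to the distance yields $w(t,\cdot)\in C^{s-\epsilon}_x(\overline{\Omega^+}\cap B_{1/2})$ with norm $\le CC_0$, uniformly in $t\in(-\tfrac12,0)$. That gives the \emph{spatial} $C^{s-\epsilon}_x$ part of the $u/d^s$ estimate.

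Next comes the time regularity. For the bound $\|u\|_{C^{1-\epsilon}_t}\le CC_0$ I would again split into two regimes. In the interior regime (at points $x_0$ with $d(x_0)$ not small relative to the time increment) the estimate follows directly from Theorem~\ref{thm.mainb}, via rescaling on parabolic cylinders of size $d(x_0)$, exactly as in Lemma~\ref{lem.bds2}. Near the boundary, Proposition~\ref{prop.mainbound} controls $u(t,x)-Q(z)\bar u_z(x)$ by $CC_0|t|^{\gamma/2s}=CC_0|t-t'|^{(2s-\epsilon)/2s}\ge CC_0|t-t'|^{1-\epsilon/2s}$ since $\bar u_z$ is $t$-independent; choosing $z$ appropriately (nearest boundary point, and splitting the time increment against the distance as in the proof of Proposition~\ref{prop.bound}) one gets $|u(t,x_0)-u(t',x_0)|\le CC_0|t-t'|^{1-\epsilon}$, and a covering argument promotes this to all of $B_{1/2}$. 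For the temporal Hölder seminorm of $u/d^s$ in the $C^{\frac12-\frac{\epsilon}{2s},s-\epsilon}_x$ space, the time variable is measured with exponent $\tfrac12-\tfrac{\epsilon}{2s}$; here I would again use Proposition~\ref{prop.mainbound}: dividing $u(t,x)-u(t',x)$ by $d^s(x)$ in a cylinder near a boundary point of width $r$ gives, after the same $d^{-s}$-Hölder trick, the bound $|t-t'|^{\gamma/2s}/r^{s}$ on an $r$-scale, and optimizing over $r$ versus the spatial location produces the mixed seminorm exponent; in the interior regime it again reduces to Theorem~\ref{thm.mainb}.

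The main obstacle I anticipate is the bookkeeping in the last step: correctly combining the boundary expansion (which gives decay in powers of $\mathrm{dist}$ to $\Gamma$) with the interior parabolic estimates (which live on cylinders of size $\sim d(x_0)$) so as to extract exactly the anisotropic seminorm $C^{\frac12-\frac{\epsilon}{2s},\,s-\epsilon}_x$ for $u/d^s$ — i.e. matching the time exponent $\tfrac12-\tfrac{\epsilon}{2s}$ to the spatial exponent $s-\epsilon$ with the right parabolic scaling $t\sim r^{2s}$, and checking that all constants are uniform in the base point and depend on $\Gamma$ only through $\rho_0$. The interplay with the factor $d^{-s}$, whose Hölder seminorm \eqref{eq.rs3} blows up like $r^{-2s+\epsilon}$, is the delicate point: one must verify that the gain $r^{2s-\epsilon}$ in the $L^\infty$ expansion of $u-\widetilde Qd^s$ exactly compensates it, leaving the clean exponent $s-\epsilon$. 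Everything else is a routine adaptation of the elliptic argument of \cite{RS} to the parabolic setting, with $|x|$ replaced by the parabolic distance $|x|+|t|^{1/2s}$.
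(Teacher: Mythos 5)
Your proposal follows the same general architecture as the paper's proof (expansion at boundary points via Proposition~\ref{prop.mainbound} with $\gamma=2s-\epsilon$, trading $\bar u$ for $\bar Q\,d^s$ via Lemma~\ref{lem.rs}, cylinder estimates at scale $r\sim d(x_0)$ combined with a two-regime argument as in Proposition~\ref{prop.bound}), but there is a genuine gap at the central step: you never actually produce the H\"older bounds for $u-Q\bar u$ on the cylinders $(-r^{2s}+t_0,t_0)\times B_r(x_0)$. Proposition~\ref{prop.mainbound} only gives an $L^\infty$ expansion, and Lemma~\ref{lem.rs} only compares $\bar u$ with $\bar Q d^s$, so the ``analogous $C^{s-\epsilon}_x$ bound'' you assert for $u-\widetilde Q d^s$ is unjustified; without it, dividing by $d^s$ does not give $[w(t,\cdot)]_{C^{s-\epsilon}(B_r(x_0))}\le CC_0$, since the raw ingredients only yield $[u]_{C^{s-\epsilon}_x(\mathrm{cyl})}\lesssim C_0 r^{\epsilon}$ and $\|u\|_{L^\infty(\mathrm{cyl})}\lesssim C_0 r^s$, which after multiplication by $d^{-s}\sim r^{-s}$ and by $[d^{-s}]_{C^{s-\epsilon}}\lesssim r^{-2s+\epsilon}$ produce factors $r^{\epsilon-s}$ that blow up at the boundary. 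The same defect is fatal for the time estimates in the regime $|t-t'|\ll d(x_0)^{2s}$: dividing the expansion of Proposition~\ref{prop.mainbound} by $d^s$ gives $|t-t'|^{\gamma/2s}r^{-s}+r^{\gamma-s}$, and the term $r^{s-\epsilon}$ (which you dropped) is not controlled by $|t-t'|^{\frac12-\frac{\epsilon}{2s}}$ there, nor can you ``optimize over $r$'' since $r\sim d(x_0)$ is fixed; and your fallback ``reduce to Theorem~\ref{thm.mainb} by rescaling on cylinders of size $d(x_0)$ exactly as in Lemma~\ref{lem.bds2}'' caps out at $C^{\frac12}_t$/$C^{s}_x$, because $u$ itself is only of size $r^s$ on such cylinders, so for any time exponent beyond $\tfrac12$ the rescaling produces a degenerating factor $r^{\epsilon-s}$. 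In particular neither route gives the exponent $\tfrac12-\tfrac{\epsilon}{2s}$ for $u/d^s$, let alone $1-\epsilon$ in time for $u$, uniformly up to $\Gamma$.

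The missing idea — which is the core of the paper's Steps 1 and 2 — is to apply the interior estimate (Lemma~\ref{lem.bds0}) not to $u$ but to the \emph{difference}, rescaled as $v_r(t,x)=r^{-s}\bigl(u(r^{2s}t+t_0,rx+z)-Q\bar u(rx+z)\bigr)$ as in \eqref{eq.vrdef}: by Proposition~\ref{prop.mainbound} this function has size $r^{s-\epsilon}$ with growth $r^{s-\epsilon}R^{2s-\epsilon}$, and it solves an equation with bounded right-hand side $r^{s}\bigl(f-Q\bigr)$, so Lemma~\ref{lem.bds0} yields $[u-Q\bar u]_{C^{\frac12-\frac{\epsilon}{2s},s-\epsilon}_{t,x}(\mathrm{cyl})}\le Cr^{s}$ and $[u-Q\bar u]_{C^{1-\frac{\epsilon}{2s},2s-\epsilon}_{t,x}(\mathrm{cyl})}\le C$ (the paper's \eqref{eq.mainthm3} and \eqref{eq.mainthm6}). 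Since $Q\bar u$ is time-independent, these are exactly the bounds your two ``small time increment'' regimes require, and combined with \eqref{eq.rs2}--\eqref{eq.rs3} they also give the $C^{s-\epsilon}_x$ bound for $u-\widetilde Q d^s$ that you asserted. One further ingredient you omit: for the $C^{1-\epsilon}_t$ estimate in the regime $|t_1-t_2|\gtrsim d(x_0)^{2s}$ the paper proves H\"older regularity in time of the boundary coefficient, $[Q(\cdot,z)]_{C^{\frac12-\frac{\epsilon}{2s}}_t}\le C$ (from the already-established regularity of $u/d^s$ together with the elliptic regularity of $\bar u/d^s$), and runs a telescoping argument toward an auxiliary point $\bar y$ with $d(\bar y)\sim|t_1-t_2|^{\frac1{2s}}$; your alternative of applying Proposition~\ref{prop.mainbound} once at the top time $t_0=\max(t_1,t_2)$ with a single $Q$ does handle that regime (the spatial remainder $d(x_0)^{2s-\epsilon}$ is then dominated by $|t_1-t_2|^{1-\frac{\epsilon}{2s}}$), but this does not repair the near regime, so as written the proof does not close.
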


\begin{proof}
We may assume that
\[
\|u\|_{L^\infty((-1,0)\times \R^n)}+\|f\|_{L^\infty((-1,0)\times \Omega^+)} \leq 1.
\]
Also, inside $\Omega$ the result follows from the interior regularity, so we only need to show the estimates in $\Omega^+\cap\{x:d(x)< \rho_0\}$.

Pick a point $x_0 \in B_{1/4}\cap \Omega^+\cap\{x:d(x)< \rho_0\}$, and consider $z\in \Gamma$ minimising the distance to $x_0$, i.e.,
\begin{equation}
\label{eq.mainthm0}
2r := \textrm{dist}(x_0,\Gamma) = \textrm{dist}(x_0,z) < \rho_0.
\end{equation}

During the proof we will assume that $r$ is as small as we need (namely, $4r^{2s} < 1$), as long as it does not depend on $x_0$. Under these assumptions $B_r(x_0) \subset B_{2r}(x_0) \subset \Omega^+$ and $z \in B_{1/2}$, since $0\in \Gamma$.

Let $\bar u$ be the solution of \eqref{eq.ubareq} satisfying $\bar u = 0$ in $\R^n\setminus B_1$. By Proposition~\ref{prop.mainbound} we have that for each $t_0 \in (-1/2,0)$ there exists some $Q= Q(t_0,z)$ with $|Q| \leq C$ for which
\begin{equation}
\label{eq.mainthm1}
|u(t, x)-Q\bar{u}(x) | \leq C\left(|x-z|^{2s-\epsilon}+|t-t_0|^{\frac{2s-\epsilon}{2s}}\right)\textrm{ in } \left(-\frac{1}{2}+t_0,t_0\right)\times \R^n,
\end{equation}
where $C$ is a constant depending only on $n, \rho_0, s, \epsilon$ and the ellipticity constants \eqref{eq.L2}. We have used here Proposition \ref{prop.mainbound} on balls of radius $1/2$ around each $z\in \Gamma\cap B_{1/2}$. Notice that, $\bar u$ restricted to these balls satisfies an equation of the type \eqref{eq.ubareq} inside, and is bounded outside by $c_2$, so that Proposition~\ref{prop.mainbound} applies.

We will now divide the proof in two parts, concerning respectively the regularity for $u/d^s$ and the $(1-\epsilon)$-temporal regularity for $u$.
\\[0.5cm]
{\bf Step 1: Regularity for $u/d^s$.}
To begin with, note that there is some $K = K(t_0, z)$ such that $|K| \leq C$ and
\begin{equation}
\label{eq.mainthm2}
\|u-K d^s\|_{L^\infty\left(\left(-r^{2s}+t_0,t_0\right)\times B_r(x_0)\right)} \leq Cr^{2s-\epsilon}.
\end{equation}
Indeed, this follows combining \eqref{eq.mainthm1} and \eqref{eq.rs1}, and assuming $r$ small enough.

On the other hand we also claim that
\begin{equation}
\label{eq.mainthm3}
[u-Kd^s]_{C^{\frac{1}{2} - \frac{\epsilon}{2s},s-\epsilon}_{t,x}((-r^{2s}+t_0,t_0)\times B_r(x_0))} \leq Cr^s,
\end{equation}
for any $t_0\in \left(-\frac{1}{2},0\right)$. Suppose also that it is always true that $-1 < t_0-r^{2s} < t_0 < 0$, making $r$ smaller if necessary. The constant $C$ in \eqref{eq.mainthm2}-\eqref{eq.mainthm3} depends only on $n, s, \epsilon, \rho_0$ and the ellipticity constants \eqref{eq.L2}.

To see \eqref{eq.mainthm3} define the following function
\begin{equation}
\label{eq.vrdef}
v_r(t, x) := r^{-s} u(r^{2s} t + t_0, r x + z) - r^{-s}Q\bar{u}(rx+z).
\end{equation}
Notice that by \eqref{eq.mainthm1} we have the following bound in $(-2^{2s},0)\times B_2(x_0)$,
\[
\|v_r\|_{L^\infty((-2^{2s},0)\times B_2)} \leq C r^{s-\epsilon},
\]
and that by \eqref{eq.mainthm1} we have the following growth control for $R \geq 1$,
\begin{equation}
\label{eq.mainthm5}
\sup_{t\in(-2,0)} \|v_r(t,\cdot)\|_{L^\infty( B_R)} \leq C r^{s-\epsilon}R^{2s-\epsilon}.
\end{equation}

Moreover, $v_r$ solves
\begin{equation}
\label{eq.mainthm7}
\de_t v_r - Lv_r = r^{s}\left(f(r^{2s}t+t_0,rx+z)-Q\right) \textrm{ in } (-2,0)\times B_2(\tilde{x_0}),
\end{equation}
for $\tilde{x_0} = \frac{x_0-z}{r}$, and $r$ small enough so that the domain in $t$ contains $(-2,0)$. Using the interior estimate in Lemma \ref{lem.bds0} and the bounds on $Q$, we obtain that $[v_r]_{C^{\frac{1}{2}-\frac{\epsilon}{2s},s-\epsilon}_{t, x}\left(\left(-1,0\right)\times B_1(\tilde{x_0})\right)} \leq Cr^{s-\epsilon}$. From this it follows that
\begin{equation}
\label{eq.mainthm8}
r^{s-\epsilon}[u-Q\bar{u}]_{C^{\frac{1}{2}-\frac{\epsilon}{2s},s-\epsilon}_{t, x}\left(\left(-r^{2s}+t_0,t_0\right)\times B_r({x_0})\right)} = r^s [v_r]_{C^{\frac{1}{2}-\frac{\epsilon}{2s},s-\epsilon}_{t, x}\left(\left(-1,0\right)\times B_1({\tilde{x_0}})\right)} \leq Cr^{2s-\epsilon},
\end{equation}
and so we get the desired result,  \eqref{eq.mainthm3}, by combining this expression with \eqref{eq.rs2}.

Finally, for any $x_1, x_2\in B_r(x_0)$ and any $t_1, t_2 \in \left(-r^{2s}+t_0,t_0 \right)$
\begin{align*}
\frac{u(t_1, x_1)}{d^s(x_1)} - \frac{u(t_2,x_2)}{d^s(x_2)} & =  \frac{(u-Kd^s)(t_1, x_1) - (u-Kd^s)(t_2, x_2)}{d^s(x_1)}~+ \\
&~~~+ (u-Kd^s)(t_2, x_2)(d^{-s}(x_1) - d^{-s}(x_2)).
\end{align*}

Now, by \eqref{eq.mainthm3} and using that $r$ and $d$ are comparable in $B_r(x_0)$ we have that
\[
\frac{|(u-Kd^s)(t_1, x_1) - (u-Kd^s)(t_2, x_2)|}{d^s(x_1)} \leq C\left(|x_1-x_2|^{s-\epsilon} + |t_1-t_2|^{\frac{1}{2}-\frac{\epsilon}{2s}}\right).
\]

By \eqref{eq.mainthm2} and \eqref{eq.rs3},
\[
|u-Kd^s|(t_2,x_2) |d^{-s}(x_1)-d^{-s}(x_2)| \leq C |x_1-x_2|^{s-\epsilon},
\]
we finally get that
\[
[u/d^s]_{C^{\frac{1}{2}-\frac{\epsilon}{2s},s-\epsilon}_{t, x}\left(\left(-r^{2s}+t_0,t_0\right)\times B_r({x_0})\right)} \leq C
\]
for all such balls $B_r(x_0)$. The bound
\[
 \left\| u/d^s \right\|_{C^{\frac{1}{2}-\frac{\epsilon}{2s},s-\epsilon}_x\left(\left(-\frac{1}{2},0\right)\times(\overline{\Omega^+}\cap {B_{1/2}})\right)} \leq C
\]
now follows the same way as in the proof of Proposition~\ref{prop.bound}.
\\[0.5cm]
{\bf Step 2: $C^{1-\epsilon}_t$ regularity.} Let us now prove the bound for $[u]_{C^{1-\epsilon}_{t}}$.

We begin by noticing that applying Lemma \ref{lem.bds0} to the solution $v_r$ of \eqref{eq.mainthm7} we have $[v_r]_{C^{1-\frac{\epsilon}{2s},2s-\epsilon}_{t,x}\left(\left(-1,0\right)\times B_1(\tilde{x_0})\right)} \leq Cr^{s-\epsilon}$, where $v_r$ was defined by \eqref{eq.vrdef}. Rescaling, we find
\begin{equation}
\label{eq.mainthm4}
r^{2s-\epsilon}[u- Q\bar u]_{C^{1-\frac{\epsilon}{2s},2s-\epsilon}_{t,x}\left(\left(-r^{2s}+t_0,t_0\right)\times B_r({x_0})\right)} = r^s [v_r]_{C^{1-\frac{\epsilon}{2s},2s-\epsilon}_{t,x}\left(\left(-1,0\right)\times B_1({\tilde{x_0}})\right)} \leq Cr^{2s-\epsilon}.
\end{equation}

From here, we deduce
\begin{equation}
\label{eq.mainthm6}
[u- Q (t_0,z) \bar u]_{C^{1-\frac{\epsilon}{2s},2s-\epsilon}_{t,x}\left(\left(-r^{2s}+t_0,t_0\right)\times B_r({x_0})\right)}  \leq C,
\end{equation}
where we will from now on explicitly write the dependence of $ Q$, and we remind that $r$ depends on $x_0$. Notice that from \eqref{eq.mainthm1},
\[
Q(t_0,z) = \lim_{\substack{z^*\in \Omega^+\\ z^*\to z}} \frac{u(t_0, z^*)}{\bar u (z^*)} =: \frac{u(t_0, z)}{\bar u (z)}.
\]
This last expression makes sense pointwise since the function $u/\bar u$ is continuous up to the boundary $\de\Omega^+\cap B_{1/2} = \Gamma \cap B_{1/2}$, so we can take the limit.

Indeed, we already proved that $\left\| u/d^s \right\|_{C^{\frac{1}{2}-\frac{\epsilon}{2s},s-\epsilon}_x\left(\left(-\frac{1}{2},0\right)\times(\overline{\Omega^+}\cap B_{1/2})\right)} \leq C$, and from \cite[Theorem 1.2]{RS} we know $\left\| \bar{u}/d^s \right\|_{C^{\frac{1}{2}-\frac{\epsilon}{2s},s-\epsilon}_x\left(\left(-\frac{1}{2},0\right)\times(\overline{\Omega^+}\cap B_{1/2})\right)} \leq C$: combining both expressions we obtain
\[
\|u/\bar u\|_{C^{\frac{1}{2}-\frac{\epsilon}{2s},s-\epsilon}_x\left(\left(-\frac{1}{2},0\right)\times(\overline{\Omega^+}\cap B_{1/2})\right)} \leq C.
\]

In particular, for any fixed $z$ on the boundary,
\begin{equation}
\label{eq.Qtimereg}
[Q(\cdot,z)]_{C_t^{\frac{1}{2}-\frac{\epsilon}{2s}}\left(-\frac{1}{2},0\right)} \leq C,
\end{equation}
where the constant $C$ does not depend on the point $z$ of the boundary inside $B_{1/2}$.

Now we want to show, for any point $y\in B_{1/4}$ and $t_1,t_2\in \left(-\frac{1}{2},0\right)$,
\[
|u(t_1,y)-u(t_2,y)|\leq C|t_1-t_2|^{1-\frac{\epsilon}{2s}},
\]
for some constant $C$ depending only on $\epsilon$, $n$, $s$, $\rho_0$ and the ellipticity constants \eqref{eq.L2}. Notice that to see this we can suppose that $|t_1-t_2|$ is as small as we need as long as its value does not depend on $y$. Let us consider $z$ such that
\[
 \textrm{dist}(y,\Gamma) = \textrm{dist}(y,z) < \rho_0.
\]
Notice that we can also assume that $|y-z|$ is as small as we need, since the interior regularity is already known.

If $|t_2-t_1| < 2^{-2s}|y-z|^{2s}$, then by \eqref{eq.mainthm6} we obtain the desired result. Assume now that
\begin{equation}
\label{eq.smallz}
|t_2-t_1| \geq 2^{-2s}|y-z|^{2s}.
\end{equation}

Let $\bar y$ to be chosen later, satisfying
\[
\textrm{dist}(\bar y, \Gamma) = \textrm{dist}(\bar y, z),
\]
i.e., in the line passing through $y$ and $z$.

Define $y_k = 2^{-k}\bar y + (1-2^{-k})y$, so that $y_0 = \bar y$, $y_\infty = y$. Define also
\[
w(t, x) := u(t, x) - Q(t,z) \bar u(x)
\]
which will be useful for points $x$ in the segment between $y$ and $\bar y$. With all this we can bound the following expression,
\begin{align*}
|w(t_1,y)-w(t_2,y)| & \leq \sum_{k\geq 0} |w(t_1,y_{k+1})-w(t_1,y_{k })| +\\
&~~~+ \sum_{k\geq 0} |w(t_2,y_{k+1})-w(t_2,y_{k })|  + |w(t_1,\bar y) - w(t_2,\bar y)| \\
& \leq 2\sum_{k\geq 0} C|y_{k+1}-y_{k}|^{2s-\epsilon}+ |w(t_1,\bar y) - w(t_2,\bar y)| \\
& =  2C|y-\bar y|^{2s-\epsilon}\sum_{k \geq 0} 2^{-(k+1)(2s-\epsilon)}+|w(t_1,\bar y) - w(t_2,\bar y)| \\
& \leq C|y-\bar y|^{2s-\epsilon} + |w(t_1,\bar y) - w(t_2,\bar y)|,
\end{align*}
Here, we used that
\[
y_{k+1}-y_{k} = 2^{-(k+1)} (y-\bar y),
\]
and therefore, in each term of the sum we can use the estimate \eqref{eq.mainthm6}. On the other hand
\begin{align*}
|w(t_1,\bar y) - w(t_2,\bar y)| &\leq |u(t_1,\bar y) - Q(t_1,z)\bar u (\bar y)-\left(u(t_2,\bar y) - Q(t_1,z)\bar u (\bar y)\right)|+\\
&~~~+|Q(t_1,z) - Q(t_2,z)|\bar u(\bar y).
\end{align*}

We take $\bar y$ such that
\begin{equation}
\label{eq.imp1}
|t_2-t_1| < 2^{-2s}|\bar y - z|^{2s},
\end{equation}
so that the estimate in \eqref{eq.mainthm6} is valid. Considering \eqref{eq.Qtimereg}, we have
\begin{align*}
|w(t_1,\bar y) - w(t_2,\bar y)| & \leq  C|t_2-t_1|^{1-\frac{\epsilon}{2s}} + C |t_2-t_1|^{\frac{1}{2}-\frac{\epsilon}{2s}} \bar u(\bar y)\\
& \leq  C|t_2-t_1|^{1-\frac{\epsilon}{2s}} + C |t_2-t_1|^{\frac{1}{2}-\frac{\epsilon}{2s}} |\bar y - z|^s;
\end{align*}
where in the last inequality we have used \eqref{eq.ubareq2}. Hence, using $|\bar y - z | > |\bar y - y|$,
\[
|w(t_1,y)-w(t_2,y)| \leq C(|\bar y-z|^{2s-\epsilon}+ |t_2-t_1|^{1-\frac{\epsilon}{2s}} +  |t_2-t_1|^{\frac{1}{2}-\frac{\epsilon}{2s}} |\bar y - z|^s).
\]
We now impose that $\epsilon_0^{2s}|\bar y - z|^{2s} \leq |t_2-t_1|$ for some $\epsilon_0$ independent of $y, \bar y, t_1$ and $t_2$, and we get
\[
|w(t_1,y)-w(t_2,y)| \leq C|t_2-t_1|^{1-\frac{\epsilon}{2s}}.
\]
Thus, we take $\bar y$ such that
\[
\epsilon_0|\bar y - z| \leq |t_2-t_1|^{\frac{1}{2s}} < 2^{-1}|\bar y - z|.
\]
This is always possible if $\epsilon_0$ and $|t_2-t_1|$ are small enough depending on $s$ and $\rho_0$.

Finally, from \eqref{eq.Qtimereg}, \eqref{eq.ubareq2} and \eqref{eq.smallz}
\begin{align*}
|u(t_1,y)-u(t_2,y)| & \leq |w(t_1,y)-w(t_2,y)| +|Q(t_1,z)-Q(t_2,z)|\bar u (y) \\
& \leq  C|t_2-t_1|^{1-\frac{\epsilon}{2s}} +  C|t_2-t_1|^{\frac{1}{2}-\frac{\epsilon}{2s}} |y-z|^{s} \\
& \leq C|t_2-t_1|^{1-\frac{\epsilon}{2s}},
\end{align*}
as we wanted to see.
\end{proof}

We finally give the:
\begin{proof}[Proof of Theorem \ref{thm.mainboundary}]
As in the proof of Proposition \ref{prop.mainboundary}, combine the result in Proposition~\ref{prop.mainbound2} with the interior estimates to get the desired result.
\end{proof}

\begin{rem}
\label{rem.csreg}
In a recent work by the second author and Serra, \cite{RS17}, the regularity results for the elliptic problem have been extended to $C^{1,\alpha}$ domains, for $\alpha\in(0,1)$.

In Section \ref{sec.4}, the fact that the domain is $C^{1,1}$ is only used in the construction of supersolutions in Lemma~\ref{lem.super}, and in Lemma~\ref{lem.bds}. Using that the solutions to the elliptic problem with a $C^{1,\alpha}$ domain are bounded by $d^s$, namely \eqref{eq.ds2}, then Proposition \ref{prop.mainboundary} is true for $C^{1,\alpha}$ domains, with the constant depending on $\alpha$ too. On the other hand, the argument done in Lemma~\ref{lem.super} can be easily adapted to $C^{1,\alpha}$ domains.

In Section \ref{sec.5} there are two steps where we used the $C^{1,1}$ regularity of the domain. Namely, to obtain the bounds \eqref{eq.ubareq2} from \cite{RS}, and to say that $\frac{\bar{u}(rx)}{r^s}$ converges uniformly in $B_1$ as $r\downarrow 0$ to $\bar{Q}(x_n)^s_+$ in \eqref{eq.unifconv}. Again, using the elliptic results for a domain $C^{1,\alpha}$ with $\alpha \in (0,1)$, then the regularity up to the boundary found for $u/d^s$ follows as well for this class of domains.
\end{rem}

\section{The Dirichlet problem}
\label{sec.6}
In this section we prove Corollary~\ref{cor.mainboundary}. First, we give the following lemma.

\begin{lem}
\label{lem.u0_L2norm}
Let $s\in(0,1)$ and let $\Omega\subset \R^n$ be a bounded $C^{1,1}$ domain satisfying the exterior ball condition. Let $u$ be the solution to
\begin{equation}\label{eq.fracheat3}
  \left\{ \begin{array}{rcll}
  \de_t u - L u&=&c_0& \textrm{in }\Omega,\ t > 0 \\
  u&=&0& \textrm{in } \R^n\setminus \Omega,\ t \geq 0 \\
  u(0,x)&=&u_0&\textrm{in }\Omega,
  \end{array}\right.
\end{equation}
for some constant $c_0 > 0$. Then, we have
\begin{equation}
|u| \leq C(t_0) (\|u_0\|_{L^2(\Omega)}+c_0) d^s,~~\textrm{ for all } t\geq t_0 > 0,
\end{equation}
where $C(t_0)$ depends only on $t_0, n, s, \Omega$ and the ellipticity constants \eqref{eq.L2}. The dependence with respect to $\Omega$ is via $|\Omega|$ and the $C^{1,1}$ norm of the domain.
\end{lem}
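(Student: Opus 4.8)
The plan is to decompose the solution exactly as in the proof of Lemma~\ref{lem.super}, writing $u = u_1 + u_2$ where $u_1$ solves the stationary problem $-Lu_1 = c_0$ in $\Omega$ with $u_1 = 0$ in $\R^n\setminus\Omega$, and $u_2$ solves the homogeneous heat equation $\de_t u_2 - L u_2 = 0$ in $\Omega$, with $u_2 = 0$ outside $\Omega$ and initial datum $u_2(0,\cdot) = u_0 - u_1$. The only difference with Lemma~\ref{lem.super} is that the source constant is $c_0$ (rather than $1$) and the initial datum is the general $L^2$ function $u_0$ (rather than the constant $1$), so the whole argument goes through with the bookkeeping of these two quantities. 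For $u_1$, the elliptic estimate from \cite{RS} gives $|u_1|\le C c_0\, d^s$, with $C$ depending only on $n$, $s$, the $C^{1,1}$ norm of $\Omega$ and the ellipticity constants; this replaces \eqref{eq.ds2}.

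For $u_2$, I would expand in the Dirichlet eigenfunctions of $L$ in $\Omega$, exactly as in the proof of Lemma~\ref{lem.super}:
\[
u_2(t,x) = \sum_{k>0} (u_2)_k\, \phi_k(x)\, e^{-\lambda_k t},
\]
with $\|\phi_k\|_{L^2(\Omega)} = 1$ and $(u_2)_k$ the Fourier coefficients of $u_0 - u_1$. Using the elliptic boundary bound of \cite{RS} together with Lemma~\ref{lem.bdeig}, one has $|\phi_k(x)| \le C\lambda_k^{w+1} d^s(x)$ with $w = \tfrac12(\tfrac{n}{2s})^2$, so that
\[
|u_2(t,x)| \le d^s(x) \sum_{k>0} |(u_2)_k|\, C\lambda_k^{w} e^{-\lambda_k t}.
\]
Then the same summation argument as in \cite{FR} (splitting off a fixed power of $\lambda_k$, using the Weyl-type asymptotics $\lambda_k \sim C_2 k^{2s/n}$ from Lemma~\ref{lem.bdeig}, and Cauchy--Schwarz to pass from $\sum|(u_2)_k|\lambda_k^w e^{-\lambda_k t}$ to a multiple of $(\sum|(u_2)_k|^2)^{1/2}\cdot(\sum \lambda_k^{2w}e^{-2\lambda_k t})^{1/2}$) yields
\[
|u_2(t,x)| \le C(t_0)\, \|u_0 - u_1\|_{L^2(\Omega)}\, d^s(x)\qquad\text{for all } t\ge t_0>0.
\]
Finally, $\|u_0 - u_1\|_{L^2(\Omega)} \le \|u_0\|_{L^2(\Omega)} + |\Omega|^{1/2}\|u_1\|_{L^\infty(\Omega)} \le \|u_0\|_{L^2(\Omega)} + C c_0$ by the $L^\infty$ bound on $u_1$ (itself a consequence of $|u_1|\le Cc_0 d^s$ with $d$ bounded on $\Omega$). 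Adding the bounds for $u_1$ and $u_2$ gives $|u| \le C(t_0)(\|u_0\|_{L^2(\Omega)} + c_0)\, d^s$, which is the claim.

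The one genuinely nontrivial ingredient — and the place to be careful — is the summation estimate controlling $\sum_{k>0} |(u_2)_k|\,\lambda_k^{w}e^{-\lambda_k t}$ by $C(t)\|u_2(0,\cdot)\|_{L^2(\Omega)}$; but this is exactly the computation carried out in \cite{FR} and invoked verbatim in Lemma~\ref{lem.super}, and the presence of the constant source $c_0$ and of a general (rather than constant) initial datum does not affect it at all, since in that step $u_0 - u_1$ enters only through its $L^2$ norm. So the lemma follows by tracking constants through the proof of Lemma~\ref{lem.super}; I do not anticipate any new obstacle.
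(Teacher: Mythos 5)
Your proposal is correct and coincides with the paper's argument: the paper's proof of this lemma is literally to repeat the proof of Lemma~\ref{lem.super}, i.e.\ the same decomposition $u=u_1+u_2$, the elliptic bound $|u_1|\le Cc_0d^s$ from \cite{RS}, and the eigenfunction expansion of $u_2$ with the summation estimate from \cite{FR}, tracking $\|u_0-u_1\|_{L^2(\Omega)}\le \|u_0\|_{L^2(\Omega)}+Cc_0$. Nothing further is needed.
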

\begin{proof}
Proceed exactly as in the proof of Lemma \ref{lem.super}.
\end{proof}

And now we can prove Corollary \ref{cor.mainboundary}.

\begin{proof}[Proof of Corollary \ref{cor.mainboundary}]
For the first part, cover $\Omega$ by a finite number of unit balls and apply Theorem \ref{thm.mainb} to the interior balls and Proposition \ref{prop.bound} to balls with center on the boundary, to get
\begin{equation}
\|u\|_{C^{\frac{1}{2},s}_{t,x}\left(\left(\frac{1}{2},1\right)\times\overline{\Omega}\right)}\leq C\left(\|u\|_{L^\infty\left(\left(\frac{1}{4},1\right)\times\Omega\right)}+\|f\|_{L^\infty\left(\left(\frac{1}{4},1\right)\times\Omega\right)} \right).
\end{equation}
Similarly, applying Proposition \ref{prop.mainbound2} we get
\begin{equation}
\|u\|_{C^{1-\epsilon}_{t}\left(\left(\frac{1}{2},1\right)\times\overline{\Omega}\right)}+\|u/d^s\|_{C^{\frac{1}{2}-\frac{\epsilon}{2s},s-\epsilon}_{t,x}\left(\left(\frac{1}{2},1\right)\times\overline{\Omega}\right)}\leq C\left(\|u\|_{L^\infty\left(\left(\frac{1}{4},1\right)\times\Omega\right)}+\|f\|_{L^\infty\left(\left(\frac{1}{4},1\right)\times\Omega\right)} \right).
\end{equation}

On the other hand, by Lemma \ref{lem.u0_L2norm} with $c_0 = \|f\|_{L^\infty\left(\left(0,1\right)\times\Omega\right)}$ and $t = t_0 = 1/4$,
\begin{align*}
\|u\|_{L^\infty\left(\left(\frac{1}{4},1\right)\times\Omega\right)} & \leq  C\left(\|u(1/4,\cdot)\|_{L^\infty\left(\Omega\right)}+\|f\|_{L^\infty\left(\left(\frac{1}{4},1\right)\times\Omega\right)}\right)\\
& \leq  C\left(\|u(0,\cdot)\|_{L^2\left(\Omega\right)}+\|f\|_{L^\infty\left(\left(0,1\right)\times\Omega\right)}\right).
\end{align*}

Finally, combining the previous expressions and rescaling the temporal domain appropriately we get the desired result.

For the second part it is enough to combine the previous result with the interior regularity estimates \eqref{eq.maina}. This can be done as long as $\alpha \leq s$, from the $C^{\frac{1}{2},s}_{t,x}$ estimate.
\end{proof}

Let us finish by proving a corollary with sufficient conditions on $f$ for $u$ to have classical derivatives with respect to time $t$ up to the boundary.

\begin{cor}
\label{cor.main2_2}
Let $s\in(0,1)$, let $L$ be any operator of the form \eqref{eq.L1}-\eqref{eq.L2} and let $\Omega$ be a bounded $C^{1,1}$ domain. Let $f\in C^\delta_t ((0,1)\times \Omega)$ for some $\delta > 0$, $u_0\in L^2(\Omega)$, and let $u$ be the weak solution to
\begin{equation}\label{eq.fracheat_u0_cor}
  \left\{ \begin{array}{rcll}
  \de_t u - L u&=&f& \textrm{in }\Omega,\ t > 0 \\
  u&=&0& \textrm{in } \R^n\setminus \Omega,\ t \geq 0, \\
  u(0,\cdot)&=&u_0& \textrm{in } \Omega,\ t = 0. \\
  \end{array}\right.
\end{equation}

Then,
\begin{equation}
\|\de_t u\|_{L^\infty\left(\left(\frac{1}{2},1\right)\times\overline{\Omega}\right)} +\|Lu\|_{L^\infty\left(\left(\frac{1}{2},1\right)\times\overline{\Omega}\right)}  \leq C \left( \|u_0\|_{L^2(\Omega)} + \|f\|_{C^\delta_t((0,1)\times \Omega)} \right),
\end{equation}
where the constant $C$ depends only on $ \delta, n, s, \Omega$ and the ellipticity constants \eqref{eq.L2}.
\end{cor}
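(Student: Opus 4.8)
The plan is to obtain the estimate by a difference–quotient bootstrap built on the sharp time–regularity results already proved, rather than by a semigroup/Duhamel computation. After normalising so that $\|u_0\|_{L^2(\Omega)}+\|f\|_{C^\delta_t((0,1)\times\Omega)}\le 1$, and replacing $\delta$ by a smaller number if necessary, we may assume $\delta\in(0,\tfrac12)$. Since $\de_t u=Lu+f$ holds pointwise, it suffices to bound $\|\de_t u\|_{L^\infty((1/2,1)\times\overline\Omega)}$, because then $\|Lu\|_{L^\infty((1/2,1)\times\overline\Omega)}\le\|\de_t u\|_{L^\infty}+\|f\|_{L^\infty}\le C$. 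In fact I will prove the stronger statement that $u$ is $C^{1+\delta-\epsilon}_t$ up to $\overline\Omega$ for some small $\epsilon>0$.

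The only tool I will use repeatedly is the following. If $v$ is a weak solution of $\de_t v-Lv=g$ in $(a,1)\times\Omega$ with $v=0$ in $(a,1)\times(\R^n\setminus\Omega)$ and $g\in L^\infty((a,1)\times\Omega)$, then for $a<t_1<1$,
\[
[v]_{C^{1-\epsilon}_t\left((t_1,1)\times\R^n\right)}\le C\Bigl(\|v\|_{L^\infty((a,1)\times\R^n)}+\|g\|_{L^\infty((a,1)\times\Omega)}\Bigr),
\]
with $C=C(\epsilon,n,s,\Omega,t_1-a)$. This is exactly the combination of the boundary estimate of Theorem~\ref{thm.mainboundary} near $\de\Omega$ and the interior estimate of Theorem~\ref{thm.mainb} away from $\de\Omega$, glued by a covering argument as in the proof of Corollary~\ref{cor.mainboundary}; the seminorm over $\R^n$ equals the one over $\overline\Omega$ because $[\,\cdot\,]_{C^{1-\epsilon}_t}$ is evaluated at a fixed spatial point and $v\equiv0$ off $\Omega$. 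Applying this to $v=u$, $g=f$, and using Corollary~\ref{cor.mainboundary} (or Lemma~\ref{lem.u0_L2norm}) to bound $\|u\|_{L^\infty((1/16,1)\times\R^n)}$ by $\|u_0\|_{L^2(\Omega)}+\|f\|_{L^\infty}\le1$, we get $[u]_{C^{1-\epsilon}_t((1/8,1)\times\R^n)}\le C$.

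Now fix $0<\epsilon<\delta$ and, for small $h>0$, set $\phi^h(t,x):=\bigl(\phi(t,x)-\phi(t-h,x)\bigr)/h$. By linearity and time–translation, $u^h$ is a weak solution of $\de_t u^h-Lu^h=f^h$ in $(h,1)\times\Omega$ with $u^h=0$ in $(h,1)\times(\R^n\setminus\Omega)$; moreover $\|f^h\|_{L^\infty((h,1)\times\Omega)}\le h^{\delta-1}\|f\|_{C^\delta_t}$ and, by the previous step, $\|u^h\|_{L^\infty((1/8+h,1)\times\R^n)}\le h^{-\epsilon}[u]_{C^{1-\epsilon}_t((1/8,1)\times\R^n)}\le Ch^{-\epsilon}$. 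Feeding $v=u^h$, $g=f^h$ into the tool (with $a=1/8+h$, $t_1=3/8$, $h$ small) and using $\epsilon<1-\delta$ so that $h^{-\epsilon}\le h^{\delta-1}$, we obtain $[u^h]_{C^{1-\epsilon}_t((3/8,1)\times\R^n)}\le Ch^{\delta-1}$, hence $\|(u^h)^h\|_{L^\infty((3/8+2h,1)\times\R^n)}\le h^{-\epsilon}[u^h]_{C^{1-\epsilon}_t}\le Ch^{\delta-1-\epsilon}$. Since $h^2(u^h)^h(t,x)=u(t,x)-2u(t-h,x)+u(t-2h,x)$, the second-order time difference of $u$ of step $h$ is bounded by $Ch^{1+\delta-\epsilon}$, uniformly in $x\in\overline\Omega$ and in all small $h$. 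As $1+\delta-\epsilon\in(1,2)$ (by the choice $0<\epsilon<\delta<\tfrac12$), the classical characterisation of $C^{\theta}$ via second differences for $\theta\in(1,2)$ gives $u(\cdot,x)\in C^{1+\delta-\epsilon}((1/2,1))$ uniformly in $x\in\overline\Omega$; in particular $\|\de_t u\|_{L^\infty((1/2,1)\times\overline\Omega)}\le C$, and the conclusion follows.

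The main obstacle — and the reason this short argument works — is that Theorem~\ref{thm.mainboundary} provides a $C^{1-\epsilon}_t$ estimate \emph{for every} $\epsilon>0$ with a right-hand side that involves only the $L^\infty$ norm of the solution (and the $L^\infty$ norm of the right-hand side), with no dependence on an initial datum: this is precisely what allows the estimate to be applied to the difference quotients $u^h$ while losing only $\epsilon$ per application, so that two applications gain a full derivative and push the regularity exponent past $1$. The delicate points to check carefully are: (i) that $u^h$ is a genuine weak solution with right-hand side $f^h$ and $\|f^h\|_{L^\infty}\lesssim h^{\delta-1}\|f\|_{C^\delta_t}$; (ii) that the time–Hölder estimates used are global in $x$ (automatic, since $u^h$ vanishes off $\Omega$ and the relevant seminorm is pointwise in $x$); and (iii) that $1+\delta-\epsilon$ stays strictly inside $(1,2)$, guaranteed by first shrinking $\delta$ below $\tfrac12$ and then choosing $\epsilon\in(0,\delta)$.
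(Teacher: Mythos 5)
Your argument is correct and is essentially the paper's own route: take incremental quotients of $u$ in time, use the up-to-the-boundary $C^{1-\epsilon}_t$ estimate (Theorem~\ref{thm.mainboundary} plus Theorem~\ref{thm.mainb}, glued as in Corollary~\ref{cor.mainboundary}, with Lemma~\ref{lem.u0_L2norm} supplying the $L^\infty$ bound) on the quotients, exploit $f\in C^\delta_t$ to control the new right-hand side, and add exponents past $1$ with $\epsilon<\delta$. The only difference is bookkeeping: the paper normalizes the quotient by $\tau^{\delta}$ and invokes \cite[Lemma 5.6]{CC}, while you use order-one quotients twice and conclude via the Zygmund second-difference characterization of $C^{1+\delta-\epsilon}_t$; both are standard and equivalent here.
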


\begin{proof}
We proceed as in Corollary \ref{cor.main1} by taking incremental quotients in $t$,
\[
u^\tau_{\delta}(t+\tau, x) := \frac{u(t+\tau, x)-u(t, x)}{|\tau|^{\delta}},~~~~f^\tau_{\delta}(t, x) := \frac{f(t+\tau, x)-f(t, x)}{|\tau|^{\delta}}
\]
for some $\tau > 0$ fixed. Notice that
\[
\de_t u^\tau_{\delta} - L u^\tau_{\delta} = f^\tau_{\delta}~ \textrm{ in }~ \Omega, t > \frac{1}{4}.
\]

Note also that
\begin{align*}
\|f^\tau_{\delta}\|_{L^\infty\left(\left(\frac{1}{4}+\tau,1\right)\times \Omega\right)}  & \leq C\left( [f]_{C^{\delta}_t \left(\left(\frac{1}{4},1\right)\times \Omega\right) } \right),\\
\|u^\tau_{\delta}\|_{L^\infty\left(\left(\frac{1}{4}+\tau,1\right)\times \Omega\right)} & \leq C\left( [u]_{C^{\delta}_t \left(\left(\frac{1}{4},1\right)\times \Omega\right) } \right) \leq C \left( \|u_0\|_{L^2(\Omega)} + \|f\|_{L^\infty((0,1)\times \Omega)} \right),
\end{align*}
where in the last inequality we have used the first part of Corollary~\ref{cor.mainboundary}.

On the other hand,
\[
 \sup_{0 \leq \tau \leq \frac{1}{16}} \|u^\tau_{\delta}\|_{C^{1-\epsilon}_t\left(\left(\frac{1}{2},1\right)\times\Omega\right)} \geq   C \|u\|_{C^{1-\epsilon+\delta}_t\left(\left(\frac{1}{2},1\right)\times\Omega\right)},
\]
(see, for example, \cite[Lemma 5.6]{CC}). Combining the previous results with Corollary \ref{cor.mainboundary} we obtain the following bound,
\begin{equation}
\|u\|_{C^{1+\delta-\epsilon}_t\left(\left(\frac{1}{2},1\right)\times \overline{\Omega}\right)} \leq C \left( \|u_0\|_{L^2(\Omega)} + \|f\|_{C^\delta_t((0,1)\times \Omega)} \right).
\end{equation}
Using that $\|Lu\|_{L^\infty\left(\left(\frac{1}{2},1\right)\times\overline{\Omega}\right)} \leq \|\de_t u\|_{L^\infty\left(\left(\frac{1}{2},1\right)\times\overline{\Omega}\right)} + \|f\|_{L^\infty\left(\left(\frac{1}{2},1\right)\times\overline{\Omega}\right)} $, we are done.
\end{proof}

We finally give a corollary on the higher regularity in $t$.
\begin{cor}
\label{cor.main2_3}
Let $s\in(0,1)$, let $L$ be any operator of the form \eqref{eq.L1}-\eqref{eq.L2} and let $\Omega$ be any bounded $C^{1,1}$ domain. Let $f\in C^k_t ((0,1)\times \Omega)$ for some $k\in\N$, $u_0\in L^2(\Omega)$, and let $u$ be the weak solution to \eqref{eq.fracheat_u0_cor}. Then, for any $\delta > 0$
\begin{equation}
\| u\|_{C^k_t\left(\left(\frac{1}{2},1\right)\times\overline{\Omega}\right)}  \leq C \left( \|u_0\|_{L^2(\Omega)} + \|f\|_{C^{k-1,\delta}_t((0,1)\times \Omega)} \right),
\end{equation}
where the constant $C$ depends only on $\delta, k, n, s, \Omega$ and the ellipticity constants \eqref{eq.L2}.
\end{cor}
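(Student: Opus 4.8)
The plan is to argue by induction on $k$, exploiting the fact that if $u$ solves \eqref{eq.fracheat_u0_cor} then $v:=\partial_t u$ solves a nonlocal parabolic equation of the same type with right-hand side $\partial_t f$. The base case $k=1$ is immediate: as shown in the proof of Corollary~\ref{cor.main2_2}, for any $\delta\in(0,1)$ and $\epsilon<\delta$ one has $u\in C^{1+\delta-\epsilon}_t\left(\left(\frac12,1\right)\times\overline\Omega\right)\subset C^{1}_t\left(\left(\frac12,1\right)\times\overline\Omega\right)$ together with the bound $\|u\|_{C^1_t}\le C\bigl(\|u_0\|_{L^2(\Omega)}+\|f\|_{C^{0,\delta}_t((0,1)\times\Omega)}\bigr)$, which is exactly the statement for $k=1$.

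For the inductive step, assume the statement holds for $k-1$, for every operator $L$ of the form \eqref{eq.L1}-\eqref{eq.L2}, every bounded $C^{1,1}$ domain, and every exponent. Fix $t_0\in(0,\tfrac14)$. The first step is to verify that $v:=\partial_t u$ is a weak solution of
\[
\partial_t v-Lv=\partial_t f\ \text{ in }\Omega,\ t>t_0,\qquad v=0\ \text{ in }\R^n\setminus\Omega,\ t\ge t_0,\qquad v(t_0,\cdot)=\partial_t u(t_0,\cdot)\ \text{ in }\Omega.
\]
To see this I would take the incremental quotients $u^\tau_1(t,x)=\tau^{-1}\bigl(u(t+\tau,x)-u(t,x)\bigr)$ exactly as in the proof of Corollary~\ref{cor.main2_2}; these solve $\partial_t u^\tau_1-Lu^\tau_1=f^\tau_1$, with $f^\tau_1$ the corresponding quotient of $f$. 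Since $f\in C^k_t\subset C^1_t$ and, by Corollary~\ref{cor.main2_2} applied on $(t_0,1)$, $u$ is of class $C^1_t$ up to $\overline\Omega$ with uniform bounds, both $u^\tau_1\to\partial_t u$ and $f^\tau_1\to\partial_t f$ locally uniformly in $(t_0,1)\times\R^n$ as $\tau\downarrow0$; passing to the limit in the weak formulation (the growth and boundedness needed are precisely those used in Lemma~\ref{lem.1}) gives the equation for $v$. Moreover, Corollary~\ref{cor.main2_2} yields $\|v(t_0,\cdot)\|_{L^2(\Omega)}\le|\Omega|^{1/2}\|\partial_t u\|_{L^\infty((t_0,1)\times\Omega)}\le C\bigl(\|u_0\|_{L^2(\Omega)}+\|f\|_{C^\delta_t((0,1)\times\Omega)}\bigr)$.

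Now apply the inductive hypothesis to $v$ on the interval $(t_0,1)$ (translating time so that it becomes $(0,1)$, which is legitimate since Corollary~\ref{cor.mainboundary} and hence all the results invoked hold on arbitrary time intervals). Using that $\partial_t f\in C^{k-1}_t$ with $\|\partial_t f\|_{C^{k-2,\delta}_t((t_0,1)\times\Omega)}\le\|f\|_{C^{k-1,\delta}_t((0,1)\times\Omega)}$, we obtain
\[
\|v\|_{C^{k-1}_t\left(\left(\frac12,1\right)\times\overline\Omega\right)}\le C\bigl(\|v(t_0,\cdot)\|_{L^2(\Omega)}+\|\partial_t f\|_{C^{k-2,\delta}_t((t_0,1)\times\Omega)}\bigr)\le C\bigl(\|u_0\|_{L^2(\Omega)}+\|f\|_{C^{k-1,\delta}_t((0,1)\times\Omega)}\bigr).
\]
Combined with $\|u\|_{L^\infty\left(\left(\frac12,1\right)\times\Omega\right)}\le C\bigl(\|u_0\|_{L^2(\Omega)}+\|f\|_{L^\infty((0,1)\times\Omega)}\bigr)$ from Corollary~\ref{cor.mainboundary} (or Lemma~\ref{lem.u0_L2norm}), this gives $\|u\|_{C^k_t\left(\left(\frac12,1\right)\times\overline\Omega\right)}\le C\bigl(\|u_0\|_{L^2(\Omega)}+\|f\|_{C^{k-1,\delta}_t((0,1)\times\Omega)}\bigr)$, which closes the induction. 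The main obstacle is the first step of the inductive step — establishing rigorously that $\partial_t u$ is a weak solution of the differentiated problem with the claimed initial trace, i.e. that $\partial_t$ commutes with $L$ at the level of weak solutions; once this is in place, the remainder is a mechanical iteration, together with the elementary observation that differentiating once in $t$ lowers the Hölder index by one.
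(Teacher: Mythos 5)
Your proposal is correct and follows essentially the same route as the paper: induction on $k$, with the base case given by Corollary~\ref{cor.main2_2} and the inductive step resting on the fact (justified via incremental quotients in $t$) that time derivatives of $u$ solve the Dirichlet problem with differentiated right-hand side. The only difference is organisational --- the paper applies the Corollary~\ref{cor.main2_2}-type estimate to $\de_t^{(q)}u$, whose boundedness comes from the induction hypothesis, while you apply the full induction hypothesis to $\de_t u$ after differentiating once --- and both versions share the same harmless bookkeeping of shrinking time intervals, handled by the fact that all estimates hold on $(t_0,T)$ with constants depending on $t_0,T$.
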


\begin{proof}
If $k = 1$, the result is a consequence of the previous statement, Corollary~\ref{cor.main2_2}. Now we proceed by induction.

Suppose it is true for $k = q$, and let us check we can obtain the same for $q+1$. Notice that this means $v_q:=\de_t^{(q)}u$ is bounded and therefore classically defined, and it solves
\begin{equation}
  \left\{ \begin{array}{rcll}
  \de_t v_q - L v_q&=&\de_t^{(q)}f& \textrm{in }\Omega,\ t > 0 \\
  v_q&=&0& \textrm{in } \R^n\setminus \Omega,\ t \geq 0. \\
  \end{array}\right.
\end{equation}

Using incremental quotients, as in the proof of Corollary~\ref{cor.main2_2}, we obtain
\begin{equation}
\|\de_t v_q\|_{L^\infty\left(\left(\frac{1}{2},1\right)\times\overline{\Omega}\right)}  \leq C \left( \|v_q(1/4,\cdot)\|_{L^\infty(\Omega)} + \|\de_t^{(q)} f\|_{C^\delta_t((1/4,1)\times \Omega)} \right).
\end{equation}
From the induction hypothesis
\[
\| v_q\|_{L^\infty\left(\left(\frac{1}{8},1\right)\times\overline{\Omega}\right)}  \leq C \left( \|u_0\|_{L^2(\Omega)} + \|f\|_{C^{q-1,\delta}_t((0,1)\times \Omega)} \right),
\]
and combining the previous two expressions we are done.
\end{proof}
\begin{rem}
Thanks to the previous result, if $f\in C^\infty_t$ up to the boundary then the solution to the Dirichlet problem is automatically $C^\infty_t$ inside $\Omega$. As explained in the introduction, this does not happen in space for general stable operators.

Still, if $f\in C^\infty_{x}$ and the kernel satisfies $a\in C^\infty(S^{n-1})$, then by Corollary~\ref{cor.main3} the solution $u$ is $C^\infty_{x}$ in the interior of the domain.
\end{rem}

\section{Sharpness of the estimates}
\label{sec.7}
In this final section we discuss the sharpness of the estimates in Theorems~\ref{thm.maina} and~\ref{thm.mainboundary}.

\subsection{Sharpness for the interior estimates}

Solutions to the elliptic problem are, in particular, solutions to the parabolic problem. In \cite[Proposition 6.1]{RS}, the second author and Serra proved that to gain up to $C^{\alpha+2s}_x$ spatial regularity it is necessary to assume that the solution is, at least, in $C^\alpha_x$.

Indeed, they construct a function $u:\R^2\to\R$ such that, for $\alpha \in (0,s]$, $\epsilon>0$ small, and a certain operator of the form \eqref{eq.L1}-\eqref{eq.L2},
\begin{enumerate}[(i)]
\item $Lu = 0$ in $B_1$
\item $u = 0$ in $B_2\setminus B_1$
\item $u \in C^{\alpha-\epsilon}(\R^2)$
\item $u\notin C^{\alpha+2s}(B_{1/2})$.
\end{enumerate}

This proves the sharpness of the bounds for Theorem \ref{thm.maina} in the spatial part, since the condition on the a priori regularity on $u$ is sharp.

We next show that the temporal part is also optimal: from a function $u$ that is $\frac{\alpha}{2s}$-Hölder in $t$, we can gain regularity up to $1+\frac{\alpha}{2s}$ and not better in general. This follows from the following lemma, that uses a construction inspired by \cite[Counterexample 2.4.1]{CD}.

\begin{lem}
Let $s \in (0,1)$, and let $\alpha \in (0,s]$. Then, for any $\epsilon > 0$ small, there exists a function $v$ satisfying:
\begin{enumerate}[(i)]
\item $\de_t v + \fls v = 0$ in $(-1,0)\times B_1$
\item $v \in C^{\frac{\alpha}{2s}-\epsilon, \alpha}_{t,x}((-1,0)\times \R^n)$
\item $v\notin C^{1+\frac{\alpha}{2s}}_{t}\left(\left(-\frac{1}{2},0\right)\times B_{1/2}\right)$ for any $\epsilon > 0$.
\end{enumerate}
\end{lem}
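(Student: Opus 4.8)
The idea is to build a solution of the fractional heat equation whose time-regularity is exactly borderline $1+\frac{\alpha}{2s}$ by separating variables: look for $v(t,x)=g(t)\,\psi(x)$ where $\psi$ is chosen so that $(-\Delta)^s\psi$ behaves like a fixed power near a point of $B_1$ but $\psi$ itself is only $C^\alpha$ there, and $g$ solves the resulting ODE. Concretely, following the Caffarelli--Duvaut-type construction referenced as \cite[Counterexample 2.4.1]{CD}, one takes a one-dimensional profile and a bump; the key is to engineer that $\de_t v$ is continuous but not $C^{\alpha/2s}$ in $t$, i.e.\ that $v$ fails to be $C^{1+\alpha/2s}_t$ at $t=0$, while the spatial profile only lives in $C^\alpha_x$.

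\medskip
\textbf{Step 1: Choice of the spatial profile.} Fix a function $\psi_0:\R\to\R$ which equals $|x_1|^\alpha$ (or a smoothed version agreeing with $|x_1|^\alpha$ near $x_1=0$) on a neighbourhood of the origin, is supported in $B_2$, and is smooth away from $\{x_1=0\}$. For $\alpha\in(0,s]$ one has that $(-\Delta)^s\psi_0$ is a bounded, indeed $C^\gamma$ for some small $\gamma>0$, function away from $\{x_1=0\}$, but one should choose $\psi_0$ so that, say, $(-\Delta)^s\psi_0$ has a prescribed singular profile $|x_1|^{\alpha-2s}$-type behaviour only if $\alpha<2s$ — in our range $\alpha\le s<2s$ this is automatic. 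Multiply by a cutoff $\chi(x')$ in the remaining variables so that $\psi(x)=\psi_0(x_1)\chi(x')$ is compactly supported, $C^\alpha_x(\R^n)$, and not better, and with $(-\Delta)^s\psi$ bounded on $B_1$.

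\medskip
\textbf{Step 2: Solve the ODE in time.} Writing $v(t,x)=g(t)\psi(x)$, the equation $\de_t v+(-\Delta)^s v=0$ in $(-1,0)\times B_1$ cannot hold exactly unless $(-\Delta)^s\psi=\lambda\psi$ there, which is not our case; so instead I would use the actual Caffarelli--Duvaut construction: take $v$ to be (a tail-corrected version of) the solution of the Cauchy problem with a carefully chosen singular source concentrated on $t=0$, or equivalently superpose eigenfunction-type building blocks. The cleanest route is to mimic \cite[Counterexample 2.4.1]{CD} verbatim in the fractional setting: one produces $v$ explicitly as a convolution of the fractional heat kernel $p(t,\cdot)$ with a measure/initial datum that is $C^\alpha$ but no better, uses the self-similarity $p(t,x)=t^{-n/2s}p(1,xt^{-1/2s})$ to read off that $v(t,\cdot)$ gains $2s$ derivatives in space (hence $v$ is globally $C^\alpha_x$, with $C^{2s+\alpha}_x$ only if the datum were $C^\alpha$ with the sharp modulus — which it is not at one point), and then computes $\de_t v(t,0)$ directly. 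By scaling, $\de_t v(t,0)$ should behave like $c\,|t|^{\alpha/2s-1+\text{(something)}}$; the precise exponent bookkeeping is what forces $v\notin C^{1+\alpha/2s}_t$ near $t=0$, while a slightly weaker interpolation gives $v\in C^{\alpha/2s-\epsilon}_t$.

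\medskip
\textbf{Step 3: Verify (i)--(iii) and assemble.} Property (i) holds by construction (the source is placed outside $(-1,0)\times B_1$, or $v$ is built as a genuine caloric function there). Property (ii) follows from the heat-kernel bounds \eqref{eq.liouv1}--\eqref{eq.liouv2} exactly as in the interior regularity proof: smoothing of $p(t,\cdot)$ gives the spatial $C^\alpha_x$ bound, and the time H\"older exponent $\frac{\alpha}{2s}-\epsilon$ comes from an interpolation between the $L^\infty$ bound and the spatial regularity through the equation, as in the proof of Corollary~\ref{cor.main1}. Property (iii) is the content of the explicit computation of $\de_t v(t,0)$ in Step 2. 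Finally one replaces $\fls$ by the normalized $-(-\Delta)^s$ of the paper by adjusting the constant $c_{n,s}$; this only changes $g$ by a time-rescaling.

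\medskip
\textbf{Main obstacle.} The delicate point is Step 2: making the time-derivative computation genuinely sharp, i.e.\ showing $\limsup_{t\to 0^-}\frac{|\de_t v(t,0)-\de_t v(0,0)|}{|t|^{\alpha/2s}}=+\infty$ rather than merely that $v$ is not smooth. This requires choosing the spatial singularity of $\psi$ (or of the initial datum) with the exact homogeneity so that the self-similar scaling $p(t,x)=t^{-n/2s}p(1,xt^{-1/2s})$ produces precisely the critical power $|t|^{\alpha/2s}$ in $\de_t v$, with a nonzero coefficient. One must also be careful that the same profile does not accidentally lie in $C^\alpha_x$ with the sharp (Dini-type) modulus that would upgrade it to $C^{2s+\alpha}_x$ and hence to $C^{1+\alpha/2s}_t$ by Theorem~\ref{thm.maina}; this is exactly why the restriction $\alpha\le s$ (and the use of a genuinely $C^\alpha$-but-not-better profile such as $|x_1|^\alpha$) is needed, and it is the reason the counterexample is stated for $\alpha\in(0,s]$.
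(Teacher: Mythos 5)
Your plan does not produce the required counterexample, and the obstruction is structural, not just a matter of unfinished bookkeeping. In Step~2 you propose to realise $v$ as the convolution of the fractional heat kernel with an initial datum (or measure) that is $C^\alpha$ in space but no better, and to extract the failure of $C^{1+\frac{\alpha}{2s}}_t$ from a direct computation of $\de_t v(t,0)$. But such a $v$ is a bounded global caloric function: it solves $\de_t v+\fls v=0$ in all of $(-1,0)\times\R^n$, and by instantaneous smoothing of the semigroup (or simply by the paper's own Corollary~\ref{cor.main2}, i.e.\ Theorem~\ref{thm.maina} applied with $f=0$ on balls covering $\R^n$) it is automatically $C^{1+\frac{\alpha}{2s}}_t$ on $\left(-\frac12,0\right)\times B_{1/2}$ --- in fact $C^\infty$ in $t$ at times bounded away from the initial time. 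Roughness of the spatial datum only degrades the solution near the initial time, never at later interior times, so property (iii) cannot fail for any construction of this type. The point of the lemma is precisely that the obstruction must come from the \emph{time} regularity of the data in $\R^n\setminus B_1$ during the time window $(-1,0)$: any admissible counterexample must have global time regularity strictly below $\frac{\alpha}{2s}$ (otherwise Theorem~\ref{thm.maina} upgrades it), and your proposal never identifies a source for this sub-$\frac{\alpha}{2s}$ time roughness. The separation-of-variables idea of Step~1 is abandoned by you yourself, so what remains is a programme whose decisive computation (``the precise exponent bookkeeping'') is exactly the part that cannot be carried out along this route.

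For comparison, the paper's construction keeps the equation only in $(-1,0)\times B_1$ and switches on exterior data at time $t=-\frac14$: outside $B_1$ one imposes $\bar v(t,x)=\epsilon_0\left(t+\frac14\right)^{1+\delta}+\left(t+\frac14\right)^{\delta}\eta(x)$ for $t\ge-\frac14$ (and $0$ before), with $\delta=\frac{\alpha}{2s}-\epsilon$ and $\eta$ a bump supported in an annulus away from $B_1$. Since $\fls\eta\le -c<0$ in $B_1$, for $\epsilon_0$ small $\bar v$ is a subsolution, and the comparison principle gives $v\ge \epsilon_0\left(t+\frac14\right)^{1+\delta}$ in $B_1$ while $v\equiv 0$ for $t\le-\frac14$ by uniqueness; since $\delta<\frac{\alpha}{2s}$, this kink at $t=-\frac14$ rules out $C^{1+\frac{\alpha}{2s}}_t$ regularity in $\left(-\frac12,0\right)\times B_{1/2}$. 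Property (ii) is then obtained from the $C^{\frac12,s}_{t,x}$ boundary estimate of Proposition~\ref{prop.mainboundary} together with the regularity of the exterior data; this is also where the hypothesis $\alpha\le s$ is actually used (so that $C^s_x$ up to $\de B_1$ implies the claimed global $C^\alpha_x$ bound), rather than for the reason you give in your final paragraph.
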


\begin{proof}
Consider the solution $v$ to the fractional heat equation
\[
\de_t v +\fls v = 0 \textrm{ in } (-1,0)\times B_1
\]
with initial value $v(-1,\cdot)\equiv 0$ and exterior condition outside $B_1$ equal to $\bar v$,
\begin{equation}
  \bar v(t, x) = \left\{ \begin{array}{ll}
  0& \textrm{if }t < -\frac{1}{4} \\
  \epsilon_0 \left(t+\frac{1}{4}\right)^{1+\delta} + (t+1/4)^{\delta}\eta(x) &\textrm{if } t \geq -\frac{1}{4}, \\
  \end{array} \right.
\end{equation}
where we fix $\delta = \frac{\alpha}{2s}-\epsilon>0$, and $\eta(x)$ is a $C^\infty$ non-negative function supported in $B_{7/2}\setminus B_{3/2}$ and equal to 1 in $B_3\setminus B_2$.

On the one hand, $v \in C^{\frac{\alpha}{2s}-\epsilon, \alpha}_{t,x}((-1,0)\times \R^n)$. Indeed, for times in $(-1,-1/4)$, $v\equiv 0$ by uniqueness. For times $t\geq -1/4$, this is true inside $B_1$ by the $C^s$ regularity up to the boundary, Proposition~\ref{prop.mainboundary}; and it is also true outside the ball by the regularity of the exterior condition.

On the other hand, for $\epsilon_0 > 0$ small, then $v$ is at most $C^{1+\delta}_t$ in $(-1/2,0)\times B_{1/2}$. To see this, notice that, for $t \geq -1/4$ and in $B_{1}$,
\[
\de_t \bar v + \fls \bar v = (t+1/4)^\delta\left((1+\delta)\epsilon_0 + \fls \eta\right).
\]
Note also that $\fls \eta \leq -c$ in $B_{1/2}$ for some positive constant $c$, so that we can choose $\epsilon_0 >0$ small enough such that $\bar v$ is a subsolution to the fractional heat equation in $(-1,0)\times B_1$. By the comparison principle, $v \geq \bar v$ in $(-1,0)\times B_1$, and also $v \equiv 0$ in $(-1,-1/4)\times B_1$ by uniqueness. Thus, $\de_t v $ is at most $C^\delta_t$ inside $B_{1/2}$, and $\delta < \frac{\alpha}{2s}$.
\end{proof}

\subsection{Sharpness for the boundary estimates}

The $C^s_x$ regularity up to the boundary for the solutions to nonlocal parabolic equations is optimal (as it is optimal even for the fractional Laplacian in the elliptic case).

Regarding the optimality of the bounds for the estimates up to the boundary for $u/d^s$ we expect them to be optimal or almost optimal for general $f\in L^\infty$ (even for the fractional Laplacian) because the regularity cannot exceed the one achieved in the interior.

For general stable operators we expect this regularity to be optimal even if $\Omega$ is a $C^\infty$ domain and $f\in C^\infty$. We refer to \cite[Proposition 6.2]{RS}, where it is proven that for some operator $L$ of the form \eqref{eq.L1}-\eqref{eq.L2} and some $C^\infty$ domain $\Omega$, one has $L(d^s)\notin L^\infty(\Omega)$. Thus, we do not expect to have $C^s$ regularity up to the boundary for the quotient $u/d^s$.


\begin{thebibliography}{00}
\bibitem[CC95]{CC} L. Caffarelli, X. Cabré, \emph{Fully Nonlinear Elliptic Equations}, AMS Colloquium Publications, Vol 43, 1995.
\bibitem[CD14]{CD} H. Chang-Lara, G. Dávila, \emph{Regularity for solutions of nonlocal parabolic equations II}, J. Differential Equations 256 (2014), 130-156.
\bibitem[CK15]{CLK} H. Chang-Lara, D. Kriventsov, \emph{Further time regularity for non-local, fully non-linear parabolic equations}, Comm. Pure Appl. Math., to appear.
\bibitem[CKS10]{CKS} Z. Chen, P. Kim, R. Song, \emph{Heat kernel estimates for the Dirichlet fractional Laplacian}, J. Eur. Math. Soc. 12 (2010), 1307-1329.
\bibitem[FR15]{FR} X. Fernández-Real, X. Ros-Oton, \emph{Boundary regularity for the fractional heat equation},
Rev. Acad. Cienc. Ser. A Math. 110 (2016), 49-64.
\bibitem[Gei14]{G} L. Geisinger, \emph{A short proof of Weyl's law for fractional diferential operators}, J. Math. Phys.
55 (2014), 011504.
\bibitem[Gru15]{GG15} G. Grubb, \emph{Fractional Laplacians on domains, a development of Hörmander's theory of $\mu$-transmission pseudodifferential operators}, Adv. Math. 268 (2015), 478-528.
\bibitem[Gru14]{GG14} G. Grubb, \emph{Local and nonlocal boundary conditions for $\mu$-transmission and fractional elliptic pseudodifferential operators}, Anal. PDE 7 (2014), 1649-1682.
\bibitem[JX15]{JX} T. Jin, J. Xiong, \emph{Schauder estimates for solutions of linear parabolic integro-differential equations}, Discrete Contin. Dyn. Syst. A. 35 (2015), 5977-5998.
\bibitem[KS14]{KS} M. Kassmann, R. W. Schwab \emph{Regularity results for nonlocal parabolic equations},  Riv. Mat. Univ. Parma 5 (2014), 183-212.
\bibitem[Ros15]{R} X. Ros-Oton, \emph{Nonlocal elliptic equations in bounded domains: a survey}, Publ. Mat. 60 (2016), 3-26.
\bibitem[RS14]{RS2} X. Ros-Oton, J. Serra, \emph{The Dirichlet problem for the fractional Laplacian: regularity up to the boundary}, J. Math. Pures Appl. 101 (2014), 275-302.
\bibitem[RS14b]{RS} X. Ros-Oton, J. Serra, \emph{Regularity theory for general stable operators}, J. Differential Equations 260 (2016), 8675-8715.
\bibitem[RS17]{RS17} X. Ros-Oton, J. Serra, \emph{Boundary regularity estimates for nonlocal elliptic equations in $C^1$ and $C^{1,\alpha}$ domains}, Ann. Mat. Pura Appl., to appear.
\bibitem[RV15]{RV} X. Ros-Oton, E. Valdinoci, \emph{The Dirichlet problem for nonlocal operators with singular kernels: convex and nonconvex domains}, Adv. Math. 288 (2016), 732-790.
\bibitem[ST94]{ST} G. Samorodnitsky, M. S. Taqqu, \emph{Stable Non-Gaussian Random Processes: Stochastic Models
With Infinite Variance}, Chapman and Hall, New York, 1994.
\bibitem[SS14]{SS} R. Schwab, L. Silvestre, \emph{Regularity for parabolic integro-differential equations with very irregular kernels}, Anal. PDE 9 (2016), 727-772.
\bibitem[Ser15]{Ser} J. Serra, \emph{Regularity for fully nonlinear nonlocal parabolic equations with rough kernels}, Calc. Var. Partial Differential Equations 54 (2015), 615-629.
\bibitem[SV13]{SV} R. Servadei, E. Valdinoci, \emph{A Brezis-Nirenberg result for non-local critical equations in low dimension}, Comm. Pure Appl. Anal. 12 (2013), 2445-2464.
\bibitem[Sim97]{S} L. Simon, \emph{Schauder estimates by scaling}, Calc. Var. Partial Differential Equations 5 (1997), 391-407.


\end{thebibliography}
\end{document}